
\documentclass{article}

\usepackage{amsmath,amsthm,amssymb,mathtools}  
\usepackage{dsfont}			
\usepackage{thmtools}		
\usepackage{enumerate}
\usepackage{graphicx}		
\usepackage{subcaption}		
\usepackage[font=small]{caption}	
\usepackage{cite} 			
\usepackage[latin1]{inputenc} 

\usepackage{latexsym}	

\usepackage{makecell}			
\usepackage{booktabs}			
\renewcommand{\arraystretch}{2}	

\usepackage{stackengine}	
\stackMath					

\usepackage[hyperindex,breaklinks
	]{hyperref}		
\hypersetup{colorlinks=true, linkcolor=blue, citecolor=blue}

	\usepackage{verbatim}		
		
	\usepackage[
	disable,		
	textsize=scriptsize,textwidth=4.2cm]{todonotes}		
	\setlength{\marginparwidth}{4.2cm}
	\newcommand{\SELF}[1]{\todo[color=green!40]{#1}} 
	\newcommand{\OMIT}[1]{\todo[color=gray!30]{#1}}  
	\newcommand{\CITE}[1]{\todo[color=cyan!30]{#1}}  
	
	\newcommand{\SELFL}[1]{\reversemarginpar\todo[color=green!40]{#1}} 
	\newcommand{\OMITL}[1]{\reversemarginpar\todo[color=gray!40]{#1}}  
	\newcommand{\CITEL}[1]{\reversemarginpar\todo[color=cyan!30]{#1}} 
	
	\newcommand{\SELFR}[1]{\normalmarginpar\todo[color=green!40]{#1}} 
	\newcommand{\OMITR}[1]{\normalmarginpar\todo[color=gray!40]{#1}}  
	\newcommand{\CITER}[1]{\normalmarginpar\todo[color=cyan!30]{#1}}

\usepackage[nameinlink]{cleveref} 	

\usepackage{bm}					
\usepackage{cmll}				

\usepackage{makecell}			
\usepackage{booktabs}			
\setlength\heavyrulewidth{1pt}	
\setlength\lightrulewidth{0.8pt}
\renewcommand{\arraystretch}{2}	

\usepackage[amsstyle]{cases}	

\newtheorem{theorem}{Theorem}[section]
\newtheorem{proposition}[theorem]{Proposition}
\newtheorem{corollary}[theorem]{Corollary}
\newtheorem{lemma}[theorem]{Lemma}

\theoremstyle{definition}
\newtheorem{definition}[theorem]{Definition} 
\newtheorem{example}[theorem]{Example}

\declaretheoremstyle[
spaceabove=6pt, spacebelow=6pt,
headfont=\normalfont\bfseries,
notefont=\normalfont\bfseries, 
notebraces={}{},
bodyfont=\normalfont\itshape
]{Estilo1}

\newcommand\inner[1] 		{\langle #1 \rangle}

\def\Id{\mathds{1}}
\def\N{\mathds{N}}
\def\Z{\mathds{Z}}
\def\R{\mathds{R}}
\def\C{\mathds{C}}
\def\F{\mathds{F}}
\def\PR{\mathds{P}}

\def\D{\mathcal{D}}

\def\II{\mathcal{I}}		
\def\PP{\mathcal{P}}		
\def\AA{\mathbf{A}}
\def\BB{\mathbf{B}}
\def\CC{\mathbf{C}}
\def\ii{\mathbf{i}}				

\def\im	 			{\mathrm{i}}

\def\Plucker		{Pl\"{u}cker}

\def\pperp		{\simperp}

\DeclareMathOperator{\Span}{span}

\DeclareMathOperator{\Ann}{Ann}
\DeclareMathOperator{\vol}{vol}

\DeclareMathOperator{\tr}{tr}
\DeclareMathOperator{\Hom}{Hom}
\DeclareMathOperator{\diam}{diam}
\DeclareMathOperator{\area}{area}
\DeclareMathOperator{\Isom}{Isom}
\DeclareMathOperator{\Transf}{Transf}
\DeclareMathOperator{\Gr}{Gr}

\def\lcontr					{\mathbin{\lrcorner}}		


\def\wrt					{w.r.t.\ }
\def\ie						{i.e.\ }
\def\resp					{resp.\ }
\def\WLOG					{w.l.o.g.\ }

\def\cd						{\bar{d}}
\def\dg						{d_{\mathrm{g}}}
\def\dcF					{d_{\mathrm{cF}}}
\def\dpF					{d_{\mathrm{pF}}}
\def\dFS					{d_{\mathrm{FS}}}
\def\dcw					{d_{\mathrm{c\wedge}}}
\def\dBC					{d_{\mathrm{BC}}}
\def\dA						{d_{\mathrm{A}}}
\def\dcd					{d_{\mathrm{c2}}}
\def\dpd					{d_{\mathrm{p2}}}
\def\MFS					{\hat{d}_{\mathrm{FS}}}
\def\mFS					{\check{d}_{\mathrm{FS}}}
\def\mpF					{\check{d}_{\mathrm{pF}}}

\def\path 					{\leadsto}					
\def\geod					{\hookrightarrow}			
\def\segm					{\rightarrowtail}			
\def\nullpath				{\dashrightarrow}			
\def\expan					{\nearrow}					
\def\contr					{\searrow}					
\newcommand{\pathdim}[1]	{\overset{#1}{\path}}		
\newcommand{\geodim}[1]		{\overset{#1}{\geod}}		
\newcommand{\segdim}[1]		{\mathrel{\stackon[0pt]{\segm}{\scriptstyle#1}}} 
\newcommand\abaixo[2]		{\ \stackunder[.5pt]{#1}{\scriptstyle#2}\ }

\makeatletter
\newcommand{\orthsum}{\DOTSB\mathbin{\mathpalette\boxplus@\relax}}
\newcommand{\boxplus@}[2]{\vcenter{\hbox{$\m@th#1\boxplus$}}}
\makeatother

\begin{document}

\title{Asymmetric Geometry of Total Grassmannians}

\author{Andr\'e L. G. Mandolesi 
               \thanks{Instituto de Matemática e Estatística, Universidade Federal da Bahia, Av. Milton Santos s/n, 40170-110, Salvador - BA, Brazil. E-mail: \texttt{andre.mandolesi@ufba.br}}}
               
\date{\today \SELF{v3.3} }

\maketitle

\abstract{
Metrics in Grassmannians, or distances between subspaces of same dimension, have many uses,
and extending them to the Total Grassmannian of subspaces of different dimensions is an important problem,
as usual extensions lack good properties or give little information.
Dimensional asymmetries call for the use of asymmetric metrics,
and we present a natural method to obtain them, extending all the main Grassmannian metrics (geodesic, projection Frobenius, Fubini-Study, gap, etc.).
Their geometry adequately reflects containment relations of subspaces, continuous paths link subspaces of distinct dimensions,
and we describe minimal geodesics, shortest paths to move a subspace onto another.
In particular, the Fubini-Study metric extends as an asymmetric angle that is easily computed, has many useful properties, and a nice geometric interpretation.

\vspace{.5em}
\noindent
{\bf Keywords:} Grassmannian, Grassmann manifold, asymmetric metric, distance between subspaces, Fubini-Study, asymmetric angle.

\vspace{3pt}

\noindent
{\bf MSC 2020:}	Primary 14M15; 	
				Secondary 15A75, 	
				51K99	
}

\section{Introduction}

Various metrics (geodesic, projection Frobenius, Fubini-Study, gap, etc.) \cite{Edelman1999,Deza2016,Stewart1990,Qiu2005} 
in Grassmannians or Grassmann manifolds $\Gr_p(n)$ \cite{Kobayashi1996,Harris1992,Kozlov2000I,Kozlov2000III,Wong1967,Bendokat2024} 
	\CITE{Kobayashi inclui caso complexo}
have been used to measure the separation of $p$-dimensional subspaces.
They are important in geometry, linear algebra, functional analysis, and applications that use subspaces to represent data: 
machine learning \cite{Hamm2008,Huang2018,Lerman2011}, 
	\CITE{Zhang2018}
computer vision \cite{Lui2012,Turaga2008,Vishwanathan2006},
coding theory \cite{Ashikhmin2010,Conway1996,Barg2002,Dhillon2008},
wireless communication \cite{Dhillon2008,Love2003,Love2005},  
\CITE{Du2018,Pereira2022}
etc.%
	\CITE{language processing Hall2000 \\
	recommender systems Boumal2015}

Many applications \cite{Pereira2021,Basri2011,Draper2014,Kato1995,Wang2006,Sun_2007,Zuccon2009,Gruber2009,Renard2018,Beattie2005,Sorensen2002,Ye2016} 
need the Total Grassmannian $\Gr(n)$ of subspaces of different dimensions,
but distances used in it have drawbacks.
	\CITE{image recognition Basri2011,Draper2014,Sun2007, Wang2006,Wang2008; \\
		numerical linear algebra Beattie2005,Sorensen2002; \\
		information retrieval Zuccon2009; \\
		EEG signal analysis Figueiredo2010; \\
		wireless communication Pereira2022, Pereira2021}
Some fail the triangle inequality \cite{Basri2011,Draper2014,Pereira2021} and do not give a topology.
	\CITE{Pereira2022}
When dimensions differ the gap \cite{Kato1995} is always 1, not giving any information,
and the symmetric distance \cite{Sun_2007,Wang2006,Zuccon2009} 
	\CITE{Bagherinia2011, Figueiredo2010, Sharafuddin2010}
is at least $1$.
It can be best to have $d(V,W)=0 \Leftrightarrow V\subset W$ \cite{Gruber2009,Renard2018}, which usual metrics do not allow.
The containment gap \cite{Beattie2005,Sorensen2002} 
	\CITE{Kato1995 não usa esse nome}
satisfies it, but carries little data (only the largest principal angle).
Other metrics proposed in \cite{Ye2016} have similar issues.

Any usual (symmetric) metric in $\Gr(n)$ is bound to have problems, 
as subspaces of different dimensions have inherently asymmetric relations:
a plane $P$ can contain a line $L$, not vice versa; areas projected on $L$ vanish, lengths projected on $P$ tend not to; etc.
Its usual topology, of a disjoint union of $\Gr_p(n)$'s, is also inadequate:
it isolates subspaces of different dimensions, ignoring containment or proximity relations between them;
it fails to show that a small region around $L$ can intersect but not contain planes, while near $P$ we have lines; 
it restricts continuous paths to a $\Gr_p(n)$;
etc.
In some applications, dimensions can increase, as subspaces store more data, or decrease, if data is lost or compressed,
and in this topology such processes are discontinuous, hence harder to study. 

The symmetry $d(x,y)=d(y,x)$ of metrics has long been recognized as an overly restrictive simplifying assumption, as often the path, time or cost to go from $x$ to $y$ is not the same as from $y$ to $x$: one-way streets, rush-hour traffic, uphill or downhill, toll roads, etc.
The separation condition $d(x,y)=0 \Leftrightarrow x=y$ is also too strong at times (e.g., for subspaces).
The triangle inequality $d(x,z) \leq d(x,y)+d(y,z)$ is what matters most:
if $d(x,z)$ measures an optimal way to go from $x$ to $z$, it can not be worse than going through $y$.
Without it, balls $B_r(x) = \{y:d(x,y)<r\}$ do not form a basis for a topology
(they can be a subbasis, but for $x$ in an open set $S$, we can not say $B_r(x)\subset S$ for some $r>0$;
or, using this condition to define open sets, $B_r(x)$ might not be open).
	\SELF{Balls $B$ generate 2 topologies, must coincide to be basis: \\
		1) open $A$ = finite intersection of arbitrary unions of $B$'s; \\
		2) $A$ open if any $x\in A$ has $x\in B \subset A$. \\
		Problemas se forem $\neq$: \\
		1) $x\in B_1\cap B_2$ can have no $B_3$ with $x\in B_3 \subset B_1 \cap B_2$, so $A$'s are not unions of $B$'s. \\
		2) $B$ might not be open
	}	
Distances violating the inequality can appear in $\Gr(n)$ \cite{Basri2011,Draper2014,Pereira2021,Ye2016}, sometimes inadvertently,
if one keeps the $\Gr_p(n)$ formulas, or 
\SELF{equivalently}
takes the distance from the smaller subspace to the closest one of same dimension in the larger one.
 
Asymmetric metrics or quasi-metrics%
	\SELFL{or quasi-distances or $T_0$-quasi-pseudometrics}
\cite{Wilson1931,Busemann1944,Zaustinsky1959,Albert1941,Mennucci2013}
	\CITE{Kazeem2014}
occur in 
topology \cite{GoubaultLarrecq2013,Kunzi2001,Kunzi2009},
Finsler geometry \cite{Bao2012},
	\CITE{Flores2013} 
computer science \cite{Mayor2010,Romaguera1999,Seda2008},
	\CITE{Wang2022 Appendix A has examples of quasi-metrics in probability and information}
category theory \cite{Gutierres2012,Lawvere1973},
biology \cite{Stojmirovic2004,Stojmirovic2009},
	\CITE{Stojmirovic200}
materials science \cite{Mielke2003a,Rieger2008},
	\CITE{Mainik2005,Mielke2003}
etc.%
	\CITE{AlgomKfir2011,Anguelov2016, Chenchiah2009 \\
	 graph theory Fang2022}
Richer than metrics, they carry data in distances $d(x,\cdot)$ and $d(\cdot,x)$ \emph{from} and \emph{to} $x$,
and usual concepts split into \emph{backward} and \emph{forward} ones,
a common duality in asymmetric structures \cite{Kopperman1995}.  
A weaker separation condition lets them induce and generalize partial orders $\preccurlyeq$, 		
with $d(x,y)$ measuring the failure of $x \preccurlyeq y$.
They give two non-Hausdorff topologies, linked to $\preccurlyeq$ and $\succcurlyeq$, which complement each other in a \emph{bitopological space} \cite{Kopperman1995,Ivanov2000,Kelly1963}, and combine into a symmetrized metric topology.
Many metric results have asymmetric versions, 
while some asymmetric ones have only trivial metric analogues \cite{Chenchiah2009,Collins2007,Cobzas2012,GarciaRaffi2003,Romaguera2015,Kelly1963,Mennucci2014}.

In this article we extend, in a natural way, metrics from $\Gr_p(n)$ to asymmetric metrics $d$ in $\Gr(n)$,
with $d(V,W)$ measuring how far $V$ is from satisfying $V \subset W$.
If $\dim V \leq \dim W$, $d(V,W)$ retains the original formula, so it should be adequate for the same kinds of application as the original metric.
But if $\dim V > \dim W$, $d(V,W)$ stays constant at a maximum, as $V \subset W$ never gets any closer to happening, while $d(W,V)$ still describes their separation.

This seemingly innocuous detail improves the properties and usefulness of $d$,
which incorporates the main condition affecting how subspaces relate (which one is larger).
This leads to dimension-independent results,
that keep us from having to analyze lots of cases when various subspaces are involved.
It is also important as dimensions might change or not be known beforehand in applications (e.g., for subspaces obtained via projections, truncating the spectrum of an operator, etc.).

All the main asymmetric metrics give $\Gr(n)$ the same two natural topologies $\tau^\pm$, linked to the partial orders $\subset$ and $\supset$,
and which combine into the disjoint union topology, linked to the $=$ relation.
While $\tau^\pm$ are asymmetric, the whole bitopological structure is symmetric, reflecting an important symmetry of $\Gr(n)$ given by the orthogonal complement: as $\perp$ flips containment relations ($V \subset W \Leftrightarrow V^\perp \supset W^\perp$),
it also flips $\tau^- \leftrightarrow \tau^+$.

With $\tau^\pm$, $\Gr(n)$ is path connected, so a subspace can move continuously onto another of a different dimension.
In this case, its path can have a discontinuous length change, which can be used as a penalty for data loss, or for data expansion if one wants to compress it.

We describe minimal geodesics (shortest paths from $V$ to $W$),
analyze when they are segments (their length is $d(V,W)$),
and when the triangle inequality attains equality.
This is important for applications as it reveals extremal cases where something relevant can happen.%
\CITE{Qiu2005 p.\,519}

Geodesic and Fubini-Study metrics receive special attention.
The first one gives geodesic lengths for some metrics.
The second one is important for wireless communication \cite{Dhillon2008,Love2003,Love2005,Pereira2021}
	\CITER{Pereira2022}
and quantum theory \cite{Bengtsson2017}, 
	\CITE{Só Fubini-Study em $\PR(H)$. Also	Ortega2002, Brody2001, Yu2019}
with complex spaces,
and extends to $\Gr(n)$ as an asymmetric angle 
\cite{Mandolesi_Grassmann} that measures volume contraction in orthogonal projections.
Links to Grassmann and Clifford geometric algebras \cite{Mandolesi_Products},
powerful but underused formalisms to work with subspaces,
give easy ways to compute the angle and useful properties \cite{Mandolesi_Trigonometry},
like volumetric Py\-thag\-o\-re\-an theorems \cite{Mandolesi_Pythagorean} with interesting implications for quantum theory \cite{Mandolesi_Born}.

\Cref{sc:preliminaries} sets up notation and terminology, and reviews asymmetric metrics, Grassmannians, distances used in $\Gr(n)$ and their problems.  
\Cref{sc:Other asymmetric metrics} obtains asymmetric metrics in $\Gr(n)$, and gives some properties.
\Cref{sc:Asymmetric geometry} describes their topologies, isometries, paths, minimal geodesics, and, in particular, convexity results for the asymmetric geodesic metric.
\Cref{sc:Asymmetric Fubini-Study} links the asymmetric Fubini-Study, chordal $\wedge$ and Binet-Cauchy metrics to asymmetric angles and volume projection factors, 
gives geometric interpretations,
and obtains convexity results for the Fubini-Study one.
\Cref{sc:conclusion} closes with some remarks.

\Cref{sc:Grassmann algebra} reviews Grassmann exterior algebra.
\Cref{sc:Metrics and distances on Grassmannians} reviews and classifies Grassmannian metrics, and proves some inequalities.
\Cref{sc:Fubini-Study metric} discusses different Fubini-Study distances, and extends convexity results to the complex case.
\Cref{sc:Asymmetric angles} reviews asymmetric angles, gives formulas to compute them, and new results for the complex case.

\section{Preliminaries}\label{sc:preliminaries}

In this article, $\F = \R$ or $\C$, and $\F^n$ has its canonical inner product $\inner{\cdot,\cdot}$ (Hermitian, if $\F=\C$, with conjugate-linearity in the first entry).\label{df:F inner}
A $p$-dimensional subspace $V$ is a \emph{$p$-subspace}, or \emph{line} if $p=1$.
We write $V_{(p)}$\label{df:Vp} to show its dimension,
$P_V: \F^n\rightarrow V$ for orthogonal projection,\label{df:orthogonal projections}
$\boxplus$\label{df:orth sum} for orthogonal direct sum,
$0$ for the subspace $\{0\}$, 
$^\dagger$ for conjugate transpose,\label{df:conj transp}
$\|\cdot\|_{\mathrm{F}}$ 
	\SELF{$\|\mathbf{A}\|_{\mathrm{F}}^2 = \sum |a_{ij}|^2 = \tr(A^\dagger A)$}
and $\|\cdot\|_2$ for Frobenius and operator norms,
and $V \subset W$ allows $V=W$, while $\subsetneq$ prevents it.
\Cref{tab:symbols} has other symbols we use.

\begin{table}[]
	\scriptsize
	\centering
	\renewcommand{\arraystretch}{1}
	\begin{tabular}{lll} 
		\toprule
		Symbol & Description & Page
		\\
		\cmidrule(lr){1-1} \cmidrule(lr){2-2} \cmidrule(lr){3-3} 
		$\F$ & $\R$ or $\C$ & \pageref{df:F inner}
		\\
		$\inner{\cdot,\cdot}$, $\|\cdot\|$, $\|\cdot\|_{\mathrm{F}}$, $\|\cdot\|_2$ & Inner/Hermitian product, norms & \pageref{df:F inner}, \pageref{df:inner AB} 
		\\
		$V_\R$, $\inner{\cdot,\cdot}_\R$ & Underlying real space and inner product & \pageref{df:underlying real 2}
		\\
		$V_{(p)}$ & Subspace $V$ of dimension $p$ & \pageref{df:Vp} 
		\\
		$P_V$ & Orthogonal projection on $V$ & \pageref{df:orthogonal projections} 
		\\
		$\PP_W(V)$ & Set of projection subspaces of $W$ \wrt $V$ & \pageref{df:PO}
		\\
		$\boxplus$ & Orthogonal direct sum of subspaces or paths & \pageref{df:orth sum}, \pageref{df:path sum}
		\\
		$^\dagger$ & Conjugate transpose & \pageref{df:conj transp}
		\\
		$\theta_{v,w}$, $\phi_{v,w}$, $\theta_{K,L}, \theta_i$ & Angles between vectors, lines or subspaces & \pageref{eq:angles}
		\\
		$\theta_{K,L}$, $c_{K,L}$, $g_{K,L}$ & Angular, chordal and gap distances of lines & \pageref{df:angle lines}, \pageref{fig:distances_lines}
		\\
		$\Theta_{V,W}$, $\pi_{V,W}$ & Asymmetric angle, volume projection factor & \pageref{df:Theta pi}
		\\
		$\pperp$, $\perp$ & Partially orthogonal, orthog.\ (complement, map) & \pageref{df:pperp}, \pageref{pr:antiisometry} 
		\\
		$\tau^-$, $\tau^+$, $\tau$, $B^-_r(x)$, $B^+_r(x)$, $B_r(x)$ & Topologies and balls & \pageref{df:tau}
		\\
		$\Isom$, $\Transf$ & Isometry and transformation groups & \pageref{df:Isom Transf groups}
		\\
		$L(\gamma)$, $L_1$, $L_2$, $\Delta L(\gamma)\lvert_{t_0}$ & Lengths of curves, length change at $t_0$ & \pageref{df:Delta L}, \pageref{df:L1 L2}
		\\
		$\Delta_p$, $\Delta_{p,d}$ & Diameter of $\Gr_p(\infty)$ for the metric $d$ & \pageref{df:diameter}
		\\
		$[V,W]_d$, $(V,W)_d$ & Sets of between-points from $V$ to $W$ & \pageref{df:between}
		\\
		$\Gr_p(V)$, $\Gr_p(n)$, $\Gr_p(\infty)$ & Grassmannians & \pageref{df:GrV}, \pageref{df:Gr}, \pageref{df:Gr infty}
		\\
		$\Gr(V)$, $\Gr(n)$, $\Gr(\infty)$, $\Gr^{\pm}(n)$ & Total Grassmannians & \pageref{df:Total Gr}, \pageref{df:Gr infty}, \pageref{df:Gr pm}
		\\
		$\bar{d}$, $\hat{d}$, $D$ & Conjugate, max-symm., intrinsic asymm. metrics & \pageref{df:conjugate maxsym}, \pageref{df:D}
		\\
		$\mFS$, $\MFS$, $\tilde{d}_{\mathrm{pF}}$, $\mpF$, $\vec{d}$, $d_{\mathrm{s}}$, $\delta$, $\hat{\delta}$ & Distances in $\Gr(n)$ & \pageref{tab:distances Total}, \pageref{eq:mFS}
		\\
		$\dg$, $\dcF$, $\dpF$, $\dFS$, $\dcw$, $\dBC$, $\dA$, $\dcd$, $\dpd$ & Asymmetric metrics in $\Gr(n)$, metrics in $\Gr_p(n)$ & \pageref{tab:asymmetric metrics}, \pageref{tab:metrics Gpn}
		\\
		$\expan$, $\contr$ & Expansion, contraction & \pageref{df:expansion contraction}
		\\
		$\path$, $\geod$, $\segm$, $\nullpath$ & Path, minimal geodesic, segment, null path & \pageref{df:arrows}
		\\
		$\!\!\abaixo{\path}{\gamma}\!\!$, $\path_I$, $\pathdim{p}$ & Path $\gamma$, path of type I, path in $\Gr_p(n)$  & \pageref{df:arrows}
		\\
		$\bigwedge V$, $\bigwedge^p V$ & Exterior algebra, exterior power & \pageref{df:Grass alg}
		\\
		$\wedge$, $\lcontr$ & Exterior and interior products & \pageref{df:Grass alg}, \pageref{df:contr}
		\\
		$[B]$ & Subpace of a blade $B$ & \pageref{eq:blade space}
		\\
		\bottomrule
	\end{tabular}
	\caption{Some symbols used in this article}
	\label{tab:symbols}
\end{table}

In the complex case, the natural identification of $\C^n$ with $\R^{2n}$ also identifies a $p$-subspace $V$ of $\C^n$ with a $2p$-subspace  of $\R^{2n}$, its \emph{underlying real space} $V_\R$.\label{df:underlying real}
Also, $(\C^n)_\R$ has inner product $\inner{\cdot,\cdot}_\R = \operatorname{Re} \inner{\cdot,\cdot}$, and
\begin{equation}\label{eq:Hermitian}
	\inner{v,w} = \inner{v,w}_\R + \im \inner{\im v,w}_\R.
\end{equation}
In the real case, let $V_\R = V$ and $\inner{\cdot,\cdot}_\R = \inner{\cdot,\cdot}$.\label{df:underlying real 2}

\begin{definition}\label{df:angle lines}
	The \emph{Euclidean} and \emph{Hermitian angles} of $v,w\neq 0$ are 
	\begin{equation}\label{eq:angles}
		\theta_{v,w} = \cos^{-1} \frac{\inner{v,w}_\R}{\|v\||w\|} \in [0,\pi] \quad \text{and} \quad \phi_{v,w} = \cos^{-1} \frac{|\inner{v,w}|}{\|v\||w\|} \in [0,\tfrac\pi2].
	\end{equation}
	The \emph{angle between lines} $K=\Span\{v\}$ and $L=\Span\{w\}$
	\OMIT{$v,w\neq 0$}
	is $\theta_{K,L} = \phi_{v,w}$. 
		\OMIT{$\in [0,\frac\pi2]$}
	Also, $v$ and $w$ are \emph{aligned} if $\inner{v,w} \geq 0$.
\end{definition}

So, $\theta_{v,w}$ is the usual angle, but in the underlying real space if $\F=\C$.
And $\phi_{v,w} = \theta_{v,P_{L} v}$ ($=\frac\pi2$ if $P_L v=0$) \cite{Scharnhorst2001}.
Also, $v$ and $w$ are aligned
$\Leftrightarrow P_L v = \lambda w$ for $\lambda \geq 0$,
in which case $\theta_{K,L} = \phi_{v,w}= \theta_{v,w}$.

\emph{Principal angles} \cite{Bjorck1973,Stewart1990,Qiu2005}
\CITE{Afriat1957,Galantai2006,Golub2013} 
$0\leq \theta_1\leq\cdots\leq\theta_m \leq \frac \pi 2$ of  $V_{(p)}$ and $W_{(q)}$, with $m=\min\{p,q\}\neq 0$, are $\theta_i = \cos^{-1} \sigma_i$, where $\sigma_i$ is the $i\textsuperscript{th}$ singular value of the orthogonal projection $P:V\rightarrow W$.
\emph{Principal bases} $\beta = (e_1,\ldots,e_p)$ of $V$ and $(f_1,\ldots,f_q)$ of $W$ are orthonormal bases with $\inner{e_i,f_j} = \delta_{ij} \cos \theta_i$.
We also say $\beta$ is a principal basis of $V$ \wrt $W$.
The \emph{principal vectors} $e_i$ and $f_j$ are aligned singular vectors of $P$, with $\theta_{e_i,f_i} = \theta_i$.

 \begin{definition}\label{df:pperp}
	$V$ is \emph{partially orthogonal} ($\pperp$) to $W$ if $W^\perp \cap V \neq 0$.
\end{definition}

We have $V_{(p)} \pperp W_{(q)} \Leftrightarrow$ a principal angle is $\frac\pi2$ or $p>q$.
This relation is asymmetric when $p \neq q$.

\subsection{Asymmetric metrics}\label{sc:Asymmetric metrics}

We define asymmetric metrics as follows \cite{Albert1941,GoubaultLarrecq2013,Cobzas2012,Mennucci2013,GarciaRaffi2003,Mennucci2014,Romaguera2015}.
Some authors call them \emph{quasi-metrics}, a term often used for a similar concept \cite{Wilson1931,Busemann1944,Zaustinsky1959,Chenchiah2009,Collins2007,Kelly1963} with a $T_1$ separation condition ($d(x,y)=0 \Leftrightarrow x=y$).

\begin{definition}\label{df:asymmetric metric}
	An \emph{asymmetric metric} on a set $M$ is $d:M\times M\rightarrow [0,\infty]$ 
	\SELF{è bom deixar $\infty$, para a intrinsic metric, se não houver path} satisfying, for $x,y,z\in M$:
	\begin{enumerate}[(i)]
		\item $d(x,y)=d(y,x)=0 \Leftrightarrow x=y$ \ (\emph{$T_0$ separation condition}). \label{it:T0}
		\item $d(x,z) \leq d(x,y)+d(y,z)$ \ (\emph{oriented triangle inequality}). \label{it:oriented triang ineq}
	\end{enumerate}
\end{definition}

Unlike usual metrics, the \emph{distance $d(x,y)$ from $x$ to $y$} can differ from $d(y,x)$.
This greatly increases the applicability of $d$ \cite{GoubaultLarrecq2013,Kunzi2001,Kunzi2009,Bao2012,Mayor2010,Romaguera1999,Seda2008,Gutierres2012,Lawvere1973,Stojmirovic2004,Stojmirovic2009,Mielke2003a,Rieger2008}, 
and lets it carry more data, in $d(x,y)$ and $d(y,x)$.
But it requires some care, and one must pay attention to the order of elements in (\ref{it:oriented triang ineq}).

The advantage of $T_0$ over the $T_1$ condition is that it allows $d$ to induce a nontrivial partial order on $M$ by $x \preccurlyeq y \Leftrightarrow d(x,y)=0$. 
Any partial order can be represented this way for some $d$.
\CITEL{[ex.1.4]Mennucci2013, [p.\,21]{Stojmirovic2005}, Anguelov2016}
When $d$ vanishes, $\preccurlyeq$ can give a further sense of proximity: 
	\CITE{Anguelov2016}
for distinct $x,y,z\in M$, if $d(x,y)=d(y,z)=0$ then 
	\SELF{$d(x,z)=0$, by (\ref{it:oriented triang ineq})}
$x\prec y \prec z$ suggests $y$ and $z$ are, in a sense, closer than $x$ and $z$.
Indeed, (\ref{it:T0}) and (\ref{it:oriented triang ineq}) give
$0 \neq d(z,y) \leq d(z,x) + d(x,y) = d(z,x)$, 
but this $\leq$ is not always strict, as can be seen in \Cref{ex:N} below. 

\begin{definition}\label{df:conjugate maxsym}
	The \emph{conjugate} asymmetric metric of $d$ is $\bar{d}(x,y) = d(y,x)$.
	The \emph{max-sym\-me\-trized} metric is $\hat{d}(x,y)=\max\{d(x,y),d(y,x)\}$.
\end{definition}

While $\bar{d}$ induces $\succcurlyeq$, $\hat{d}$ is related to $=$, in the sense that $\hat{d}(x,y) = 0 \Leftrightarrow x=y$.
Symmetrizing by the $\min$ would violate the triangle inequality,
but any symmetric monotone 
\CITER{Other name for symmetric is permutation-invariant: Matrix Anal., Horn, Johnson p.285; Horn1991 p.169; Qiu2005 p.512}
norm%
\footnote{This means $\|(a,b)\| = \|(b,a)\|$, and $|a|\geq |c|$, $|b| \geq |d| \Rightarrow \|(a,b)\|\geq \|(c,d)\|$ \cite{Bhatia1997}.} 
$\|\cdot\|$ in $\R^2$ symmetrizes $d$ (with some loss of information) into a metric $\rho(x,y) = \|(d(x,y),d(y,x))\|$.


\begin{definition}\label{df:tau}
	\emph{Forward, backward} and \emph{symmetric topologies} 
	\CITER{Flores2013, Mennucci2013, Mennucci2014}
	$\tau^+$, $\tau^-$ and $\tau$ are generated 
		\SELF{in the sense of being the smallest topolgies in which these balls are open}
	by \emph{forward, backward} and \emph{symmetric balls} given by
	\begin{equation}\label{eq:balls}
		\begin{aligned}
			B^+_r(x) &= \{y\in M:d(x,y)<r\}, \\
			B^-_r(x) &= \{y\in M:\bar{d}(x,y)<r\} = \{y\in M:d(y,x)<r\}, \text{ and} \\
			B_r(x) &= \{y\in M:\hat{d}(x,y)<r\} = B^+_r(x) \cap B^-_r(x).
		\end{aligned}
	\end{equation}
\end{definition}

So, $B^+_r(x)$ uses distances $d(x, \cdot)$ from $x$, $B^-_r(x)$ has distances $d(\cdot, x)$ to $x$, and $B_r(x)$ takes the largest one.
Some authors 
	\CITE{Kazeem2014} 
treat $\tau^+$ and $\tau^-$ as the topologies of $d$ and $\bar{d}$, but it is better to view $(M,\tau^\pm)$ as a \emph{bitopological space} \cite{Kopperman1995,Ivanov2000,Kelly1963}.
While $\tau^\pm$ are not Hausdorff,
\OMIT{have $d(x,y)=0$ for $x\neq y$}
they complement each other, and together are as effective (in fact, more) as the metric topology $\tau$ of $\hat{d}$.

\begin{definition}\label{df:Isom Transf groups}
	$f:M \rightarrow M$ is an \emph{isometry} if $d(f(x),f(y)) = d(x,y)$ for all $x,y\in M$, 
	or an \emph{anti-isometry} if $d(f(x),f(y)) = d(y,x)$.
	The \emph{isometry and transformation groups} of $M$ are $\Isom(M) = \{$bijective 
	\SELF{$T_0$ garante injetividade, mas nem $d$ usual dá sobejetividade: $f:\R^+\rightarrow\R^+$, $f(x)=x+1$}
	isometries of $M\}$
	and $\Transf(M) = \{$bijective isometries and anti-isometries of $M\}$.
\end{definition}

A bijective isometry gives self-homeomorphisms of $\tau$, $\tau^+$ and $\tau^-$.
A bijective anti-isometry gives a self-homeomorphism of $\tau$, and a homeomorphism $\tau^+ \cong \tau^-$ switching backward and forward topologies.

Analysis in $M$ requires some care, as limits are not unique in a $T_0$ topology.
Also, 
	\CITE{Chenchiah2009 p.5823, Cobzas2012 p.8.\\
		Stojmirovic2005 p.\,23 errado}
$d(x,y)$ is upper sem\-i\-con\-tin\-u\-ous in $x$ for $\tau^-$ and lower semicontinuous for $\tau^+$ (in $y$ it is the opposite),
as (\ref{it:oriented triang ineq}) gives\OMIT{$d(x_k,y) \leq  d(x_k,x) + d(x,y)$,\,$\tau^-$\\
		$d(x,x_k) + d(x_k,y) \geq d(x,y)$,\,$\tau^+$ \\[3pt]
		$\limsup_{k} d(x,y_k) \leq d(x,y)$ \\
		if $d(y,y_k)\rightarrow 0$ ($\tau^+$), as \\
		$d(x,y_k) \leq d(x,y) + d(y,y_k)$;\\[3pt]
		$\liminf_{k} d(x,y_k) \geq d(x,y)$ \\
		if $d(y_k,y)\rightarrow 0$ ($\tau^-$), as \\
		$d(x,y_k) + d(y_k,y) \geq d(x,y)$}
\begin{equation}\label{eq:semicontinuidade}
	\begin{aligned}
		\limsup_{k\rightarrow\infty} d(x_k,y) &\leq d(x,y) \text{ \ if } d(x_k,x)\rightarrow 0, \text{ and} \\
		\liminf_{k\rightarrow\infty} d(x_k,y) &\geq d(x,y) \text{ \ if } d(x,x_k)\rightarrow 0.
	\end{aligned}
\end{equation}

\begin{definition}
	A sequence $x_k$ is \emph{left (\resp right) K-Cauchy} if given $\epsilon>0$ there is $n \in \N$ such that $d(x_k,x_l)<\epsilon$ (\resp $d(x_l,x_k)<\epsilon$) whenever $n \leq k \leq l$.
	If any such sequence has $\hat{d}(x_k,x) \rightarrow 0$ for some $x\in M$ then $(M,d)$ is \emph{left (\resp right) Smyth complete}.
	\CITER{Cobzas2012}
	It is \emph{Smyth bicomplete} if both.
\end{definition}

See \cite{Cobzas2012} for other completeness concepts.

\begin{definition}
	A \emph{curve} in $M$ (for $\tau$ or $\tau^\pm$) is a continuous $\gamma:I\rightarrow M$, 
	for an interval $I\subset \R$.
	If $I=[a,b]$, $\gamma$ is a \emph{path} from $\gamma(a)$ to $\gamma(b)$,
	and it is \emph{trivial} if its image has no other points besides these.
%
\end{definition}

Being coarser, $\tau^\pm$ have more curves  than $\tau$:
$\gamma$ is continuous at $t$ for $\tau^-$ if
\SELF{and only if} 
$\lim_{s \rightarrow t} d(\gamma(s),\gamma(t)) = 0$;
for $\tau^+$, if $\lim_{s \rightarrow t} d(\gamma(t),\gamma(s)) = 0$;
and $\tau$ needs both.
On the other hand, $\tau^\pm$ have less continuous functions:
if $f:M \rightarrow \R$ is continuous, $x \preccurlyeq y \Rightarrow f(x) = f(y)$.

For $\tau^\pm$, there is a trivial path from $x$ to $y \Leftrightarrow x \preccurlyeq y$ or $y \preccurlyeq x$.%
	\SELF{ in the first case they are null. Null paths can be nontrivial.}

\begin{definition}\label{df:lengths}
		A curve $\gamma:I\rightarrow M$ is \emph{rectifiable} if it has finite \emph{length} 
	\begin{equation}\label{eq:length}
		L(\gamma) = \sup_{\substack{t_1 < \cdots < t_N \\ t_i \in I, N\in\N}} \sum_{i=1}^{N-1} d(\gamma(t_i),\gamma(t_{i+1})).
	\end{equation}
	It is \emph{null} if $L(\gamma) = 0$.%
	\SELF{constant in $\tau$, not $\tau^\pm$}	
	The \emph{length change} of $\gamma$ at $t_0\in I$ is
	\begin{equation}\label{df:Delta L}
		\Delta L(\gamma)\lvert_{t_0} = \lim\limits_{\delta \rightarrow 0} L(\gamma\lvert_{I\cap [t_0-\delta,t_0+\delta]}).
	\end{equation}
\end{definition}

The reversed curve $-\gamma$ (i.e., $-\gamma(t) = \gamma(-t)$) can be non-rectifiable, or have a different length.
If $f$ is an anti-isometry,
\begin{equation}\label{eq:L reversed}
	L(-\gamma) = L(f\circ\gamma).
\end{equation}

In $\tau^\pm$, the length of $\gamma\lvert_{[a,t]}$ can vary discontinuously with $t$, 
so we can have $\Delta L(\gamma)\lvert_{t_0} \neq 0$.
For our purposes, this will be convenient, but some authors avoid it using \emph{run-continuous} paths \cite{Mennucci2014}.
	\CITE{Mennucci2014 uses \emph{run-continuous} paths in $\tau$, so they are continuous. In $\tau$, not $\tau^\pm$, a rectifiable continuous path is run-continuous.}

\begin{lemma}
	For a curve $\gamma:I \rightarrow M$ and $t_0 \in I$,
		\SELF{$L(\gamma\lvert_{I\cap (-\infty,t_0)}) = 0$ if $t_0 = \inf I$, $L(\gamma\lvert_{I\cap (t_0,\infty)}) = 0$ if $t_0=\sup I$.}
	\begin{subequations}\label{eq:L Delta L}
		\begin{align}
			L(\gamma) &= L(\gamma\lvert_{I\cap (-\infty,t_0]}) + L(\gamma\lvert_{I\cap [t_0,\infty)}) \label{eq:L Delta L closed} \\ 
			&= L(\gamma\lvert_{I\cap (-\infty,t_0)}) + \Delta L(\gamma)\lvert_{t_0} + L(\gamma\lvert_{I\cap (t_0,\infty)}). \label{eq:L Delta L open}
		\end{align}
	\end{subequations}
	And, disregarding the first $\lim$ if $t_0 = \inf I$, or the last one if $t_0=\sup I$,
	\begin{equation}\label{eq:Delta L}
		\Delta L(\gamma)\lvert_{t_0} = \lim\limits_{t\rightarrow t_0^-} d(\gamma(t),\gamma(t_0)) + \lim\limits_{t\rightarrow t_0^+} d(\gamma(t_0),\gamma(t)).
	\end{equation}
\end{lemma}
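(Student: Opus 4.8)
The plan is to treat the length $L$ as a monotone, additive set function on subintervals of $I$, so that the only genuinely analytic input is a one-sided limit argument at $t_0$; everything else follows from the oriented triangle inequality and the supremum structure of \eqref{eq:length}. First I would prove the closed decomposition \eqref{eq:L Delta L closed}. The key point is that inserting $t_0$ into any partition $t_1<\cdots<t_N$ never decreases its sum: if $t_i<t_0<t_{i+1}$, the oriented triangle inequality gives $d(\gamma(t_i),\gamma(t_{i+1}))\leq d(\gamma(t_i),\gamma(t_0))+d(\gamma(t_0),\gamma(t_{i+1}))$. Hence in computing $L(\gamma)$ one may restrict to partitions containing $t_0$, and each such partition splits into one of $I\cap(-\infty,t_0]$ and one of $I\cap[t_0,\infty)$ sharing the node $t_0$; summing and taking suprema on each side gives $\leq$, while concatenating near-optimal partitions of the two halves (each refined to contain $t_0$) gives $\geq$. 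The same supremum argument with the right endpoint left open yields the monotone-limit identity $L(\gamma\lvert_{I\cap(-\infty,t_0)})=\lim_{t\to t_0^-}L(\gamma\lvert_{I\cap(-\infty,t]})$ and its mirror image on the right.

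Next I would isolate the jump. Setting $A^-=\lim_{\delta\to0^+}L(\gamma\lvert_{I\cap[t_0-\delta,t_0]})$ and $A^+=\lim_{\delta\to0^+}L(\gamma\lvert_{I\cap[t_0,t_0+\delta]})$ (both exist by monotonicity in $\delta$), additivity at $t_0$ gives $L(\gamma\lvert_{I\cap[t_0-\delta,t_0+\delta]})=L(\gamma\lvert_{I\cap[t_0-\delta,t_0]})+L(\gamma\lvert_{I\cap[t_0,t_0+\delta]})$, whence $\Delta L(\gamma)\lvert_{t_0}=A^-+A^+$. Combining the monotone-limit identity with \eqref{eq:L Delta L closed} yields $L(\gamma\lvert_{I\cap(-\infty,t_0]})=L(\gamma\lvert_{I\cap(-\infty,t_0)})+A^-$ and, symmetrically, $L(\gamma\lvert_{I\cap[t_0,\infty)})=A^++L(\gamma\lvert_{I\cap(t_0,\infty)})$. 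Substituting both into \eqref{eq:L Delta L closed} produces the open decomposition \eqref{eq:L Delta L open}. The boundary cases $t_0=\inf I$ or $t_0=\sup I$ are handled by reading the missing side as a degenerate (empty or one-point) curve of length $0$, which simply drops the corresponding term and limit.

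Finally I would prove \eqref{eq:Delta L} by identifying $A^-=\lim_{t\to t_0^-}d(\gamma(t),\gamma(t_0))$, the right half being symmetric with the distance taken in the forward orientation $d(\gamma(t_0),\gamma(t))$. The two-point partition gives $d(\gamma(t),\gamma(t_0))\leq L(\gamma\lvert_{I\cap[t,t_0]})\to A^-$, so $\limsup_{t\to t_0^-}d(\gamma(t),\gamma(t_0))\leq A^-$. For the reverse, I would split an arbitrary partition of $[t,t_0]$ at its penultimate node to get $L(\gamma\lvert_{I\cap[t,t_0]})\leq L(\gamma\lvert_{I\cap[t,t_0)})+\sup_{s\in[t,t_0)}d(\gamma(s),\gamma(t_0))$, so that $A^-=L(\gamma\lvert_{I\cap[t,t_0]})-L(\gamma\lvert_{I\cap[t,t_0)})\leq\limsup_{t\to t_0^-}d(\gamma(t),\gamma(t_0))$; hence $\limsup$ equals $A^-$, and in particular there is a sequence $t_n\to t_0^-$ with $d(\gamma(t_n),\gamma(t_0))\to A^-$. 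To upgrade this to a genuine limit, I would use that $L(\gamma\lvert_{I\cap[s,t_0)})=L(\gamma\lvert_{I\cap[s,t_0]})-A^-\to0$, so that for every $t\in[t_n,t_0)$ the oriented triangle inequality $d(\gamma(t_n),\gamma(t_0))\leq d(\gamma(t_n),\gamma(t))+d(\gamma(t),\gamma(t_0))$ together with $d(\gamma(t_n),\gamma(t))\leq L(\gamma\lvert_{I\cap[t_n,t]})\leq L(\gamma\lvert_{I\cap[t_n,t_0)})$ forces $d(\gamma(t),\gamma(t_0))\geq A^--\epsilon$, giving $\liminf\geq A^-$.

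I expect this last step to be the main obstacle: it is the only place where one must control the whole $\liminf$ rather than a supremum, and it hinges on the vanishing of the residual half-open length $L(\gamma\lvert_{I\cap[s,t_0)})$. That vanishing presupposes finiteness (rectifiability) near $t_0$; otherwise the $\infty-\infty$ cancellations underlying the jump identity break down, as for an infinitely oscillating curve accumulating at $t_0$ while $d(\gamma(t),\gamma(t_0))$ stays bounded. I would therefore state the identities under the standing hypothesis that the relevant sub-lengths are finite, the extended-value case being either immediate or excluded by this assumption.
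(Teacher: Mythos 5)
Your proof is correct and uses the same ingredients as the paper's: additivity of $L$ across a splitting point, near-optimal partitions, and the oriented triangle inequality. You are in fact more careful at the one delicate point: the paper's sandwich argument produces, for each $\epsilon$, a single $t_1$ with $d(\gamma(t_0),\gamma(t_1))$ within $\epsilon$ of $\Delta$, which by itself only identifies $\Delta$ as the $\limsup$; your upgrade to a genuine limit via the vanishing of the residual half-open length $L(\gamma\lvert_{I\cap[t,t_0)})$ supplies exactly the missing detail. Your closing caveat is also right: \eqref{eq:Delta L} presupposes rectifiability near $t_0$ (the lemma is only ever applied to rectifiable curves), since an infinitely oscillating curve can have $\Delta L(\gamma)\lvert_{t_0}=\infty$ while both one-sided limits of $d$ vanish.
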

\begin{proof}
	\eqref{eq:L Delta L} is simple.
	We prove \eqref{eq:Delta L} for $t_0 = \inf I$;
	the case $t_0 = \sup I$ is similar, and the general one follows by splitting \eqref{df:Delta L} via \eqref{eq:L Delta L closed}.
	
	Let $\Delta = \Delta L(\gamma)\lvert_{t_0}$.
	Given $\epsilon>0$ there are $\delta>0$ and $t_1 < \cdots < t_N$ in $I$, with $t_1 \in (t_0,t_0+\delta)$,
	such that
	$L(\gamma\lvert_{[t_0,t_1]}) \in \big[\Delta, \Delta + \epsilon \big)$
	and 
	$\epsilon > L(\gamma) - \sum_{i=0}^{N-1} d(\gamma(t_i),\gamma(t_{i+1})) \geq L(\gamma\lvert_{[t_0,t_1]}) - d(\gamma(t_0),\gamma(t_1)) \geq 0$.
	So
	$d(\gamma(t_0),\gamma(t_1)) \in \big(\Delta - \epsilon , \Delta + \epsilon \big)$.
	This gives \eqref{eq:Delta L} without the first $\lim$.
%
\end{proof}

%

\begin{definition}
	The infimum of lengths of paths from $x$ to $y$ gives the \emph{intrinsic asymmetric metric} 
	\label{df:D}
	$D(x,y)$ of $d$,
		\SELFR{$D\geq d$, and $D(x,y) = \infty$ if there is no rect path $x$ to $y$}
	and $d$ is \emph{intrinsic} if $D=d$.
	\SELF{and $M$ is a \emph{length space}}
	A path $\gamma$ from $x$ to $y$ is a \emph{minimal geodesic} if $L(\gamma) = D(x,y)$,
	and a \emph{segment} if $L(\gamma) = d(x,y)$.
	\CITE{Busemann1944}
	$M$ is a \emph{geodesic space} if any two points are linked by a minimal geodesic.
	\SELF{not necessarily unique}
\end{definition}

These concepts depend on the choice of topology.
As $d(x,y) \leq D(x,y)$, any segment is a minimal geodesic, and vice versa if $d$ is intrinsic.

\begin{definition}\label{df:between}
	For $x,y,z\in M$, we say $y$ is a \emph{between-point} from $x$ to $z$ if $d(x,z) = d(x,y)+d(y,z)$.
	The set of such points is $[x,z]_d$, and $(x,z)_d = [x,z]_d\backslash\{x,z\}$.
		\CITE{Some authors require $y \neq x,z$, or all distinct. \\ Jiang1996, Blumenthal1970 p. 33 for metric spaces \\
		Some authors use a stronger concept of Menger convex}
	$M$ is \emph{Menger convex}
	if $(x,z)_d \neq \emptyset$ for all $x \neq z$.
\end{definition}

If $y$ is in a segment from $x$ to $z$ then $y \in [x,z]_d$.
	\OMITR{$d(x,z) = L(\gamma) = L(\gamma\lvert_{x \text{---} y}) + L(\gamma\lvert_{y\text{---}z}) \geq d(x,y) + d(y,z) \geq d(x,z) \Rightarrow$ equalities}
The converse holds if $M$ is geodesic and $d$ is intrinsic.
Note how the $T_0$ separation condition ensures $[x,x]_d = \{x\}$ and $(x,x)_d = \emptyset$. 
	\SELF{so $y \in (x,z)_d$ implies $x,y,z$ are pairwise distinct.}	

The next example shares many features with the Total Grassmannian. \Cref{pr:quotient} will link them, with $\D$ as a set of subspace dimensions.

\begin{example}\label{ex:N}
	An asymmetric metric in $\D=\{0,1,\ldots,n\}$ is
	\begin{equation*}
		d(p,q) = \begin{cases}
			0 &\text{if } p\leq q, \\
			1 &\text{if } p>q,
		\end{cases}
	\end{equation*}
	which induces $\leq$.
	While $(\D,\tau)$ is discrete, $\D^\pm =(\D,\tau^\pm)$ are contractible,
	with open sets of the form $\{0,1,\ldots,p\}$ in $\D^-$, 
	and $\{p,p+1,\ldots,n\}$ in $\D^+$.
	Isometries preserve $\leq$ and anti-isometries reverse it, so $\Isom(\D) = \{\Id\}$ and $\Transf(\D) = \{\Id,f\}$, with $f(p)=n-p$, which flips $\D^- \leftrightarrow \D^+$.
	Left or right K-Cauchy sequences in $\D$ are eventually constant, so $d$ is Smith bicomplete.
	And $\D^\pm$ are geodesic spaces with $d$ intrinsic: a null minimal geodesic from $p$ to $q>p$ is given in $\D^-$ (or $\D^+$, with $[0,1]$ and $(1,2]$) by
	\begin{equation*}
		\gamma(t) = \begin{cases}
			p &\text{if } t \in [0,1), \\
			q &\text{if } t \in [1,2],
		\end{cases}
	\end{equation*} 
	and $-\gamma$ is a minimal geodesic of length $1$ from $q$ to $p$.
	We can also have minimal geodesics from $p$ to $q$ passing through $p<m<q$ (whose reversed paths have at least length 2, and so are not minimal), 
	or others from $q$ to $p$ going from $q$ to $s>q$, decreasing to $r<p$, then increasing to $p$.
\end{example}

\subsection{Grassmannians}\label{sc:Grassmannians}

The \emph{Grassmannian} 
$\Gr_p(V) = \{p$-subspaces of $V\}$ 
	\label{df:GrV}
of a $q$-subspace $V$ has a structure of connected compact manifold of dimension $p(q-p)$ if $p\leq q$ \cite{Kobayashi1996,Harris1992,Kozlov2000I,Kozlov2000III,Wong1967,Bendokat2024},
and $\Gr_p(V) = \emptyset$ if $p>q$.
We also write%
\footnote{Common notations are $\Gr(p,\F^n)$, $\Gr(p,n)$, $\Gr(n,p)$ or even $\Gr(p,n-p)$.}
$\Gr_p(n) = \Gr_p(\F^n)$.\label{df:Gr}
See in \Cref{sc:Metrics and distances on Grassmannians} how we classify ($\ell^2$, $\wedge$, max) the metrics in it.

For $\ell^2$ and $\wedge$ metrics, curve lengths coincide%
	\footnote{As these metrics converge as\-ymp\-tot\-i\-cal\-ly for small $\theta_i$'s (see \cite[p.\,337]{Edelman1999} or \cite[p.\,98]{Stewart1990}).}, 
and $\Gr_p(n)$ is a geodesic space.
Up to reparametrization 
	\SELF{geodésica Riemanniana tem velocidade const, em espaço métrico não}
and choice of principal bases $(e_1,\ldots,e_p)$ of $V$ and $(f_1,\ldots,f_p)$ of $W$,
any minimal geodesic $\mu$ from $V$ to $W$ is given by \emph{direct rotations} \cite{Qiu2005}: with principal angles $\theta_1\leq\cdots\leq\theta_p$ and $t\in[0,1]$, 
	\SELF{the $e_i$'s rotate at constant speeds towards the $f_i$'s}
\begin{subequations}\label{eq:geodesic Grp}
	\begin{align}
			\mu(t) &= \Span\{v_1(t),\ldots,v_p(t)\}, \\
			\shortintertext{where}
			v_i(t) &= 
			\begin{cases}
				\cos(t\theta_i) e_i + \sin (t\theta_i) \frac{f_i - P_{V} f_i}{\|f_i - P_{V} f_i\|} & \text{ if } \theta_i \neq 0, \\
				e_i & \text{ if } \theta_i=0.
			\end{cases}
	\end{align}
\end{subequations}
Its length is the \emph{geodesic metric} $\dg(V,W) = \sqrt{\sum_{i=1}^p \theta_i^2}$, which is the intrinsic metric of all $\ell^2$ and $\wedge$ ones.
For $U,V,W\in\Gr_p(n)$, $U \in [V,W]_{\dg} \Leftrightarrow U = \mu(t_0)$ for some $t_0$ and $\mu$ as above.

The \emph{Total Grassmannian} $\Gr(V) = \{$subspaces of $V\} =\bigcup_p \Gr_p(V)$ 
\label{df:Total Gr}
is usually given the disjoint union topology,
	\SELF{So it is compact}
which is geometrically inadequate
(it separates a plane from its lines, for example).
In \Cref{sc:Asymmetric geometry} we describe more suitable topologies.
Some distances used in $\Gr(n) = \Gr(\F^n)$, shown in \Cref{tab:distances Total}, are extensions of metrics of \Cref{sc:Metrics and distances on Grassmannians}:
\begin{itemize}
	\item $\mFS$ and $\MFS$ extend the Fubini-Study metric $\dFS$ (see \Cref{sc:Fubini-Study metric}).
	
	\item 	
	$\mpF$ \cite{Basri2011,Draper2014,Pereira2021}
		\CITE{Pereira2022}
	is the projection Frobenius distance $\dpF$ from the smaller subspace to the closest one of same dimension in the larger one (in general, its projection).
	Other extensions of $\dpF$ are $\tilde{d}_{\mathrm{pF}}$ \cite{Ashikhmin2010} 
	and the \emph{directional}%
		\footnote{Following \cite{Wang2015}, some authors use $\vec{d}(V,W) = \min_{\mathbf{H} \in \R^{q \times p}} \|\mathbf{X} - \mathbf{Y}\mathbf{H}\|_{\mathrm{F}}$ for any matrices $\mathbf{X}$ and $\mathbf{Y}$ whose columns span $V_{(p)}$ and $W_{(q)}$. But without orthonormality this is a distance between a basis and a subspace, not two subspaces (e.g., it doubles if we multiply $\mathbf{X}$ by 2).}
	and \emph{symmetric distances} $\vec{d}(V,W)$ 
		\SELFL{$= \|P_{W^\perp} P_V\|_{\mathrm{F}}$. \\
		Can use any orthon basis of $V$. \\ $\min\limits_{W'\in\Omega_p^-(W)} \!\!\!\dpF(V,W')^2$ if $p\leq q$, \\	 $\max\limits_{W'\in\Omega_p^+(W)} \!\!\!\dpF(V,W')^2$ if $p>q$. \\  
		Min is $\sqrt{\max\{0,p-q\}}$ (fixed $p$, $q$), if $V\subset W$ or $W \subset V$.}
	and $d_{\mathrm{s}}$%
		\SELF{$= \max\{\vec{d}(V,W), \vec{d}(W,V)\} = \sqrt{\max(p,q)-\sum_{i,j} \inner{e_i,f_j}^2} = \frac{1}{\sqrt{2}} \sqrt{|p-q| + \|P_V-P_W\|_{\mathrm{F}}^2}$ \\
		For fixed $p$, $q$, min is $\sqrt{|p-q|}$, if $V\subset W$ or $W\subset V$. Max is $\sqrt{\max\{p,q\}}$, if $V \perp W$.}%
	\cite{Sun_2007,Wang2006,Zuccon2009}.
		\CITE{Bagherinia2011, Figueiredo2010, Sharafuddin2010.}

	\item The \emph{containment gap} $\delta(V,W)$ \cite{Beattie2005,Sorensen2002}
		\SELFR{$\delta(U,W) = \max\limits_{\|u\|=1} \|u-P_W u\| \leq \max\limits_{\|u\|=1} \|u-P_W P_V u\| \leq \max\limits_{\|u\|=1} (\|u-P_V u\| + \|P_V u-P_W P_V u\|) \leq \delta(U,V) + \max\limits_{\|u\|=1} \|\frac{P_V u}{\|P_V u\|}-P_W \frac{P_V u}{\|P_V u\|}\| \leq \delta(U,V) + \max\limits_{\|v\|=1} \|v-P_W v\| = \delta(U,V) + \delta(V,W)$. \\[6pt]
		$\delta(V,W) =0 \Leftrightarrow V\subset W$. Max occurs when $V\pperp W$.}
	is an asymmetric metric extending $\dpd$,
	and its max-symmetrized metric is the \emph{gap} $\hat{\delta}$ \cite{Kato1995}.%
		\CITE{Stewart1990 não, só equal dim} 
		\SELF{$\|T\|=\sup\{\frac{\|Tx\|}{\|x\|}\}$. To prove let $p=q$ so $\|(P_V - P_W)e_i\| = \|e_i - P_W e_i\|\leq \delta(V,W)$ and $\|(P_V - P_W)e_i^\perp\| = \|P_W e_i^\perp\| = \sin\theta_i \leq \delta(V,W)$}
\end{itemize}

\begin{table}[]
	\centering
		\renewcommand{\arraystretch}{1}
		\begin{tabular}{ll}
			\toprule
			Distance & Formulas
			\\
			\cmidrule(lr){1-1} \cmidrule(lr){2-2}  
			$\mFS$ & $\cos^{-1}(\prod_{i=1}^m \cos\theta_i)$
			\\[3pt]
			$\MFS$ & $\begin{cases}
				\cos^{-1}(\prod_{i=1}^p \cos\theta_i) &\text{ if } p=q, \\
				\frac\pi2 &\text{ if } p\neq q
			\end{cases}$
			\\[3pt]
			$\mpF$ & $\sqrt{\sum_{i=1}^m \sin^2 \theta_i} = \sqrt{m - \|\mathbf{A}^\dagger \mathbf{B}\|_{\mathrm{F}}^2}$ 
			\\[3pt]
			$\tilde{d}_{\mathrm{pF}}$ & $\sqrt{\frac{|p-q|}{2}+\sum_{i=1}^m \sin^2 \theta_i} 
			= \sqrt{\frac{p+q}{2} - \|\mathbf{A}^\dagger \mathbf{B}\|_{\mathrm{F}}^2}
			= \frac{\|P_V-P_W\|_{\mathrm{F}}}{\sqrt{2}}$ 
			\\[3pt]
			$\vec{d}(V,W)$ & $\sqrt{\max\{0,p-q\} + \sum_{i=1}^m \sin^2 \theta_i} 
			= \sqrt{p - \|\mathbf{A}^\dagger \mathbf{B}\|_{\mathrm{F}}^2} 
			= \sqrt{\sum_{i=1}^p \|e_i-P_W e_i\|^2}$
			\\[3pt]
			$d_{\mathrm{s}}$ & $\sqrt{|p-q|+\sum_{i=1}^m \sin^2 \theta_i} 
			= \sqrt{\max\{p,q\} - \|\mathbf{A}^\dagger \mathbf{B}\|_{\mathrm{F}}^2}
			= \max\{\vec{d}(V,W),\vec{d}(W,V)\}$ 
			\\
			$\delta(V,W)$ & $\max\limits_{v\in V, \|v\|=1} \|v-P_W v\| = \begin{cases}
				\sin \theta_p &\text{ if } p \leq q, \\
				1 &\text{ if } p > q
			\end{cases}$
			\\
			$\hat{\delta}$ & $\max\{\delta(V,W),\delta(W,V)\} = \|P_V - P_W\|_2 = 
			\begin{cases}
				\sin \theta_p &\text{ if } p = q, \\
				1 &\text{ if } p \neq q
			\end{cases}$
			\\
			\bottomrule
		\end{tabular}
	\caption{Distances between $V_{(p)}, W_{(q)} \in \Gr(n)$, with $m=\min\{p,q\}\neq 0$ and principal angles $\theta_1\leq\cdots\leq\theta_m$. Also, $(e_1,\ldots,e_p)$ is a principal basis of $V$ \wrt $W$, while $\mathbf{A}$ and $\mathbf{B}$ are matrices with orthonormal bases of $V$ and $W$ as columns.}
	\label{tab:distances Total}
\end{table}

Similar extensions can be defined for other metrics.
Anyway, these distances have some shortcommings:
\begin{itemize}
	\item For $p\neq q$, $\MFS$ and $\hat{\delta}$ are constant, giving no information about the separation of subspaces, while $\tilde{d}_{\mathrm{pF}}$ and $d_{\mathrm{s}}$ have a non-zero minimum, making it harder to detect when one subspace is almost contained in the other \cite{Gruber2009,Renard2018}, specially if dimensions are not known beforehand.
	These metrics give the disjoint union topology.

	\item\label{pg:triangle fail}
	The triangle inequality fails for $\mFS$ and $\mpF$ 
	(e.g., for lines $K \neq L$ in a plane $V$, $\mFS(K,V) + \mFS(V,L) = 0 < \mFS(K,L)$),
	and is unknown%
	\footnote{A counterexample in \cite{Karami2023} is for the distance of \cite{Wang2015}, without orthonormality.}
	for $\vec{d}$.
	This reduces their usefulness, and their balls are not a basis for a topology: e.g.,
	if $V$ has a line $J \notin O = B_r(K) \cap B_r(L)$ then $V\in O$ but $B_\epsilon(V) \not\subset O$ for any $\epsilon >0$, as  $J\in B_\epsilon(V)$.

	\item $\delta$ and $\hat{\delta}$ carry little data (only $\theta_p$), which is not enough for all uses.
\end{itemize}
Also, $\delta$ and $\vec{d}$ are asymmetric, which we do not see as a problem.

The \emph{Infinite Grassmannian} $\Gr_p(\infty)$ of $p$-subspaces in all $\F^n$'s,
and the \emph{Infinite Total Grassmannian}%
\footnote{Called \emph{Doubly Infinite Grassmannian} in \cite{Ye2016}.}
$\Gr(\infty)$ of all subspaces in all $\F^n$'s, are defined \cite{Ye2016}, using the natural inclusion
$\Gr_p(n) \subset \Gr_p(n+1)$, as \label{df:Gr infty}
\begin{equation*}
	\Gr_p(\infty) = \bigcup_{n=0}^\infty \Gr_p(n), \quad \text{ and } \quad \Gr(\infty) = \bigcup_{n=0}^\infty \Gr(n) = \bigcup_{p=0}^\infty \Gr_p(\infty).
\end{equation*}

A metric in  $\Gr_p(\infty)$ is given by a family of metrics $d_p$ in the $\Gr_p(n)$'s that do not depend on $n$.
In \cite{Ye2016}, it is extended to $V_{(p)}, W_{(q)}\in \Gr(\infty)$, with $p<q$,
via an inclusion of $\theta_{p+1} = \cdots = \theta_q = \frac\pi2$,
	\SELF{Can skip $\delta$ and include $\theta$'s in $d_p$, both have same formula}
which turns
$\min\{d_p(V,U):U\in \Gr_p(W)\} = \min\{d_q(Y,W):Y\in \Gr_q(\infty), Y\supset V\}$ 
	\SELF{Minimal distances occur at $W_p$ and $V\oplus W_\perp$, which have with $V$ the same nonzero $\theta_i$'s as $W$}
into a metric $d^*(V,W) = \max\{d_q(Y,W):Y\in \Gr_q(\infty), Y\supset V\}$.
	\SELF{If a $q$-subspace can contain $V$ and $q-p$ extra dimensions in $W^\perp$, what explains use of $\Gr_q(\infty)$}
But $d^*$, with $*$ as in the notation of \cite{Ye2016}, still has issues:
\begin{itemize}
	\item For 
	$*=\alpha,\beta,\phi,\pi,\sigma$ it is constant when $p\neq q$. In fact,
	$d^\pi = \hat{\delta}$ and $d^\phi = \MFS$ (\!\!\cite[p.\,1189]{Ye2016} has a small mistake, as $c_\phi = \frac\pi2$, not $1$).
	
	\item For $* = \kappa,\rho$, or no $*$ (their Grassmann metric), it has a non-zero minimum when $p\neq q$. In fact, $d^\kappa = d_{\mathrm{s}}$.
	
	\item For $*=\mu$, it is not a metric, as it is based on the Martin distance, which does not satisfy a triangle inequality (see \Cref{sc:Metrics and distances on Grassmannians}).
\end{itemize}

Some problems seen above are due to the attempt to force a symmetric metric on a space whose elements have inherently asymmetric relations.

\section{Asymmetric metrics in $\Gr(n)$}\label{sc:Other asymmetric metrics}

Under reasonable conditions, we extend Grassmannian metrics naturally to an asymmetric metric in $\Gr(\infty)$,
which then restricts to $\Gr(n)$.

\begin{definition}\label{df:regular family}
	Let $\{d_p\}_{p\in\N}$ be family of metrics in the $\Gr_p(\infty)$'s which is \emph{regular} in the sense that, for $0 \neq p < q$:
	\begin{enumerate}[(i)]
		\item $d_p(V,W) = \Phi_p(\theta_1,\ldots,\theta_p)$ for a nondecreasing function $\Phi_p$ of the principal angles $\theta_1 \leq\cdots\leq\theta_p$ of $V,W\in \Gr_p(\infty)$; \label{it:fp nondecr}
		\item $\Phi_q(0,\ldots,0,\theta_1,\ldots,\theta_p) = \Phi_p(\theta_1,\ldots,\theta_p)$. \label{it:Phi zeros} 
	\end{enumerate}
\end{definition}

The metrics of \Cref{sc:Metrics and distances on Grassmannians} do not depend on $n$, so they extend to $\Gr_p(\infty)$, and it is easy to check that (\ref{it:fp nondecr}) and (\ref{it:Phi zeros}) hold.


\begin{definition}\label{df:diameter}
	$\Delta_p = \diam \Gr_p(\infty) = \sup\{d_p(V,U):V,U\in \Gr_p(\infty)\}$.
\end{definition}

The last column of \Cref{tab:asymmetric metrics} has $\Delta_p$ for the metrics of \Cref{sc:Metrics and distances on Grassmannians}.
When necessary, we write $\Delta_{p,d}$ to indicate the metric $d$ used.

\begin{lemma}\label{pr:diameter nondecr}
	$\Delta_p = \Phi_p(\frac\pi2,\ldots,\frac\pi2)$ is a non-decreasing function of $p$.%
	\SELF{Pode fazer com $\Gr_p^N$, $N\geq 2p$, mas parece menos natural}
\end{lemma}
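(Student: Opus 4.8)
The plan is to establish the two assertions in turn: first the closed form $\Delta_p = \Phi_p(\frac\pi2,\ldots,\frac\pi2)$, then the monotonicity in $p$. Both follow from the regularity conditions of \Cref{df:regular family}, and the only geometric input needed is that extreme principal-angle configurations are realizable in the infinite Grassmannian.

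For the closed form, recall from condition (\ref{it:fp nondecr}) that $d_p(V,U) = \Phi_p(\theta_1,\ldots,\theta_p)$ with $\Phi_p$ nondecreasing in each of the ordered angles $0 \leq \theta_1 \leq \cdots \leq \theta_p \leq \frac\pi2$. Since every $\theta_i \leq \frac\pi2$, coordinatewise monotonicity gives $\Phi_p(\theta_1,\ldots,\theta_p) \leq \Phi_p(\frac\pi2,\ldots,\frac\pi2)$ for all admissible tuples, so $\Phi_p(\frac\pi2,\ldots,\frac\pi2)$ is an upper bound for $d_p$. To see the supremum is attained, I would exhibit a pair realizing all angles equal to $\frac\pi2$: taking orthogonal $p$-subspaces $V \perp U$ inside some $\F^n$ with $n \geq 2p$ (available in $\Gr_p(\infty)$) makes the projection $P\colon V \to U$ vanish, hence all $\theta_i = \frac\pi2$. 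This yields $\Delta_p = \Phi_p(\frac\pi2,\ldots,\frac\pi2)$.

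For monotonicity, I would compare $\Delta_p$ and $\Delta_{p+1}$ using the compatibility condition (\ref{it:Phi zeros}) with $q = p+1$, which gives
\begin{equation*}
	\Phi_{p+1}\!\left(0,\tfrac\pi2,\ldots,\tfrac\pi2\right) = \Phi_p\!\left(\tfrac\pi2,\ldots,\tfrac\pi2\right) = \Delta_p,
\end{equation*}
where the argument on the left has $p$ copies of $\frac\pi2$ preceded by a single $0$. Because the tuple $(0,\frac\pi2,\ldots,\frac\pi2)$ is still ordered and $0 \leq \frac\pi2$, the coordinatewise monotonicity of $\Phi_{p+1}$ gives $\Phi_{p+1}(0,\frac\pi2,\ldots,\frac\pi2) \leq \Phi_{p+1}(\frac\pi2,\ldots,\frac\pi2) = \Delta_{p+1}$. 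Chaining these yields $\Delta_p \leq \Delta_{p+1}$, and iterating shows $\Delta_p$ is nondecreasing.

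The steps are short, so the main point requiring care is the correct reading of ``nondecreasing function of the principal angles'' in (\ref{it:fp nondecr}): I would take this to mean monotone in each argument, which is exactly what both the upper-bound estimate and the insertion of a leading $0$ in condition (\ref{it:Phi zeros}) exploit. A secondary check is that substituting a $0$ preserves the ordering $\theta_1 \leq \cdots \leq \theta_p$, so that $\Phi_{p+1}$ is being evaluated on a legitimate tuple; this holds automatically since the inserted zero is smallest. No deeper obstacle arises, as attainment of the extreme configuration is immediate in $\Gr_p(\infty)$.
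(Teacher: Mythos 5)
Your proposal is correct and follows essentially the same route as the paper: both use the existence of orthogonal $p$-subspaces in $\Gr_p(\infty)$ to realize the all-$\frac\pi2$ configuration, condition (\ref{it:Phi zeros}) to rewrite $\Phi_p(\frac\pi2,\ldots,\frac\pi2)$ as $\Phi_q$ evaluated with leading zeros, and the coordinatewise monotonicity of $\Phi_q$ to conclude. The only cosmetic difference is that you compare consecutive indices $p$ and $p+1$ and chain, whereas the paper compares general $p<q$ in one step.
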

\begin{proof}
	Let $0 < p<q$. Since $\Gr_p(\infty)$ has orthogonal subspaces, $\Delta_p = \Phi_p(\frac\pi2,\ldots,\frac\pi2) = \Phi_q(0,\ldots,0,\frac\pi2,\ldots,\frac\pi2) \leq \Phi_q(\frac\pi2,\ldots,\frac\pi2) = \Delta_q$.
\end{proof}

We use $\Gr_p(\infty)$ as $p$-subspaces of $\F^n$ intersect non-trivially for $p > \frac n2$, so $\diam \Gr_p(n) = \Phi_p(0,\ldots,0,\frac\pi2,\ldots,\frac\pi2)$, with $2p-n$ zeros, would decrease for the $\ell^2$ metrics.

\begin{lemma}\label{pr:theta' theta}
	Let $V\in \Gr_p(n)$ have principal angles $\theta_1\leq\cdots\leq\theta_{\min\{p,q\}}$ with $W\in \Gr_q(n)$, 
	$\theta_1'\leq\cdots\leq\theta_{\min\{p,q'\}}'$ with $W' \in \Gr_{q'}(W)$,
	and set $\theta_i = \frac\pi2$ for $i>\min\{p,q\}$.
	Then $\theta_i \leq \theta_i' \leq \theta_{i+q-q'}$ $\forall\, i \leq \min\{p,q'\}$,
	and:
	\begin{enumerate}[(i)]
	\item\label{it:theta V theta W'}%
	For $p\leq q'$,  $P_W(V) \subset W' \,\Leftrightarrow\, \theta_i = \theta_i'$  $\forall\, i \leq p$.
	
	\item\label{it:theta V' theta W}%
		\SELF{$\theta_{j+q-q'} = 
			\begin{cases}
				0 &\text{if } j\leq 0, \\
				\theta'_j &\text{if } 0 < j \leq \min\{p,q'\}.
			\end{cases}$}%
	For $p \geq q-q'$,\\
	$W'^\perp \cap W \subset V \Leftrightarrow \theta_i = 
	\begin{cases}
		0 &\text{if } i\leq q-q', \\
		\theta'_{i-q+q'} &\text{if } q-q' < i \leq \min\{p+q-q',q\}.
	\end{cases}$
	\end{enumerate}
\end{lemma}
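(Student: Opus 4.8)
The plan is to reduce the whole statement to the interlacing behaviour of singular values under deletion of rows. Since principal angles are the cosines of singular values of an orthogonal projection, and $W'\subset W$ gives $P_{W'}=P_{W'}P_W$, I would fix orthonormal bases adapted to the orthogonal decomposition $W = W' \boxplus U$, where $U = W'^\perp\cap W$ has $\dim U = q-q'$: an orthonormal basis $(f_1,\dots,f_{q'})$ of $W'$, completed by $(f_{q'+1},\dots,f_q)$ spanning $U$, and any orthonormal basis $(a_1,\dots,a_p)$ of $V$. Let $M=(\inner{f_j,a_i})$ be the $q\times p$ matrix of $P_W|_V$; then the first $q'$ rows form the matrix $M'$ of $P_{W'}|_V$, and the last $q-q'$ rows form the matrix $M''$ of $P_U|_V$. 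By the preliminaries the singular values of $M$, $M'$, $M''$ (padded with zeros) are $\cos\theta_i$, $\cos\theta_i'$, and the cosines of the principal angles of $(U,V)$.

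The two-sided bound $\theta_i\le\theta_i'\le\theta_{i+q-q'}$ is then precisely the standard interlacing theorem for deletion of $k=q-q'$ rows, $\sigma_i(M)\ge\sigma_i(M')\ge\sigma_{i+k}(M)$ for $1\le i\le\min\{p,q'\}$, read through $\sigma_i=\cos\theta_i$ (recall $\cos$ is decreasing and the convention $\theta_i=\frac\pi2$, i.e.\ $\sigma_i(M)=0$, for $i>\min\{p,q\}$). The one structural identity I would then exploit is the orthogonal splitting of Frobenius norms, $\|M\|_{\mathrm{F}}^2=\|M'\|_{\mathrm{F}}^2+\|M''\|_{\mathrm{F}}^2$, which together with $\|\mathbf{A}^\dagger\mathbf{B}\|_{\mathrm{F}}^2=\sum_i\cos^2\theta_i$ from \Cref{tab:distances Total} gives
\[
\|M''\|_{\mathrm{F}}^2=\textstyle\sum_i\cos^2\theta_i-\sum_i\cos^2\theta_i'\ge 0 .
\]
Both claims reduce to reading off this single scalar.

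For (\ref{it:theta V theta W'}), where $p\le q'$ and both angle lists have $p$ terms, the lower bound gives $\cos^2\theta_i\ge\cos^2\theta_i'$ termwise, so $\|M''\|_{\mathrm{F}}^2=0$ iff $\theta_i=\theta_i'$ for all $i$; and $M''=0$ iff $P_U(V)=P_U(P_W V)=0$, i.e.\ $P_W(V)\subset W'$, which proves the equivalence. For (\ref{it:theta V' theta W}), where $p\ge q-q'$, the matrix $M''$ has exactly $q-q'$ singular values, each in $[0,1]$, and these are the cosines of the principal angles of $(U,V)$; hence $\|M''\|_{\mathrm{F}}^2=q-q'$ forces all of them to equal $1$, i.e.\ $U\subset V$, while conversely $U\subset V$ makes those angles vanish and yields $\|M''\|_{\mathrm{F}}^2=q-q'$. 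To match the scalar equality $\|M''\|_{\mathrm{F}}^2=q-q'$ with the displayed angle pattern I would telescope the upper bound: writing
\[
\|M''\|_{\mathrm{F}}^2=\textstyle\sum_{i=1}^{q-q'}\cos^2\theta_i-\sum_{j\ge1}\big(\cos^2\theta_j'-\cos^2\theta_{j+q-q'}\big)\le\sum_{i=1}^{q-q'}\cos^2\theta_i\le q-q',
\]
where each summand and each bound is nonnegative, equality $\|M''\|_{\mathrm{F}}^2=q-q'$ forces $\theta_i=0$ for $i\le q-q'$ and $\theta_{j+q-q'}=\theta_j'$ for all $j$, which is exactly the stated formula; the reverse implication just reverses these tightness computations.

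The main obstacle I anticipate is bookkeeping rather than conceptual: keeping the index ranges and the $\frac\pi2$-padding consistent so that the telescoped sum is valid across the boundary $j=\min\{p,q'\}$, and checking that the displayed upper limit $\min\{p+q-q',q\}$ is exactly the range in which $\theta_i=\theta'_{i-q+q'}$ carries genuine (non-$\frac\pi2$) content, both sides agreeing trivially at $\frac\pi2$ beyond it. Working through the Frobenius identity instead of comparing angles one index at a time is what keeps this under control.
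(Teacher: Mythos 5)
Your proof is correct, and for the two equivalences it takes a genuinely different route from the paper's. Both arguments obtain the two\nobreakdash-sided bound $\theta_i \le \theta_i' \le \theta_{i+q-q'}$ from the interlacing of singular values under deletion of $q-q'$ rows, exactly as you do. But for (\ref{it:theta V theta W'}) and (\ref{it:theta V' theta W}) the paper argues structurally: it observes that each side of the equivalence is itself equivalent to the statement that $W'$ is the span of the last $q'$ vectors of some principal basis $(f_1,\dots,f_q)$ of $W$ with respect to $V$ (for (\ref{it:theta V' theta W}), with the complementary vectors $f_1,\dots,f_{q-q'}$ spanning $W'^{\perp}\cap W\subset V$). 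You instead funnel everything through the single scalar $\|M''\|_{\mathrm F}^2=\sum_i\cos^2\theta_i-\sum_j\cos^2\theta_j'$ and analyze when the interlacing inequalities are tight. What your route buys is robustness: you never have to exhibit a principal basis adapted to $W'$, so the non\nobreakdash-uniqueness of principal vectors at repeated angles never enters, and the subspace conditions $P_W(V)\subset W'$ and $W'^{\perp}\cap W\subset V$ are read off directly as $M''=0$ and as all $q-q'$ singular values of $M''$ equalling $1$. What the paper's route buys is brevity and a geometric picture of which $W'\subset W$ realize the given angle data. The only point requiring care in your write\nobreakdash-up is the one you already flag: the telescoped identity must be read with the $\frac\pi2$\nobreakdash-padding so that the index ranges of $\sum_j\cos^2\theta_j'$ and $\sum_j\cos^2\theta_{j+q-q'}$ agree (they differ when $q'\le p<q$, but the extra terms vanish under the convention); with that convention every step checks out, and the equality range $q-q'<i\le\min\{p+q-q',q\}$ comes out exactly as stated.
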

\begin{proof}\OMITL{p. \pageref{rm:detailed proof} has detailed proof}
	The inequalities follow from the usual interlacing ones of singular values \cite{Horn1991}.
		\SELF{Cauchy interlacing adapted \\ Horn1991 p. 149 Cor. 3.1.3}
	(\ref{it:theta V theta W'}) Both sides of $\Leftrightarrow$ are equivalent to $W'= \Span\{f_1,\ldots,f_{q'}\}$ for a principal basis $(f_1,\ldots,f_q)$ of $W$ \wrt $V$.
	(\ref{it:theta V' theta W}) Likewise, but with $W'^\perp \cap W = \Span\{f_1,\ldots, f_{q-q'}\} \subset V$ and $W'= \Span\{f_{q-q'+1},\ldots,f_{q}\}$.
\end{proof}

\begin{definition}\label{df:PO}
	A \emph{projection subspace} of $W\in \Gr_q(n)$ \wrt $V\in \Gr_p(n)$ is any $W' \in \Gr_{\min\{p,q\}}(W)$ such that $P_W(V) \subset W'$.
	The set of such subspaces is denoted by $\PP_W(V)$.
		\SELFR{If $V\not\pperp W$ then $W_P=P_W(V)$ and $W_\perp = V^\perp \cap W$, otherwise $W_P \supset P_W(V)$ and $W_\perp \subset V^\perp \cap W$, strict inclusions if $p \leq q$} 
\end{definition}

Principal angles of $V$ with $W' \in \PP_W(V)$ are the same  
	\OMIT{None, if $p$ or $q$ is 0}
as with $W$.%
	\SELF{and the same nonzero principal angles as $V\oplus W_\perp$ and $W$.}
And $P_W(V) \in \PP_W(V) \Leftrightarrow \dim P_W(V) = \min\{p,q\} \Leftrightarrow V \not\pperp W$ or $P_W(V)=W$.

\begin{lemma}\label{pr:d subspace smaller}
	Let $V,W\in \Gr_p(\infty)$, $V' \in \Gr_r(V)$ and $W' \in \PP_W(V')$. Then $d_r(V',W') \leq d_p(V,W)$.
\end{lemma}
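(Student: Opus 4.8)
The plan is to reduce the statement to an inequality between the monotone functions $\Phi_p$ and $\Phi_r$ of \Cref{df:regular family}, evaluated at suitable principal angles, and then to feed it the interlacing bounds of \Cref{pr:theta' theta}. First I would record that $r \le p$ and that $W'$ has dimension $\min\{r,p\}=r$, so $d_r(V',W')$ is well defined. By the remark following \Cref{df:PO}, since $W'\in\PP_W(V')$ the principal angles $\psi_1\le\cdots\le\psi_r$ of $V'$ with $W'$ coincide with those of $V'$ with $W$. Hence \Cref{df:regular family}(\ref{it:fp nondecr}) gives $d_r(V',W')=\Phi_r(\psi_1,\ldots,\psi_r)$, while $d_p(V,W)=\Phi_p(\theta_1,\ldots,\theta_p)$, where $\theta_1\le\cdots\le\theta_p$ are the principal angles of $V$ with $W$.

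The key step is to compare the $\psi_j$ with the $\theta_i$. Since principal angles are symmetric (they are the singular values of the projection, which are shared with its adjoint), I would apply \Cref{pr:theta' theta} with the roles of $V$ and $W$ interchanged: taking the fixed subspace to be $W$ (dimension $p$) and the nested pair to be $V'\subset V$ (dimensions $r<p$). The upper interlacing bound then yields $\psi_j \le \theta_{j+p-r}$ for all $1\le j\le r$.

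Finally I would pad with zeros and invoke monotonicity. By \Cref{df:regular family}(\ref{it:Phi zeros}), $\Phi_r(\psi_1,\ldots,\psi_r)=\Phi_p(0,\ldots,0,\psi_1,\ldots,\psi_r)$ with $p-r$ leading zeros; this tuple is already nondecreasing. It is dominated entrywise by $(\theta_1,\ldots,\theta_p)$: the first $p-r$ entries satisfy $0\le\theta_i$, and the entry in position $j+p-r$ satisfies $\psi_j\le\theta_{j+p-r}$ by the previous step. As $\Phi_p$ is nondecreasing (\Cref{df:regular family}(\ref{it:fp nondecr})), this gives $\Phi_p(0,\ldots,0,\psi_1,\ldots,\psi_r)\le\Phi_p(\theta_1,\ldots,\theta_p)$, i.e.\ $d_r(V',W')\le d_p(V,W)$. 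The degenerate case $r=0$ is immediate, since then $d_0(V',W')=0$.

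I expect the main obstacle to be the index bookkeeping in the role-swapped application of \Cref{pr:theta' theta}: one must verify that it is exactly the \emph{upper} interlacing bound $\psi_j\le\theta_{j+p-r}$ (rather than the lower one $\theta_j\le\psi_j$) that aligns with the zero-padding, so that entrywise domination holds and the monotonicity of $\Phi_p$ can be applied in the correct direction.
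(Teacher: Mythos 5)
Your proposal is correct and follows essentially the same route as the paper's proof: identify the principal angles of $V'$ with $W'$ with those of $V'$ with $W$, apply \Cref{pr:theta' theta} (with the roles of the nested pair and the fixed subspace swapped) to get the upper interlacing bound, pad with $p-r$ zeros via \Cref{df:regular family}(\ref{it:Phi zeros}), and conclude by monotonicity of $\Phi_p$. The only difference is that you make explicit the role-swap and the entrywise domination that the paper leaves implicit in its one-line chain of (in)equalities.
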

\begin{proof}
	If $V$ and $W$ have principal angles $\theta_1 \leq\cdots\leq\theta_p$, 
	and $V'$ and $W'_{(r)}$ have $\theta_1' \leq\cdots\leq\theta_r'$ (same as $V'$ and $W$),
	 \Cref{pr:theta' theta} gives
	$d_r(V',W') = \Phi_r(\theta_1',\ldots,\theta_r') = \Phi_p(0,\ldots,0,\theta_1',\ldots,\theta_r') \leq \Phi_p(\theta_1,\ldots,\theta_p) = d_p(V,W)$.
\end{proof}

We are ready to extend $\{d_p\}$, but first let us explain the idea. 

Let $V_{(p)}, W_{(q)}\in \Gr(\infty)$ and $K=\{d_p(V,U):U\in \Gr_p(W)\}$.
If $p\leq q$ then $K$ is compact and nonempty, and it is usual to set $d(V,W) =  \min K$.
But if $p>q$ then $\Gr_p(W) = \emptyset$, and $\min K = \min \emptyset$ is undefined.
The common solution is to swap $V$ and $W$ and set $d(V,W) = d(W,V)$, but it breaks the triangle inequality (as in $\mpF$ and $\mFS$).

Instead, we take $d(V,W) = \inf_{[0,\Delta_p]} K$, the infimum of $K$ in the ordered set $([0,\Delta_p],\leq)$,
which is natural as it is the smallest interval with all possible values of $d_p(V,U)$.
If $p\leq q$, this infimum gives $\min K$ as usual,
but if $p>q$ then%
	\footnote{Recall \cite[p.\,261]{Bajnok2020} that $\inf_\mathcal{O} \emptyset = M$ in an ordered set $\mathcal{O}$ with greatest element $M$, as the infimum of a subset is its greatest lower bound in $\mathcal{O}$, and $\emptyset$ has no element smaller than $M$.}
$d(V,W) = \inf_{[0,\Delta_p]} \emptyset = \Delta_p$.
This breaks the symmetry, but preserves the (oriented) triangle inequality, as we show.

\begin{theorem}\label{pr:asym metrics}
	Given a regular family of metrics $\{d_p\}_{p\in\N}$ in the $\Gr_p(\infty)$'s, let, for $V_{(p)}, W_{(q)} \in \Gr(\infty)$, 
	\begin{equation}\label{eq:asym metric}
		d(V,W) = \inf\nolimits_{[0,\Delta_p]}\{d_p(V,U):U\in \Gr_p(W)\} \in [0,\Delta_p].
	\end{equation}
	Then $d$ is an asymmetric metric in $\Gr(\infty)$ 
	and, for any $W' \in \PP_W(V)$,%
		\SELF{For $\ell^2$ metrics, $d(V,W) = d(V,W')$ for $W' \in \Gr_p(W) \Leftrightarrow W'$ is projection subspace. For $\wedge$ and  max ones it also happens when $V\pperp W$}%
	\begin{equation}\label{eq:d cases}
		d(V,W) = d(V,W') = \begin{cases}
			d_p(V,W') = \Phi_p(\theta_1,\ldots,\theta_p)  &\text{if } 0<p\leq q,\\
			\Delta_p &\text{otherwise},
		\end{cases}
	\end{equation}
	where $\theta_1 \leq\cdots\leq\theta_p$ are the principal angles of $V$ and $W$.
\end{theorem}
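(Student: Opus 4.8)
The plan is to first pin down the explicit formula \eqref{eq:d cases}, since it turns the abstract infimum into something computable, and then to read off both defining properties of an asymmetric metric from it. The genuinely substantial work is a single case of the oriented triangle inequality.

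\emph{Establishing \eqref{eq:d cases}.} Fix $V_{(p)},W_{(q)}$ and set $K=\{d_p(V,U):U\in\Gr_p(W)\}$. If $p>q$ then $\Gr_p(W)=\emptyset$, so $K=\emptyset$ and the convention $\inf_{[0,\Delta_p]}\emptyset=\Delta_p$ gives $d(V,W)=\Delta_p$ (the case $p=0$ is trivial, giving $0=\Delta_0$). If $0<p\le q$, pick any $W'\in\PP_W(V)$; since $p\le q$ this is a genuine $p$-subspace of $W$, so $W'\in\Gr_p(W)$ and $d_p(V,W')\in K$. For an arbitrary $U\in\Gr_p(W)$, \Cref{pr:theta' theta} (interlacing, with $U$ the $p$-dimensional subspace of $W$) shows the principal angles of $V$ with $U$ dominate entrywise those $\theta_1\le\cdots\le\theta_p$ of $V$ with $W$; the monotonicity of $\Phi_p$ then yields $d_p(V,U)\ge\Phi_p(\theta_1,\ldots,\theta_p)$. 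As $W'$ has exactly the principal angles $\theta_i$ of $V$ with $W$ (noted after \Cref{df:PO}), this bound is attained, so $\min K=d_p(V,W')=\Phi_p(\theta_1,\ldots,\theta_p)$. In both cases the value is independent of the choice of $W'\in\PP_W(V)$ and equals $d(V,W')$, giving the displayed identities.

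\emph{$T_0$ separation.} If $V=W$ then $p=q$, all $\theta_i=0$, and $d(V,W)=\Phi_p(0,\ldots,0)=d_p(V,V)=0$; likewise $d(W,V)=0$. Conversely, if $d(V,W)=d(W,V)=0$ then $p\ne q$ is impossible, since the direction from the larger to the smaller subspace equals $\Delta_{\max\{p,q\}}>0$ (a genuine metric has positive diameter once the dimension is at least $1$). Hence $p=q$ and $0=d(V,W)=d_p(V,W)$, so $V=W$ because $d_p$ is a metric.

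\emph{Oriented triangle inequality.} Write $\dim V=p$, $\dim U=s$, $\dim W=q$; we want $d(V,W)\le d(V,U)+d(U,W)$. Using \eqref{eq:d cases} and \Cref{pr:diameter nondecr}, every configuration except $p\le s\le q$ reduces to a comparison of diameters: if $p>s$ then $d(V,U)=\Delta_p\ge d(V,W)$; if $s>q$ then $d(U,W)=\Delta_s\ge\Delta_p\ge d(V,W)$ (here $s>q\ge p$ forces $s\ge p$); and when $p>q$ one of these always applies. The substance is the forward case $p\le s\le q$. Choose $U'\in\PP_U(V)$ and $W'\in\PP_W(U)$, so $U'\in\Gr_p(U)$, $W'\in\Gr_s(W)$, and \eqref{eq:d cases} gives $d(V,U)=d_p(V,U')$, $d(U,W)=d_s(U,W')$. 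Pick $W''\in\PP_{W'}(U')$; then $W''\in\Gr_p(W')\subset\Gr_p(W)$, and \Cref{pr:d subspace smaller} (with $U,W'\in\Gr_s(\infty)$ and $U'\subset U$) yields $d_p(U',W'')\le d_s(U,W')=d(U,W)$. Since $W''$ is a $p$-subspace of $W$, the infimum in \eqref{eq:asym metric} gives $d(V,W)\le d_p(V,W'')$, while the honest triangle inequality of the metric $d_p$ in a common $\Gr_p$ gives $d_p(V,W'')\le d_p(V,U')+d_p(U',W'')$. Chaining these, $d(V,W)\le d_p(V,U')+d_p(U',W'')\le d(V,U)+d(U,W)$.

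I expect the forward case $p\le s\le q$ to be the main obstacle: it is the only one not settled by a triviality about diameters, and it requires transporting all three distances into a single fixed Grassmannian $\Gr_p$ through projection subspaces so that the genuine triangle inequality for $d_p$ becomes available, with \Cref{pr:d subspace smaller} controlling the loss incurred when shrinking $U$ down to the $p$-subspace $U'$.
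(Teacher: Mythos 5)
Your proof is correct and follows essentially the same route as the paper's: establish \eqref{eq:d cases} via the interlacing inequalities of \Cref{pr:theta' theta}, read off the $T_0$ condition, and prove the oriented triangle inequality by reducing the non-trivial case $p\le \dim U\le q$ to the genuine triangle inequality of $d_p$ through the chain $U'\in\PP_U(V)$, $W'\in\PP_W(U)$, $W''\in\PP_{W'}(U')$ together with \Cref{pr:d subspace smaller}. The only blemish is the parenthetical ``$s>q\ge p$ forces $s\ge p$'', which presumes $q\ge p$; the correct justification is simply that $s\ge p$ holds because the case $p>s$ has already been dispatched.
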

\begin{proof}
	First we prove \eqref{eq:d cases}.
	If $p=0$ then $W'=0$ and $d(V,W) = 0 = \Delta_0$.
	If  $p>q$ then $W' = W$ and $d(V,W) = \Delta_p$, as seen above.
	If $0<p\leq q$ then $V$ has the same $\theta_i$'s with $W'_{(p)}$, so
	$d_p(V,W') = \Phi_p(\theta_1,\ldots,\theta_p) \leq d_p(V,U)$ for all $U\in \Gr_p(W)$, by \Cref{pr:theta' theta}, and thus $d(V,W) = d_p(V,W')$.
	
	\Cref{df:asymmetric metric}(\ref{it:T0}) holds since, by \eqref{eq:d cases},
	$d(V,W)=0 \Rightarrow d_p(V,W')=0 \Rightarrow V=W' \subset W$,
	and likewise $d(W,V)=0 \Rightarrow W \subset V$. 
		\OMIT{$p=0$ trivial, \\ $p\neq 0$ has $\Delta_p \neq 0$}
	
	For \Cref{df:asymmetric metric}(\ref{it:oriented triang ineq}), 
	we must show $d(V,W) \leq d(V,U) + d(U,W)$
	for $U_{(r)}, V_{(p)}, W_{(q)} \in \Gr(\infty)$. 
	If $r<p$ then $d(V,W) \leq \Delta_p = d(V,U)$, by \eqref{eq:asym metric} and \eqref{eq:d cases}.
	If $r\geq p$ and $r>q$ then $d(V,W) \leq \Delta_p \leq \Delta_r = d(U,W)$, by \eqref{eq:asym metric}, \Cref{pr:diameter nondecr} and \eqref{eq:d cases}. 
	If $p=0$ it is trivial.
	If $0<p\leq r \leq q$,
	taking $U'_{(p)} \in \PP_U(V)$, 
	$W'_{(r)} \in \PP_W(U)$
	and $W''_{(p)} \in \PP_{W'}(U')$,
	we obtain
	\begin{align*}
		d(V,W) &\leq d_p(V,W'') \leq d_p(V,U') + d_p(U',W'') \\
		&\leq d_p(V,U') + d_r(U,W') = d(V,U) + d(U,W), 
	\end{align*}
	using \eqref{eq:asym metric}, the triangle inequality of $d_p$, \Cref{pr:d subspace smaller} and \eqref{eq:d cases}.
\end{proof}

\begin{table}[]
	\centering
	\renewcommand{\arraystretch}{1}
	\begin{tabular}{llll}
		\toprule
		Type & Symbol & Formula for $0<p\leq q$ & Value for $p>q$ ($\Delta_p$)
		\\
		\cmidrule(lr){1-1} \cmidrule(lr){2-2}  \cmidrule(lr){3-3} \cmidrule(lr){4-4}
		$\ell^2$ & $\dg$ & $\sqrt{\sum_{i=1}^p \theta_i^2}$  & $\frac\pi2 \sqrt{p}$
		\\[3pt]
		& $\dcF$ & $2\sqrt{\sum_{i=1}^p \sin^2 \frac{\theta_i}{2}}$ & $\sqrt{2p}$
		\\[3pt] 
		& $\dpF$ & $\sqrt{\sum_{i=1}^p \sin^2 \theta_i}$ & $\sqrt{p}$
		\\[6pt]
		$\wedge$ & $\dFS$ & $\cos^{-1}(\prod_{i=1}^p \cos\theta_i)$ & $\frac\pi2$
		\\[3pt]
		& $\dcw$ & $\sqrt{2-2\prod_{i=1}^p \cos\theta_i}$ & $\sqrt{2}$
		\\[3pt] 
		& $\dBC$ & $\sqrt{1-\prod_{i=1}^p \cos^2\theta_i}$ & $1$
		\\[6pt] 
		max & $\dA$ & $\theta_p$ & $\frac\pi2$
		\\[3pt] 
		& $\dcd$ & $2\sin\frac{\theta_p}{2}$ & $\sqrt{2}$
		\\[3pt]
		& $\dpd$ & $\sin\theta_p$ & $1$
		\\
		\bottomrule
	\end{tabular}
	\caption{Asymmetric metrics $d(V,W)$ for $V_{(p)}, W_{(q)} \in \Gr(n)$, with $p \neq 0$, in terms of their principal angles $\theta_1\leq\cdots\leq\theta_{\min\{p,q\}}$. If $p=0$ all distances are $d(V,W) = \Delta_0 = 0$.}
	\label{tab:asymmetric metrics}
\end{table}

Naturally, $d$ restricts to $\Gr(n)$.
\Cref{tab:asymmetric metrics} has the asymmetric metric extending each metric of \Cref{sc:Metrics and distances on Grassmannians}.
We use the same symbol for both.
The asymmetric $\dpd$ is the containment gap $\delta$ of \Cref{tab:distances Total}.
Note that $d(V,W) = d(V,W')$ for $W' \in \PP_W(V)$ does not mean $d(V,W) = d(V,P_W(V))$.

 \begin{example}
 	For the \emph{asymmetric Fubini-Study metric} $\dFS$ in \Cref{tab:asymmetric metrics}, the distance from $V_{(p)}$ to $W_{(q)}$ in $\Gr(n)$ is
 	\begin{equation}\label{eq:adFS}
 		\dFS(V,W) = 
 		\begin{cases}
 			0 &\text{ if } p=0, \\
 			\cos^{-1}(\prod_{i=1}^p \cos\theta_i) &\text{ if } 0 < p \leq q, \\
 			\frac\pi2 &\text{ if } p > q.
 		\end{cases}
 	\end{equation}
 	A comparison shows $\mFS$ \eqref{eq:mFS} and $\MFS$ \eqref{eq:dFS}, in \Cref{sc:Fubini-Study metric}, are its min- and max-\-sym\-met\-riz\-a\-tions, 
 	and if $p=q$ they all give the usual $\dFS$ \eqref{eq:usual FS}.
 	
 	At first, their differences may seem dull: the conditions for each case.
 	But those in \eqref{eq:adFS} reflect what most affects relations between subspaces: which one is larger.
 	This leads to better properties (see \Cref{sc:Asymmetric Fubini-Study}), with less restrictions or cases depending on the dimensions.
 	
 	For example, $\mFS$ does not satisfy a triangle inequality nor gives $\Gr(n)$ a topology;
 	$\MFS$ is a metric with a very restrictive topology;
 	and $\dFS$ is an asymmetric metric with more useful topologies (see \Cref{sc:Asymmetric geometry}).
 	Also, $\dFS$ is more informative when $p \neq q$: the constant $\MFS(V,W) = \frac\pi2$ does not show how close $V$ and $W$ are;
 	a small $\mFS(V,W)$ shows one is almost contained in the other, but not which one;
 	and a small $\dFS(V,W)$ with $\dFS(W,V)=\frac\pi2$ shows $V$ is almost contained in a strictly larger $W$.
 \end{example}

 \subsection{Basic properties}
 
 Here we obtain some properties for any $d$ given by \eqref{eq:asym metric}, and some which are more specific, for later use.

 \begin{proposition}\label{pr:d min max}
 	Let $V_{(p)}, W_{(q)} \in \Gr(n)$.
 	\SELF{In $\Gr(n)$, $V \perp W$ requires $p+q \leq n$}
 	\begin{enumerate}[(i)]
 		\item $d(V,W)=0 \Leftrightarrow V \subset W$. \label{it:d=0}
 		\item $d(V,W) = \Delta_p \Leftrightarrow
 		\begin{cases}
 			V \perp W \text{ or } p>q, \text{ for asymmetric $\ell^2$ metrics,} \\
 			V \pperp W \text{ or } p=0, \text{ for $\wedge$ or max ones.}
 		\end{cases}$ \label{it:d=Delta}
 	\end{enumerate}
 \end{proposition}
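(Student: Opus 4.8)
The plan is to read everything off the closed-form expression \eqref{eq:d cases} of \Cref{pr:asym metrics}, splitting according to whether $p=0$, $0<p\le q$, or $p>q$, and to exploit the monotonicity of $\Phi_p$ specific to each of the three metric classes ($\ell^2$, $\wedge$, max) of \Cref{sc:Metrics and distances on Grassmannians}. Two facts do most of the work: the value $\Delta_p=\Phi_p(\frac\pi2,\ldots,\frac\pi2)$ from \Cref{pr:diameter nondecr}, and the characterization $V_{(p)}\pperp W_{(q)}\Leftrightarrow$ some principal angle is $\frac\pi2$ or $p>q$ noted after \Cref{df:pperp}.

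For (i), the forward direction $d(V,W)=0\Rightarrow V\subset W$ is exactly the argument already run for the $T_0$ axiom inside the proof of \Cref{pr:asym metrics}, so I would only need the converse. If $V\subset W$ then $p\le q$; for $p=0$ we get $d(V,W)=\Delta_0=0$ directly, and for $0<p\le q$ the principal angles of $V$ with $W$ all vanish (as $P_W$ restricts to the identity on $V$), so by \eqref{eq:d cases}, $d(V,W)=\Phi_p(0,\ldots,0)=d_p(V,V)=0$ by the separation axiom of the metric $d_p$.

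For (ii), I would first dispose of the extreme cases. If $p=0$ then $d(V,W)=\Delta_0=0=\Delta_p$, which matches the clause ``$p=0$'' for $\wedge$ and max metrics and the clause ``$V\perp W$'' for $\ell^2$ metrics (since $0\perp W$ always). If $p>q$ then \eqref{eq:d cases} gives $d(V,W)=\Delta_p$ unconditionally, matching ``$p>q$'' for $\ell^2$ and ``$V\pperp W$'' for $\wedge$/max (as $p>q\Rightarrow V\pperp W$). The real content is the range $0<p\le q$, where $d(V,W)=\Phi_p(\theta_1,\ldots,\theta_p)$ and I want $\Phi_p(\theta_1,\ldots,\theta_p)=\Phi_p(\frac\pi2,\ldots,\frac\pi2)$. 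Here the three classes diverge: an $\ell^2$ metric is strictly increasing in each $\theta_i$ separately, so equality forces every $\theta_i=\frac\pi2$, i.e.\ $P_W|_V=0$, i.e.\ $V\perp W$; a $\wedge$ metric is a strictly monotone function of $\prod_i\cos\theta_i$, whose extreme value $0$ is attained iff some $\theta_i=\frac\pi2$; and a max metric depends only on, and is strictly increasing in, $\theta_p=\max_i\theta_i$, so equality forces $\theta_p=\frac\pi2$, again meaning some angle equals $\frac\pi2$. For the last two classes ``some $\theta_i=\frac\pi2$'' is precisely $V\pperp W$ in the regime $p\le q$, which closes the equivalence.

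The main obstacle is not any single estimate but organizing the case split cleanly and invoking the correct monotonicity property for each metric family; in particular I must check that the boundary cases ($p=0$ and $p>q$) are reconciled with the stated conditions, and that the step ``all $\theta_i=\frac\pi2\Leftrightarrow V\perp W$'' (for $\ell^2$) versus ``one $\theta_i=\frac\pi2\Leftrightarrow V\pperp W$'' (for $\wedge$/max) is justified from the singular-value description of principal angles together with the definition of $\pperp$. I expect this bookkeeping, rather than the underlying analysis, to be the delicate part.
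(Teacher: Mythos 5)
Your proposal is correct and is essentially the paper's own argument: the paper's proof is the one-line ``follows from \eqref{eq:d cases} and the formulas in \Cref{tab:asymmetric metrics},'' and you have simply written out the case split ($p=0$, $0<p\leq q$, $p>q$) and the monotonicity facts that make that one-liner work. The details you supply — strict monotonicity in each $\theta_i$ for $\ell^2$, vanishing of $\prod_i\cos\theta_i$ for $\wedge$, and dependence on $\theta_p$ alone for max, matched against the characterization of $\pperp$ — are exactly the intended ones.
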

 \begin{proof}
 	Follows from \eqref{eq:d cases} 
 	\OMIT{For \ref{it:d=0}}
 	and the formulas in \Cref{tab:asymmetric metrics}.
 \end{proof}
 
By (\ref{it:d=0}), $d$ induces the partial order $\subset$,
so $d(V,W)$ measures how far $V$ is from being contained in $W$. 
If $p>q$ this is never any closer to happening, and $d(V,W)$ remains constant at its maximum $\Delta_p$ (for a given $p$).

\begin{proposition}\label{pr:d subspaces}
	$d(V',W) \leq d(V,W) \leq d(V,W')$, for $V,W,V',W' \in \Gr(n)$ with $V'_{(p')} \subset V_{(p)}$ and $W'_{(q')} \subset W_{(q)}$.
\end{proposition}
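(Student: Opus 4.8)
The plan is to establish the two monotonicity inequalities separately, reducing each to the principal-angle interlacing of \Cref{pr:theta' theta} together with the case split \eqref{eq:d cases}. In both inequalities the genuine work occurs when all relevant dimensions lie below the dimension of the target subspace, so that both distances are computed by some $\Phi$; the remaining configurations are settled by the fact that $d$ never exceeds $\Delta$ of the dimension of its first argument, together with the monotonicity $\Delta_{p'} \leq \Delta_p$ for $p' \leq p$ from \Cref{pr:diameter nondecr}. Throughout, write $p = \dim V$, $p' = \dim V'$, $q = \dim W$, $q' = \dim W'$, so $p' \leq p$ and $q' \leq q$; the cases where $V$ or $V'$ is the zero subspace are immediate, as then the corresponding distance vanishes.

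For the right-hand inequality $d(V,W) \leq d(V,W')$ I would apply \Cref{pr:theta' theta} directly, using its lower interlacing bound: if $\theta_i$ are the principal angles of $V$ with $W$ and $\theta_i'$ those of $V$ with $W'$, then $\theta_i \leq \theta_i'$ for $i \leq \min\{p,q'\}$. If $p \leq q'$ both distances equal $\Phi_p$ of the respective angle tuples, and since $\theta_i \leq \theta_i'$ for all $i \leq p$ and $\Phi_p$ is nondecreasing, the inequality follows. If $q' < p \leq q$, then $d(V,W') = \Delta_p$ by \eqref{eq:d cases}, while $d(V,W) \leq \Delta_p$, giving the claim; if $p > q$ both sides equal $\Delta_p$.

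For the left-hand inequality $d(V',W) \leq d(V,W)$ the subtlety is that shrinking the \emph{first} argument produces \emph{fewer but larger} principal angles, so a naive comparison fails. Assume first $p \leq q$ (hence $p' \leq p \leq q$). Applying \Cref{pr:theta' theta} with the two subspaces interchanged---legitimate since principal angles are symmetric in their two arguments---to $V' \subset V$ against $W$ yields the \emph{upper} bound $\theta_i' \leq \theta_{i+p-p'}$ for $i \leq p'$, where now $\theta_i$ and $\theta_i'$ denote the principal angles of $V$, respectively $V'$, with $W$. Using \Cref{df:regular family}(ii) to pad with $p-p'$ leading zeros, $d(V',W) = \Phi_{p'}(\theta_1',\ldots,\theta_{p'}') = \Phi_p(0,\ldots,0,\theta_1',\ldots,\theta_{p'}')$, and this padded tuple is sorted and entrywise dominated by $(\theta_1,\ldots,\theta_p)$: its $k$-th entry is $0 \leq \theta_k$ for $k \leq p-p'$, and equals $\theta_{k-p+p'}' \leq \theta_k$ for $k > p-p'$ by the upper bound. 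Monotonicity of $\Phi_p$ then gives $d(V',W) \leq \Phi_p(\theta_1,\ldots,\theta_p) = d(V,W)$. When instead $p > q$, one has $d(V,W) = \Delta_p$ and $d(V',W) \leq \Delta_{p'} \leq \Delta_p$ by \Cref{pr:diameter nondecr}, so the inequality again holds.

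I expect the main obstacle to be precisely the bookkeeping in the left-hand inequality: correctly matching the shorter angle tuple of $V'$ against the longer one of $V$. The crucial point is that the relevant content of \Cref{pr:theta' theta} here is its upper interlacing bound (rather than the lower one used for the other inequality), and that condition (ii) of \Cref{df:regular family} is exactly what licenses padding with zeros so that the entrywise comparison and the monotonicity of $\Phi_p$ can be applied. Everything else is routine case analysis.
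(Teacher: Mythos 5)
Your proof is correct, but it diverges from the paper's on one half and essentially coincides with it on the other. For the inequality $d(V,W)\leq d(V,W')$ the paper does not touch principal angles at all: since $\Gr_p(W')\subset \Gr_p(W)$, the infimum in \eqref{eq:asym metric} is taken over a smaller set, and the inequality is immediate in one line, with no case split on $p$ versus $q'$ or $q$. Your three-case interlacing argument reaches the same conclusion but does more work than necessary; the structural argument also makes clear that this direction holds for \emph{any} $d$ defined by \eqref{eq:asym metric}, independently of regularity. For the inequality $d(V',W)\leq d(V,W)$ your argument is, in substance, the paper's: the paper handles $p>q$ exactly as you do, and for $p\leq q$ it passes to projection subspaces $U_{(p)}\in\PP_W(V)$ and $U'_{(p')}\in\PP_U(V')$ so as to invoke \Cref{pr:d subspace smaller}, whose proof is precisely the upper interlacing bound of \Cref{pr:theta' theta} combined with zero-padding via \Cref{df:regular family}(ii) and monotonicity of $\Phi_p$ — the computation you inline directly on $V'\subset V$ against $W$. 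Your inlined version is valid (the swapped application of \Cref{pr:theta' theta} is legitimate by symmetry of principal angles, and the padded tuple is sorted and entrywise dominated as you say); the paper's detour through $\PP_W(V)$ just reuses an already-proved lemma and keeps all comparisons between subspaces of equal dimension.
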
	
\begin{proof}
	As $\Gr_p(W')\subset \Gr_p(W)$, \eqref{eq:asym metric} gives the second inequality.
		\OMIT{$d(V,W) = \inf \{d_p(V,U):U\in \Gr_p(W)\} \leq \inf \{d_p(V,U):U\in \Gr_p(W')\} = d(V,W')$}		
	If $p>q$, $d(V',W) \leq \Delta_{p'} \leq \Delta_p = d(V,W)$
	by \eqref{eq:asym metric}, \Cref{pr:diameter nondecr} and \eqref{eq:d cases}.
		\OMIT{\ref{pr:asym metrics}}
	If $p\leq q$, taking $U_{(p)} \in \PP_W(V)$ and $U'_{(p')} \in \PP_{U}(V')$
	we find
	$d(V',W) \leq d_{p'}(V',U') \leq d_p(V,U) = d(V,W)$
	by \eqref{eq:asym metric}, \Cref{pr:d subspace smaller} and \eqref{eq:d cases}.
\end{proof}	

So, moving $V$ into $W$ is easier ($d(V,W)$ is smaller) the smaller $V$ is, or the larger $W$ is.
We give equality conditions for $\dFS$ and $\dg$ (see \Cref{tab:asymmetric metrics}):

\begin{proposition}\label{pr:equalities subspaces} 
	For $V,W, V', W'$ as above:%
	\SELF{$\dFS(V,W) = \dFS(V,W_P) =$ \\ $\dFS(V\oplus W_\perp,W) = \min\{\dFS(V,U):U \in \Gr(W)\} =$ \\ $\min\{\dFS(Y,W):Y \in \Gr(n), Y\supset V\}$.}
	\begin{enumerate}[(i)]
		\item $\dFS(V,W) = \dFS(V,W') \Leftrightarrow V\pperp W$ or $P_W(V)\subset W'$.\label{it:W' sub W}
		
		\item $\dFS(V,W) = \dFS(V',W) \Leftrightarrow V'\pperp W$ or $V'^\perp \cap V \subset W$.\label{it:V' sub V}	
		
		\item $\dg(V,W) = \dg(V,W') \Leftrightarrow 
		\begin{cases}
			p\leq q' \text{ and } P_W(V)\subset W'; \text{ or } \\ 
			p>q', \text{ with } V\perp W \text{ if } p\leq q. 
		\end{cases}$ \label{it:dg W' sub W}
		
		\item $\dg(V,W) = \dg(V',W) \Leftrightarrow 
		\begin{cases}
			p\leq q \text{ and } V'^\perp \cap V \subset W; \text{ or } \\ 
			p>q \text{ and } V' = V.
		\end{cases}$\label{it:dg V' sub V}	
	\end{enumerate}
\end{proposition}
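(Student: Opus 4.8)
The plan is to prove all four equality conditions using the explicit formulas from \Cref{tab:asymmetric metrics} together with the angle-comparison results of \Cref{pr:theta' theta} and the monotonicity structure built into the regular family. The key observation is that both $\dFS$ and $\dg$ are strictly increasing in each principal angle on $[0,\frac\pi2]$ (for $\dFS$, $\cos^{-1}$ of a product of cosines; for $\dg$, the root-sum-of-squares), so an equality of distances forces a precise matching of the relevant principal angles, which \Cref{pr:theta' theta} then translates into the stated containment relations. Throughout I would use that, by \eqref{eq:d cases}, $d(V,W)$ depends only on the principal angles $\theta_1\le\cdots\le\theta_p$ of $V$ and $W$ (padded with $\frac\pi2$'s when $p>q$), and similarly for the primed subspaces with their own angles $\theta_i'$.

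For part (\ref{it:W' sub W}), I would compare the angles $\theta_i$ of $V,W$ with the angles $\theta_i'$ of $V,W'$. By \Cref{pr:theta' theta} we have $\theta_i\le\theta_i'$, so since $\dFS$ is strictly increasing in each argument, $\dFS(V,W)=\dFS(V,W')$ holds iff either all $\theta_i=\theta_i'$, or the product of cosines already vanishes (i.e.\ some $\theta_i=\frac\pi2$, which is exactly $V\pperp W$). In the non-degenerate case, \Cref{pr:theta' theta}(\ref{it:theta V theta W'}) identifies $\theta_i=\theta_i'$ for all $i$ with $P_W(V)\subset W'$, giving the dichotomy ``$V\pperp W$ or $P_W(V)\subset W'$.'' Part (\ref{it:V' sub V}) is the dual statement: here $V'\subset V$, so I would apply \Cref{pr:theta' theta}(\ref{it:theta V' theta W}) (with the roles of the subspaces arranged via the complement $V'^\perp\cap V$), and again the strict monotonicity of $\dFS$ forces either a vanishing cosine product ($V'\pperp W$) or an exact angle match, which corresponds to $V'^\perp\cap V\subset W$.

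Parts (\ref{it:dg W' sub W}) and (\ref{it:dg V' sub V}) are the analogous statements for $\dg$, but now the casework on dimensions becomes genuinely different because $\dg$ attains its maximum $\Delta_p=\frac\pi2\sqrt p$ (an $\ell^2$ metric) exactly when $V\perp W$ or $p>q$, rather than merely when a single angle hits $\frac\pi2$. For part (\ref{it:dg W' sub W}) I would split on whether $p\le q'$: when $p\le q'$, both distances are genuine root-sum-of-squares of matching-length angle tuples, strict monotonicity applies, and $\theta_i=\theta_i'$ for all $i$ gives $P_W(V)\subset W'$ via \Cref{pr:theta' theta}(\ref{it:theta V theta W'}); when $p>q'$ the distance $\dg(V,W')=\Delta_p$ is already maximal, so equality forces $\dg(V,W)=\Delta_p$ as well, which by \Cref{pr:d min max}(\ref{it:d=Delta}) means $V\perp W$ (in the subcase $p\le q$) or is automatic (in the subcase $p>q$). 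Part (\ref{it:dg V' sub V}) follows the same template with the primed subspace on the left.

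The main obstacle I anticipate is bookkeeping the dimensional casework cleanly, especially tracking how the padding convention ($\theta_i=\frac\pi2$ for $i>\min\{p,q\}$) interacts with the interlacing inequalities when $p$, $q$, $q'$ straddle each other; a misaligned index in \Cref{pr:theta' theta} would corrupt the containment conclusion. The subtle point in the $\dg$ cases is that, unlike $\dFS$, attaining the maximum is an $\ell^2$ condition ($V\perp W$, i.e.\ \emph{all} angles $\frac\pi2$) rather than a single-angle condition, so the ``or $p>q'$'' branches genuinely require invoking \Cref{pr:d min max}(\ref{it:d=Delta}) and cannot simply be read off from one degenerate factor. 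Once the monotonicity-plus-interlacing machinery is set up, each of the four parts reduces to the same two-step argument (strict monotonicity forces angle equality; \Cref{pr:theta' theta} converts angle equality to containment), so the write-up should be short once the case distinctions are pinned down correctly.
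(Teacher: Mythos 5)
Your proposal is correct and follows essentially the same route as the paper's proof: handle the degenerate dimension cases via \eqref{eq:d cases} and \Cref{pr:d min max}, then in the main case use the interlacing inequalities of \Cref{pr:theta' theta} so that equality of the distance formulas forces either the degenerate maximum ($\theta_p=\frac\pi2$ for $\dFS$, all $\theta_i=\frac\pi2$ for $\dg$) or exact angle matching, which \Cref{pr:theta' theta}(\ref{it:theta V theta W'}),(\ref{it:theta V' theta W}) converts into the stated containments. You also correctly identify the one genuine asymmetry between the two metrics (single vanishing cosine versus all angles at $\frac\pi2$), which is exactly where the paper's casework diverges.
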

\begin{proof}
	(\ref{it:W' sub W}) 
	Follows from \Cref{pr:d min max}, unless $0<p\leq q' \leq q$.
	If $V$ has principal angles $\theta_1\leq\cdots\leq\theta_p$ with $W$,
	and $\theta_1'\leq\cdots\leq\theta_p'$ with $W'$,
	\Cref{pr:theta' theta} gives
	$\theta_i \leq \theta_i' \,\forall i$.
	So
	$\prod_{i=1}^p \cos \theta_i = \prod_{i=1}^p \cos \theta'_i \Leftrightarrow \theta_p=\frac\pi2$ 
		\OMIT{$=\theta'_p$}
	or $\theta_i = \theta_i' \,\forall i$, which corresponds to (\ref{it:W' sub W}), by \Cref{pr:theta' theta}(\ref{it:theta V theta W'}).
	
	(\ref{it:V' sub V})
	Follows from \Cref{pr:d min max}, unless  $0< p' \leq p \leq q$.
	If $W$ has principal angles $\theta_1\leq\cdots\leq\theta_p$ with $V$,
	and $\theta_1'\leq\cdots\leq\theta_{p'}'$ with $V'$,
	\Cref{pr:theta' theta} (swapping $V \leftrightarrow W$, $V' \leftrightarrow W'$) gives
	$\theta'_i \leq \theta_{i+p-p'} \,\forall i \leq p'$.
	So,
	\[ \prod_{i=1}^p \cos \theta_i  = \prod_{i=1}^{p'} \cos \theta'_i \,\Leftrightarrow\, \theta'_{p'} = \frac\pi2 \text{ or } \theta_i = 
	\begin{cases}
		0 &\text{if } i\leq p-p', \\
		\theta'_{i-p+p'} &\text{if } p-p' < i \leq p,
	\end{cases} \]
	which corresponds to (\ref{it:V' sub V}), by \Cref{pr:theta' theta}(\ref{it:theta V' theta W}).

	(\ref{it:dg W' sub W})
	Follows from \eqref{eq:d cases} if $p=0$ or $p>q \geq q'$.
	If $0 < p \leq q' \leq q$, 
	$V$ has principal angles $\theta_1\leq\cdots\leq\theta_p$ with $W$, 
	and $\theta_1'\leq\cdots\leq\theta_p'$ with $W'$,
	so 
	\[\sum_{i=1}^p \theta_i^2 = \dg^2(V,W) = \dg^2(V,W') = \sum_{i=1}^p \theta_i'^2  
	\,\Leftrightarrow\, \theta_i = \theta_i' \ \forall i \,\Leftrightarrow\, P_W(V) \subset W',\]
	by \Cref{pr:theta' theta}.
		\OMIT{\ref{it:theta V theta W'}}
	If $q'<p\leq q$, \eqref{eq:d cases} and $\Delta_{p,\dg} = \frac{\pi\sqrt{p}}{2}$ (see \Cref{tab:asymmetric metrics}) give
	\[ \sum_{i=1}^p \theta_i^2 = \dg^2(V,W)  = \dg^2(V,W') = \frac{\pi^2 p}{4} 
	\,\Leftrightarrow\, \theta_i=\frac\pi2 \,\forall i \,\Leftrightarrow\, V \perp W. \]

	(\ref{it:dg V' sub V})
	Follows from \Cref{pr:d min max} 
		\OMIT{\ref{it:d=0}}
	if $p'=0$.
	If $0 < p' \leq p \leq q$, $W$ has principal angles $\theta_1\leq\cdots\leq\theta_p$ with $V$, and $\theta_1'\leq\cdots\leq\theta_{p'}'$ with $V'$, 
	so \Cref{pr:theta' theta} (swapping $V \leftrightarrow W$, $V' \leftrightarrow W'$)
		\OMIT{\ref{it:theta V' theta W}}
	gives $\theta'_i \leq \theta_{i+p-p'} \,\forall i \leq p'$ and
	\begin{equation*}
		\sum_{i=1}^p \theta_i^2 = \sum_{i=1}^{p'} \theta_i'^2 \Leftrightarrow
		\theta_i = 
		\begin{cases}
			0 &\text{if } i\leq p-p', \\
			\theta'_{i-p+p'} &\text{if } p-p' < i \leq p
		\end{cases}
		\Leftrightarrow V'^\perp \cap V \subset W.
	\end{equation*}
	If $p>q$ we have $\frac{\pi^2 p}{4} = \dg^2(V,W) = \dg^2(V',W) \leq \frac{\pi^2 p'}{4} \Leftrightarrow p=p'$.
\end{proof}

\begin{corollary}\label{pr:proj subsp}
	For $V,W, W'$ as above, with $q' = \min\{p,q\}$:
	\begin{enumerate}[(i)]
		\item $\dFS(V,W) = \dFS(V,W') \Leftrightarrow W' \in \PP_W(V)$ or $V \pperp W$. \label{it:proj subsp dFS}
		\item $\dg(V,W) = \dg(V,W') \Leftrightarrow W' \in \PP_W(V)$.\label{it:proj subsp dg}
	\end{enumerate}
\end{corollary}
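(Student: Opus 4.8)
The plan is to derive both equivalences directly from \Cref{pr:equalities subspaces}, since the corollary is nothing more than its specialization to $q' = \min\{p,q\}$ read through \Cref{df:PO}. The one fact I would use repeatedly is that, because $W' \subset W$ has $\dim W' = q' = \min\{p,q\}$, we automatically have $W' \in \Gr_{\min\{p,q\}}(W)$, so by \Cref{df:PO} the containment $P_W(V) \subset W'$ is \emph{equivalent} to the membership $W' \in \PP_W(V)$. With this translation in hand, both parts reduce to matching conditions.

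For (\ref{it:proj subsp dFS}), I would simply invoke \Cref{pr:equalities subspaces}(\ref{it:W' sub W}), which states $\dFS(V,W) = \dFS(V,W') \Leftrightarrow V \pperp W$ or $P_W(V) \subset W'$. Replacing $P_W(V) \subset W'$ by $W' \in \PP_W(V)$ and reordering the disjunction yields exactly the claimed equivalence.

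For (\ref{it:proj subsp dg}), I would invoke \Cref{pr:equalities subspaces}(\ref{it:dg W' sub W}) and split on the sign of $p-q$. If $p \leq q$ then $q' = p$, the hypothesis $p \leq q'$ holds with equality, the alternative $p > q'$ is impossible, and the equivalence collapses to $P_W(V) \subset W'$, i.e.\ $W' \in \PP_W(V)$. If $p > q$ then $q' = q$, so $W' \in \Gr_q(W)$ forces $W' = W$; here $\dg(V,W) = \dg(V,W')$ trivially, and since $\PP_W(V) = \{W\}$ in this case we also have $W' \in \PP_W(V)$, so both sides of the equivalence are true. Note that the auxiliary clause ``$V \perp W$ if $p \leq q$'' in \Cref{pr:equalities subspaces}(\ref{it:dg W' sub W}) sits inside the branch $p > q'$, which under $q' = \min\{p,q\}$ only occurs when $p > q$; there the condition $p \leq q$ fails, so the clause is vacuous and never restricts anything.

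I do not anticipate a genuine obstacle: the mathematical content lives entirely in \Cref{pr:equalities subspaces}, and the corollary is a bookkeeping reduction. The only spot deserving a moment's attention is verifying that, when $p > q$, the degenerate forcing $W' = W$ makes both the distance equality and the membership $W' \in \PP_W(V)$ hold automatically, so that no spurious condition survives the specialization.
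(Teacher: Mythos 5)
Your proposal is correct and follows essentially the same route as the paper: both parts are read off from \Cref{pr:equalities subspaces} together with the translation $P_W(V)\subset W' \Leftrightarrow W'\in\PP_W(V)$ supplied by \Cref{df:PO}, with the case $p>q$ handled by noting $W'=W$. The only cosmetic difference is that the paper gets the $\Leftarrow$ direction of (\ref{it:proj subsp dg}) directly from \eqref{eq:d cases} rather than from \Cref{pr:equalities subspaces}(\ref{it:dg W' sub W}), which changes nothing of substance.
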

\begin{proof}
	(\ref{it:proj subsp dFS}) Follows from \Cref{pr:equalities subspaces}(\ref{it:W' sub W}) and \Cref{df:PO}.
	(\ref{it:proj subsp dg},\,$\Rightarrow$) 
	Follows from \Cref{pr:equalities subspaces}(\ref{it:dg W' sub W}) if $p\leq q$, and otherwise $W'=W$.
	(\ref{it:proj subsp dg},\,$\Leftarrow$) Immediate from \eqref{eq:d cases}.
\end{proof}

\section{Asymmetric geometry of $\Gr(n)$}\label{sc:Asymmetric geometry} 

Throughout this section, $d$ is one of the asymmetric metrics of \Cref{tab:asymmetric metrics}.
We refer to its type as $\ell^2$, $\wedge$ or max cases.

\begin{proposition}\label{pr:extended inequalities}
	 For distances from $V_{(p)}$ to $W_{(q)}$ in $\Gr(n)$, with $V \not\subset W$:%
	 \SELF{otherwise all $d=0$}
	\begin{enumerate}[(i)]
		\item The inequalities in \Cref{pr:ineqs1} hold.
		\item Those in \Cref{pr:ineqs2} hold if $\dim(V\cap W) +2 \leq p \leq q$. 
		All $>$'s become $=$ if $\dim(V\cap W)+1 = p \leq q$ or $(p,q) = (1,0)$.
		If $1\neq p > q$, the middle $>$ in each line holds, and the other $>$ and $\geq$ become $=$.\label{it:new ineqs2}
	\end{enumerate}
\end{proposition}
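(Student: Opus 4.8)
The plan is to reduce everything to the principal-angle formulas recorded in \Cref{tab:asymmetric metrics}, and then invoke the equal-dimension inequalities \Cref{pr:ineqs1} and \Cref{pr:ineqs2} as statements about functions of the principal angles. The key observation is that, by \eqref{eq:d cases}, for $0<p\le q$ each asymmetric metric equals $\Phi_p(\theta_1,\ldots,\theta_p)$ with the \emph{same} formula as on $\Gr_p(n)$ (indeed $d(V,W)=d_p(V,W')$ for any $W'\in\PP_W(V)$, and $(V,W')\in\Gr_p(n)^2$ has exactly the principal angles $\theta_1,\ldots,\theta_p$), whereas for $p>q$ each metric is the constant $\Delta_p=\Phi_p(\tfrac\pi2,\ldots,\tfrac\pi2)$. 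So I would split into the two regimes $p\le q$ and $p>q$, and in each the claimed relations become instances of the equal-dimension inequalities evaluated at a specific angle tuple.

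For $0<p\le q$ I would apply \Cref{pr:ineqs1} and \Cref{pr:ineqs2} to the pair $(V,W')$ in $\Gr_p(n)$ and transport the conclusion to $(V,W)$ via \eqref{eq:d cases}; part (i) is then immediate. For part (ii) the real content is the dictionary between the angle conditions under which the appendix strict inequalities hold or collapse and the geometric conditions stated here. The bridge is that the number of vanishing principal angles of $V_{(p)}$ with $W$ equals $\dim(V\cap W)$ (since $\theta_i=0$ exactly when the $i$-th principal vector lies in $V\cap W$). Hence $V\not\subset W$ means at least one nonzero angle, which makes every metric positive; $\dim(V\cap W)+2\le p$ means at least two nonzero angles; and $\dim(V\cap W)+1=p$ means exactly one nonzero angle. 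I expect \Cref{pr:ineqs2} to state that its strict inequalities are strict precisely when at least two angles are nonzero and degenerate to equalities when at most one is nonzero (the Weierstrass product inequality $\prod(1-x_i)\ge 1-\sum x_i$ is an equality iff at most one $x_i\neq0$, and $\prod\cos\theta_i=\cos\theta_p$ iff $\theta_i=0$ for $i<p$). Feeding this equivalence through the dictionary yields exactly the two $p\le q$ clauses of (ii).

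For $p>q$ with $p\ge2$ (the generic subcase of $1\neq p>q$), every metric equals its diameter $\Delta_p$, so the inequalities reduce to comparisons among the last column of \Cref{tab:asymmetric metrics}, i.e.\ to the $V\perp W$ instance of the appendix inequalities. Reading off that column, the $\wedge$ and max metrics coincide there ($\dFS=\dA=\tfrac\pi2$, $\dcw=\dcd=\sqrt2$, $\dBC=\dpd=1$), because at all angles $\tfrac\pi2$ both $\prod\cos\theta_i$ and $\cos\theta_p$ vanish, collapsing each product formula onto the single-largest-angle formula; meanwhile each $\ell^2$ metric carries an extra factor $\sqrt p$. Thus, for $p\ge2$, precisely the comparison separating the $\ell^2$ entry from the coinciding $\wedge$/max entries stays strict (the ``middle $>$''), while the remaining $>$ and the $\ge$ degenerate to equalities. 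The value $(p,q)=(1,0)$ is the subcase $p=1$ of $p>q$: here $\sqrt p=1$, the $\ell^2$ factor disappears, and all three entries coincide, giving ``all $>$'s become $=$'' in agreement with the corresponding $p\le q$ clause.

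The main obstacle is not any single estimate but the faithful translation of the appendix's angle-based equality criteria into the geometric conditions on $\dim(V\cap W)$ and on $(p,q)$, carried out uniformly across the $\ell^2$, $\wedge$ and max types; in particular one must check at the all-$\tfrac\pi2$ boundary that exactly the intended inequalities remain strict and the rest collapse, which rests on the equality case of the Weierstrass product inequality together with the simultaneous vanishing of $\prod\cos\theta_i$ and $\cos\theta_p$. A secondary point of care is that \Cref{pr:ineqs1} must be invoked for \emph{every} angle configuration, including the all-$\tfrac\pi2$ tuple, so that part (i) holds in both regimes without any extra hypothesis.
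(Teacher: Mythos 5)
Your proposal is correct and follows essentially the same route as the paper: for $0<p\le q$ reduce to the pair $(V,W')$ with $W'\in\PP_W(V)$ and invoke Propositions \ref{pr:ineqs1} and \ref{pr:ineqs2} (using that $V\ne W'\Leftrightarrow V\not\subset W$ and $\dim(V\cap W)=\dim(V\cap W')$), and for $p>q$ compare the diameter values in \Cref{tab:asymmetric metrics}. Your extra bookkeeping at the all-$\tfrac\pi2$ tuple and the $(p,q)=(1,0)$ case matches what the paper leaves implicit.
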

\begin{proof}
	If $p\leq q$ then $d(V,W) = d(V,W')$ for $W'_{(p)} \in \PP_W(V)$, and we can use the original Propositions, noting that $V \neq W' \Leftrightarrow V \not\subset W$, and  $\dim(V\cap W) = \dim(V\cap W')$. If $p>q$ then $d(V,W) = \Delta_{p,d}$, and the result follows by comparing the values in \Cref{tab:asymmetric metrics}.
\end{proof}

This implies the metrics in \Cref{tab:asymmetric metrics} are topologically equivalent, giving $\Gr(n)$ the same natural  topologies $\tau^-$, $\tau^+$ and $\tau$.
These reflect the relations $\subset$, $\supset$ and $=$, respectively, since the formulas in \Cref{tab:asymmetric metrics} show that, though the precise balls \eqref{eq:balls} depend on $d$, for small $r>0$ (Fig.\,\ref{fig:backward balls}):
\SELF{$r<\Delta_p$, otherwise balls have all subspaces of all dim}
\begin{itemize}
	\item $B^-_r(V)$ has $q$-subspaces with $q\leq p$ and almost contained in $V_{(p)}$ (\ie forming small principal angles with $V$);
	\item $B^+_r(V)$ has those with $q \geq p$ and almost containing $V$;
	\SELF{Unions of Schubert varieties of {Ye2016}.
		Closed balls (in sense of $\leq r$, not topological) are $B^-_0[V] = \{U\in \Gr(n):U\subset V\} = \Gr(V)$,
		$B^+_0[V] = \{U\in \Gr(n):U\supset V\}$ and $B_0[V]=\{V\}$}
	\item $B_r(V)$ has those with $q=p$ and almost equal to $V$.
\end{itemize}
We write $\Gr^{\pm}(n)$ if the topology is $\tau^\pm$, $\Gr(n)$ if it is $\tau$ or does not matter.\label{df:Gr pm}

\begin{figure}[t]
	\centering
	\begin{subfigure}[b]{0.46\textwidth}
		\includegraphics[width=\textwidth]{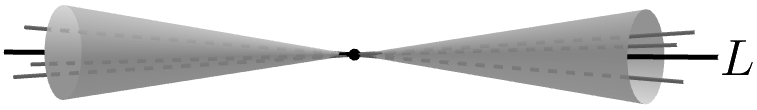}
		\caption{$B^-_r(L)$ has $0$ and lines inside a double cone around $L$. $B^+_r(L)$ would have $\R^3$, lines and planes crossing the cone interior, and $B_r(L)$ only lines.}
		\label{fig:neighborhood line}
	\end{subfigure}
	\qquad
	\begin{subfigure}[b]{0.45\textwidth}
		\includegraphics[width=\textwidth]{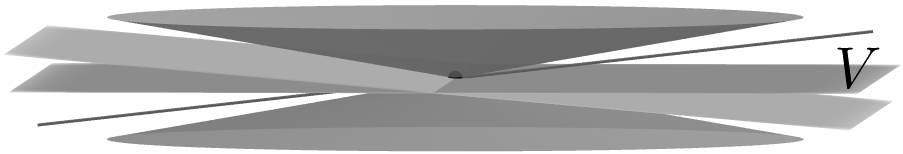}
		\caption{$B^-_r(V)$ has $0$, lines and planes outside a double cone near $V$. $B^+_r(V)$ would have $\R^3$ and planes outside the cone, and $B_r(V)$ only planes.}
		\label{fig:neighborhood plane}
	\end{subfigure}
	\caption{Backward balls of small radius $r$ around a line $L$ and a plane $V$ in $\Gr^-(\R^3)$}
	\label{fig:backward balls}
\end{figure}

The subspace topology 
	\OMIT{for $\tau$ or $\tau^\pm$}
of $\Gr_p(n)$ is its usual one, as in it $d$ is the original metric.
In $\tau$, the $\Gr_p(n)$'s are disconnected from each other (disjoint union topology),
as any small $B_r(V)$ is contained in a $\Gr_p(n)$. 
	\OMIT{since $\hat{d}(V,W) = \Delta_{\max\{p,q\}} \geq 1$ if $p = \dim V \neq \dim W = q$}
But they are deeply interconnected in $\tau^-$ (\resp $\tau^+$),
as $B_r^-(V)$ (\resp $B_r^+(V)$) stretches downwards (\resp upwards) to include smaller (\resp larger) subspaces,
and curves can cross from a $\Gr_p(n)$ to any other (Figs.\,\ref{fig:curve} and \ref{fig:total-grassmannian}).

\begin{figure}[t]
	\centering
	\includegraphics[width=.5\linewidth]{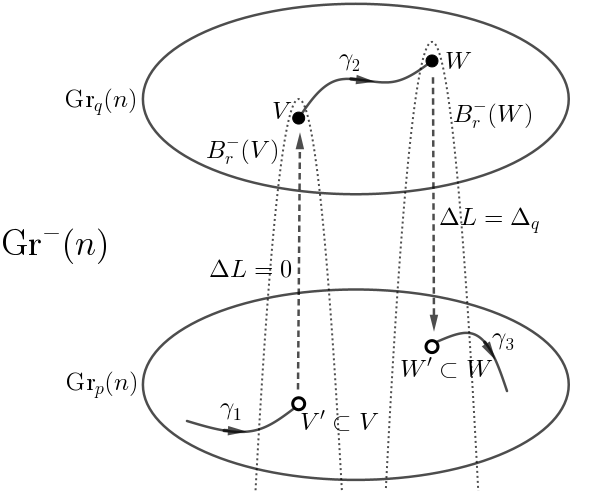}
	\caption{Balls $B_r^-(V)$ stretch downwards to include neighborhoods of $V'\subset V$, and likewise for W, so $\gamma_1$, $\gamma_2$ and $\gamma_3$ form a continuous curve.
	In $\Gr^+(n)$, balls stretch upwards, so closed dots would be below.
	Moving downwards causes a length change $\Delta L = d(W,W') = \Delta_q$, while upwards $\Delta L = d(V',V)=0$ (see \Cref{pr:length partition}).}
	\label{fig:curve}
\end{figure}

If $V\subset W$ then $V \in B_r^-(W)$ and $W \in B_r^+(V)$ for all $r>0$,
and so $\tau^\pm$ are not Hausdorff, but $T_0$, as expected.
These topologies require some attention.
In $\tau^-$, for example, one can easily check that:
\begin{itemize}
	\item any neighborhood of $V$ contains $\Gr(V)$;
	\item the closure of a unit set $\{V\}$ is $\{U \in \Gr(n): V \subset U\}$;
	\SELF{$\{U:U\subset V\}$ in $\Gr^{+}(n)$}
	\item $\Gr_0(n) = \{0\}$ is open, is in all open sets, and its closure is $\Gr(n)$; 
	\SELF{in $\tau^+$, $\Gr_n(n) = \{\F^n\}$ and $\bigcup_{q\leq p} \Gr_q(n)$}
	\item $\Gr_p(n)$ has empty interior if $p\neq 0$, and its closure is $\bigcup_{q\geq p} \Gr_q(n)$; 
	\item the smallest open set containing $\Gr_p(n)$ is $\bigcup_{q\leq p} \Gr_q(n)$.
\end{itemize}
One can obtain similar results for $\tau^+$ directly, or via \Cref{pr:Gr- = Gr+}.

\begin{proposition}
	$\Gr^{\pm}(n)$ are compact, contractible, path connected, simply connected, and Smyth bicomplete.%
	\SELF{$\dim$ is locally const in $\tau$, discontin in $\tau^\pm$.	In $\tau^\pm$, $d(V,W)$ is discontin in $V$ and $W$ (if $V \subsetneq W$, $d(V,V) = d(W,W) = 0$ and $d(W,V)\neq 0$, though $V \in B^-_r(W)$ and $W \in B^+_r(V) \ \forall r$)}
\end{proposition}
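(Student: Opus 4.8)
The plan is to prove all the topological assertions for $\tau^-$, i.e.\ for $\Gr^-(n)$, and then transport them to $\tau^+$ through the orthogonal complement map $\perp\colon V\mapsto V^\perp$, which by \Cref{pr:Gr- = Gr+} is a bijective anti-isometry and hence (as noted after \Cref{df:Isom Transf groups}) a homeomorphism $\Gr^-(n)\cong\Gr^+(n)$ exchanging $\tau^-$ and $\tau^+$; thus compactness, contractibility, path connectedness and simple connectedness pass from one to the other. Smyth bicompleteness is a property of $d$ and $\hat d$ alone, the same for both $\pm$, so it need be shown only once. Finally, a contractible space is automatically path connected and simply connected, so it suffices to establish, for $\Gr^-(n)$, that it is compact, contractible and Smyth bicomplete.

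For compactness I would first check that $\tau^-\subseteq\tau$. Given $V_{(p)}$ and any $q$, by \eqref{eq:d cases} the set $B^-_r(V)\cap\Gr_q(n)=\{U_{(q)}:d(U,V)<r\}$ is, for $q\le p$, a sublevel set of the continuous function $U\mapsto d(U,V)$ on $\Gr_q(n)$, and for $q>p$ it equals $\Gr_q(n)$ or $\emptyset$ according as $\Delta_q<r$ or not; in every case it is open in the usual topology of $\Gr_q(n)$, so $B^-_r(V)$ is open in the disjoint union topology $\tau$. Since $(\Gr(n),\tau)$ is a finite disjoint union of the compact manifolds $\Gr_p(n)$, it is compact, and compactness passes to the coarser topology $\tau^-$ (any $\tau^-$-open cover is a $\tau$-open cover, so it has a finite subcover).

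The heart of the argument is contractibility, and it is also the main obstacle, since the collapse I use is continuous only because $\tau^-$ is non-Hausdorff. I would contract $\Gr^-(n)$ onto the top element $\F^n$ via $H\colon\Gr(n)\times[0,1]\to\Gr(n)$ with $H(V,t)=V$ for $t\in[0,1)$ and $H(V,1)=\F^n$, so $H_0=\mathrm{id}$ and $H_1$ is constant. The crucial point is that $\F^n$ has $\Gr(n)$ as its only $\tau^-$-neighborhood: for small $r$, $\F^n\in B^-_r(W)$ means $d(\F^n,W)<r$, forcing $\F^n\subset W$, i.e.\ $W=\F^n$ and $B^-_r(\F^n)=\Gr(n)$. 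Hence for a $\tau^-$-open set $O$, either $\F^n\in O$, in which case $O=\Gr(n)$ and $H^{-1}(O)$ is the whole product, or $\F^n\notin O$, in which case $H^{-1}(O)=O\times[0,1)$; both are open, so $H$ is continuous. (Equivalently, each slice $t\mapsto H(V,t)$ is $\tau^-$-continuous because $\lim_{t\to1^-}d(V,\F^n)=0$, as $V\subset\F^n$.) Path connectedness and simple connectedness follow immediately from contractibility.

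For Smyth bicompleteness I would fix $\epsilon<\Delta_1$, positive by \Cref{pr:diameter nondecr}. If $V_k$ is left K-Cauchy, choose $N$ with $d(V_k,V_l)<\epsilon$ for $N\le k\le l$; since \eqref{eq:d cases} gives $d(V_k,V_l)=\Delta_{p_k}\ge\Delta_1$ whenever $p_k>p_l$ with $p_k\ge1$, the dimensions $p_k$ are nondecreasing for $k\ge N$, hence eventually constant, equal to some $p$. On that tail $d$ restricts to the genuine metric $d_p$ of $\Gr_p(n)$ and the left K-Cauchy condition becomes ordinary Cauchyness, so completeness of the compact metric space $\Gr_p(n)$ yields $V\in\Gr_p(n)$ with $\hat d(V_k,V)\to0$. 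The right K-Cauchy case is identical after replacing $d(V_k,V_l)$ by $d(V_l,V_k)$ (forcing the $p_k$ nonincreasing), or follows by applying the anti-isometry $\perp$. This gives left and right Smyth completeness, hence Smyth bicompleteness.
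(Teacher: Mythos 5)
Your proposal is correct and follows essentially the same route as the paper: compactness from the finitely many compact $\Gr_p(n)$'s, contractibility via a piecewise-constant homotopy whose jump is continuous only because $\tau^\pm$ is non-Hausdorff (the paper collapses to $0$ at $t=\tfrac12^+$, you collapse to $\F^n$ at $t=1$; these are dual and equally valid), and Smyth bicompleteness from the eventual monotonicity of dimensions forced by $\epsilon<\Delta_1$. The one slip is attributing to \Cref{pr:Gr- = Gr+} that $\perp$ is an anti-isometry --- by \Cref{pr:antiisometry} that holds only in the $\wedge$ and max cases, not for $\ell^2$ metrics --- but since \Cref{pr:Gr- = Gr+} does assert the homeomorphism $\Gr^-(n)\cong\Gr^+(n)$ in all cases, which is all you actually use, and your right K-Cauchy case is already handled by the direct argument rather than by $\perp$, the proof stands.
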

\begin{proof}
	Being finite unions of compact $\Gr_p(n)$'s, they are compact.
	Since $\Gr^{-}(n)$ (or $\Gr^{+}(n)$, using $[0,\frac12)$ and $[\frac12,1]$) has a continuous contraction
	\SELF{in $\tau^-/\tau^+$ strict ineq in smaller/larger subspace} 
	\begin{equation*}
		h(V,t) = \begin{cases}
			V &\text{if } t \in [0,\frac12], \\
			0 &\text{if } t \in (\frac12,1],
		\end{cases}
	\end{equation*}
	it is path and simply connected.
	As $d(V_k,V_l)< \Delta_k \Rightarrow \dim V_k \leq \dim V_l$, any left/right K-Cauchy sequence $V_k$ has nondecreasing/nonincreasing dimension, so it is eventually a Cauchy sequence in some $\Gr_p(n)$, which is complete.
	Thus $\Gr^{\pm}(n)$	are Smyth bicomplete.
\end{proof}

The price for connectedness is that any continuous $f:\Gr^\pm(n) \rightarrow \R$ is constant:
$f(V) = f(0)$ for any $V \in \Gr^\pm(n)$, as $0$ is in any neighborhood of $V$ in $\tau^-$, or vice versa in $\tau^+$.
Semicontinuity allows more interesting functions:
with the upper one, $V \subset W$ only implies $f(V) \leq f(W)$ in $\tau^-$, or $f(V) \geq f(W)$ in $\tau^+$,
and with the lower one it is the opposite.

\subsection{Isometries and anti-isometries}

Here we find the isometry and transformation groups of $\Gr(n)$.

\begin{proposition}\label{pr:d perp}
	In the $\wedge$ or max cases, $d(V,W) = d(W^\perp,V^\perp)$ for any $V_{(p)},W_{(q)} \in \Gr(n)$. In the $\ell^2$ case, if, and only if, $p \leq q$ or $p + q = n$.
\end{proposition}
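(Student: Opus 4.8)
The plan is to compare the two distances through the principal angles involved, using the CS (cosine--sine) decomposition to pin down exactly how the principal angles of $V,W$ relate to those of $W^\perp,V^\perp$. Concretely, I would decompose $\F^n$ orthogonally into $V\cap W$, $V\cap W^\perp$, $V^\perp\cap W$, $V^\perp\cap W^\perp$, and $k$ two-dimensional planes $P_1,\dots,P_k$ on each of which $V$ and $W$ restrict to lines meeting at a principal angle $\theta_i\in(0,\tfrac\pi2)$. The one geometric input needed is that, inside each $P_i$, the lines $V^\perp\cap P_i$ and $W^\perp\cap P_i$ also meet at angle $\theta_i$. Writing $a=\dim(V\cap W)$, $b=\dim(V\cap W^\perp)$ and $d_0=\dim(V^\perp\cap W^\perp)$, this shows that the principal angles of $V,W$ are $0$ (multiplicity $a$), $\theta_1,\dots,\theta_k$, and $\tfrac\pi2$ (multiplicity $b$), whereas those of $W^\perp,V^\perp$ are the \emph{same} angles $\theta_1,\dots,\theta_k$ and $\tfrac\pi2$ (again multiplicity $b$, now arising from $V\cap W^\perp\subset W^\perp$), differing only in the number of zeros ($d_0$ in place of $a$).

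With this in hand I would split on dimensions. When $p\le q$ we have $\dim W^\perp=n-q\le n-p=\dim V^\perp$, so both distances fall in the ``smaller-to-larger'' case of \eqref{eq:d cases}: $d(V,W)=\Phi_p$ of the first angle list and $d(W^\perp,V^\perp)=\Phi_{n-q}$ of the second. Since the two sorted lists agree after deleting the extra zeros, the regularity condition \Cref{df:regular family}(\ref{it:Phi zeros}) collapses both to $\Phi_{k+b}(\theta_1,\dots,\theta_k,\tfrac\pi2,\dots,\tfrac\pi2)$, giving $d(V,W)=d(W^\perp,V^\perp)$. This step uses nothing about the type, so it holds for all three families ($\ell^2$, $\wedge$, max) whenever $p\le q$; in particular it absorbs the degenerate cases $p=0$ and $q=n$, where both sides vanish.

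When $p>q$ the angle analysis is no longer needed: \eqref{eq:d cases} gives $d(V,W)=\Delta_p$, and since $\dim W^\perp=n-q>n-p=\dim V^\perp$ it also gives $d(W^\perp,V^\perp)=\Delta_{n-q}$, so everything reduces to comparing diameters. Reading off \Cref{tab:asymmetric metrics}, in the $\wedge$ and max cases $\Delta_j$ is the same constant for every $j\ge 1$, and here $p\ge 1$ and $n-q\ge 1$, so $\Delta_p=\Delta_{n-q}$ and the equality holds; together with the $p\le q$ case this proves the unconditional statement. In the $\ell^2$ case $\Delta_j$ ($=\tfrac\pi2\sqrt j,\ \sqrt{2j},\ \sqrt j$) is \emph{strictly} increasing in $j$ (cf.\ \Cref{pr:diameter nondecr}), so $\Delta_p=\Delta_{n-q}$ forces $p=n-q$, i.e.\ $p+q=n$; as both sides are then constants independent of $V,W$, equality holds for all such pairs exactly when $p+q=n$, and combined with the $p\le q$ case this gives the stated ``if and only if''. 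The main obstacle is the first paragraph: justifying cleanly that passing to orthogonal complements preserves the nonzero, non-right principal angles and the multiplicity of the right angles. I expect to settle this via the CS decomposition, citing the standard principal-angle references \cite{Bjorck1973,Stewart1990}, after which the remainder is bookkeeping plus \Cref{df:regular family}(\ref{it:Phi zeros}) and the diameter column of \Cref{tab:asymmetric metrics}.
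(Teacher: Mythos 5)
Your proof is correct and follows essentially the same route as the paper: reduce the case $p\le q$ to the fact that the nonzero principal angles of $V^\perp,W^\perp$ coincide with those of $V,W$ (which you derive via the CS decomposition, where the paper simply cites \cite{Knyazev2007}), and settle the case $p>q$ by comparing $\Delta_p$ with $\Delta_{n-q}$, using constancy of the diameters in the $\wedge$/max cases and strict monotonicity in the $\ell^2$ case. The extra bookkeeping of multiplicities in your first paragraph is sound and only makes explicit what the paper leaves to the cited reference.
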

\begin{proof}
	If $p=0$, the result follows from \Cref{pr:d min max}(\ref{it:d=0}).
	If $0 \neq p \leq q$ then $\dim W^\perp \leq \dim V^\perp$, and it follows from the \Cref{tab:asymmetric metrics} formulas,
	as the nonzero principal angles of $V^\perp$ and $W^\perp$ equal those of $V$ and $W$ \cite{Knyazev2007}.
	If $p>q$,  we have in the $\wedge$ and max cases $d(V,W) = \Delta_p = \Delta_{n-q} = d(W^\perp,V^\perp)$, but in the $\ell^2$ case if, and only if, $p = n - q$.
\end{proof}

\begin{corollary}\label{pr:antiisometry}
	In the $\wedge$ or max cases, the map $\perp\,:\Gr(n) \rightarrow \Gr(n)$, $\perp\!(V)=V^\perp$, is a bijective anti-isometry.
	In the $\ell^2$ case, only for $n\leq 1$.%
		\SELF{$n=0$: $\perp$ is also isometry \\
		$n=1$: $\perp$ maps $0 \leftrightarrow \F$}
\end{corollary}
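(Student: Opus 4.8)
The plan is to deduce \Cref{pr:antiisometry} directly from \Cref{pr:d perp}, which already supplies the essential distance-preservation computation; the corollary is really just a matter of checking the definitions in \Cref{df:Isom Transf groups} and handling the asymmetric ($\ell^2$) case. First I would recall that an anti-isometry is a map $f$ with $d(f(V),f(W)) = d(W,V)$ for all $V,W$. So I want to show $d(V^\perp,W^\perp) = d(W,V)$. \Cref{pr:d perp} states $d(V,W) = d(W^\perp,V^\perp)$; renaming the pair (replace $V$ by $W$ and $W$ by $V$) gives $d(W,V) = d(V^\perp,W^\perp)$, which is exactly the anti-isometry condition for the map $\perp$. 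In the $\wedge$ or max cases this identity holds unconditionally, so $\perp$ satisfies the defining equation for all $V,W$.

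Next I would verify that $\perp$ is a bijection: the map $V \mapsto V^\perp$ is its own inverse (as $(V^\perp)^\perp = V$), hence bijective on $\Gr(n)$. Combined with the anti-isometry equation, this places $\perp$ in $\Transf(\Gr(n))$ as a bijective anti-isometry, proving the first sentence.

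For the $\ell^2$ case, \Cref{pr:d perp} tells us $d(V,W) = d(W^\perp,V^\perp)$ holds \emph{only} when $p \leq q$ or $p+q = n$, so the anti-isometry identity $d(W,V) = d(V^\perp,W^\perp)$ can fail in general. I would argue that $\perp$ is an anti-isometry in the $\ell^2$ case precisely when the required identity holds for \emph{all} pairs, and then show this forces $n \leq 1$. The cleanest route is to exhibit a failing pair whenever $n \geq 2$: take a line and a generic plane, or any $V_{(p)}, W_{(q)}$ with $p > q$ and $p + q \neq n$, and invoke the ``if and only if'' of \Cref{pr:d perp} to conclude the identity is violated, so $\perp$ is not an anti-isometry. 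For $n \leq 1$ one checks the two tiny Grassmannians by hand (the marginal note already does this: for $n=0$ the space is a point, and for $n=1$ the map swaps $0 \leftrightarrow \F$), confirming $\perp$ is an anti-isometry there.

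The only mildly delicate point is the logical bookkeeping of the quantifier in the $\ell^2$ case: being an anti-isometry requires the identity for \emph{every} pair, whereas \Cref{pr:d perp}'s condition ``$p \leq q$ or $p+q=n$'' is pair-dependent. The main step is therefore to confirm that whenever $n \geq 2$ there exists at least one pair with $p > q$ and $p + q \neq n$ (e.g.\ $p = 2$, $q = 0$ when $n \geq 3$, or $p=2, q=0$ versus $p+q=2\neq n$ suitably chosen), so the condition genuinely fails for some pair and $\perp$ cannot be a global anti-isometry. I expect this case-exhibition to be the only real content; everything else is an immediate rephrasing of \Cref{pr:d perp} together with the involutivity of $\perp$.
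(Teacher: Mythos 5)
Your proposal is correct and follows essentially the same route as the paper, which states this as an unproved corollary of \Cref{pr:d perp}: renaming the pair in that proposition gives the anti-isometry identity verbatim, involutivity of $\perp$ gives bijectivity, and the ``only if'' of \Cref{pr:d perp} supplies a failing pair in the $\ell^2$ case. For the witness when $n\geq 2$ the simplest uniform choice is $(p,q)=(1,0)$, which has $p>q$ and $p+q=1\neq n$ for every $n\geq 2$ (your ``line and generic plane'' as written has $p<q$, but your general criterion is the right one).
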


The following holds even in the $\ell^2$ case, as topologies are the same.

\begin{corollary}\label{pr:Gr- = Gr+}
	$\perp$ is a homeomorphism $\Gr^-(n) \cong \Gr^+(n)$ and a self-homeomorphism of $\Gr(n)$, 	with $\Gr_p(n) \cong \Gr_{n-p}(n)$ as usual.
\end{corollary}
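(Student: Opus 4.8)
The plan is to reduce the statement to the behaviour of a bijective anti-isometry and then transport the conclusion across metrics, since all the asymmetric metrics of \Cref{tab:asymmetric metrics} induce the same $\tau^-$, $\tau^+$ and $\tau$. As the claim is purely topological, by \Cref{pr:extended inequalities} it suffices to establish it for one convenient $d$. I would therefore pick a $\wedge$ or max metric, for which \Cref{pr:antiisometry} guarantees that $\perp$ is a bijective anti-isometry; note also that $\perp$ is an involution, $(V^\perp)^\perp = V$, so it is its own inverse. The homeomorphism claims then follow from the general remarks after \Cref{df:Isom Transf groups}, and the plan is to make those remarks explicit in this setting.

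The key computation is how an anti-isometry $f$ moves the balls of \eqref{eq:balls}. From $d(f(V),f(W)) = d(W,V)$ one gets
\[
f\big(B^+_r(V)\big) = B^-_r(f(V)) \quad\text{and}\quad f\big(B^-_r(V)\big) = B^+_r(f(V)).
\]
Applying this to $f = \perp$, and using that an involution satisfies $\perp^{-1}(S) = \perp(S)$, gives $\perp^{-1}\big(B^+_r(V)\big) = B^-_r(V^\perp)$, so preimages of subbasic $\tau^+$-open sets are $\tau^-$-open; hence $\perp : \Gr^-(n) \to \Gr^+(n)$ is continuous, and the symmetric computation shows its inverse (again $\perp$) is continuous, so it is a homeomorphism swapping the two topologies. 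For the self-homeomorphism of $\Gr(n)$ I would note that an anti-isometry preserves the max-symmetrized metric, $\hat d(V^\perp,W^\perp) = \hat d(V,W)$, so $\perp$ is an isometry of $\hat d$ and thus a self-homeomorphism of its metric topology $\tau$.

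For the dimensionwise statement, I would restrict $\perp$ to each stratum: it carries $\Gr_p(n)$ bijectively onto $\Gr_{n-p}(n)$, and since the subspace topology on each $\Gr_p(n)$ is the usual one, this restriction is the classical homeomorphism $\Gr_p(n) \cong \Gr_{n-p}(n)$ given by orthogonal complementation. The one point needing care --- more a logical subtlety than a computational obstacle --- is the very first reduction: in the $\ell^2$ case $\perp$ is \emph{not} an anti-isometry for $n \geq 2$ (by \Cref{pr:antiisometry}, which allows it only when $n \leq 1$), so the homeomorphism there is not read off from \Cref{df:Isom Transf groups} directly but is only inherited, because by \Cref{pr:extended inequalities} the $\ell^2$ metrics carry the same $\tau^\pm$ and $\tau$ as the $\wedge$ and max ones.
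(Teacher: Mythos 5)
Your proposal is correct and takes essentially the same route as the paper: the paper deduces the corollary from \Cref{pr:antiisometry} combined with the general observation after \Cref{df:Isom Transf groups} that a bijective anti-isometry swaps $\tau^+$ and $\tau^-$, and handles the $\ell^2$ case exactly as you do, by noting that all the asymmetric metrics induce the same topologies. Your explicit ball computation and the remark on the strata merely fill in details the paper leaves implicit.
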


So, $\Gr(n)$ has its usual symmetry, and the asymmetric $\Gr^{\pm}(n)$ form together a symmetric bitopological space (Fig.\,\ref{fig:total-grassmannian}).
Note that $\Gr(\infty)$ has no such symmetries, as the map $\perp$ is not defined.

\begin{figure}[t]
	\centering
	\includegraphics[width=\linewidth]{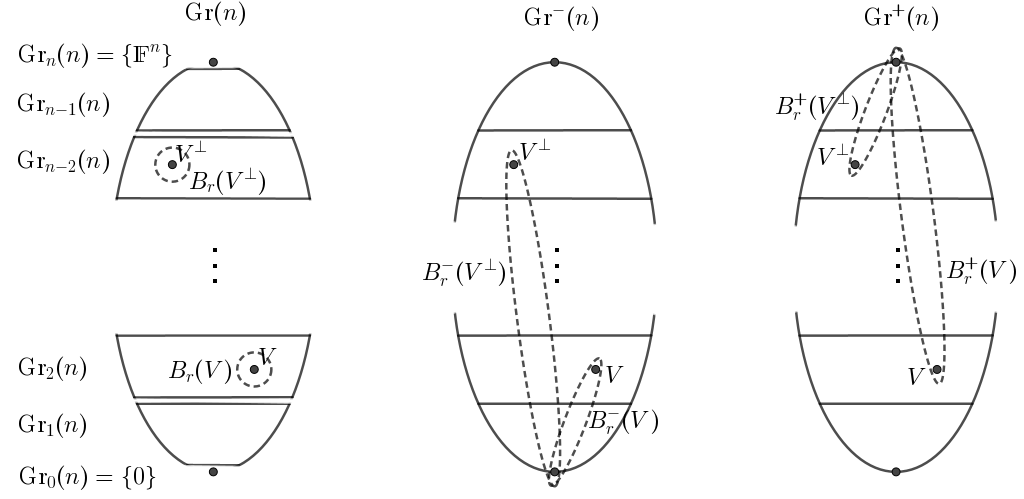}
	\caption{Topologies of the Total Grassmannian. The map $V\mapsto V^\perp$ is a homeomorphism of $\Gr(n)$, and flips the asymmetric $\Gr^{-}(n) \leftrightarrow \Gr^{+}(n)$, so the pair remains symmetric. In $\Gr^\pm(n)$, the $\Gr_p(n)$'s are much more interconnected than shown.}
	\label{fig:total-grassmannian}
\end{figure}

As we show next, the image of a subspace by an isometry of $\Gr(n)$ is formed by the images of its lines.
So, while separately each $\Gr_p(n)$ has its own isometry group,
	\CITE{Wolf2011}
isometries of $\Gr(n)$ are determined by those of the projective space $\PR(\F^n) = \Gr_1(n)$,
and anti-isometries (if any) are compositions of $\perp$ and isometries.
Recall \cite{Wolf2011} that%
\footnote{$\Isom(\PR(\F^n))$ is the same for all metrics of \Cref{tab:metrics Gpn}, as there is a single principal angle.}
$\Isom(\PR(\R^n)) \cong PO(n) = O(n)/\{\pm \mathds{1}\}$ ($\cong SO(n)$ for odd $n$),
\CITE{\url{https://en.wikipedia.org/wiki/Projective_orthogonal_group}}%
and $\Isom(\PR(\C^n))$ is generated by $PU(n) = U(n)/U(1)$
\SELF{$\cong PSU(n) \cong SU(n)/\Z_n$}
and complex conjugation.
\CITE{p.310, (9.3.5) para $q=1$:\\
	$I(M) = \Isom(\PR(\C^n))$,  \\
	$I_0(M) = U(n)/U(1)$ (p.309), \\
	$\alpha = $ complex conj (p.307). \\
	Há erro no caso $2q=n$, pois se $n=2$ temos $\beta \in \alpha I_0$.
	De fato, $\Isom(\C\PR^1) = \Isom(S^2) = O(2)$ e $PU(2) = PSU(2) = SU(2)/\Z_2 = SO(3)$, e nessa identificação $\alpha$ é reflexão de $S^2$ por um plano e $\beta = -\mathds{1}$}

\begin{proposition}
	$\Isom(\Gr(n)) \cong \Isom(\PR(\F^n))$, and
	\[ \Transf(\Gr(n)) \cong 
	\begin{cases}
		\{\mathds{1},\perp\} &\text{if } n=1, \\
		\Isom(\PR(\F^n)) &\text{if } n\geq 2 \ (\ell^2 \text{ case)}, \\
		\Isom(\PR(\F^n)) \rtimes \{\mathds{1},\perp\} &\text{if } n\geq 2 \ (\wedge \text{ or max cases)}.
	\end{cases} \]
\end{proposition}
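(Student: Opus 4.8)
The plan is to reduce everything to the action on lines. First I would observe that, by \Cref{pr:d min max}(\ref{it:d=0}), we have $d(V,W)=0 \Leftrightarrow V\subset W$, so any bijective isometry $f$ preserves $\subset$ in both directions (as $f^{-1}$ is again an isometry). Hence $f$ is an automorphism of the poset $(\Gr(n),\subset)$: it fixes the least element $0$, preserves rank, so $f(\Gr_p(n))=\Gr_p(n)$, and in particular restricts to a bijection of the atoms $\Gr_1(n)=\PR(\F^n)$. Since $d$ agrees with the original metric $d_1$ on pairs of lines, this restriction $f|_{\Gr_1(n)}$ lies in $\Isom(\PR(\F^n))$.

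Next I would show $f$ is determined by $f|_{\Gr_1(n)}$. Because $f$ preserves $\subset$ and dimension, and $f^{-1}$ does too, $f$ carries the lines contained in $V$ bijectively onto the lines contained in $f(V)$; as these span the respective subspaces, $f(V)=\Span\{f(L):L\subset V,\ \dim L=1\}$. Thus the restriction map $\Phi\colon\Isom(\Gr(n))\to\Isom(\PR(\F^n))$, $f\mapsto f|_{\Gr_1(n)}$, is an injective group homomorphism. For surjectivity I would use that every element of $\Isom(\PR(\F^n))$ is induced by a map $A\colon\F^n\to\F^n$ preserving the inner product up to conjugation (orthogonal or unitary, in $\C$ possibly composed with complex conjugation); such $A$ preserve dimension, $\subset$ and all principal angles, hence preserve every metric of \Cref{tab:asymmetric metrics}, giving an isometry $\hat A\in\Isom(\Gr(n))$ restricting to the prescribed map. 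Therefore $\Phi$ is an isomorphism, proving $\Isom(\Gr(n))\cong\Isom(\PR(\F^n))$.

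For $\Transf(\Gr(n))$ I would analyze anti-isometries, which by the same reasoning reverse $\subset$ and satisfy $f(\Gr_p(n))=\Gr_{n-p}(n)$. In the $\wedge$ or max cases with $n\geq2$, $\perp$ is a bijective anti-isometry (\Cref{pr:antiisometry}); then for any anti-isometry $f$ the composite $\perp\circ f$ is an isometry, so $f=\perp\circ(\perp\circ f)\in\perp\cdot\Isom(\Gr(n))$, and conversely $\perp$ composed with any isometry is an anti-isometry. Since $\perp^2=\Id$ and $\perp$, reversing dimensions, is not an isometry for $n\geq2$, I get $\Transf(\Gr(n))=\Isom(\Gr(n))\sqcup\perp\cdot\Isom(\Gr(n))\cong\Isom(\PR(\F^n))\rtimes\{\Id,\perp\}$ (in fact a direct product, since every $\hat A$ commutes with $\perp$ because orthogonal/unitary maps and conjugation satisfy $A(W^\perp)=(AW)^\perp$). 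The case $n=1$ is degenerate: $\Isom(\PR(\F^1))$ is trivial while $\perp$ (an anti-isometry for all types when $n\leq1$) swaps $0$ and $\F$, giving $\Transf(\Gr(1))=\{\Id,\perp\}$.

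The main obstacle is the $\ell^2$ case with $n\geq2$, where \Cref{pr:antiisometry} says $\perp$ is \emph{not} an anti-isometry and I must rule out anti-isometries altogether. I would argue by contradiction: given an anti-isometry $f$, the bijection $g:=\perp\circ f|_{\Gr_1(n)}\colon\PR(\F^n)\to\PR(\F^n)$ is an isometry, because the single nonzero principal angle between two hyperplanes equals that between their normal lines, so $d_{n-1}(fL_1,fL_2)=d_1((fL_1)^\perp,(fL_2)^\perp)$ by regularity (\Cref{df:regular family}(\ref{it:Phi zeros})), while the anti-isometry property forces $d_1((fL_1)^\perp,(fL_2)^\perp)=d_1(L_1,L_2)$. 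Writing $g=\hat A|_{\Gr_1(n)}$, so $fL=(AL)^\perp$, and extending through inclusion-reversal as above yields $f(V)=A(V^\perp)=(\hat A(V))^\perp$, i.e.\ $f=\perp\circ\hat A$. But then $d(fV,fW)=d(V^\perp,W^\perp)$, which by \Cref{pr:d perp} equals $d(W,V)$ only when $\dim W\leq\dim V$ or $\dim V+\dim W=n$; taking $V=0$ and $W$ a line gives $d(V^\perp,W^\perp)=\Delta_n\neq\Delta_1=d(W,V)$ for $n\geq2$, so $f$ is not an anti-isometry, a contradiction. Hence $\Transf(\Gr(n))=\Isom(\Gr(n))\cong\Isom(\PR(\F^n))$ in this case.
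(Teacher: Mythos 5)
Your proposal is correct and follows essentially the same route as the paper: reduce isometries to $\Isom(\PR(\F^n))$ via preservation of the partial order $\subset$ and of principal angles, obtain $\Transf$ by composing anti-isometries with $\perp$ when it is one, and kill anti-isometries in the $\ell^2$ case with the computation $d(f(0),f(L))=\Delta_n\neq\Delta_1=d(L,0)$. Your detour through $f=\perp\circ\hat A$ in the $\ell^2$ exclusion is unnecessary (order-reversal alone already gives $f(0)=\F^n$ and $f(L)\in\Gr_{n-1}(n)$, which is all the final contradiction uses), and your observations that the semidirect product is in fact direct and that surjectivity follows from the classification of projective isometries are correct refinements of what the paper states more tersely.
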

\begin{proof}
	Any $f\in \Isom(\Gr(n))$ preserves the partial order $\subset$, so it preserves dimensions and
	restricts to $f_1\in \Isom(\Gr_1(n))$.
	And any such $f_1$ extends uniquely to an $f:\Gr(n) \rightarrow \Gr(n)$ preserving $\subset$, and as $f_1$ preserves angles between lines, $f$ preserves principal angles between subspaces, and so $f\in \Isom(\Gr(n))$.
	Clearly, $f \leftrightarrow f_1$ is an isomorphism.

	When $\perp$ is an anti-isometry, 
		\OMIT{\ref{pr:antiisometry}}
	composing it with another one gives an isometry, and vice versa, so 
	$\Transf(\Gr(n)) \cong \Isom(\Gr(n)) \rtimes \{\mathds{1},\perp\}$.
	If $n=1$, $\Isom(\Gr(n)) = \{\mathds{1}\}$.
		\OMIT{Maps in $\Gr(1)=\{0,\F\}$ are $\mathds{1}$ and $\perp$}
	If $n\geq 2$, the $\ell^2$ case has no anti-isometry $f$, as it would reverse $\subset$, so $f(0) = \F^n$, $f(L) \in \Gr_{n-1}(n)$ for $L \in \Gr_1(n)$, and $d(f(0),f(L)) = \Delta_n \neq \Delta_1  = d(L,0)$.
\end{proof}

The spaces $\D^\pm$ of \Cref{ex:N} are related to $\Gr^{\pm}(n)$ by:

\begin{proposition}\label{pr:quotient}
	$\Gr^{\pm}(n)/\Isom(\Gr(n))$ are homeomorphic to $\D^\pm$.
\end{proposition}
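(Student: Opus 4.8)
The plan is to identify the quotient map explicitly and show it induces the homeomorphism with $\D^\pm$. The natural candidate is the dimension map $\dim:\Gr(n)\rightarrow\D=\{0,1,\ldots,n\}$, sending $V_{(p)}\mapsto p$. First I would verify that $\dim$ is constant on orbits of $\Isom(\Gr(n))$: by the preceding Proposition, every isometry preserves the partial order $\subset$, hence preserves dimensions, so each orbit lies in a single $\Gr_p(n)$. Conversely, I must show the action is transitive on each $\Gr_p(n)$, i.e.\ any two $p$-subspaces are related by an isometry. Since $\Isom(\Gr(n))\cong\Isom(\PR(\F^n))$ and the latter contains $PO(n)$ (resp.\ is generated by $PU(n)$ and conjugation), the induced unitary/orthogonal action on $\Gr_p(n)$ is transitive. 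Thus the orbits of $\Isom(\Gr(n))$ are exactly the sets $\Gr_p(n)$, and $\dim$ descends to a bijection $\Gr^{\pm}(n)/\Isom(\Gr(n))\rightarrow\D$.

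Next I would check this bijection is a homeomorphism for the relevant topologies. The quotient $\Gr^{\pm}(n)/\Isom(\Gr(n))$ carries the quotient topology from $\tau^\pm$, and $\D^\pm$ carries $\tau^\pm$ from the asymmetric metric of \Cref{ex:N}. The key observation is that the asymmetric metric on $\Gr(n)$ satisfies $d(V,W)=0\Leftrightarrow V\subset W$ by \Cref{pr:d min max}(\ref{it:d=0}), so in particular $d(V,W)=0\Rightarrow\dim V\leq\dim W$, matching the metric $d(p,q)=0\Leftrightarrow p\leq q$ on $\D$. I would then recall from the discussion after \Cref{pr:extended inequalities} that the open sets of $\Gr^-(n)$ are exactly the downward-closed unions $\bigcup_{q\leq p}\Gr_q(n)$ (the smallest open set containing $\Gr_p(n)$), which correspond precisely to the open sets $\{0,1,\ldots,p\}$ of $\D^-$; symmetrically for $\tau^+$ and $\{p,\ldots,n\}$. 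Since these open sets are unions of $\Gr_q(n)$'s, they are saturated for the $\dim$-fibration, so the quotient topology on $\Gr^{-}(n)/\Isom(\Gr(n))$ has exactly the downward-closed sets as opens, matching $\D^-$; the $\tau^+$ case is identical with upward-closed sets.

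The main obstacle I anticipate is confirming transitivity of the isometry action on each $\Gr_p(n)$ cleanly, since isometries of $\Gr(n)$ are only described via their restriction to lines. One must argue that the orthogonal (or unitary) transformation realizing $f_1\in\Isom(\PR(\F^n))$, extended to all subspaces as in the previous proof, acts transitively on $p$-subspaces --- this reduces to the standard fact that $O(n)$ (resp.\ $U(n)$) acts transitively on $\Gr_p(n)$, which is immediate by mapping one orthonormal basis to another. Once transitivity is in hand, identifying the open sets of the quotient with those of $\D^\pm$ is routine, as the relevant open sets in both spaces are already exhibited in \Cref{ex:N} and in the topological discussion following \Cref{pr:extended inequalities}. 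The whole argument is essentially a matter of matching two explicitly known lattices of open sets under the dimension bijection.
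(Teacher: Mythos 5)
Your proposal is correct and follows essentially the same route as the paper: identify the orbits of $\Isom(\Gr(n))$ as the $\Gr_p(n)$'s (dimension preservation plus transitivity via extending line isometries), then check that $\dim$ descends to a homeomorphism by matching the saturated open sets with the open sets of $\D^\pm$. One phrase should be tightened --- the open sets of $\Gr^-(n)$ are \emph{not} exactly the downward-closed unions $\bigcup_{q\leq p}\Gr_q(n)$ (the balls $B^-_r(V)$ are open too); what you mean, and what your next sentence correctly uses, is that these are exactly the \emph{saturated} open sets.
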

\begin{proof}
	As isometries preserve dimensions, any orbit of $\Isom(\Gr(n))$ is in a $\Gr_p(n)$.
	And given $V,W \in \Gr_p(n)$, taking $p$ orthogonal lines in each we can find $f_1\in \Isom(\Gr_1(n))$ mapping them, which extends, as above, to $f\in \Isom(\Gr(n))$ with $f(V)=W$.
	So the orbits are the $\Gr_p(n)$'s.
	
	A quotient map $\pi:\Gr^{-}(n) \rightarrow \D^-$ is given by $\pi(V) = \dim V$,
		\OMIT{with $\pi^{-1}(p) = \Gr_p(n)$}
	since $\pi^{-1}(\{0,1,\ldots,p\}) = \bigcup_{q\leq p} \Gr_q(n)$ 
	and $\pi(B^-_r(V)) = \{0,1,\ldots, \dim V\}$ are open.
	For $\Gr^{+}(n)$ it is similar.
\end{proof}

\subsection{Rectifiable curves in $\Gr^{\pm}(n)$}

Let $\gamma:I\rightarrow \Gr^{\pm}(n)$ be a rectifiable curve.

\begin{definition}
	$t_0\in I$ is a \emph{critical point} of $\gamma$ if there is no $\delta>0$ such that $\gamma \lvert_{I \cap (t_0-\delta,t_0+\delta)}$ is a curve in some $\Gr_p(n)$.
\end{definition}

\begin{proposition}
	$\gamma$ has at most finitely many critical points.
\end{proposition}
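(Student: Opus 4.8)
The plan is to track only the integer quantity $\dim\gamma(t)\in\{0,1,\ldots,n\}$. By the very definition of a critical point, the non-critical parameters form an open subset of $I$ on which $\gamma$ is locally contained in a single $\Gr_p(n)$, so $\dim\gamma$ is locally constant there; the critical points are thus exactly the parameters near which $\dim\gamma$ is unstable. The engine of the argument is the cost of downward jumps: whenever $s<s'$ with $\dim\gamma(s)>\dim\gamma(s')$, \eqref{eq:d cases} gives $d(\gamma(s),\gamma(s'))=\Delta_{\dim\gamma(s)}\geq\Delta_1>0$, using that $\Delta_1=\Phi_1(\tfrac\pi2)>0$ and $\Delta_{\dim\gamma(s)}\geq\Delta_1$ by \Cref{pr:diameter nondecr}. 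Hence for any finite sample $t_0<\cdots<t_m$, writing $d^\downarrow$ for the number of strict descents $\dim\gamma(t_i)>\dim\gamma(t_{i+1})$ in the sampled sequence, definition \eqref{eq:length} forces $\Delta_1\,d^\downarrow\leq\sum_i d(\gamma(t_i),\gamma(t_{i+1}))\leq L(\gamma)$. Since $\gamma$ is rectifiable ($L(\gamma)<\infty$), this caps $d^\downarrow\leq L(\gamma)/\Delta_1$ uniformly over all samples.

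Next I would control the \emph{ascents}, which cost nothing for the length but are nevertheless limited because $\dim\gamma$ is a bounded walk on $\{0,\ldots,n\}$. For a sample, let $U$ and $W$ be the total up- and down-amounts of the sequence $\dim\gamma(t_0),\ldots,\dim\gamma(t_m)$. The net change satisfies $U-W=\dim\gamma(t_m)-\dim\gamma(t_0)\in[-n,n]$, and since each descent drops by at most $n$ we get $W\leq n\,d^\downarrow$, whence $U=W+(\dim\gamma(t_m)-\dim\gamma(t_0))\leq n\,d^\downarrow+n$. As every strict step moves by at least $1$, the number $d^\uparrow$ of strict ascents is at most $U$, so the total number of strict dimension changes obeys $d^\uparrow+d^\downarrow\leq(n+1)\,d^\downarrow+n\leq(n+1)L(\gamma)/\Delta_1+n=:B$, a bound uniform over all finite samples.

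Finally I would derive a contradiction from too many critical points. If there were $B+1$ critical points $t_1<\cdots<t_{B+1}$, then around each $t_j$ the quantity $\dim\gamma$ is non-constant on every neighborhood, so I can pick $u_j<v_j$ with $\dim\gamma(u_j)\neq\dim\gamma(v_j)$ inside intervals chosen small enough to be pairwise disjoint and correctly ordered. Sampling at all the $u_j,v_j$ then produces at least $B+1$ strict dimension changes, contradicting the bound $B$; hence $\gamma$ has at most $B$ critical points. The same argument applies verbatim in $\Gr^-(n)$ and $\Gr^+(n)$, as it uses only \eqref{eq:d cases} and the local constancy that defines criticality, not the choice of $\tau^\pm$, and it handles a possible accumulation of critical points automatically (one simply selects $B+1$ of them, however distributed). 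I expect the only delicate point to be the bounded-walk estimate that limits the ascents: descents are automatically penalized in length, whereas the cheap ascents must be bounded indirectly through $0\leq\dim\gamma(t)\leq n$, which is exactly what converts ``finite length'' into ``finitely many dimension changes.''
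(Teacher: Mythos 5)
Your proof is correct and rests on the same engine as the paper's: by \eqref{eq:d cases} every downward dimension jump costs at least $\Delta_1>0$ of length, so rectifiability bounds the descents, and the boundedness of $\dim\gamma(t)\in\{0,\dots,n\}$ then controls the ascents. The paper runs this as a contradiction (infinitely many critical points yield an infinite ordered sequence of dimension changes, from which infinitely many descents are extracted, making $L(\gamma)=\infty$), whereas you make it quantitative with the explicit bound $B=(n+1)L(\gamma)/\Delta_1+n$ on the number of critical points — a harmless and slightly more informative packaging of the same idea.
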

\begin{proof}
	Assuming otherwise, we can find $t_1 < t_2 < \cdots$ in $I$ such that $\dim \gamma(t_i) \neq \dim \gamma(t_{i+1})$.
	Passing to a subsequence, we can assume $p_{2i} = \dim \gamma(t_{2i}) > \dim \gamma(t_{2i+1})$.
		\OMIT{as $\dim < \infty$}
	Contradicting rectifiability, \eqref{eq:length} and \eqref{eq:d cases} give
	\[ L(\gamma) \geq \sum_{i=1}^\infty d(\gamma(t_{2i}),\gamma(t_{2i+1})) = \sum_{i=1}^\infty \Delta_{p_{2i}} = \infty. \qedhere \]
\end{proof}

Let $t_0\in I$.
If $t_0\neq\inf I$ there is $\delta>0$ such that $\gamma\lvert_{(t_0-\delta,t_0)}$ has no critical points, so it is a rectifiable curve in some $\Gr_p(n)$.
As $\Gr_p(n)$ is complete and Hausdorff,
	\OMIT{so Hausdorff complete}
$\lim_{t\rightarrow t_0^-} \gamma(t)$ is well defined.
Likewise, $\lim_{t\rightarrow t_0^+} \gamma(t)$ is well defined for $t_0\neq \sup I$.
	\OMIT{in some other $\Gr_q(n)$}
Continuity requires
\begin{equation}\label{eq:continuity}
	\lim_{t\rightarrow t_0^\pm} \gamma(t) \subset \gamma(t_0) \text{ in } \Gr^-(n), \ \text{ and }\  
	\lim_{t\rightarrow t_0^\pm} \gamma(t) \supset \gamma(t_0) \text{ in } \Gr^+(n).
\end{equation}

\begin{definition}\label{df:expansion contraction}	
	Let
	$U  = \lim_{t\rightarrow t_0^-} \gamma(t)$, $V = \gamma(t_0)$ and $W = \lim_{t\rightarrow t_0^+} \gamma(t)$ for%
		\footnote{Disregard $U$ if $t_0 = \inf I$, or $W$ if $t_0 = \sup I$.}
	$t_0 \in I$.
	We say $\gamma$, at $t_0$,
	\begin{itemize}
		\item \emph{expands} ($\expan$) from $U$ to $V$ (\resp $V$ to $W$)  if $U \subsetneq V$ (\resp $V \subsetneq W$);
		\item \emph{contracts} ($\contr$) from $U$ to $V$ (\resp $V$ to $W$) if $U \supsetneq V$ (\resp $V \supsetneq W$).
	\end{itemize}
\end{definition}

Note that $U$ or $W$ might not lie in $\gamma$
(unless $\gamma$ stays at them for a while, for example).
By \eqref{eq:continuity}, we can only have $U \expan \gamma(t_0)$ or $\gamma(t_0) \contr W$ in $\Gr^-(n)$, and $U \contr \gamma(t_0)$ or $\gamma(t_0) \expan W$ in $\Gr^+(n)$.
Also, $t_0$ is critical $\Leftrightarrow \gamma$ has at $t_0$ one expansion, one contraction, or one of each.

\begin{example}
	Let $\gamma:\R \rightarrow \Gr^{-}(n)$ be given, for $c\geq 0$, by
	\vspace{-6pt}
	\[\gamma(t) = 
	\begin{cases}
		\F^n &\text{for } t\in[-c,c], \\
		0 &\text{otherwise.}
	\end{cases} \]
	It is continuous, has $0 \expan \F^n$ at $-c$, and $\F^n \contr 0$ at $c$.
	If $c=0$, a single critical point has both an expansion and a contraction.
	In $\Gr^+(n)$, we must swap $\F^n$ and $0$, or use $(-c,c)$ with $c > 0$.
\end{example}

\begin{lemma}\label{pr:lim Grp}
	If $\lim_{t\rightarrow t_0^\pm} \gamma(t) = U$ then, for any $V \in \Gr(n)$, we have $\lim_{t\rightarrow t_0^\pm} d(\gamma(t),V) = d(U,V)$ and $\lim_{t\rightarrow t_0^\pm} d(V,\gamma(t)) = d(V,U)$.
\end{lemma}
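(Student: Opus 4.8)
The plan is to reduce everything to the continuity of principal angles inside a single fixed-dimension Grassmannian, after peeling off the dimension cases in which $d$ is constant. Recall from the discussion preceding the statement that, on the relevant side of $t_0$, the curve $\gamma$ lies in some fixed $\Gr_p(n)$; hence $\gamma(t)$ and its one-sided limit $U$ all belong to $\Gr_p(n)$, so in particular $\dim U = p$, and $\gamma(t) \to U$ in the usual Hausdorff metric topology of $\Gr_p(n)$. Equivalently, the orthogonal projections converge, $\|P_{\gamma(t)} - P_U\|_2 \to 0$. This is the fact I would exploit.

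Let $q = \dim V$. First I would treat the forward distance $d(\gamma(t),V)$ via the cases of \eqref{eq:d cases}. If $p=0$ or $p>q$, then $d(\gamma(t),V) = \Delta_p$ is constant in $t$ and equals $d(U,V)$, since $\dim U = p$; the limit is immediate. If $0<p\le q$, then $d(\gamma(t),V) = \Phi_p(\theta_1(t),\ldots,\theta_p(t))$, where $\theta_1(t)\le\cdots\le\theta_p(t)$ are the principal angles of $\gamma(t)$ and $V$. Their cosines are singular values of the projection $P_V|_{\gamma(t)}$, equivalently of $P_V P_{\gamma(t)}$, which converge to those of $P_V P_U$ because $P_{\gamma(t)}\to P_U$ in $\|\cdot\|_2$ and singular values are continuous functions of the operator; hence $\theta_i(t)\to\theta_i(U,V)$ for each $i$. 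Since each $\Phi_p$ in \Cref{tab:asymmetric metrics} is a continuous (indeed smooth) function of the angles, $d(\gamma(t),V)\to\Phi_p(\theta_1(U,V),\ldots,\theta_p(U,V)) = d(U,V)$.

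The backward distance $d(V,\gamma(t))$ is symmetric, exchanging the roles of the two subspaces. Again by \eqref{eq:d cases}: if $q=0$ or $q>p$ then $d(V,\gamma(t)) = \Delta_q = d(V,U)$ is constant; and if $0<q\le p$ then $d(V,\gamma(t)) = \Phi_q(\theta_1(t),\ldots,\theta_q(t))$, built from the same principal angles (the nonzero principal angles of a pair are symmetric in the pair), so the identical continuity argument gives $d(V,\gamma(t))\to d(V,U)$. Both one-sided limits $t\to t_0^\pm$ are handled in exactly the same way, each using the one-sided interval on which $\dim\gamma$ is constant.

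I expect the only genuine content to be the continuity of the principal angles under $\gamma(t)\to U$; everything else is bookkeeping of the dimension cases in \eqref{eq:d cases}. The cleanest justification is the one above, through $\|P_{\gamma(t)}-P_U\|_2\to 0$ and the continuity of singular values, which sidesteps the need to choose orthonormal bases of $\gamma(t)$ varying continuously with $t$. The one point to state carefully is that $\dim U = p$ coincides with the locally constant dimension of $\gamma$ near $t_0$: this is what places the convergence inside a single Grassmannian and makes the constant-$\Delta$ cases agree in the limit.
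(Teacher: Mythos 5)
Your proof is correct, but it takes a genuinely different route from the paper's. The paper disposes of this in two lines: since $\gamma$ eventually lies in a fixed $\Gr_p(n)$, where $d$ restricts to the original (symmetric) metric, the convergence $\gamma(t)\to U$ gives \emph{both} $d(\gamma(t),U)\to 0$ and $d(U,\gamma(t))\to 0$, and then the two semicontinuity inequalities \eqref{eq:semicontinuidade} (one yielding $\limsup_t d(\gamma(t),V)\leq d(U,V)$, the other $\liminf_t d(\gamma(t),V)\geq d(U,V)$) squeeze the limit; the second argument is handled by the analogous inequalities in $y$. That argument is purely metric-theoretic: it uses only the oriented triangle inequality and never touches the formula \eqref{eq:d cases}, so it would work verbatim for any asymmetric metric with this convergence property. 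Your argument instead unfolds \eqref{eq:d cases} explicitly --- splitting off the constant cases $p=0$, $p>q$ (where $\dim U=p$ makes the limit trivial) and, in the case $0<p\leq q$, invoking continuity of singular values of $P_VP_{\gamma(t)}$ under $\|P_{\gamma(t)}-P_U\|_2\to 0$ together with continuity of $\Phi_p$. This is sound (Weyl's inequality gives the singular-value continuity, and the $\Phi_p$ of \Cref{tab:asymmetric metrics} are all continuous), and it has the virtue of making visible exactly where the dimension bookkeeping enters; but it relies on continuity of $\Phi_p$, which is not part of \Cref{df:regular family} (only monotonicity is required there), so it is tied to the specific metrics of the table, whereas the paper's squeeze argument is both shorter and more general.
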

\begin{proof}
	Follows from \eqref{eq:semicontinuidade}, as $\gamma(t)$ converges to $U$ in some $\Gr_p(n)$ and so $d(\gamma(t),U) = d(U,\gamma(t)) \rightarrow 0$.
\end{proof}

The next result shows nonzero length changes \eqref{df:Delta L} occur only at contractions (Fig.\,\ref{fig:curve}).
This can be used in applications as a penalty for data loss,
or, if one wants to compress data, the conjugate asymmetric metric (\Cref{df:conjugate maxsym}) gives a penalty for expansions.

\begin{proposition}\label{pr:length partition}
	$\Delta L(\gamma)\lvert_{t_0} =
	\begin{cases}
		\Delta_{p,d} &\text{if } \gamma \text{ contracts from a $V_{(p)}$ at $t_0$},\\
		0 & \text{otherwise.}
	\end{cases}$
\end{proposition}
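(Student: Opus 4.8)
The plan is to evaluate $\Delta L(\gamma)\lvert_{t_0}$ through the one-sided limit formula \eqref{eq:Delta L}, which writes it as $\lim_{t\to t_0^-} d(\gamma(t),\gamma(t_0)) + \lim_{t\to t_0^+} d(\gamma(t_0),\gamma(t))$, dropping whichever limit is absent when $t_0$ is an endpoint of $I$. Setting $V=\gamma(t_0)$, $U=\lim_{t\to t_0^-}\gamma(t)$ and $W=\lim_{t\to t_0^+}\gamma(t)$ as in \Cref{df:expansion contraction} (these limits exist, since away from $t_0$ the curve lies in a single complete Hausdorff $\Gr_p(n)$), I would invoke \Cref{pr:lim Grp} to identify the two limits with $d(U,V)$ and $d(V,W)$. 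Thus $\Delta L(\gamma)\lvert_{t_0}=d(U,V)+d(V,W)$, and the whole problem reduces to evaluating two distances between nested subspaces.

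First I would treat $\Gr^-(n)$. Here the continuity condition \eqref{eq:continuity} forces $U\subset V$ and $W\subset V$. By \Cref{pr:d min max}(\ref{it:d=0}), $U\subset V$ gives $d(U,V)=0$, so the left term never contributes. For the right term, either $W=V$, giving $d(V,W)=0$, or $W\subsetneq V$, which is precisely a contraction $V\contr W$ from $V_{(p)}$; then $\dim W<p$, so the case $p>q$ of \eqref{eq:d cases} yields $d(V,W)=\Delta_{p,d}$. Hence $\Delta L(\gamma)\lvert_{t_0}$ equals $\Delta_{p,d}$ exactly when $\gamma$ contracts from $V_{(p)}=\gamma(t_0)$, and $0$ otherwise.

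The $\Gr^+(n)$ case is symmetric: now \eqref{eq:continuity} gives $U\supset V$ and $W\supset V$, so $d(V,W)=0$ kills the right term, while the left term $d(U,V)$ vanishes unless $U\supsetneq V$, i.e.\ a contraction $U\contr V$ from the larger subspace $U$; then \eqref{eq:d cases} gives $d(U,V)=\Delta_{\dim U,d}$. In both topologies the only nonzero contribution is the distance \emph{from} the larger (contracted-from) subspace \emph{to} the smaller one, which by \eqref{eq:d cases} equals the diameter indexed by the larger dimension.

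I expect the main point requiring care to be the bookkeeping of which subspace plays the role of $V_{(p)}$ in the statement: in $\Gr^-(n)$ the contracted-from subspace is $\gamma(t_0)$ itself, whereas in $\Gr^+(n)$ it is the incoming limit $U$; in each case it is the larger of the two subspaces straddling $t_0$, and its dimension is the $p$ appearing in $\Delta_{p,d}$. I would also verify that a simultaneous expansion at a critical $t_0$ contributes nothing, since in $\Gr^-(n)$ an expansion $U\expan V$ has term $d(U,V)=0$ already (and dually in $\Gr^+(n)$), so contractions alone govern the length change. The endpoint cases $t_0=\inf I$ or $t_0=\sup I$ are immediate, as then one of the two limits is simply absent.
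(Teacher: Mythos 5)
Your proposal is correct and follows essentially the same route as the paper's proof: reduce $\Delta L(\gamma)\lvert_{t_0}$ to $d(U,V)+d(V,W)$ via \eqref{eq:Delta L} and \Cref{pr:lim Grp}, then use \eqref{eq:continuity} together with \eqref{eq:d cases} to see that one term always vanishes and the other is $\Delta_{p,d}$ precisely when there is a contraction. Your explicit bookkeeping of which subspace is the contracted-from one in $\Gr^-(n)$ versus $\Gr^+(n)$ is a slightly more detailed rendering of the paper's parenthetical remark, but it is the same argument.
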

\begin{proof}
	By \eqref{eq:Delta L} and \Cref{pr:lim Grp}, $\Delta L(\gamma)\lvert_{t_0} = d(U,V) + d(V,W)$
	for $U,V,W$ as in \Cref{df:expansion contraction}.
	By \eqref{eq:continuity}, $U,W \subset V$ in $\Gr^-(n)$, or $U,W \supset V$ in $\Gr^+(n)$.
	In either case, one of the distances is $0$, and the other is $\Delta_{p,d}$ (if there is a contraction, from $U_{(p)}$ or $V_{(p)}$) or  $0$ (if there is not).
\end{proof}

The length of a curve with contractions is no longer the same for all asymmetric $\ell^2$ or $\wedge$ metrics, as in $\Gr_p(n)$, since $\Delta L$ depends on $d$.

\begin{corollary}\label{pr:null geodesic}
	A curve
		\SELF{continuous so expansions are closed at the correct side}
	$\eta:I\rightarrow \Gr^{\pm}(n)$ is null $\Leftrightarrow$ it is piecewise constant, changing at most by a finite number of expansions.
\end{corollary}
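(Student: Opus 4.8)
The plan is to prove both implications by reducing everything to \Cref{pr:length partition}, which singles out contractions as the only source of nonzero length change, together with the additivity \eqref{eq:L Delta L} of length across a point and the fact that inside a fixed $\Gr_p(n)$ the restriction of $d$ is the genuine (symmetric) metric $d_p$, for which positive distance forces positive length.

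For the forward implication, suppose $L(\eta)=0$. Then $\eta$ is rectifiable, so by the earlier result that a rectifiable curve has only finitely many critical points, it has critical points $t_1<\cdots<t_k$ and nothing else. First I would rule out contractions: if $\eta$ contracted from some $V_{(p)}$ at a point $t_0$, then $p\geq 1$, so $\Delta_{p,d}\geq\Delta_{1,d}>0$ by \Cref{pr:diameter nondecr}, and \eqref{eq:L Delta L} with \Cref{pr:length partition} would give $L(\eta)\geq \Delta L(\eta)\lvert_{t_0}=\Delta_{p,d}>0$, contradicting $L(\eta)=0$. Hence $\eta$ has no contractions, and in $\Gr^{\pm}(n)$ every critical point then carries only expansions. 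On each open interval free of critical points, $\eta$ is a curve in a single $\Gr_p(n)$ of length $\leq L(\eta)=0$; since $d_p$ is a true metric there, a nonconstant arc would contain two points at positive distance and hence have positive length, so $\eta$ is constant on that interval. Thus $\eta$ is piecewise constant and changes only by the finitely many expansions at $t_1,\ldots,t_k$.

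For the converse, suppose $\eta$ is piecewise constant, changing only by a finite set of expansions at $t_1<\cdots<t_k$. Applying \eqref{eq:L Delta L} repeatedly to split $L(\eta)$ at these points expresses it as a finite sum of the lengths of the constant (hence null) pieces between consecutive $t_i$'s plus the length changes $\Delta L(\eta)\lvert_{t_i}$. Each constant piece contributes $0$, and each $\Delta L(\eta)\lvert_{t_i}=0$ by \Cref{pr:length partition}, since an expansion is not a contraction. Therefore $L(\eta)=0$, i.e.\ $\eta$ is null.

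The main obstacle is the forward direction, and within it the two points needing care: verifying that a null curve confined to a single $\Gr_p(n)$ is genuinely constant (this is exactly where I use that $d_p$ is a metric, so that $d_p(x,y)>0$ for $x\neq y$ forces positive length on any nonconstant arc), and confirming that the only changes compatible with $L(\eta)=0$ are expansions rather than contractions. The endpoints of $I$ require only the one-sided conventions already built into \eqref{df:Delta L} and \eqref{eq:L Delta L}, and since \Cref{pr:length partition} is stated for $\Gr^{\pm}(n)$, the argument is identical in both topologies, with the left/right roles of expansions and contractions merely swapped.
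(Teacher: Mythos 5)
Your proof is correct and follows essentially the same route as the paper's: contractions are excluded because \Cref{pr:length partition} would force a positive length change, the curve is then a null curve in a single $\Gr_p(n)$ between critical points and hence constant there since $d_p$ is a genuine metric, and the converse follows by splitting the length at the expansion points, each of which contributes zero. You have simply written out in full the steps the paper leaves implicit.
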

\begin{proof}
	($\Rightarrow$) $\eta$ can have no contraction,
	and in any interval with no expansion it is a null curve in some $\Gr_p(n)$, hence constant.
	($\Leftarrow$) Immediate. 
\end{proof}

So, $\eta$ has a sequence $V = V_1 \expan V_2 \expan \cdots \expan V_k = W$ of expansions (with $V_i \subsetneq V_{i+1)}$), and between them it remains at $V_i$ for a while.

\begin{lemma}\label{pr:d VS WT}
	$d(V\orthsum S, W\orthsum T) = d(V,W)$ for $V_{(p)},W_{(q)},S,T \in \Gr(n)$ with $V,W \subset T^\perp$ and $S \subset T$, and in the $\ell^2$ case $p \leq q$.
		\SELF{If $p\leq q$ holds $\forall \ d$ from \eqref{eq:asym metric}. \\
		$\ell^2$ case also holds if $V \perp W$, or $S=0$ and $p>q+t$}
\end{lemma}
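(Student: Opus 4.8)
The plan is to reduce the claim $d(V\orthsum S, W\orthsum T)=d(V,W)$ to a statement about principal angles via the characterization \eqref{eq:d cases}, and to show that the extra orthogonal pieces $S\subset T$ contribute only zero principal angles that do not affect the value. First I would set $p'=\dim(V\orthsum S)=p+\dim S$ and $q'=\dim(W\orthsum T)=q+\dim T$, and note that since $V,W\subset T^\perp$ and $S\subset T$, the sums are genuinely orthogonal and the configuration is \emph{block-decomposed}: the $T$ block (containing $S$ on the left and $T$ on the right) is orthogonal to the $T^\perp$ block (containing $V$ on the left and $W$ on the right). The key computational fact is that principal angles of an orthogonal direct sum are the union (with multiplicity) of the principal angles of the summands, so the principal angles of $V\orthsum S$ with $W\orthsum T$ are obtained by combining those of $V$ with $W$ (inside $T^\perp$) and those of $S$ with $T$ (inside $T$). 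Since $S\subset T$, the latter are all $0$.

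The main case to handle is $0<p\leq q$, where I would argue as follows. Because $S\subset T$, the left summand $S$ projects isometrically into the right summand $T$, contributing $\dim S$ principal angles equal to $0$. The remaining principal angles come from the pair $V,W$. Here I must verify $p'\leq q'$ so that the first branch of \eqref{eq:d cases} applies on the left-hand side too; this holds since $p'=p+\dim S\leq q+\dim S\leq q+\dim T=q'$ using $p\leq q$ and $\dim S\leq\dim T$ (the latter from $S\subset T$). Then both sides fall under the first branch of \eqref{eq:d cases}, and since the nonzero principal angles of the augmented pair coincide with those of $(V,W)$ while the added angles are zeros, regularity condition \eqref{it:Phi zeros} of \Cref{df:regular family} gives $\Phi_{p'}(0,\ldots,0,\theta_1,\ldots,\theta_p)=\Phi_p(\theta_1,\ldots,\theta_p)$, i.e. the two distances agree.

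For the remaining cases I would dispatch the degenerate and large-dimension situations directly. If $p=0$ then $V=0$ so $V\orthsum S=S\subset T\subset W\orthsum T$, giving $d=0$ on both sides by \Cref{pr:d min max}(\ref{it:d=0}). For the general family from \eqref{eq:asym metric} the hypothesis $p\leq q$ is not assumed, so I must also treat $p>q$: then by \eqref{eq:d cases} the right-hand side is $d(V,W)=\Delta_p$, and for the $\wedge$ or max cases one checks $p'>q'$ fails in general, so here the proof genuinely uses that we are in a case where the added dimensions preserve the relevant inequality or the constant value. In the $\wedge$ and max cases the value for $p>q$ is $\Delta_p$, which equals $\Delta_{p'}$ only when the added zero angles do not change the diameter-type quantity; this forces the careful bookkeeping flagged in the margin note, namely that either $p\leq q$ or one is in the $\wedge$/max setting.

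The hard part will be the principal-angle union lemma for orthogonal direct sums, specifically justifying that appending $S\subset T$ introduces exactly $\dim S$ zero angles and leaves the nonzero spectrum of $(V,W)$ untouched. This is where I expect the real work: one wants an orthonormal principal basis of $V\orthsum S$ built from a principal basis of $V$ \wrt $W$ together with an orthonormal basis of $S$, checking that the orthogonality $V\perp T\supset S$ keeps the cross terms $\inner{e_i,f_j}$ block-diagonal. Once this lemma is in hand, the three branches of \eqref{eq:d cases} combined with the regularity property \eqref{it:Phi zeros} close the argument, with the $\ell^2$-case dimension restriction $p\leq q$ entering precisely to keep the left-hand configuration in the first branch.
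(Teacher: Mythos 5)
Your proposal is correct and follows essentially the same route as the paper: reduce to \eqref{eq:d cases}, observe that $V\orthsum S$ and $W\orthsum T$ have the principal angles of $V,W$ together with $\dim S$ zeros, and invoke the regularity condition \Cref{df:regular family}(\ref{it:Phi zeros}), with the $p=0$ and $p>q$ cases dispatched via \Cref{pr:d min max}. Your treatment of $p>q$ is looser than the paper's (which notes directly that $V\orthsum S\pperp W\orthsum T$ and uses \Cref{pr:d min max}(\ref{it:d=Delta}) plus the constancy of $\Delta_p$ in the $\wedge$ and max cases), but the substance matches.
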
 
\begin{proof}
	If $p=0$, it follows from \Cref{pr:d min max}(\ref{it:d=0}).
	If $0 \neq p \leq q$, $V$ and $W$ have principal angles $\theta_1, \ldots,\theta_p$, 
	$V \orthsum S$ and $W \orthsum T$ have the same and $s = \dim S$ zeros,
	and $d(V\orthsum S, W\orthsum T) = \Phi_{s+p}(0,\ldots,0,\theta_1, \ldots,\theta_p) = \Phi_p(\theta_1, \ldots,\theta_p) = d(V,W)$, by \eqref{eq:d cases} and \Cref{df:regular family}. 
	If $p>q$ then $V \orthsum S \pperp W \orthsum T$, and in the $\wedge$ and max cases $\Delta_p$ is constant (\Cref{tab:asymmetric metrics}) and \Cref{pr:d min max}(\ref{it:d=Delta}) gives $d(V\orthsum S, W\orthsum T) = \Delta_{p+s} = \Delta_p = d(V,W)$.
%
\end{proof}

\begin{definition}\label{df:path sum}
	Define $\mu \orthsum \eta : I \rightarrow \Gr(n)$ by
	$(\mu \orthsum \eta)(t) = \mu(t) \orthsum \eta(t)$,
	for $\mu,\eta : I \rightarrow \Gr(n)$ with $\mu(t) \perp \eta(t)$  $\forall\,t$.
	If $\eta(t) = S$ $\forall t$, we write $\mu \orthsum S$.
\end{definition}

\begin{proposition}\label{pr:L gamma eta}
	$L(\mu \orthsum \eta) = L(\mu)$, for a curve $\mu:I \rightarrow \Gr^\pm(V)$ (with no contractions in the $\ell^2$ case) and a null curve $\eta:I \rightarrow \Gr^\pm(V^\perp)$.
\end{proposition}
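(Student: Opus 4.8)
The plan is to evaluate $L(\mu\orthsum\eta)$ directly from its defining supremum \eqref{eq:length} and match it, term by term, with that of $\mu$, using \Cref{pr:d VS WT} as the engine. For a partition $t_1<\cdots<t_N$ of $I$, each summand is $d\big((\mu\orthsum\eta)(t_i),(\mu\orthsum\eta)(t_{i+1})\big)=d\big(\mu(t_i)\orthsum\eta(t_i),\,\mu(t_{i+1})\orthsum\eta(t_{i+1})\big)$, and I want to show this equals $d(\mu(t_i),\mu(t_{i+1}))$ whenever $t_i<t_{i+1}$. Since this would hold for \emph{every} partition, the two families of partition sums coincide as sets of numbers, and taking suprema yields $L(\mu\orthsum\eta)=L(\mu)$ at once, with no need to treat critical points or length changes \eqref{df:Delta L} separately. (In particular the orthogonality $\mu(t)\perp\eta(t)$ makes $\mu\orthsum\eta$ well defined as in \Cref{df:path sum}, and $L(\mu)<\infty$ then transfers rectifiability.)

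To invoke \Cref{pr:d VS WT} with $V=\mu(t_i)$, $W=\mu(t_{i+1})$, $S=\eta(t_i)$, $T=\eta(t_{i+1})$, I must check its three hypotheses. Write $A$ for the ambient subspace that the statement calls $V$, so $\mu(t)\subset A$ and $\eta(t)\subset A^\perp$ for all $t$. The containment $V,W\subset T^\perp$ is then automatic: $T=\eta(t_{i+1})\subset A^\perp$, hence $\mu(t_i),\mu(t_{i+1})\subset A\subset T^\perp$. The condition $S\subset T$ is a first monotonicity fact: being null, $\eta$ is, by \Cref{pr:null geodesic}, piecewise constant with only finitely many expansions, so its value is non-decreasing for $\subset$, giving $\eta(t_i)\subset\eta(t_{i+1})$. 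Finally, in the $\ell^2$ case \Cref{pr:d VS WT} also asks for $\dim V\leq\dim W$, which is a second monotonicity fact, supplied precisely by the hypothesis that $\mu$ has no contractions; in the $\wedge$ and max cases no dimension condition is required.

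The step I expect to require the most care is this last monotonicity claim: that a curve with no contractions has non-decreasing dimension. The argument is that, by \eqref{eq:continuity} and \Cref{df:expansion contraction}, at a critical point of a curve in $\Gr^-(A)$ the only possible dimension drop is a contraction on the right (the right-hand limit strictly contained in the value), and in $\Gr^+(A)$ a contraction on the left; ruling out contractions forces the relevant one-sided limit to coincide with the value, so the dimension never decreases as $t$ grows, while between critical points it is constant. An analogous reading of \Cref{df:expansion contraction} shows a null $\eta$ has only expansions in both $\Gr^\pm(A)$, which is exactly what makes its value non-decreasing for $\subset$. Once both monotonicity facts are in place, the three hypotheses of \Cref{pr:d VS WT} hold for every consecutive pair, the partition sums coincide term by term, and $L(\mu\orthsum\eta)=L(\mu)$ follows.
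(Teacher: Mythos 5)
Your proof is correct and follows essentially the same route as the paper's: expand $L(\mu\orthsum\eta)$ via the supremum in \eqref{eq:length}, use \Cref{pr:null geodesic} to get $\eta(t_i)\subset\eta(t_{i+1})$, and apply \Cref{pr:d VS WT} to each summand (with the no-contraction hypothesis supplying $\dim\mu(t_i)\leq\dim\mu(t_{i+1})$ in the $\ell^2$ case, exactly as the paper notes). The only difference is that you spell out the monotonicity-of-dimension argument in more detail than the paper does, which is a welcome clarification rather than a deviation.
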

\begin{proof}
	By \eqref{eq:length}, $L(\mu \orthsum \eta) = \sup \sum d\big(\mu(t_i) \orthsum \eta(t_i)\,,\,\mu(t_{i+1}) \orthsum \eta(t_{i+1}) \big)$.
	By \Cref{pr:null geodesic}, $\eta(t_i) \subset \eta(t_{i+1})$, so \Cref{pr:d VS WT} gives the result.
		\OMIT{$\dim \mu(t_i) \leq \dim \mu(t_{i+1})$ in $\ell^2$ case}
\end{proof}

\begin{definition}\label{df:-gamma perp}
	For a path $\gamma:[a,b] \rightarrow \Gr^\pm(n)$, let $-\gamma^\perp:[-b,-a] \rightarrow \Gr^\mp(n)$ be given by $-\gamma^\perp(t) = \gamma(-t)^\perp$.
\end{definition}

By \Cref{pr:Gr- = Gr+}, $-\gamma^\perp$ is continuous in $\Gr^\mp(n)$.
Minor adjustments at critical points, to satisfy \eqref{eq:continuity}, can make it continuous in $\Gr^\pm(n)$. 

\begin{definition}
	A contraction $V_{(p)} \contr W_{(q)}$ is \emph{balanced}
	if $p+q=n$.
\end{definition}

\begin{proposition}\label{pr:L perp}
	$L(-\gamma^\perp) = L(\gamma)$, for $\gamma:I \rightarrow \Gr^\pm(V)$
		\SELF{nem precisa de topologia}
	(in the $\ell^2$ case, with balanced contractions, if any).
\end{proposition}
\begin{proof}
	Follows from \eqref{eq:length} and \Cref{pr:d perp}.
\end{proof}

\subsection{Geodesics for asymmetric $\ell^2$ or $\wedge$ metrics}\label{sc:Geodesics}

Here, $d$ is an asymmetric $\ell^2$ or $\wedge$ metric.
Max metrics are excluded because, as they use only $\theta_p$, there is little control over the other angles, and their minimal geodesics are less well behaved, even in $\Gr_p(n)$.

\begin{definition}\label{df:arrows}
	$\gamma: V \path W$ or $V \abaixo{\path}{\gamma} W$ mean $\gamma$ is a path from $V$ to $W$ in $\Gr^{\pm}(n)$.
	We also write $\path_I$ to indicate its type (see bellow), and $\pathdim{p}$ for a path in $\Gr_p(n)$.
	Likewise, we use $\geod$ for a minimal geodesic, $\segm$ for a segment, and $\nullpath$ for a null path.
\end{definition}

The notation can be concatenated,
so $\gamma:T \abaixo{\segm}{\mu} U \expan V \geodim{p} W$ means $\gamma$ consists of a segment $\mu$ from $T$ to $U$, an expansion to $V_{(p)}$, then a minimal geodesic of $\Gr_p(n)$ to $W_{(p)}$ (possibly not minimal for $\Gr(n)$).
Some care is needed with expansions or contractions: e.g., in $\Gr^-(n)$ we might not have $\gamma(t_0) = U$ for some $t_0$, just $\lim_{t\rightarrow t_0^-} \gamma(t) = U$ (see \Cref{df:expansion contraction}).

\begin{definition}\label{df:types}
	A path $\gamma: V_{(p)} \path W$ in $\Gr^\pm(n)$ is:
	\begin{enumerate}[(i)]
		\item\label{df:type I}
		\emph{type I} if 
		$\gamma = \mu \orthsum \eta$ with
		$\mu:V \geodim{p} W'$ and $\eta:0 \nullpath T$, for some $W' \in \PP_W(V)$ and $T = W'^\perp \cap W$.	
		
		\item\label{df:type II}
		\emph{type II} if $\gamma:V \nullpath U_{(r)} \contr U' \nullpath W$ with $\Delta_{r,d} = \Delta_{p,d}$.
	\end{enumerate}
\end{definition}

In (\ref{df:type I}), $\mu(t) \subset T^\perp$ and $\eta(t) \subset T$ $\forall\,t$,
by \eqref{eq:geodesic Grp}, as $V,W' \subset T^\perp$,
and \Cref{pr:null geodesic}, which also shows $\gamma$ is a sequence of minimal geodesics in $\Gr_{p_i}(n)$'s and expansions,
$\gamma : V \geodim{p} U_1' \expan U_1 \geodim{p_1} U_2' \expan \cdots \expan U_k \geodim{q} W$.%
	\SELF{Não valeria para max metrics}

In (\ref{df:type II}), $V \subset U$ and $U \neq U' \subset U \cap W$, by \Cref{pr:null geodesic}.
In the $\ell^2$ case, \Cref{tab:asymmetric metrics} shows $\Delta_{r,d} = \Delta_{p,d} \Rightarrow r=p \Rightarrow U=V$, and so $\gamma:V \contr U' \nullpath W$.

Fig.\,\ref{fig:paths I II} has examples of such paths.

\begin{definition}\label{df:L1 L2}
	Let $L_1(V,W) = \dg(V,W)$ and $L_2(V_{(p)},W) = \Delta_{p,d}$.
\end{definition}

When there is no risk of confusion, we write $L_1$ and $L_2$, leaving $V$ and $W$ implicit.
Note that $L_1$ has $\dg$, and $L_2$ has $d$. 
As we will show, they are lengths of type I and II paths from $V$ to $W$, when these exist.

\begin{figure}[t]
	\centering
	\begin{subfigure}[b]{0.46\textwidth}
		\includegraphics[width=\textwidth]{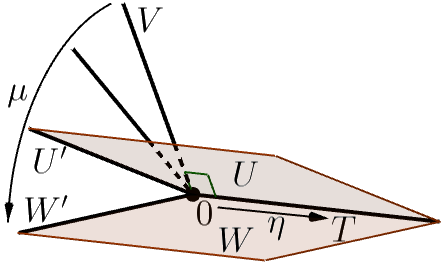}
		\caption{Type I path from $V$ to $W$: as $V$ rotates to $W'$ along $\mu$, at $U'$ it expands to $U = U' \orthsum T$, which rotates to $W$. We can also have a type II path $V \contr 0 \expan W$, or in $\wedge$ case $V \expan \R^3 \contr W$ (in $\ell^2$ case, $\Delta_3 \neq \Delta_1$).}
	\end{subfigure}
	\qquad
	\begin{subfigure}[b]{0.45\textwidth}
		\includegraphics[width=\textwidth]{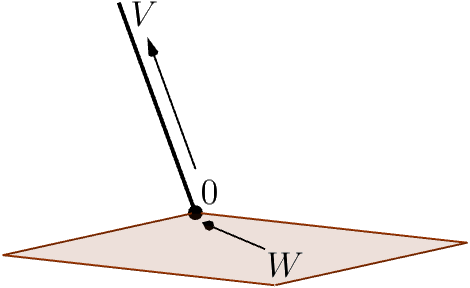}
		\caption{Type II path from $W$ to $V$: it stays at $W$ for a while, then contracts to $0$, and later expands to $V$. In $\wedge$ case we can also have $W \expan \R^3 \contr V$ (in $\ell^2$ case, $\Delta_3 \neq \Delta_2$). There is no type I path from $W$ to $V$.}
		\label{fig:geodesica plano reta}
	\end{subfigure}
	\caption{Type I and II paths between a line $V$ and a plane $W$ in $\R^3$.}
	\label{fig:paths I II}
\end{figure}

\begin{lemma}\label{pr:geodesic phi+X}
	For $S \in \Gr_s(n)$ and $V,W \in \Gr_p(S^\perp)$, 
		\SELF{For max metrics we can have perturbations of $S$ if they are slower than $\tilde{\mu}$ moves}
	$\mu : V\orthsum S \geodim{p+s} W\orthsum S \Leftrightarrow \mu = \tilde{\mu} \orthsum S$ for $\tilde{\mu} : V \geodim{p} W$ in $\Gr_p(S^\perp)$.
	Also, $L(\mu) = L(\tilde{\mu})$.
\end{lemma}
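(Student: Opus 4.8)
I would prove this as an equivalence between two \emph{ordinary} Grassmannian minimal geodesics, exploiting that both $\mu$ and $\tilde\mu$ have endpoints of constant dimension (so neither has contractions or expansions), and that on each fixed $\Gr_m(n)$ the asymmetric metric restricts to the original symmetric one. Thus both curves are described by the direct-rotation formula \eqref{eq:geodesic Grp}. The whole argument rests on one observation: if $(e_1,\ldots,e_p)$ and $(f_1,\ldots,f_p)$ are principal bases of $V$ and $W$ in $S^\perp$, with principal angles $\theta_1\leq\cdots\leq\theta_p$, then appending an orthonormal basis $(g_1,\ldots,g_s)$ of $S$ yields principal bases of $V\orthsum S$ and $W\orthsum S$, whose principal angles are $\theta_1,\ldots,\theta_p$ together with $s$ extra zeros coming from the common $S$-directions. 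So the geodesic data of the two problems differs only by a block of zero angles.

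For ($\Leftarrow$), I would start from $\tilde\mu:V\geodim{p}W$ written via \eqref{eq:geodesic Grp}, so each $v_i(t)$ rotates $e_i$ toward $f_i$ inside $S^\perp$. Since the $g_j$ carry zero angle, the direct-rotation curve for $V\orthsum S\to W\orthsum S$ fixes them, so it is exactly $\mu(t)=\tilde\mu(t)\orthsum S$; by \eqref{eq:geodesic Grp} this is a minimal geodesic in $\Gr_{p+s}(n)$. Conversely, for ($\Rightarrow$), I would use that any minimal geodesic $\mu:V\orthsum S\geodim{p+s}W\orthsum S$ is, up to reparametrization and choice of principal bases, of the form \eqref{eq:geodesic Grp}, and that its zero-angle principal vectors (those with $v_i(t)=e_i$ constant) stay fixed. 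Since $\theta_i=0\Leftrightarrow e_i=f_i$, these fixed directions span precisely the intersection $(V\orthsum S)\cap(W\orthsum S)$. A short computation using $V,W\perp S$ gives $(V\orthsum S)\cap(W\orthsum S)=(V\cap W)\orthsum S\supseteq S$, so $S\subset\mu(t)$ for all $t$. I would then set $\tilde\mu(t)=\mu(t)\cap S^\perp$, a $p$-subspace of $S^\perp$ with $\mu(t)=\tilde\mu(t)\orthsum S$, and check from the explicit rotating vectors (all lying in $S^\perp$, as $e_i,f_i,P_Vf_i\in S^\perp$) that $\tilde\mu$ is exactly the direct-rotation geodesic from $V$ to $W$ in $\Gr_p(S^\perp)$, hence minimal.

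For the length claim I would invoke \Cref{pr:L gamma eta} with the constant (hence null) curve $\eta(t)=S$ in $\Gr^\pm(S)=\Gr^\pm((S^\perp)^\perp)$ and $\tilde\mu$ playing the role of a contraction-free curve in $\Gr^\pm(S^\perp)$: this gives $L(\mu)=L(\tilde\mu\orthsum S)=L(\tilde\mu)$ at once. Alternatively, the equality is immediate from $\dg(V\orthsum S,W\orthsum S)=\sqrt{\sum_{i=1}^p\theta_i^2}=\dg(V,W)$, since the appended zeros do not change $\dg$ and minimal geodesics have length $\dg$.

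The main obstacle is the $(\Rightarrow)$ direction, and specifically the non-uniqueness of principal vectors in the zero-angle block: the formula \eqref{eq:geodesic Grp} only presents $\mu$ for \emph{some} principal bases, and the basis of the zero-angle part (which includes, but need not single out, $S$) is not canonical. The point that resolves this is that, whatever basis is chosen, the span of the zero-angle vectors is forced to equal the endpoint intersection $(V\cap W)\orthsum S$, which is fixed and contains $S$; so $S\subset\mu(t)$ holds intrinsically and the splitting $\mu=\tilde\mu\orthsum S$ is well defined regardless of the representation used.
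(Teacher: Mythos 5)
Your proposal is correct and follows essentially the same route as the paper's (very terse) proof: form principal bases of $V\orthsum S$ and $W\orthsum S$ by appending an orthonormal basis of $S$ to principal bases of $V$ and $W$, read off both directions from the direct-rotation formula \eqref{eq:geodesic Grp}, and get the length equality from \Cref{pr:L gamma eta}. Your extra care about the non-canonical zero-angle block in the $(\Rightarrow)$ direction is a detail the paper leaves implicit, and you resolve it correctly.
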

\begin{proof}
	Forming principal bases of $V\orthsum S$ and $W\orthsum S$ from those of $V$ and $W$ and an orthonormal one of $S$,
	follows from \eqref{eq:geodesic Grp} and \Cref{pr:L gamma eta}.
\end{proof}

\begin{proposition}\label{pr:I II exist}
	For paths $\gamma:V_{(p)} \path W_{(q)}$:
	\begin{enumerate}[(i)]
		\item\label{it:I II exist}
		If $p=0$, type I paths exist, type II do not,
			\OMIT{\ref{pr:I II exist}}
		and $L_1 = L_2 = 0$. \\[3pt]
		If $0<p\leq q$, both types exist. \\[3pt]
		If $p > q$, type II paths exist, type I do not, and $L_2 \leq L_1$.
		
%
		
		\item\label{it:L I II}
		If $\gamma$ is type I (\resp II) then $L(\gamma) = L_1$ (\resp $L_2$).
		
		\item If $\gamma$ has no contraction then $L(\gamma) \geq L_1$, with equality $\Leftrightarrow \gamma$ is type I.\label{it:L1}
		
		\item If $\gamma$ has a contraction then $L(\gamma) \geq L_2$, with equality $\Leftrightarrow \gamma$ is type II.\label{it:L2}
	\end{enumerate}
\end{proposition}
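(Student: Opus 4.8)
The plan is to prove the four items in order, building everything on the length decomposition \eqref{eq:L Delta L}, the length-change rule \Cref{pr:length partition}, and the description of null paths as piecewise-constant curves with only expansions \Cref{pr:null geodesic}. For (i) I would argue by direct construction and comparison of the diameters in \Cref{tab:asymmetric metrics}. If $p=0$ then $V=0$, a null path is a type I path with $L_1=\dg(0,W)=0$, and no type II path exists, since a contraction leaves a $U_{(r)}$ with $r\geq 1$, forcing $\Delta_{r,d}>0=\Delta_{0,d}=L_2$. If $0<p\leq q$, both types are built explicitly: type I as $\mu\orthsum\eta$ with $\mu:V\geodim{p}W'$ a direct rotation \eqref{eq:geodesic Grp} into some $W'\in\PP_W(V)$ and $\eta:0\nullpath W'^\perp\cap W$, and type II as $V\contr U'\nullpath W$ for a suitable $U'\subsetneq V$ with $U'\subset W$. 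If $p>q$, type I cannot exist, since $\mu:V\geodim{p}W'$ would need $W'\in\PP_W(V)$ of dimension $q<p$; type II exists as before, and $L_2=\Delta_{p,d}\leq\Delta_{p,\dg}=\dg(V,W)=L_1$ follows because $\dg$ is the intrinsic metric of every $\ell^2$ or $\wedge$ metric, so $d_p\leq\dg$ and hence $\Delta_{p,d}\leq\Delta_{p,\dg}$.

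For (ii), if $\gamma$ is type I then \Cref{pr:L gamma eta} gives $L(\gamma)=L(\mu\orthsum\eta)=L(\mu)$, the geodesic $\mu$ living in $\Gr_p(T^\perp)$ with no contraction and $\eta$ being null; and $L(\mu)=\dg(V,W')=\dg(V,W)=L_1$, since a minimal geodesic of $\Gr_p(n)$ has $\dg$-length and every $W'\in\PP_W(V)$ has the same principal angles with $V$ as $W$. If $\gamma$ is type II, \Cref{pr:length partition} shows its only nonzero length change sits at the single contraction from $U_{(r)}$, so $L(\gamma)=\Delta_{r,d}=\Delta_{p,d}=L_2$.

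For the lower bound in (iii), a contraction-free $\gamma$ has nondecreasing dimension (by \eqref{eq:continuity} only expansions $U\expan\gamma(t_0)$ are allowed at a critical point), so it splits as arcs $V=V_0\pathdim{p_0}V_0'\expan V_1\pathdim{p_1}\cdots\expan V_k\pathdim{p_k}V_k'=W$ in fixed-dimension Grassmannians joined by expansions. Expansions add no length (\Cref{pr:length partition}), so $L(\gamma)=\sum_i L(\mathrm{arc}_i)\geq\sum_i\dg(V_i,V_i')$, and the oriented triangle inequality for the asymmetric $\dg$, together with $\dg(V_i',V_{i+1})=0$ (containment, \Cref{pr:d min max}), telescopes this to $\dg(V,W)=L_1$. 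For (iv), a path with a contraction has a first contraction from some $U_{(r)}$ with $r\geq p$, whose length change $\Delta_{r,d}\geq\Delta_{p,d}$ (by \Cref{pr:diameter nondecr}) already gives $L(\gamma)\geq\Delta_{p,d}=L_2$; equality then forces exactly one contraction with $\Delta_{r,d}=\Delta_{p,d}$ and all other pieces null, which is precisely $\gamma:V\nullpath U_{(r)}\contr U'\nullpath W$, a type II path.

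The hard part will be the equality clause of (iii): proving that a contraction-free path of length $L_1$ is necessarily type I. Tightness in the telescoping chain forces each arc to be a $\dg$-minimal geodesic and every intermediate subspace to be a between-point; I would then use the equality criteria of \Cref{pr:equalities subspaces} and \Cref{pr:proj subsp} to identify these as projection subspaces, the direct-rotation form \eqref{eq:geodesic Grp} to pin down the arcs, and \Cref{pr:geodesic phi+X} to factor the curve as a single rotation $\mu$ in a fixed complement plus a null expansion $\eta$, i.e.\ the $\mu\orthsum\eta$ shape of \Cref{df:types}.
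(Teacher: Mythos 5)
Your proposal is correct and follows essentially the same route as the paper: the same constructions and diameter comparisons for (i), the same use of \Cref{pr:L gamma eta} and \Cref{pr:length partition} for (ii) and (iv), and for (iii) your telescoping over the arcs between successive expansions is just the paper's induction on the number of expansions unrolled. The one place you only give a roadmap is the equality clause of (iii) --- which is where most of the paper's proof actually lives --- but the tools you name (\Cref{pr:equalities subspaces}, \Cref{pr:proj subsp}, \Cref{pr:geodesic phi+X}) are exactly the ones the paper uses to show that the orthogonal pieces accumulated at each expansion land inside $W$ and assemble into a single null factor $\eta$ with the final $W'$ lying in $\PP_W(V)$.
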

\begin{proof}
	(\ref{it:I II exist})	
	As $\Gr_p(n)$ is a geodesic space, there is $\mu$ as in \Cref{df:types}(\ref{df:type I}) $\Leftrightarrow p \leq q$, by \Cref{df:PO}.
	And \Cref{pr:null geodesic} gives $\eta$.
	
	If $p\neq 0$, \Cref{df:types}(\ref{df:type II}) holds for $\gamma:V \contr 0 \expan W$.
		\OMIT{with $U=V$, by \Cref{pr:null geodesic}}
	If $p=0$, $\Delta_{r,d} = \Delta_{0,d} = 0$ implies $r=0$, but $U=0$ can not contract.
	
	If $p>q$, $L_1 = \dg(V,W) = \Delta_{p,\dg} \geq \Delta_{p,d} = L_2$ (see \Cref{tab:asymmetric metrics}).
	\OMIT{\Cref{df:types}}
	
	(\ref{it:L I II}) 
	If $\gamma$ is type I then $L(\gamma) = L(\mu) = d_g(V,W') = d_g(V,W)$, by \Cref{pr:L gamma eta} and \eqref{eq:d cases}. 
	If $\gamma$ is type II, it is null before and after contracting from $U_{(r)}$ at some $t_0$, 
	so \eqref{eq:L Delta L open} and \Cref{pr:length partition} give
	$L(\gamma) = \Delta L(\gamma)\lvert_{t_0} = \Delta_{r,d} = \Delta_{p,d}$.
		\OMIT{\ref{df:types}(\ref{df:type II})}
	
	(\ref{it:L1}) 
	As $\gamma$ does not contract, we have $p\leq q$, so type I paths exist.
	\OMIT{\ref{pr:I II exist}}
	We prove the result via induction on the number $k\geq 0$ of expansions.
	Fig.\,\ref{fig:prova} can help follow the proof.	
	For $k=0$, it is immediate, as $\gamma$ is a path in $\Gr_p(n)$, where minimal geodesics are type I paths (with $\eta=0$).
	
	\begin{figure}[t]
		\centering
		\includegraphics[width=.54\linewidth]{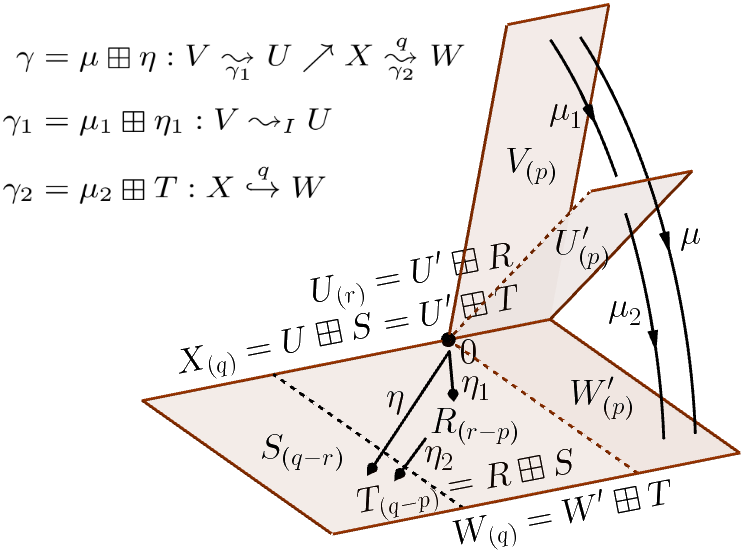}
		\caption{Subspaces and paths used in \Cref{pr:I II exist}(\ref{it:L1}). Dashed lines mean $\orthsum$. 
		The type I path	$\gamma = \mu \orthsum \eta : V_{(p)} \path W$ combines $\mu: V \geodim{p} W'$ formed by $\mu_1 : V \geodim{p} U'$ and $\mu_2:U' \geodim{p} W'$, and $\eta:0 \nullpath T$ formed by $\eta_1: 0 \nullpath R$ and $\eta_2:R \expan  T$.}
		\label{fig:prova}
	\end{figure}
	
	If $k>0$ then $\gamma: V \abaixo{\path}{\gamma_1} U_{(r)} \expan X \abaixo{\pathdim{q}}{\gamma_2} W$, with $k-1$ expansions in $\gamma_1$.
	By \eqref{eq:L Delta L open} and \Cref{pr:length partition}, the induction hypothesis, \Cref{pr:d subspaces}, and the triangle inequality,
	\begin{subequations}
		\begin{align}
			L(\gamma) 
			= L(\gamma_1) + 0 + L(\gamma_2)
			&\geq \dg(V,U) + \dg(X,W) \label{eq:a} \\
			&\geq \dg(V,U) + \dg(U,W) \label{eq:b} \\
			&\geq \dg(V,W) = L_1(V,W). \label{eq:c}
		\end{align}
	\end{subequations}
	
	If $L(\gamma)  = L_1(V,W)$, these are equalities.
	In (\ref{eq:a}), it means $\gamma_1$ is type I, by the induction hypothesis, and $\gamma_2: X \geodim{q} W$.
	So $\gamma_1 = \mu_1 \orthsum \eta_1$ 
	for $\mu_1 : V \geodim{p} U' \in \PP_{U}(V)$
	and $\eta_1 : 0 \nullpath R_{(r-p)} = U'^\perp \cap U$.
		\SELF{$\gamma = \gamma_1 (U' \boxplus  \eta_2) \gamma_2$}
	In (\ref{eq:b}), it means $S_{(q-r)} = U^\perp \cap X \subset W$, by \Cref{pr:equalities subspaces}(\ref{it:dg V' sub V}).
	By \eqref{eq:d cases} and \Cref{pr:d subspaces}, and the triangle inequality,
	\begin{equation}\label{eq:ineqs}
		\dg(V,U) + \dg(U,W)
		\geq \dg(V,U') + \dg(U',W)
		\geq \dg(V,W).
	\end{equation}
	Equality in (\ref{eq:c}) means these $\geq$'s are $=$'s.
	The first one and \Cref{pr:equalities subspaces}(\ref{it:dg V' sub V}) show
	$R \subset W$,
	so $X = U \orthsum S = U' \orthsum T$ for $T_{(q-p)} = R \orthsum S \subset W$.
	Let $W'_{(p)} = T^\perp \cap W \in \PP_W(U')$.
		\OMIT{as $T \perp U'$}	
	By \Cref{pr:geodesic phi+X},
	$\gamma_2 = \mu_2 \orthsum T$ for $\mu_2 : U' \geodim{p} W'$.
		\OMIT{$T = W'^\perp \cap  W$}
	By the last equality in \eqref{eq:ineqs}, \eqref{eq:d cases} and triangle inequality,
	$\dg(V,W) = \dg(V,U') + \dg(U',W) = \dg(V,U') + \dg(U',W') \geq \dg(V,W')$.
	By Propositions \ref{pr:d subspaces} and \ref{pr:proj subsp}, this is an equality and $W' \in \PP_W(V)$.
	Then $\dg(V,W') = L(\mu_1) + L(\mu_2)$,
		\OMIT{$= \dg(V,U') + \dg(U',W')$}
	so $\mu_1$ and $\mu_2$ form $\mu: V \geodim{p} W'$.
	
	Since
	$\gamma: V \abaixo{\path}{\mu_1 \orthsum \eta_1} U' \orthsum R \expan U' \orthsum T \abaixo{\geodim{q}}{\mu_2 \orthsum T} W' \orthsum T$
	can be written as $\gamma = \mu \orthsum \eta$ with $\eta: 0 \abaixo{\nullpath}{\eta_1} R \expan T$, 
	it is type I.	
	
	(\ref{it:L2})
	If $\gamma$ first contracts at $t_0$, after possibly expanding to $U_{(r)}$, 
	\OMIT{in $\wedge$ case}	
	then \eqref{eq:L Delta L} and Propositions \ref{pr:length partition} and \ref{pr:diameter nondecr} give
	$L(\gamma) \geq \Delta L(\gamma)\lvert_{t_0} = \Delta_{r,d} \geq \Delta_{p,d} = L_2$.
	If $L(\gamma) = L_2$ then $\Delta_{r,d} = \Delta_{p,d}$ and $\gamma$ is null outside $t_0$, so type II.
\end{proof}

\begin{theorem}\label{pr:geodesics G^n}
	$\Gr^{\pm}(n)$ is a geodesic space for any asymmetric $\ell^2$ or $\wedge$ metric $d$.
	The  minimal geodesics from $V$ to $W$ are the type I or II paths, whichever exists and has shorter length.
	The intrinsic asymmetric metric of $d$ is $D(V,W) = \min\{\dg(V,W),\Delta_{\dim V,d}\}$.
\end{theorem}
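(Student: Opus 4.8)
The plan is to read the whole theorem off \Cref{pr:I II exist}, which has already done the analytic work; what remains is to organize the case analysis. Writing $p=\dim V$, $q=\dim W$, $L_1=L_1(V,W)=\dg(V,W)$ and $L_2=L_2(V,W)=\Delta_{p,d}$ as in \Cref{df:L1 L2}, the three assertions become: $D(V,W)=\min\{L_1,L_2\}$; this infimum is attained by a path; and the attaining paths are exactly the type I or II paths of least length. So I would first pin down the value of $D$, and then harvest the geodesic-space statement and the classification from attainment together with the equality cases of \Cref{pr:I II exist}.

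For the value of $D$ I would prove two inequalities. For the lower bound I use the dichotomy that any path $\gamma:V\path W$ either contains a contraction or does not: if it does, \Cref{pr:I II exist}(\ref{it:L2}) gives $L(\gamma)\geq L_2$; if it does not, \Cref{pr:I II exist}(\ref{it:L1}) gives $L(\gamma)\geq L_1$. Either way $L(\gamma)\geq\min\{L_1,L_2\}$, whence $D(V,W)\geq\min\{L_1,L_2\}$. For the upper bound I exhibit a path of length $\min\{L_1,L_2\}$ using the existence statement \Cref{pr:I II exist}(\ref{it:I II exist}) together with the length formula (\ref{it:L I II}): when $0<p\leq q$ both types exist and I take whichever of $L_1,L_2$ is smaller; when $p=0$ only type I is available, but there $L_1=L_2=0$; and when $p>q$ only type II is available, but there $L_2\leq L_1$, so its length $L_2$ is already the minimum. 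This yields $D(V,W)=\min\{L_1,L_2\}=\min\{\dg(V,W),\Delta_{p,d}\}$.

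The geodesic-space claim is then immediate, since the path built for the upper bound has length exactly $D(V,W)$ and hence is a minimal geodesic, for every pair $V,W$. For the classification I would characterize minimal geodesics by $L(\gamma)=D(V,W)=\min\{L_1,L_2\}$ and split once more on contraction. If $\gamma$ contracts, then $L(\gamma)\geq L_2\geq D(V,W)$, so equality forces $L_2=D(V,W)$ (hence $L_2\leq L_1$) and, by the equality case of \Cref{pr:I II exist}(\ref{it:L2}), $\gamma$ is type II; symmetrically a non-contracting minimal geodesic is type I with $L_1\leq L_2$, by (\ref{it:L1}). Conversely type I and type II paths realize $L_1$ and $L_2$, so each is minimal precisely when its length is the smaller one. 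This is exactly ``whichever exists and has shorter length,'' with both families minimal in the tie $L_1=L_2$.

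I do not expect a genuine obstacle, since \Cref{pr:I II exist} front-loads the difficulty; the only care needed is bookkeeping at the degenerate ends of the case split. In particular I would flag that when $p>q$ there are no type I paths (the dimension must strictly decrease somewhere, so every path contracts), so the lower bound is supplied entirely by the contraction branch, while when $p=0$ the coincidence $L_1=L_2=0$ makes the missing type II harmless. Keeping the single quantity $\min\{L_1,L_2\}$ throughout, rather than splitting on the sign of $L_1-L_2$, is what prevents the ties and the existence gaps from multiplying into many separate subcases.
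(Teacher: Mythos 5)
Your proposal is correct and takes essentially the same route as the paper, whose proof is a one-line appeal to \Cref{pr:I II exist}: the lower bound $L(\gamma)\geq\min\{L_1,L_2\}$ via the contraction dichotomy, attainment by whichever type exists, and the classification from the equality cases. You have simply written out the bookkeeping that the paper leaves implicit.
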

\begin{proof}
	By \Cref{pr:I II exist}, $L(\gamma) \geq \min\{L_1,L_2\}$ for any $\gamma: V \path W$,
	this minimum is always attained, and only at type I or II paths.
%
%
\end{proof}

When $L_1 = L_2$, both types (if they exist) are minimal geodesics.
	\SELF{In the class of run-continuous geodesics, there is none from $V$ to $W$ if $p>q$, and if $p\leq q$ all geodesics are type I.}

\begin{corollary}\label{pr:dg intrinsic}
	The asymmetric $\dg$ is intrinsic in $\Gr^{\pm}(n)$.
\end{corollary}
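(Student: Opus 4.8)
The plan is to apply \Cref{pr:geodesics G^n} directly, since it already computes the intrinsic asymmetric metric for \emph{any} asymmetric $\ell^2$ or $\wedge$ metric $d$, namely $D(V,W) = \min\{\dg(V,W), \Delta_{\dim V, d}\}$. The geodesic $\dg$ is itself an $\ell^2$ metric (first row of \Cref{tab:asymmetric metrics}), so this formula applies with $d = \dg$, giving $D(V,W) = \min\{\dg(V,W), \Delta_{\dim V, \dg}\}$. It then suffices to show that the minimum always selects the first argument, i.e.\ that $\dg(V,W) \leq \Delta_{p,\dg}$ for every $V_{(p)}, W_{(q)} \in \Gr(n)$.

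To verify this inequality I would split into the two cases of \eqref{eq:d cases} using the $\dg$ row of \Cref{tab:asymmetric metrics}, where $\Delta_{p,\dg} = \frac{\pi}{2}\sqrt{p}$. If $p > q$, then by definition $\dg(V,W) = \Delta_{p,\dg}$, so equality holds. If $0 < p \leq q$, then $\dg(V,W) = \sqrt{\sum_{i=1}^p \theta_i^2}$ with principal angles $\theta_i \in [0,\frac\pi2]$, whence
\begin{equation*}
	\dg(V,W) = \sqrt{\sum_{i=1}^p \theta_i^2} \leq \sqrt{\sum_{i=1}^p \left(\tfrac\pi2\right)^2} = \tfrac\pi2 \sqrt{p} = \Delta_{p,\dg}.
\end{equation*}
The case $p=0$ is trivial, as then both quantities vanish. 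In every case $\dg(V,W) \leq \Delta_{\dim V, \dg}$, so $D(V,W) = \min\{\dg(V,W), \Delta_{\dim V, \dg}\} = \dg(V,W)$, which is exactly the statement that $\dg$ is intrinsic.

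There is essentially no obstacle here: the entire content has already been established in \Cref{pr:geodesics G^n}, and the only new observation is that $\Delta_{p,\dg}$ is precisely the diameter-type bound that $\dg(V,W)$ can never exceed (it equals the common value $\frac\pi2\sqrt p$ attained when $V \perp W$ or when $p>q$). The conceptual point worth stating is that, among all the asymmetric $\ell^2$ and $\wedge$ metrics, $\dg$ is the unique one for which type I paths (whose length is $\dg(V,W)$) are never longer than type II paths (whose length is $\Delta_{p,d}$): this is because for $d = \dg$ the two competing lengths $L_1 = \dg(V,W)$ and $L_2 = \Delta_{p,\dg}$ satisfy $L_1 \leq L_2$ always, so the minimal geodesic is always type I when it exists and the infimum of path lengths reproduces $\dg$ itself.
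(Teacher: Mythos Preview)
Your proof is correct and follows essentially the same approach as the paper: both invoke \Cref{pr:geodesics G^n} with $d=\dg$ and then observe that $\dg(V,W)\le\Delta_{\dim V,\dg}$, so the minimum in $D(V,W)=\min\{\dg(V,W),\Delta_{\dim V,\dg}\}$ always selects $\dg(V,W)$. The only difference is cosmetic: the paper obtains the bound in one stroke from \eqref{eq:asym metric} (which already states $d(V,W)\in[0,\Delta_p]$), whereas you re-derive it by a case split on $p\le q$ versus $p>q$.
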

\begin{proof}\OMIT{\ref{pr:geodesics G^n}}
	By \eqref{eq:asym metric}, $\dg(V,W) \leq \Delta_{\dim V,\dg}$, so $D=\dg$ when $d=\dg$.
\end{proof}

However, unlike in $\Gr_p(n)$, $\dg$ is no longer the intrinsic asymmetric metric of the other $\ell^2$ or $\wedge$ ones.

\begin{example}\label{ex:shortcut}
	If $V, W \in \Gr_2(4)$ have principal angles $\theta_1=\theta_2=\theta$,
	any $\gamma: V \geodim{2} W$ has 
		\OMIT{for an $\ell^2$ or $\wedge$ metric $d$}
	$L(\gamma) = \dg(V,W) = \theta\sqrt{2}$, being a segment only if $d=\dg$.
	If $\theta > \frac{\pi}{2\sqrt{2}}$,  there is a shortcut $\gamma: V \geod_{II} W$ in $(\Gr^\pm(4),\dFS)$ with $L(\gamma) = \Delta_{2,\dFS} = \frac\pi2$.
	\OMIT{$\Delta_{2,\dFS}$}
	It is a segment $\Leftrightarrow \dFS(V,W) = \cos^{-1}(\cos^2 \theta) = \frac\pi2 \Leftrightarrow \theta = \frac\pi2$.
	In $(\Gr^\pm(4),\dg)$ there is no shortcut, as any $V \path_I W$ is a segment of $\Gr_2(4)$, and any $\gamma:V \path_{II} W$ has $L(\gamma) = L_2 = \Delta_{2,\dg} = \frac{\pi \sqrt{2}}{2} \geq \theta \sqrt{2} = L_1$, being a minimal geodesic only if $\theta=\frac\pi2$.
\end{example}

\begin{proposition}\label{pr:trivial geod}
	All minimal geodesics from $V$ to $W$ are trivial $\Leftrightarrow$ $V=W$, or $V$ is a hy\-per\-plane of $W$, or $W=0$ (with $V=\F^n$ in $\wedge$ case).
\end{proposition}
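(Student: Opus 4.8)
The plan is to invoke \Cref{pr:geodesics G^n}, which says every minimal geodesic from $V$ to $W$ is a type I or type II path (whichever is shorter), and then to prove both implications by a case split on whether $V\subset W$ and on the dimensions $p=\dim V$, $q=\dim W$. Throughout I use that a path is trivial exactly when its image lies in $\{V,W\}$, so in each case I must decide whether the minimal geodesics are forced to be constant or single steps, or whether they can be routed through a third subspace. I also record once the recurring observation that a type I minimal geodesic $\mu\orthsum\eta$ of positive length is automatically nontrivial: then $\mu$ is a nonconstant geodesic in some $\Gr_p(n)$, and on any interval where the (piecewise constant, by \Cref{pr:null geodesic}) path $\eta$ is fixed, the map $t\mapsto\mu(t)\orthsum\eta(t)$ is injective, so its image is infinite.

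For the implication ($\Leftarrow$) I would show that each of the three listed situations forces every minimal geodesic to be trivial. If $V=W$ or $V$ is a hyperplane of $W$, then $V\subset W$ gives $d(V,W)=0$, so $D(V,W)=0$ and every minimal geodesic is a null path, i.e.\ a finite increasing chain of expansions from $V$ to $W$; when $V=W$ this chain is constant, and when $\dim W=\dim V+1$ it can only be the single expansion $V\expan W$. If $W=0$ with $p\geq 1$, type I paths do not exist, and in \Cref{df:types}(\ref{df:type II}) the null subpath $U'\nullpath 0$ forces $U'=0$; in the $\ell^2$ case $\Delta_{r,d}=\Delta_{p,d}$ additionally forces $r=p$, hence $U=V$ and the only minimal geodesic is $V\contr 0$, while in the $\wedge$ case the hypothesis $V=\F^n$ forces $U=\F^n$, again leaving only $\F^n\contr 0$. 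All of these are trivial.

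For the converse ($\Rightarrow$) I would argue the contrapositive: assuming none of the three conditions holds, I exhibit a nontrivial minimal geodesic. If $V\subsetneq W$ with $\dim W-\dim V\geq 2$, picking an intermediate $V\subsetneq X\subsetneq W$ gives a nontrivial null minimal geodesic $V\expan X\expan W$. If $V\not\subset W$ and $W\neq 0$ (so $p,q\geq 1$), then one of the two types is minimal: if type I is minimal it is nontrivial by the observation above, since $\dg(V,W)>0$; if type II is minimal, then $V\contr 0\expan W$ has length $\Delta_{p,d}=D(V,W)$, is minimal, and passes through $0\neq V,W$. Finally, if $V\not\subset W$ with $W=0$, then in the $\ell^2$ case the condition $W=0$ already holds, contradicting our assumption, while in the $\wedge$ case we must have $V\neq\F^n$, and then $V\expan\F^n\contr 0$ passes through $\F^n\neq V,0$ and is minimal because $\Delta$ is constant in $p$ in the $\wedge$ case.

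The main obstacle I anticipate is the bookkeeping in the $W=0$ case, where the $\ell^2$ and $\wedge$ behaviours genuinely diverge: I must use that $\Delta_{p,d}$ is strictly increasing in $p$ for $\ell^2$ metrics (forcing $r=p$, hence no preliminary expansion) but constant for $\wedge$ metrics (allowing the detour $V\expan\F^n\contr 0$ unless $V$ is already $\F^n$). A secondary care point is confirming minimality of each constructed detour, i.e.\ that $\Delta_{p,d}\leq\dg(V,W)$ precisely when I invoke a type II geodesic; this follows from $D(V,W)=\min\{\dg(V,W),\Delta_{\dim V,d}\}$ together with $\Delta_{p,d}\leq\Delta_{p,\dg}=\dg(V,W)$ when $p>q$, since $\dg$ is the intrinsic metric.
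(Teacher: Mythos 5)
Your proof is correct and follows essentially the same route as the paper's: both rest on \Cref{pr:geodesics G^n} (minimal geodesics are the type I or II paths) together with \Cref{pr:null geodesic}, and then split on containment and dimension, producing the same witnesses ($V\expan X\expan W$, $V\contr 0\expan W$, $V\expan\F^n\contr 0$) and the same $\ell^2$-versus-$\wedge$ analysis of the $W=0$ case. The only organizational difference is that the paper disposes of incomparable $V,W$ at the outset by observing that trivial paths exist only when $V\subset W$ or $W\subset V$, which spares it your auxiliary argument that a positive-length type I minimal geodesic has infinite image.
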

\begin{proof}
	There are trivial paths from $V$ to $W$ $\Leftrightarrow V \subset W$ or $V \supset W$.
	
	If $V \subset W$, minimal geodesics are null, and there is a nontrivial one ($V \expan U \expan W$)
		\OMIT{\ref{pr:null geodesic}}
	if, and only if, there is $U$ with $V \subsetneq U \subsetneq W$.
	
	If $V \supsetneq W$, minimal geodesics are type II.
		\OMIT{\ref{pr:geodesics G^n}, \ref{pr:I II exist}(\ref{it:I II exist})}
	If $W \neq 0$, there is a nontrivial $V \contr 0 \expan W$. 
	If $W=0$, the $\ell^2$ case has only $V \contr 0$,
	but the $\wedge$ case has a nontrivial $V \expan \F^n \contr 0$ if $V \neq \F^n$.
\end{proof}

\begin{proposition}
	 In the $\wedge$ case, 
 		\SELF{Also max case, seja lá como forem as geodésicas}
	 $\gamma:V \geod W$ in $\Gr^\pm(n)$ $\Leftrightarrow -\gamma^\perp: W^\perp \geod V^\perp$ in $\Gr^\mp(n)$, with $-\gamma^\perp$ as in \Cref{df:-gamma perp}.
\end{proposition}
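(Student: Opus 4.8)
The plan is to realize the operation $\gamma \mapsto -\gamma^\perp$ as a length-preserving involution between paths in $\Gr^\pm(n)$ and paths in $\Gr^\mp(n)$, and to transfer the minimality condition through it. The facts I would lean on are that in the $\wedge$ case $\perp$ is a bijective anti-isometry (\Cref{pr:antiisometry}) giving the homeomorphism $\Gr^\pm(n)\cong\Gr^\mp(n)$ (\Cref{pr:Gr- = Gr+}), and that $L(-\gamma^\perp)=L(\gamma)$ holds with no extra hypothesis in this case (\Cref{pr:L perp}, whose balanced-contraction proviso applies only to the $\ell^2$ case).

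First I would record that $-\gamma^\perp$ is indeed a path from $W^\perp$ to $V^\perp$: evaluating \Cref{df:-gamma perp} at the endpoints of $[-b,-a]$ gives $-\gamma^\perp(-b)=\gamma(b)^\perp=W^\perp$ and $-\gamma^\perp(-a)=\gamma(a)^\perp=V^\perp$. Next I would check that the assignment is an involution, $-(-\gamma^\perp)^\perp=\gamma$, since $-(-\gamma^\perp)^\perp(t)=(-\gamma^\perp)(-t)^\perp=(\gamma(t)^\perp)^\perp=\gamma(t)$; combined with the continuity of $\perp$ as a homeomorphism $\Gr^\pm(n)\cong\Gr^\mp(n)$ (after the minor critical-point adjustments noted after \Cref{df:-gamma perp}), this exhibits $\gamma\mapsto-\gamma^\perp$ as a bijection between paths $V\path W$ in $\Gr^\pm(n)$ and paths $W^\perp\path V^\perp$ in $\Gr^\mp(n)$.

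I would then transfer minimality. Because this bijection preserves length (\Cref{pr:L perp}), taking infima over the two matched families of paths yields $D_{\Gr^\pm(n)}(V,W)=D_{\Gr^\mp(n)}(W^\perp,V^\perp)$. A path $\gamma$ is a minimal geodesic from $V$ to $W$ exactly when $L(\gamma)=D_{\Gr^\pm(n)}(V,W)$; applying $L(\gamma)=L(-\gamma^\perp)$ and the equality of the two intrinsic distances, this holds iff $L(-\gamma^\perp)=D_{\Gr^\mp(n)}(W^\perp,V^\perp)$, i.e.\ iff $-\gamma^\perp$ is a minimal geodesic from $W^\perp$ to $V^\perp$. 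The involution property folds both implications into one symmetric argument.

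The main obstacle I anticipate is not the distance bookkeeping but the careful handling of $-\gamma^\perp$ at critical points: the literal formula $-\gamma^\perp(t)=\gamma(-t)^\perp$ need not satisfy the one-sided containment conventions \eqref{eq:continuity} required for continuity in $\Gr^\mp(n)$, so I must argue that the finitely many value-adjustments at critical points (where $\perp$ interchanges expansions and contractions) yield a genuine continuous path without altering its length. The latter holds because length is governed only by the one-sided limits and the contraction jumps (\Cref{pr:length partition}), which are insensitive to the value assigned at a single critical parameter. I would also verify explicitly that $L(-\gamma^\perp)=L(\gamma)$ is invoked only in the $\wedge$ case, so that \Cref{pr:L perp} applies with no balanced-contraction restriction.
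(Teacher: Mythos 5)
Your proposal is correct and follows essentially the same route as the paper, whose proof simply combines the anti-isometry property of $\perp$ (\Cref{pr:antiisometry}) with \eqref{eq:L reversed} to see that $\gamma\mapsto-\gamma^\perp$ is a length-preserving bijection between paths $V\path W$ in $\Gr^\pm(n)$ and paths $W^\perp\path V^\perp$ in $\Gr^\mp(n)$, hence matches minimal geodesics. One minor correction: no critical-point adjustments are needed here, because by \Cref{pr:Gr- = Gr+} the literal $-\gamma^\perp$ is already continuous into $\Gr^\mp(n)$ (time reversal together with $\perp$ converts the one-sided containments of \eqref{eq:continuity} for one topology into those for the other); the adjustments mentioned after \Cref{df:-gamma perp} are only required to regard $-\gamma^\perp$ as a path in $\Gr^\pm(n)$, which is the subject of the remark following the proposition, not of the proposition itself.
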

\begin{proof}
	Follows from \Cref{pr:antiisometry} and \eqref{eq:L reversed}.
\end{proof}

Again, minor adjustments turn $-\gamma^\perp$ into a minimal geodesic in $\Gr^\pm(n)$.
In the $\ell^2$ case, $\gamma \leftrightarrow -\gamma^\perp$ preserves path types
and, by \Cref{pr:L perp}, type I lengths,
	\SELF{$L_1$ can change when type I paths do not exist}
but as $L_2(V_{(p)},W_{(q)}) \neq L_2(W^\perp,V^\perp)$ if $p \neq n-q$,
the relation ($\leq$ or $\geq$) between $L_1$ and $L_2$ can change,
so if both types of path exist then $-\gamma^\perp$ might no longer be a minimal geodesic.

\subsection{Convexity in $(\Gr(n),\dg)$}\label{Between-points for dg}

Here we give convexity related results for $(\Gr(n),\dg)$, with topology $\tau^\pm$ when relevant.
\Cref{sc:Asymmetric Fubini-Study metric} does the same for $\dFS$.
In special, we find when there are segments from $V$ to $W$ (so, when a path attains their distance), and when $U \in [V,W]_{\dg}$ (when the triangle inequality attains equality).

\begin{proposition}\label{pr:geodesics dg}
	In $(\Gr^\pm(n),\dg)$:
	\begin{enumerate}[(i)]
		\item Minimal geodesics are segments. More precisely, all type I paths are; type II paths from $V_{(p)}$ to $W_{(q)}$ are segments $\Leftrightarrow$ $V \perp W$ or $p>q$. \label{it:I II segments}
		\item There is a segment linking any $V$ and $W$. \label{it:segments dg}
		\item $U \in [V,W]_{\dg}$ $\Leftrightarrow$ $U$ lies in a segment from $V$ to $W$.\label{it:between segment}
	\end{enumerate} 
\end{proposition}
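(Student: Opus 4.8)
The plan is to reduce almost everything to two facts just established: that $\dg$ is intrinsic in $\Gr^\pm(n)$ (\Cref{pr:dg intrinsic}), so $D = \dg$ and a path from $V$ to $W$ is a segment precisely when its length equals $\dg(V,W)$; and that $\Gr^\pm(n)$ is a geodesic space whose minimal geodesics are type I or type II paths of known lengths $L_1 = \dg(V,W)$ and $L_2 = \Delta_{p,\dg}$ (\Cref{pr:geodesics G^n} and \Cref{pr:I II exist}). With $D=\dg$ in hand, the first sentence of (\ref{it:I II segments}) is immediate: any minimal geodesic $\gamma$ has $L(\gamma) = D(V,W) = \dg(V,W)$, hence is a segment.

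For the refined statement in (\ref{it:I II segments}), I would compute directly. A type I path has $L(\gamma) = L_1 = \dg(V,W)$ by \Cref{pr:I II exist}(\ref{it:L I II}), which equals $d(V,W)$ since $d=\dg$, so every type I path is a segment. A type II path has $L(\gamma) = L_2 = \Delta_{p,\dg} = \frac\pi2\sqrt{p}$, so it is a segment iff $\dg(V,W) = \frac\pi2\sqrt{p}$. Recalling that type II paths require $p \neq 0$, I would split on dimensions via \eqref{eq:d cases}: if $p>q$ then $\dg(V,W) = \Delta_{p,\dg} = \frac\pi2\sqrt{p}$, so equality always holds; if $0<p\leq q$ then $\dg(V,W) = \sqrt{\sum_{i=1}^p \theta_i^2}$ equals $\frac\pi2\sqrt{p} = \sqrt{\sum_{i=1}^p (\pi/2)^2}$ iff every $\theta_i = \frac\pi2$, that is, iff $V \perp W$. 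This yields the stated biconditional ``$V\perp W$ or $p>q$''.

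For (\ref{it:segments dg}), since $\Gr^\pm(n)$ is a geodesic space there is a minimal geodesic between any $V$ and $W$, and as $\dg$ is intrinsic this minimal geodesic is a segment. For (\ref{it:between segment}), I would invoke the general principle recorded after \Cref{df:between}: a point lying on a segment is always a between-point, and the converse holds whenever the space is geodesic and the metric intrinsic. Both hypotheses hold for $(\Gr^\pm(n),\dg)$, so $U \in [V,W]_{\dg}$ iff $U$ lies on some segment from $V$ to $W$.

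The only genuine computation is the case split for type II paths in (\ref{it:I II segments}); the rest is a direct application of the intrinsic/geodesic machinery already developed. The main (minor) subtlety to watch is the degenerate boundary behavior at $p=0$, where no type II path exists, and at the threshold $V\perp W$ with $0<p\leq q$, where $L_1 = L_2$ and the type II path is minimal without being strictly shorter; I would therefore be careful to read the biconditional in (\ref{it:I II segments}) as a statement about type II paths that actually exist, so that the claim stays vacuously or genuinely correct in these edge cases.
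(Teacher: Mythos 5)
Your proof is correct and follows essentially the same route as the paper: intrinsicness of $\dg$ (\Cref{pr:dg intrinsic}) makes minimal geodesics segments, type I paths always realize $\dg(V,W)$, type II paths are segments exactly when $\dg(V,W)=\Delta_{p,\dg}$, and (iii) reduces to the general segment/between-point equivalence for geodesic spaces with intrinsic metric. The only cosmetic difference is that you compute the condition $\dg(V,W)=\Delta_{p,\dg}\Leftrightarrow V\perp W$ or $p>q$ directly, where the paper cites \Cref{pr:d min max}(ii), which encapsulates the same calculation.
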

\begin{proof}
	(\ref{it:I II segments}) Minimal geodesics are segments by \Cref{pr:dg intrinsic}.
	By \Cref{pr:geodesics G^n}, type I paths are minimal geodesics, as $L_1 = \dg(V,W) \leq \Delta_{p,dg} = L_2$.
	Type II ones, when $L_1 = L_2$, so	\Cref{pr:d min max}(\ref{it:d=Delta}) gives the result.
	
	(\ref{it:segments dg}) Follows from (\ref{it:I II segments}), as the space is geodesic.
		\OMIT{\ref{pr:geodesics G^n}}
	
	(\ref{it:between segment}) By (\ref{it:segments dg}), we have segments $V \segm U$ and $U \segm W$, and they form a segment $V \segm U \segm W$ $\Leftrightarrow$ $\dg(V,U) + \dg(U,W) = \dg(V,W)$.
\end{proof}

While (\ref{it:segments dg}) suggests convexity, a few cases have only trivial segments:

\begin{corollary}\label{pr:dg not convex}
	$(\Gr(n),\dg)$ is not Menger convex for $n \neq 0$.
		\SELF{$n=0$ is vacuously convex, as there are no $V \neq W$}
	Still, $(V,W)_{\dg} = \emptyset$  only in the cases of \Cref{pr:trivial geod}.
\end{corollary}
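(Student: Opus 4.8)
The plan is to derive everything from two earlier results: the between-point characterization in \Cref{pr:geodesics dg}(\ref{it:between segment}) and the classification of trivial minimal geodesics in \Cref{pr:trivial geod}.

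First I would record that, since $\dg$ is intrinsic by \Cref{pr:dg intrinsic}, in $(\Gr^\pm(n),\dg)$ segments and minimal geodesics coincide. By \Cref{pr:geodesics dg}(\ref{it:between segment}), $[V,W]_{\dg}$ is exactly the set of points lying on some segment from $V$ to $W$; this set always contains the endpoints $V$ and $W$. Hence $(V,W)_{\dg} = [V,W]_{\dg}\setminus\{V,W\}$ is nonempty precisely when some segment from $V$ to $W$ passes through a third point, i.e.\ when some minimal geodesic is nontrivial. Equivalently, $(V,W)_{\dg} = \emptyset$ if and only if every minimal geodesic from $V$ to $W$ is trivial.

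By \Cref{pr:trivial geod}, this last condition holds exactly when $V=W$, or $V$ is a hyperplane of $W$, or $W=0$ (the clause ``$V=\F^n$ in the $\wedge$ case'' being vacuous here, as $\dg$ is an $\ell^2$ metric). This proves the second assertion. For the failure of Menger convexity when $n\neq 0$, it then suffices to exhibit one pair $V\neq W$ in these cases: for $n\geq 1$ take $W=0$ and $V$ any line (or, dually, $V=0$ and $W$ a line, so that $V$ is a hyperplane of $W$); then $V\neq W$ yet $(V,W)_{\dg}=\emptyset$, so $(\Gr(n),\dg)$ is not Menger convex.

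There is no substantial obstacle here; the content is entirely in assembling the prior results. The one step that warrants care is the logical equivalence between emptiness of $(V,W)_{\dg}$ and triviality of \emph{all} minimal geodesics from $V$ to $W$, which hinges on the identity of $[V,W]_{\dg}$ with the union of images of all segments — valid because $(\Gr^\pm(n),\dg)$ is geodesic and $\dg$ is intrinsic.
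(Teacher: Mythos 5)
Your proposal is correct and follows essentially the same route the paper intends: combine \Cref{pr:geodesics dg}(iii) (between-points are exactly the points on segments, which here coincide with minimal geodesics since $\dg$ is intrinsic) with the classification of trivial minimal geodesics in \Cref{pr:trivial geod}, then exhibit a pair such as a line and $0$ to defeat Menger convexity. The only detail worth noting is that the ``$V=\F^n$ in the $\wedge$ case'' clause is indeed irrelevant for $\dg$, exactly as you observe.
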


Also, no $U \neq V,W$ satisfies (\ref{it:pqr}) or (\ref{it:pr}) below precisely in those cases.

\begin{proposition}\label{pr:triangle equality dg}
	For distinct $U_{(r)}, V_{(p)}, W_{(q)} \in \Gr(n)$,
		\SELF{Without distinct: \\
			$V=U \leq W$ in \ref{it:pqr}; \\
			$V \neq U = W$ in \ref{it:pr}; \\
			$U$ betw $V=W \Rightarrow V=U=W$; \\
			$V=U > W$ needs new case $r>q$}
	$U \in (V,W)_{\dg}$	if, and only if:
		\SELF{Cases intersect}
	\begin{enumerate}[(i)]
		\item $p\leq r \leq q$, $U=U'\orthsum R$ and $W = W' \orthsum T$ with $V, U', W' \in \Gr_p( T^\perp)$, $R \subset T$ and%
			\footnote{As in \Cref{sc:Grassmannians}, $U' \in [V,W']_{\dg} \Leftrightarrow U' = \mu(t_0)$ for a $t_0$ and $\mu:V \geodim{p} W'$ as in \eqref{eq:geodesic Grp}.}
		$U' \in [V,W']_{\dg}$; or \label{it:pqr}
		
		
		
		\item $U\subset W$, with $V\perp W$ or $p > q$. \label{it:pr} 
	\end{enumerate}
\end{proposition}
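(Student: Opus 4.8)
The plan is to reduce the between-point condition to a statement about segments and then read off the structure of the two kinds of segment. Since $\dg$ is intrinsic (\Cref{pr:dg intrinsic}) and $\Gr^\pm(n)$ is a geodesic space (\Cref{pr:geodesics G^n}), \Cref{pr:geodesics dg}(\ref{it:between segment}) gives that $U\in[V,W]_{\dg}$ iff $U$ lies on a segment from $V$ to $W$; imposing $U\neq V,W$ turns this into $U\in(V,W)_{\dg}$. Every such segment is a minimal geodesic, hence a type I or type II path by \Cref{pr:geodesics G^n}, and \Cref{pr:geodesics dg}(\ref{it:I II segments}) records exactly which ones are segments. So I would prove that the points other than $V,W$ of type I segments are precisely the subspaces in (\ref{it:pqr}), and those of type II segments are precisely the subspaces in (\ref{it:pr}).

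For the forward direction, suppose $U\in(V,W)_{\dg}$, so $U$ lies on a segment $\gamma$, which is type I or type II. If $\gamma$ is type I, then $p\leq q$ and $\gamma=\mu\orthsum\eta$ with $\mu:V\geodim{p}W'$ a direct-rotation geodesic in $\Gr_p(T^\perp)$, $W'\in\PP_W(V)$ of dimension $p$, and $\eta:0\nullpath T$ with $T=W'^\perp\cap W$ of dimension $q-p$. Setting $U'=\mu(t_0)$ and $R=\eta(t_0)$, where $U=\gamma(t_0)$, I get $U=U'\orthsum R$ with $V,U',W'\in\Gr_p(T^\perp)$, $R\subset T$, and $U'\in[V,W']_{\dg}$ because $U'$ lies on the direct rotation $\mu$ (the characterization in \Cref{sc:Grassmannians}); the count $r=p+\dim R$ with $0\leq\dim R\leq q-p$ gives $p\leq r\leq q$, which is (\ref{it:pqr}). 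If $\gamma$ is type II, then in the $\ell^2$ case $\Delta_{r,\dg}=\Delta_{p,\dg}$ forces $r=p$, so $\gamma:V\contr U'\nullpath W$, a segment only when $V\perp W$ or $p>q$. Since $U\neq V$, the point $U$ lies on the expansion (null) part, whence $U'\subset U\subset W$, so $U\subset W$ together with the segment condition: this is (\ref{it:pr}).

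For the converse I would build an explicit segment through $U$. In case (\ref{it:pqr}), choose a direct rotation $\mu:V\geodim{p}W'$ through $U'$ (possible since $U'\in[V,W']_{\dg}$) and a null path $\eta:0\nullpath T$ through $R$ (possible since $0\subset R\subset T$), reparametrizing so both reach $U'$ and $R$ at once; then $\gamma=\mu\orthsum\eta$ is a type I path through $U=U'\orthsum R$, hence a segment by \Cref{pr:geodesics dg}(\ref{it:I II segments}). In case (\ref{it:pr}), the path $\gamma:V\contr 0\nullpath W$ whose expansion part runs $0\subset U\subset W$ is a type II path through $U$, and is a segment precisely because $V\perp W$ or $p>q$. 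In either case $U$ lies on a segment, so $U\in[V,W]_{\dg}$, and distinctness upgrades this to $U\in(V,W)_{\dg}$. The degenerate subcase $p=0$ is absorbed into (\ref{it:pqr}), since then $V=0$ and the null path $0\nullpath W$ through $U$ is already a type I segment.

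The delicate part is the forward analysis for type I paths: extracting the orthogonal decomposition $U=U'\orthsum R$ and certifying $U'\in[V,W']_{\dg}$ directly from the bare fact that $U$ is a point of $\gamma$. This rests on the fine structure of type I segments from \Cref{df:types} and \Cref{pr:I II exist}, namely that $\gamma=\mu\orthsum\eta$ splits cleanly across $T^\perp\orthsum T$, so a point of $\gamma$ at time $t_0$ is the orthogonal sum of the corresponding points of $\mu$ and $\eta$. A secondary nuisance is synchronizing the parametrizations of $\mu$ and $\eta$ in the backward construction so that $\gamma$ genuinely visits $U'\orthsum R$; this is routine once one allows arbitrary reparametrization, as lengths in this asymmetric setting are parametrization-independent.
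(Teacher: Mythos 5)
Your proof is correct and follows essentially the same route as the paper's: reduce to lying on a segment via \Cref{pr:geodesics dg}, split into the type I and type II cases, and read the decomposition $U = U'\orthsum R$ off the splitting $\gamma = \mu \orthsum \eta$. The only cosmetic difference is in the converse of (\ref{it:pqr}), where the paper verifies the distance identity $\dg(V,W)=\dg(V,U)+\dg(U,W)$ directly via \Cref{pr:d VS WT} while you build an explicit type I segment through $U$ (for which one should also record that $W'\in\PP_W(V)$, which holds since $V\subset T^\perp$ gives $P_W(V)=P_{W'}(V)\subset W'$); both work.
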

\begin{proof}
	($\Rightarrow$) 
	We can prove it in $(\Gr^\pm(n),\dg)$.
	By \Cref{pr:geodesics dg}(\ref{it:between segment}),
	$U= \gamma(t_0)$ for a $\gamma: V \segm W$ and $t_0$.
	
	If $\gamma$ is type I then $p\leq r \leq q$ and $\gamma = \mu\orthsum \eta$ with $\mu: V \segdim{p} W'$
		\OMIT{\ref{pr:geodesics dg}(\ref{it:I II segments})}
	and $\eta:0 \nullpath T$ for $W' \in \PP_W(V)$  and $T = W'^\perp \cap W \subset V^\perp$.
	Also, $U'=\mu(t_0) \subset T^\perp$, $R = U'^\perp \cap U = \eta(t_0) \subset T$,
	and $U' \in [V,W']_{\dg}$ by \Cref{pr:d VS WT}, so we have (\ref{it:pqr}).

	If $\gamma$ is type II, \Cref{pr:geodesics dg}(\ref{it:I II segments}) gives $V\perp W$ or $p > q$.
	As $\dg$ is $\ell^2$, $U$ lies after the contraction of $\gamma$, so $U \subset W$ and we have (\ref{it:pr}).
		\OMIT{\ref{pr:null geodesic}}
			
%
		
	($\Leftarrow$) 
	(\ref{it:pqr}) 
	Follows from \Cref{pr:d VS WT}.
%
%
	(\ref{it:pr}) 
	By \Cref{pr:d min max}(\ref{it:d=Delta}), 
		\OMIT{or \ref{pr:geodesics dg}(\ref{it:I II segments})}
	the type II path
	$V \contr 0 \expan U \expan W$ 
	(or $V = 0 \expan U \expan W$, or $V \contr 0 = U \expan W$)
	is a segment,
		\OMIT{or $\gamma: V \contr U \expan W$ if $U=0$}
	so $U \in (V,W)_{\dg}$.
\end{proof}

\section{Asymmetric $\wedge$ metrics}\label{sc:Asymmetric Fubini-Study}

Now we link the asymmetric $\wedge$ metrics to the asymmetric angle $\Theta_{V,W}$ and volume projection factor $\pi_{V,W}$ of \Cref{sc:Asymmetric angles},
which provide geometric interpretations.
The formulas in \Cref{sc:Computing asymmetric angles} can help compute the metrics,
whose use is facilitated by the many properties in
\cite{Mandolesi_Grassmann,Mandolesi_Products,Mandolesi_Trigonometry,Mandolesi_Pythagorean}.

\begin{proposition}\label{pr:Theta prod cos}
	For $V, W\in \Gr(n)$,
	\[ \dFS(V,W) = \Theta_{V,W}, \quad \dcw(V,W) = 2 \sin \frac{\Theta_{V,W}}{2}, \quad \dBC(V,W) = \sin \Theta_{V,W}.\] 
\end{proposition}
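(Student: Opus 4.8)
The plan is to reduce all three identities to a single fact about the asymmetric angle recalled in \Cref{sc:Asymmetric angles}: by definition $\Theta_{V,W} = \cos^{-1}\pi_{V,W} \in [0,\frac\pi2]$, where the volume projection factor of $V_{(p)}$ onto $W_{(q)}$ satisfies $\pi_{V,W} = \prod_{i=1}^p \cos\theta_i$ when $0 < p \leq q$, $\pi_{V,W} = 1$ when $p = 0$, and $\pi_{V,W} = 0$ when $p > q$ (the projection of a $p$-volume into the strictly smaller $W$ being degenerate). Equivalently, $\cos\Theta_{V,W} = \pi_{V,W}$ in all cases. First I would prove $\dFS = \Theta_{V,W}$ by comparing this piecewise expression with the $\dFS$ row of \Cref{tab:asymmetric metrics} regime by regime: for $0 < p \leq q$ the table gives $\cos^{-1}(\prod_{i=1}^p\cos\theta_i) = \cos^{-1}\pi_{V,W} = \Theta_{V,W}$; for $p > q$ it gives $\frac\pi2 = \cos^{-1}0 = \Theta_{V,W}$; and for $p = 0$ both sides vanish. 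Thus in every regime the table entry is exactly $\cos^{-1}\pi_{V,W}$.

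The remaining two identities then follow from elementary trigonometry, once I observe that each \Cref{tab:asymmetric metrics} entry can be rewritten through $\pi_{V,W} = \cos\Theta_{V,W}$. For $\dcw$, the entry equals $\sqrt{2 - 2\pi_{V,W}} = \sqrt{2 - 2\cos\Theta_{V,W}}$ uniformly across the three regimes (checking that $p > q$ gives $\sqrt 2$ and $p = 0$ gives $0$), and the half-angle identity $1 - \cos\Theta = 2\sin^2\frac\Theta2$ turns this into $2\,\lvert\sin\tfrac{\Theta_{V,W}}2\rvert = 2\sin\tfrac{\Theta_{V,W}}2$, the absolute value dropping since $\tfrac{\Theta_{V,W}}2 \in [0,\tfrac\pi4]$. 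For $\dBC$, the entry equals $\sqrt{1 - \pi_{V,W}^2} = \sqrt{1 - \cos^2\Theta_{V,W}} = \lvert\sin\Theta_{V,W}\rvert = \sin\Theta_{V,W}$, again using $\Theta_{V,W} \in [0,\tfrac\pi2]$.

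There is no deep obstacle here; the only care needed is in correctly importing from the appendix both the definition $\Theta_{V,W}=\cos^{-1}\pi_{V,W}$ and the principal-angle formula for $\pi_{V,W}$, and then in verifying that the degenerate regimes are consistent with the piecewise table: namely $p > q$, where $\pi_{V,W} = 0$ forces the maximal values $\tfrac\pi2,\sqrt2,1$, and $p = 0$, where everything vanishes. This consistency is what makes the unified expressions $\sqrt{2-2\cos\Theta_{V,W}}$ and $\sqrt{1-\cos^2\Theta_{V,W}}$ valid in all cases, while the range constraint $\Theta_{V,W} \in [0,\tfrac\pi2]$ is precisely what guarantees the square roots resolve with the correct nonnegative sign.
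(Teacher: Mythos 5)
Your overall strategy --- reduce everything to the single identity $\cos\Theta_{V,W}=\prod_{i=1}^p\cos\theta_i$ (with the conventions $\Theta_{V,W}=0$ for $p=0$ and $\Theta_{V,W}=\frac\pi2$ for $p>q$) and then match the three rows of \Cref{tab:asymmetric metrics} by elementary trigonometry --- is exactly the paper's, which simply cites \eqref{eq:adFS}, \eqref{eq:Theta thetai} and the table. The case checks and the observation that $\Theta_{V,W}\in[0,\frac\pi2]$ fixes the sign of the square roots are all fine.

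However, the way you import the key identity is wrong over $\C$. You assert that \emph{by definition} $\Theta_{V,W}=\cos^{-1}\pi_{V,W}$ and that $\pi_{V,W}=\prod_{i=1}^p\cos\theta_i$. Neither holds in the complex case: \Cref{df:Theta pi} defines $\Theta_{V,W}=\cos^{-1}\bigl(\|P_WA\|/\|A\|\bigr)$ for a blade $A\in\bigwedge^pV$, and \Cref{pr:Theta pi} shows $\pi_{V,W}=\cos^2\Theta_{V,W}$ when $\F=\C$ (the principal angles of $V_\R$ and $W_\R$ are those of $V$ and $W$ duplicated, so $\pi_{V,W}=\prod_i\cos^2\theta_i$ there). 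The paper explicitly warns that setting $\Theta_{V,W}=\cos^{-1}\pi_{V,W}$ would make the angle's properties depend on $\F$. Your two misstatements happen to cancel, so the formula $\cos\Theta_{V,W}=\prod_i\cos\theta_i$ you end up using is correct --- but as written the derivation is only valid for $\F=\R$, while the proposition is stated for $\F=\R$ or $\C$. The fix is immediate: drop $\pi_{V,W}$ entirely and invoke \Cref{pr:Theta thetai} (equation \eqref{eq:Theta thetai}), which establishes $\cos\Theta_{V,W}=\prod_{i=1}^p\cos\theta_i$ directly from the blade-norm definition in both the real and complex cases; the rest of your argument then goes through unchanged.
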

\begin{proof}
	Follows from \eqref{eq:adFS}, \eqref{eq:Theta thetai} 
	and the formulas in \Cref{tab:asymmetric metrics}.
\end{proof}

By \eqref{eq:Theta pi}, 
	\OMIT{\ref{pr:Theta prod cos}}
the (squared, if $\F=\C$) cosine of $\dFS(V,W)$ is the factor $\pi_{V,W}$ by which volumes in $V$ contract if orthogonally projected on $W$ (Fig.\,\ref{fig:projections-sin}).

\begin{corollary}
	For $V, W\in \Gr(n)$,
	\[ \dcw(V,W) = 
	\begin{cases}
		\sqrt{2 - 2\pi_{V,W}} &\text{if $\F=\R$}, \\
		\sqrt{2 - 2\pi_{U,W_\R}} &\text{if $\F=\C$,}
	\end{cases} \]
	for any maximal totally real subspace $U$ of $V_\R$ \wrt $W_\R$ (\Cref{df:tot real}).
\end{corollary}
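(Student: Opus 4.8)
The plan is to reduce everything to the single identity $\dcw(V,W) = 2\sin\frac{\Theta_{V,W}}{2}$ from \Cref{pr:Theta prod cos}, and then to recognize $\cos\Theta_{V,W}$ as the appropriate volume projection factor in each case. First I would rewrite the right-hand side using the elementary half-angle identity $\bigl(2\sin\tfrac{\Theta_{V,W}}{2}\bigr)^2 = 2 - 2\cos\Theta_{V,W}$. Since $\Theta_{V,W} = \dFS(V,W) \in [0,\tfrac\pi2]$, the quantity $2\sin\tfrac{\Theta_{V,W}}{2}$ is nonnegative, so taking the positive square root gives $\dcw(V,W) = \sqrt{2 - 2\cos\Theta_{V,W}}$, valid for both $\F=\R$ and $\F=\C$. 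The corollary then amounts to expressing $\cos\Theta_{V,W}$ as a volume projection factor. For $\F=\R$ this is immediate: the remark following \Cref{pr:Theta prod cos} records, via \eqref{eq:Theta pi}, that $\cos\Theta_{V,W} = \pi_{V,W}$, and substituting gives $\dcw(V,W) = \sqrt{2 - 2\pi_{V,W}}$ at once.

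For $\F=\C$ the same relation \eqref{eq:Theta pi} only yields $\cos^2\Theta_{V,W} = \pi_{V,W}$, because complex volumes contract by the square of the corresponding real factor (so $\pi_{V,W} = \prod_i \cos^2\theta_i$ while $\cos\Theta_{V,W} = \prod_i \cos\theta_i$); thus $\pi_{V,W}$ itself does not fit the desired expression. The crux is therefore to realize the linear product $\cos\Theta_{V,W} = \prod_i \cos\theta_i$ as a genuine \emph{real} volume projection factor. Here I would invoke the description of the complex asymmetric angle from \Cref{sc:Asymmetric angles}: for a maximal totally real subspace $U$ of $V_\R$ \wrt $W_\R$ (\Cref{df:tot real}), the real principal angles of $U$ with $W_\R$ coincide with the complex principal angles $\theta_1,\ldots,\theta_p$ of $V$ with $W$, whence $\pi_{U,W_\R} = \prod_i \cos\theta_i = \cos\Theta_{V,W}$. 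Substituting into $\dcw(V,W) = \sqrt{2 - 2\cos\Theta_{V,W}}$ then yields $\sqrt{2 - 2\pi_{U,W_\R}}$, and the value is independent of the choice of $U$ since all such maximal totally real subspaces share these angles.

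The main obstacle is precisely this last identification in the complex case: one must know that a maximal totally real subspace $U$ of $V_\R$ \wrt $W_\R$ has exactly the $\theta_i$ as its real angles with $W_\R$, so that the real factor $\pi_{U,W_\R}$ recovers the linear product $\prod_i\cos\theta_i$ rather than its square. This is the substantive input from the geometry of totally real subspaces developed in \Cref{sc:Asymmetric angles}; once it is in hand, both the half-angle manipulation and the real case are routine.
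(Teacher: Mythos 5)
Your proposal is correct and follows essentially the same route as the paper, whose proof is a one-line citation of \Cref{pr:Theta prod cos}, \Cref{pr:Theta pi} and \Cref{pr:pi max tot real}: the half-angle identity applied to $\dcw = 2\sin\frac{\Theta_{V,W}}{2}$, then $\cos\Theta_{V,W}=\pi_{V,W}$ in the real case and $\cos\Theta_{V,W}=\pi_{U,W_\R}$ in the complex case. Your justification of the complex-case identity via matching principal angles is the remark the paper itself offers after \Cref{pr:pi max tot real} (whose official proof is instead a direct volume computation), so the substance is the same.
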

\begin{proof}
	Follows from \Cref{pr:Theta prod cos,pr:Theta pi,pr:pi max tot real}.
\end{proof}

So, for $\F=\R$, $\dcw^2(V,W)$ is twice the volume fraction lost by a subset of $V$ when orthogonally projected on $W$ (Fig.\,\ref{fig:projections-sin}).
For $\F=\C$, this holds for subsets of maximal totally real subspaces of $V_\R$ \wrt $W_\R$.

\begin{figure}[t]
	\centering
	\includegraphics[width=0.7\linewidth]{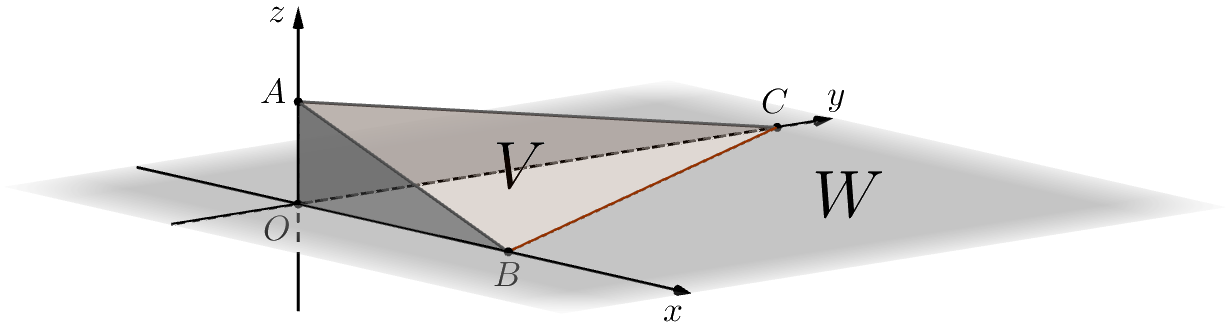}
	\caption{$\dFS(V,W) = \cos^{-1}\left(\frac{\area(OBC)}{\area(ABC)}\right)$ measures the separation between  $V$ (plane of $ABC$) and $W$ in terms of how areas of $V$ contract if orthogonally projected on $W$;
		$\dcw(V,W) = \left(2\frac{\area(ABC) - \area(OBC)}{\area(ABC)}\right)^{\frac12}$ uses the area lost by $ABC$ when projected; 
		$\dBC(V,W) = \left(\frac{\area^2(ABC) - \area^2(OBC)}{\area^2(ABC)}\right)^{\frac12} = \left(\frac{\area^2(OAB) + \area^2(OAC)}{\area^2(ABC)}\right)^{\frac12}$ is similar, but with squared areas, and also uses squared areas of the vertical triangles that `lift' $V$.}
	\label{fig:projections-sin}
\end{figure}

\begin{proposition}
	Let $V_{(p)}, W_{(q)} \in \Gr(n)$, and extend an orthogonal basis $\{w_1,\ldots,w_q\}$ of $W$ to another $\beta = \{w_1,\ldots,w_n\}$ of $\F^n$.
	Then
	\SELF{If $V=0$ or $W=\F^n$ there is no $\ii\in\II_p^n$ with $[w_\ii]\not\subset W$, and the sum is $0$. \\ If $V\neq 0$ and $W=0$ then $[w_\ii]\not\subset W \ \forall \ii\in\II_p^n$, and the sum is $1$}
	\begin{equation*}
		\dBC(V,W) = 
		\begin{cases}
			\sqrt{1 - \pi_{V,W}^2} = \sqrt{\sum_{U \not\subset W} \pi_{V,U}^2} &\text{if } \F=\R, \\
			\sqrt{1 - \pi_{V,W}} = \sqrt{\sum_{U \not\subset W} \pi_{V,U}} &\text{if } \F=\C,
		\end{cases}
	\end{equation*} 
	where the sums run over all coordinate $p$-subspaces%
	\footnote{A \emph{coordinate $p$-subspace} of a basis is a subspace spanned by $p$ of its elements.\label{ft:coord subspace}}
	$U$ of $\beta$ with $U\not\subset W$.
\end{proposition}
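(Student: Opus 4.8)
The plan is to prove the two asserted equalities separately, reducing the sums to a Parseval identity in $\bigwedge^p\F^n$. For the left-hand equalities I would start from \Cref{pr:Theta prod cos}, which gives $\dBC(V,W)=\sin\Theta_{V,W}$, together with the relation \eqref{eq:Theta pi} between the asymmetric angle and the volume projection factor, namely $\pi_{V,W}=\cos\Theta_{V,W}$ when $\F=\R$ and $\pi_{V,W}=\cos^2\Theta_{V,W}$ when $\F=\C$. Then $\dBC(V,W)^2=\sin^2\Theta_{V,W}=1-\cos^2\Theta_{V,W}$ equals $1-\pi_{V,W}^2$ in the real case and $1-\pi_{V,W}$ in the complex case, which is the first equality in each line.

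To evaluate the sums I would represent $V$ by a unit $p$-blade $\omega_V=v_1\wedge\cdots\wedge v_p$, with $\{v_i\}$ an orthonormal basis of $V$. Since $\beta$ is orthonormal, the blades $w_\ii=w_{i_1}\wedge\cdots\wedge w_{i_p}$, over increasing multi-indices $\ii=(i_1,\dots,i_p)$, form an orthonormal basis of $\bigwedge^p\F^n$; each coordinate $p$-subspace is $U=[w_\ii]$, and the blade inner product $c_\ii=\langle w_\ii,\omega_V\rangle$ is, up to phase, the product of cosines of the principal angles of $V$ and $U$, so $\pi_{V,U}=|c_\ii|$ if $\F=\R$ and $\pi_{V,U}=|c_\ii|^2$ if $\F=\C$. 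Parseval's identity then gives $\sum_\ii|c_\ii|^2=\|\omega_V\|^2=1$, the total over all coordinate $p$-subspaces.

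The key step is the partial sum over those $U$ contained in $W$. Because $w_1,\dots,w_q$ span $W$ while $w_{q+1},\dots,w_n\perp W$, these are exactly the $[w_\ii]$ with $\ii\subseteq\{1,\dots,q\}$, and the corresponding blades span $\bigwedge^p W$. As $P_W$ is the orthogonal projection onto $W$, the induced map $\bigwedge^p P_W$ is orthogonal projection onto $\bigwedge^p W$, so $\sum_{U\subset W}|c_\ii|^2=\bigl\|\bigwedge^p P_W\,\omega_V\bigr\|^2=\prod_{i=1}^{\min\{p,q\}}\cos^2\theta_i$, which is $\pi_{V,W}^2$ if $\F=\R$ and $\pi_{V,W}$ if $\F=\C$. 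Subtracting from the total $1$ yields $\sum_{U\not\subset W}\pi_{V,U}^2=1-\pi_{V,W}^2$ for $\F=\R$ and $\sum_{U\not\subset W}\pi_{V,U}=1-\pi_{V,W}$ for $\F=\C$, matching $\dBC(V,W)^2$ by the first part. The degenerate situations fall out of the same bookkeeping: if $p=0$ or $W=\F^n$ there is no $U\not\subset W$ and the sum is $0$, while if $V\neq 0$ and $W=0$ (more generally $p>q$) every coordinate $p$-subspace fails $U\subset W$, $\pi_{V,W}=0$, and the sum is $1$.

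I expect the main obstacle to be the two geometric identifications feeding this count: that $|c_\ii|$ (\resp $|c_\ii|^2$) equals $\pi_{V,U}$, and that $\bigl\|\bigwedge^p P_W\,\omega_V\bigr\|^2$ realizes the projection factor $\pi_{V,W}$ (\resp its square). These are the volumetric--Pythagorean content, and I would invoke them from \eqref{eq:Theta pi} and the cited results \cite{Mandolesi_Pythagorean} rather than rederive them, so that the remainder is only Parseval plus a subtraction.
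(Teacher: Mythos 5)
Your proposal is correct and follows essentially the same route as the paper: both reduce the claim to Parseval's identity in $\bigwedge^p\F^n$ for the orthonormal basis of coordinate blades $w_{i_1}\wedge\cdots\wedge w_{i_p}$, identify the partial sum over $U\subset W$ with $\|P_W A\|^2=\cos^2\Theta_{V,W}$, convert each $|\inner{w_\ii,A}|^2=\cos^2\Theta_{V,U}$ to $\pi_{V,U}$ via \eqref{eq:Theta pi}, and dispatch the case $p>q$ with $\pi_{V,W}=0$ and the full Pythagorean sum \eqref{eq:sum pi2}. The only cosmetic difference is that you compute the total and subtract the $U\subset W$ part, whereas the paper writes the complementary sum directly as $\|A\|^2-\|P_W A\|^2$.
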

\begin{proof}
	See \Cref{sc:Grassmann algebra} for notation and results we use.
	
	If $p > q$ then $\dBC(V,W) = 1$ (see \Cref{tab:asymmetric metrics}), $\pi_{V,W} = 0$ by \eqref{eq:def Theta pi}, and as $U \not\subset W$ is always satisfied the result follows from \eqref{eq:sum pi2}.
	
	Let $p\leq q$. 
	The coordinate $p$-subspaces of $\beta$ are
	$U = [w_{i_1}\wedge\cdots\wedge w_{i_p}]$ with $1\leq i_1 < \cdots<i_p\leq n$, and $U \not\subset W \Leftrightarrow i_j>q$ for some $j$.
	We can assume $\beta$ is orthonormal, so $\{w_{i_1}\wedge\cdots\wedge w_{i_p}:1\leq i_1 < \cdots<i_p\leq k\}$ is an orthonormal basis of $\bigwedge^p W$ for $k=q$, 
	or of $\bigwedge^p \F^n$ for $k=n$.
	For a unit $A\in\bigwedge^p V$, \Cref{pr:Theta prod cos} and \eqref{eq:def Theta pi} give
	\begin{align*}
		\dBC^2(V,W) &=  1 - \cos^2 \Theta_{V,W} = \|A\|^2 - \|P_W A\|^2 \\
		&= \sum_{\substack{1\leq i_1 < \cdots<i_p\leq n, \\ i_j>q \text{ for some } j}} |\inner{w_{i_1}\wedge\cdots\wedge w_{i_p},A}|^2 \\
		&= \sum_{U \not\subset W}  \|P_U A\|^2  
		= \sum_{U \not\subset W} \cos^2 \Theta_{V,U}, 
	\end{align*}
	so the result follows from \eqref{eq:Theta pi}.
\end{proof}

So, $\dBC^2(V,W)$ is 1 minus the (squared, if $\F=\R$) volume of the projection on $W$ of a unit volume of $V$.
Equivalently, it is the sum of (squared, if $\F=\R$) volumes of projections of a unit volume of $V$ on all coordinate $p$-subspaces $U\not\subset W$ (Fig.\,\ref{fig:projections-sin}).

\begin{example}
	If $\{f_1,\ldots,f_5\}$ is the canonical basis of $\R^5$, 
	$e_1=\frac{\sqrt{3}f_1+f_4}{2}$ and $e_2=\frac{f_2+f_5}{\sqrt{2}}$ then $V=\Span\{e_1,e_2\}$ and $W=\Span\{f_1,f_2,f_3\}$ have principal angles $30^\circ$ and $45^\circ$. 
	
	Then $\dFS(V,W) = \cos^{-1}(\frac{\sqrt{3}}{2}\cdot\frac{\sqrt{2}}{2}) = \Theta_{V,W} \cong 52.2^\circ$, and 
	areas in $V$ shrink by $\frac{\sqrt{6}}{4}$ if orthogonally projected on $W$.
	Volumes in $W$ vanish if projected on $V$, so $\dFS(W,V) = \Theta_{W,V}=90^\circ$.
	
	And $\dcw(V,W) = \sqrt{2-2 \cdot \frac{\sqrt{3}}{2}\cdot\frac{\sqrt{2}}{2}} = 2 \sin \frac{\Theta_{V,W}}{2} \cong 0.88$, whose square is twice the fraction $1- \frac{\sqrt{6}}{4} \cong 0.39$ that areas lose when projected from $V$ to $W$. 
	And $\dcw(W,V) = \sqrt{2}$ as volumes are completely lost (fraction $1$) when projected from $W$ to $V$.
	
	Also, $\dBC(V,W) = \sqrt{1-(\frac{\sqrt{3}}{2})^2 \cdot (\frac{\sqrt{2}}{2})^2} = \sin \Theta_{V,W}  = \sqrt{\frac{5}{8}}$.
	It is easy to check that $\pi_{V,[f_i \wedge f_j]}$ is $0$ for $(i,j) = (1,4), (2,5), (3,4)$ or $(3,5)$, $\frac{\sqrt{6}}{4}$ for $(1,5)$, and  
	$\frac{\sqrt{2}}{4}$ for $(2,4)$ or $(4,5)$, so we obtain the same with
	\[ \dBC(V,W) = \Big(\sum_{\substack{1\leq i < j \leq 5, \\ \{i,j\}\not\subset \{1,2,3\}}} \pi_{V,[f_i \wedge f_j]}^2 \Big)^\frac12. \]
	Extending $\{e_1,e_2\}$ orthogonally to $\{e_1,\ldots,e_5\}$, \eqref{eq:sum pi2} gives
	 \begin{align*}
		\dBC^2(W,V) &= \sum_{\substack{1\leq i < j < k \leq 5, \\ \{i,j,k\}\not\subset \{1,2\}}} \pi_{W,[e_i \wedge e_j \wedge e_k]}^2 = \sum_{1\leq i < j < k \leq 5} \pi_{W,[e_i \wedge e_j \wedge e_k]}^2  = 1.
	\end{align*} 
\end{example}

\subsection{Convexity in $(\Gr(n),\dFS)$}\label{sc:Asymmetric Fubini-Study metric}

Results here are for $(\Gr(n),\dFS)$, with $\tau^\pm$ when relevant.
Compare with \Cref{Between-points for dg}.
By \Cref{pr:geodesics G^n} and \Cref{tab:asymmetric metrics}, the intrinsic asymmetric metric of $\dFS$ is $D_{FS}(V,W) = \min\{\dg(V,W),\frac\pi2\}$,
and type I (\resp II) paths from $V$ to $W$ are minimal geodesics $\Leftrightarrow \dg(V,W) \leq \frac\pi2$ (\resp $\geq$).

\begin{proposition}\label{pr:segment FS}
	$(\Gr^\pm(n),\dFS)$ has a segment $\gamma:V \segm W$ of:%
		\SELF{If $\dim(V\cap W) + 1 = p \leq q$, $\gamma = \mu \orthsum \eta$ for a min geod $\mu$ between lines $(V\cap W)^\perp \cap V$ and $(V\cap W)^\perp \cap W_P$, and null geod $\eta$ from $V \cap W$ to $W$. If $\dim(V\cap W) = p$, $\gamma$ is null geod}
	\begin{enumerate}[(i)]
		\item type I $\Leftrightarrow V = K\orthsum S$ and $W = L\orthsum T$ for $K,L \in \Gr_1(T^\perp)$ and $S\subset T$. 
			\SELF{otherwise any $L \subset S^\perp \cap W$}
		In such case, $\gamma = \mu \orthsum \eta$ for $\mu:K \segdim{1} L$ in $\Gr_1(T^\perp)$ and $\eta: S \nullpath T$.\label{it:segm I}%
		\SELF{possibly for a different choice of $L$ and $T$, if $K \perp W$}
			
		\item type II $\Leftrightarrow V\pperp W$.\label{it:segm II}
	\end{enumerate}
\end{proposition}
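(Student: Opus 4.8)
The plan is to reduce both equivalences to comparing a path-length with $\dFS(V,W)$. By \Cref{pr:I II exist}, every type I path from $V_{(p)}$ to $W_{(q)}$ has length $L_1 = \dg(V,W)$ and every type II path (when $p \neq 0$) has length $L_2 = \Delta_{p,\dFS} = \frac\pi2$; hence a path of either type is a segment exactly when this fixed length equals $\dFS(V,W)$. So I would first recall the existence conditions from \Cref{pr:I II exist}(\ref{it:I II exist}) — type I iff $p \leq q$, type II iff $p \neq 0$ — and then characterize each length equality.

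Part (ii) is the quickest. A type II segment exists iff $p \neq 0$ and $\dFS(V,W) = \frac\pi2 = \Delta_{p,\dFS}$. I would apply \Cref{pr:d min max}(\ref{it:d=Delta}) in the $\wedge$ case, which gives $\dFS(V,W) = \Delta_p \Leftrightarrow V \pperp W$ or $p = 0$. Since $V \pperp W$ already forces $V \neq 0$, both conditions collapse to: a type II segment exists iff $V \pperp W$ (this also absorbs the case $p>q$, where $\dFS = \frac\pi2$ and $V \pperp W$ automatically).

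For part (i) the condition is $p \leq q$ together with $\dg(V,W) = \dFS(V,W)$. The key input is the inequality $\dFS \leq \dg$ with its equality case, which I take from the Grassmannian inequalities of \Cref{sc:Metrics and distances on Grassmannians} (used via \Cref{pr:extended inequalities}): equality holds precisely when at most one principal angle of $V$ and $W$ is nonzero. I then translate this into the stated block form using a principal basis $(e_1,\ldots,e_p)$, $(f_1,\ldots,f_q)$: the $p-1$ vanishing angles give $e_i = f_i$ for $i < p$, spanning a common $S \subset T \subset W$, while the one remaining angle is $\theta_{K,L}$ for the leftover lines $K = \Span\{e_p\}$ and $L = \Span\{f_p\}$, which lie in $T^\perp$ by orthogonality of the basis; conversely such a decomposition visibly yields $p-1$ zero angles and the single angle $\theta_{K,L}$, since $K \perp T$ makes $P_W K = P_L K$. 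To produce the segment itself I would take $\mu : K \segdim{1} L$ in $\Gr_1(T^\perp)$ (a segment because $\dFS = \dg$ between lines) and the null expansion $\eta : S \nullpath T$, then invoke \Cref{pr:geodesic phi+X} and \Cref{pr:L gamma eta} to see that $\gamma = \mu \orthsum \eta$ has length $\dFS(V,W)$.

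The main obstacle is the bookkeeping that makes $\gamma = \mu \orthsum \eta$ genuinely \emph{type I} in the sense of \Cref{df:types}(\ref{df:type I}), where the common part must sit inside the null factor built over $0$ rather than inside the rotating geodesic. I would resolve this by writing $\eta = S \orthsum \eta'$ with $\eta' : 0 \nullpath (S^\perp \cap T)$, so that $\mu \orthsum S : V \geodim{p} L \orthsum S$ is the underlying $\Gr_p$-geodesic onto $W' = L \orthsum S \in \PP_W(V)$ and $\eta'$ is exactly the required null factor $0 \nullpath (W'^\perp \cap W)$. I also need to dispose of the degeneracies: $V = 0$, where no line $K$ exists and only the trivial null segment occurs, must be treated separately; and the boundary $K \perp L$ (single angle $\frac\pi2$), where $\dg = \dFS = \frac\pi2$ and a type I segment coexists with the type II segment of part (ii) — I would verify that $L \orthsum S$ still lies in $\PP_W(V)$ there and that the two descriptions agree.
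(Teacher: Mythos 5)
Your proposal is correct and follows essentially the same route as the paper: existence and fixed lengths of type I/II paths from \Cref{pr:I II exist}, the equality conditions $\dg = \dFS$ (via \Cref{pr:extended inequalities}) and $\dFS = \frac\pi2$ (via \Cref{pr:d min max}) translated into the block decomposition by principal bases, and the explicit segment built with \Cref{pr:geodesic phi+X} and \Cref{pr:L gamma eta}. The only cosmetic difference is directional bookkeeping — you assemble $\mu \orthsum \eta$ and then re-split $\eta = S \orthsum \eta'$ to exhibit the type I form, while the paper starts from the type I form $\tilde{\mu} \orthsum \tilde{\eta}$ and factors $\tilde{\mu} = \mu \orthsum S$ — and your treatment of the $K \perp W$ and $V=0$ degeneracies matches the paper's choice of $L \subset S^\perp \cap W$.
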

\begin{proof}
	(\ref{it:segm I}) By \Cref{pr:I II exist},
		\OMIT{geodesic space, $d_{FS} \leq D_{FS}$}
	there is $\gamma:V_{(p)} \path_I W_{(q)}$ $\Leftrightarrow$ $p \leq q$.
	It is a segment $\Leftrightarrow$ $\dFS(V,W) = L_1 = \dg(V,W)$ $\Leftrightarrow$ $\dim(V\cap W) \geq p-1$, by \Cref{pr:extended inequalities}(\ref{it:new ineqs2}).
	This corresponds to $V$ and $W$ as above.
	
	Then $\gamma=\tilde{\mu}\orthsum\tilde{\eta}$ for $\tilde{\mu}:V \geodim{p} W'$ and $\tilde{\eta}: 0 \nullpath T'$ with $W' \in \PP_W(V)$ and $T' = W'^\perp \cap W$.
	We can assume $W' = L \orthsum S$ and $T' = S^\perp \cap T$, since $L = P_W (K)$ if $K \not\perp W$ and otherwise we can choose any line $L \subset S^\perp \cap W$.
	By Propositions \ref{pr:geodesic phi+X} and \ref{pr:PR geometry}, 
		\OMIT{minimal geodesics are segments}
	$\tilde{\mu} = \mu \orthsum S$ with $\mu$ as above.
	So $\gamma = \mu \orthsum \eta$ with $\eta = \tilde{\eta} \orthsum S: S \nullpath T$.
		\OMIT{null by \Cref{pr:L gamma eta}}

	(\ref{it:segm II}) 
	By \Cref{pr:I II exist},
	\OMIT{(\ref{it:I II exist})} 
	there is $\gamma:V_{(p)} \path_{II} W_{(q)}$ $\Leftrightarrow$ $p \neq 0$.
	It is a segment $\Leftrightarrow$ $\dFS(V,W) = L_2 = \frac\pi2 \Leftrightarrow V \pperp W$, by \Cref{pr:d min max}.
\end{proof}

So, for $\dFS$, minimal geodesics are usually not segments.
	\SELF{unlike $\dg$}
If $V \pperp W$ and $\dim(V\cap W) + 1 < p=q$, there is no segment in $\Gr_p(n)$, by \Cref{pr:segment Gp}, but a type II segment $V \contr 0 \expan W$ gives a shortcut through $\Gr^{\pm}(n)$.

%
\begin{proposition}\label{pr:U in segm}
	Let $U,V,W\in (\Gr^{\pm}(n),\dFS)$ be distinct.
		\SELF{If $V = W$ the only segment is constant (type I), $U=V=W$ and the conditions in (\ref{it:U in I}) hold, and vice versa.}
	Then $U$ is in a segment from $V$ to $W$ of:
	\begin{enumerate}[(i)]
		\item type I $\Leftrightarrow$ $U = J\orthsum R$, $V = K\orthsum S$ and $W = L\orthsum T$ with $J,K,L \in \Gr_1(T^\perp)$, $S\subset R\subset T$ and%
			\footnote{\Cref{pr:PR geometry} shows when $J \in (K,L)_{\dFS}$.}
		$J \in [K,L]_{\dFS}$;
		\label{it:U in I} 
		
		\item type II $\Leftrightarrow$ $V\pperp W$, with $V\subset U$ or $U\subset W$.
		\label{it:U in II}
	\end{enumerate}
\end{proposition}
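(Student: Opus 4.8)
The plan is to mirror the structure of \Cref{pr:triangle equality dg} (the $\dg$ analogue), replacing the $\dg$ segment theory by the $\dFS$ one from \Cref{pr:segment FS}. Throughout I work in $(\Gr^{\pm}(n),\dFS)$ with $U,V,W$ pairwise distinct. Since $\dFS \leq D_{FS}$, every segment is a minimal geodesic, hence of type I or II by \Cref{pr:geodesics G^n}; so it suffices, for each type separately, to decide when a point $U = \gamma(t_0)$ can sit on such a segment $\gamma:V \segm W$.

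For (\ref{it:U in I}), the forward direction starts from a type I segment. By \Cref{pr:segment FS}(\ref{it:segm I}) its mere existence already forces $V = K\orthsum S$ and $W = L\orthsum T$ with $K,L \in \Gr_1(T^\perp)$ and $S\subset T$, and presents it as $\gamma = \mu \orthsum \eta$ with $\mu:K \segdim{1} L$ and $\eta:S \nullpath T$. Evaluating $U = \gamma(t_0) = \mu(t_0)\orthsum\eta(t_0)$, I put $J = \mu(t_0) \in \Gr_1(T^\perp)$ and $R = \eta(t_0)$; as $\eta$ is null, \Cref{pr:null geodesic} gives $S \subset R \subset T$, so $U = J \orthsum R$, while $J \in [K,L]_{\dFS}$ because it lies on the segment $\mu$. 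For the converse, given that decomposition with $J \in [K,L]_{\dFS}$, I reconstruct the segment: on lines $\dFS$ equals $\dg$ (a single principal angle) and is intrinsic there, so by \Cref{pr:PR geometry} the between-point $J$ lies on some $\mu:K \segdim{1} L$ in $\Gr_1(T^\perp)$; choosing any null $\eta:S \nullpath T$ through $R$ (a finite chain of expansions, available since $S \subset R \subset T$), \Cref{pr:segment FS}(\ref{it:segm I}) together with \Cref{pr:d VS WT} makes $\gamma = \mu \orthsum \eta$ a type I segment passing through $U = J \orthsum R$.

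For (\ref{it:U in II}), \Cref{pr:segment FS}(\ref{it:segm II}) shows a type II segment exists exactly when $V \pperp W$, which is therefore necessary. Writing such a segment as $V \nullpath X_{(r)} \contr X' \nullpath W$, I locate $U = \gamma(t_0)$: by \Cref{pr:null geodesic} the two null stretches are chains of expansions, so $V \subset \gamma(t) \subset X$ before the contraction and $X' \subset \gamma(t) \subset W$ after it, forcing $V \subset U$ or $U \subset W$. Conversely, when $V \pperp W$ I exhibit an explicit type II segment through $U$: take $V \nullpath U \contr 0 \nullpath W$ if $V \subset U$, and $V \contr 0 \nullpath U \nullpath W$ if $U \subset W$. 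Each is null apart from one contraction, so by \Cref{pr:I II exist}(\ref{it:L I II}) it has length $\Delta_{\cdot,\dFS} = \tfrac\pi2 = \dFS(V,W)$ (the last equality by \Cref{pr:d min max} since $V \pperp W$), hence is a segment, and $U$ plainly lies on it.

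The main obstacle will be the bookkeeping at the contraction instant in the forward direction of (\ref{it:U in II}): one must use the asymmetric continuity rule \eqref{eq:continuity} to decide whether $\gamma(t_0)$ is the pre-contraction $X$ or the post-contraction $X'$ in $\Gr^-(n)$ versus $\Gr^+(n)$, so that the dichotomy $V\subset U$ or $U \subset W$ covers the contraction point with no gap. A secondary point, easy but worth stating cleanly, is the reduction to lines in (\ref{it:U in I}): that $J \in [K,L]_{\dFS}$ for the \emph{lines} $J,K,L$ is equivalent to $J$ lying on a line segment, which rests on $\dFS$ and $\dg$ coinciding on $\Gr_1(n)$ and on the geodesic (intrinsic) structure there.
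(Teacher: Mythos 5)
Your proposal is correct and follows essentially the same route as the paper: both directions of (\ref{it:U in I}) are handled by evaluating/reconstructing the decomposition $\gamma=\mu\orthsum\eta$ from \Cref{pr:segment FS}(\ref{it:segm I}) together with \Cref{pr:PR geometry}, \Cref{pr:L gamma eta} and \Cref{pr:d VS WT}, and (\ref{it:U in II}) reduces to \Cref{pr:segment FS}(\ref{it:segm II}) with the position of $U$ relative to the contraction. Your extra care at the contraction instant via \eqref{eq:continuity}, and your explicit type II paths through $0$, only flesh out details the paper leaves implicit.
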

\begin{proof}	
	(\ref{it:U in I}, $\Rightarrow$)   
	If $U=\gamma(t_0)$ for $\gamma$ as in \Cref{pr:segment FS}(\ref{it:segm I}), take $J= \mu(t_0)$ and $R=\eta(t_0)$.
	(\ref{it:U in I}, $\Leftarrow$) \Cref{pr:PR geometry} 
		\OMIT{$J\in[K,L] \Rightarrow J$ in $K\segm L$}
	gives $\tilde{\mu}:K \segm L$ in $\Gr_1(T^\perp)$ with $\tilde{\mu}(t_0) = J$ for some $t_0$.
	For $\mu = \tilde{\mu} \orthsum S$ and $\eta: 0 \nullpath S^\perp \cap T$ with $\eta(t_0) = S^\perp \cap R$, we have $\mu \orthsum \eta: V \path_I W$ with $(\mu \orthsum \eta) (t_0) = U$.
	By Propositions \ref{pr:L gamma eta} and \ref{pr:d VS WT}, $L(\mu \orthsum \eta) = L(\mu) = L(\tilde{\mu}) = \dFS(K,L) = \dFS(V,W)$.
	
	(\ref{it:U in II}) By \Cref{pr:segment FS}(\ref{it:segm II}), $U$ is in a type II segment $\Leftrightarrow V \pperp W$ and either $V \subset U$ (if $U$ is before the contraction) or $U \subset W$ (after).
\end{proof}
%

\begin{proposition}\label{pr:Total Grassmannian}
	For $U,V,W\in \Gr(n)$, $U \in [V,W]_{\dFS}$
	if, and only if:%
		\SELF{cases intersect (e.g., if $V \subset U \subset W$)}
	\begin{enumerate}[(i)]
		\item $U$ lies in a type I segment from $V$ to $W$ in $(\Gr^{\pm}(n),\dFS)$; or \label{it:UVW ABC}
		\item $V\subset U$, with $V\pperp W$ or $V^\perp\cap U\subset W$; or \label{it:UV}
		\item $U\subset W$, with $V\pperp W$ or $P_W(V)\subset U$. \label{it:VW} 		
	\end{enumerate}
\end{proposition}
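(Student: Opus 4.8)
The plan is to settle the reverse implication quickly and put all the effort into the forward one. For $(\ref{it:UVW ABC})\Rightarrow$, I use the general fact recorded after \Cref{df:between} that every point of a segment is a between-point. For $(\ref{it:UV})\Rightarrow$, $V\subset U$ gives $\dFS(V,U)=0$ by \Cref{pr:d min max}(\ref{it:d=0}), while its side condition is exactly the equality criterion of \Cref{pr:equalities subspaces}(\ref{it:V' sub V}) (with $V$ as the smaller subspace and $U$ in the role of $V$ there), so $\dFS(V,U)+\dFS(U,W)=\dFS(U,W)=\dFS(V,W)$. Case $(\ref{it:VW})\Rightarrow$ is dual: $U\subset W$ gives $\dFS(U,W)=0$, and \Cref{pr:equalities subspaces}(\ref{it:W' sub W}) (with $U$ in the role of $W'$) gives $\dFS(V,U)=\dFS(V,W)$.

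For the forward implication, suppose $U\in[V,W]_{\dFS}$, i.e. $\dFS(V,W)=\dFS(V,U)+\dFS(U,W)$. If $\dFS(V,U)=0$ then $V\subset U$ and the identity collapses to $\dFS(V,W)=\dFS(U,W)$, whence \Cref{pr:equalities subspaces}(\ref{it:V' sub V}) yields $(\ref{it:UV})$; dually, $\dFS(U,W)=0$ yields $(\ref{it:VW})$ via \Cref{pr:equalities subspaces}(\ref{it:W' sub W}). There remains the case where both summands are positive, so that $V\not\subset U$ and $U\not\subset W$ exclude $(\ref{it:UV})$ and $(\ref{it:VW})$ and I must produce $(\ref{it:UVW ABC})$. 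Since every $\dFS$-value lies in $[0,\frac\pi2]$ and equals $\frac\pi2$ whenever the first subspace is strictly larger in dimension, positivity of both terms forces $\dim V\le\dim U\le\dim W$, with $\dim V\ge 1$.

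Next I reduce to equal dimensions. Choosing $U'\in\PP_U(V)$ of dimension $p=\dim V$, \Cref{pr:asym metrics} gives $\dFS(V,U')=\dFS(V,U)$, and since $U'\subset U$, \Cref{pr:d subspaces} with the triangle inequality gives $\dFS(V,W)=\dFS(V,U')+\dFS(U,W)\ge\dFS(V,U')+\dFS(U',W)\ge\dFS(V,W)$; hence $U'\in[V,W]_{\dFS}$ with $\dim U'=\dim V$ and $\dFS(U',W)=\dFS(U,W)>0$, so $U'$ is strictly distinct from $V$ and from any $W'\in\PP_W(V)$. Passing to the exterior power, I represent $V$, $U'$, $W'$ by unit blades $A_V,A_{U'},A_{W'}$ and use $\dFS(V,W)=\cos^{-1}\|P_{\bigwedge^p W}A_V\|$, $\dFS(U',W)=\cos^{-1}\|P_{\bigwedge^p W}A_{U'}\|$ and $\dFS(V,U')=\cos^{-1}|\inner{A_V,A_{U'}}|$: the between-point identity becomes a triangle equality in $\PR(\bigwedge^p\F^n)$ among the line $A_V$, the line $A_{U'}$, and the subspace $\bigwedge^p W$, whose foot for $A_V$ is the line $A_{W'}$. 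Thus $A_{U'}$ lies strictly inside the minimal great-circle arc from $A_V$ to $A_{W'}$.

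The main obstacle is the rigidity at this point: a decomposable element sitting strictly between the two blades $A_V$ and $A_{W'}$ on that arc must force the entire arc into the Grassmann (blade) variety, which occurs precisely when $V$ and $W'$ differ by a single rotation, i.e. $\dim(V\cap W')\ge p-1$. I plan to derive this from the strictness in \Cref{pr:extended inequalities}(\ref{it:new ineqs2}) (the same mechanism that makes type~I paths segments only under $\dim(V\cap W)\ge p-1$ in \Cref{pr:segment FS}(\ref{it:segm I})) together with the line geometry of \Cref{pr:PR geometry}. Granting it, writing $V=K\orthsum S$ and $W'=L\orthsum S$ with lines $K,L$ and $S=V\cap W'$, locating $A_{U'}$ on the arc as a line $M$ summed with $S$, and then reinstating the extra dimensions of $U$ and $W$ recovers the decomposition $U=J\orthsum R$, $V=K\orthsum S$, $W=L\orthsum T$ with $S\subset R\subset T$ and $J\in[K,L]_{\dFS}$ — which is exactly membership in a type~I segment by \Cref{pr:U in segm}(\ref{it:U in I}), establishing $(\ref{it:UVW ABC})$.
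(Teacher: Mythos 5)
Your reverse implication and the degenerate forward cases ($\dFS(V,U)=0$ or $\dFS(U,W)=0$) are correct and coincide with the paper's use of \Cref{pr:d min max} and \Cref{pr:equalities subspaces}. In the main case your overall strategy — reduce to a between-point $U'$ of the same dimension as $V$, then use blade rigidity in $\PR(\bigwedge^p \F^n)$ — is also the paper's, but two steps have genuine gaps.

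First, you anchor the great-circle arc at ``the foot of $A_V$ on $\bigwedge^p W$'', taking $W'\in\PP_W(V)$. When $\dFS(V,W)=\frac\pi2$ with both summands positive — which does occur, e.g.\ $V=\Span\{e_1\}$, $U=\Span\{e_1+e_2\}$, $W=\Span\{e_2,e_3\}$ in $\R^3$ — we have $V\pperp W$, so $P_{\bigwedge^p W}A_V=0$, the foot is undefined, every line of $\bigwedge^p W$ is equidistant from $A_V$, and your recipe does not single out the correct $W'$ (here $\Span\{e_2\}=P_W(U)$, not an arbitrary element of $\PP_W(V)$). The paper sidesteps this by setting $W'=P_W(U')$: since $\Theta_{V,U},\Theta_{U,W}<\frac\pi2$ forces $\Theta_{U',W}<\frac\pi2$, this projection always has dimension $p$, and the chain $\Theta_{V,W'}\leq\Theta_{V,U'}+\Theta_{U',W'}=\Theta_{V,U}+\Theta_{U',W}\leq\Theta_{V,W}$ collapses to equalities, placing $U'\in(V,W')_{\dFS}$ with all three subspaces in $\Gr_p(n)$. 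Second, your closing sentence — ``reinstating the extra dimensions of $U$ and $W$ recovers the decomposition'' — is exactly where the paper still has work to do: one must prove $R'=U'^\perp\cap U\subset W$ (from $\Theta_{U',W}=\Theta_{U,W}$ and \Cref{pr:equalities subspaces}(\ref{it:V' sub V})), that $U'+W'$ is orthogonal to $R'$ and to $T'=(W'\orthsum R')^\perp\cap W$, and hence that $J,K,L\perp T=S\orthsum R'\orthsum T'$ with $S\subset R=S\orthsum R'\subset T$; only then do the hypotheses of \Cref{pr:U in segm}(\ref{it:U in I}) hold. As written, your proposal asserts this lifting rather than proving it. (Your rigidity step itself is sound: it is precisely \Cref{pr:triangle equality same dim}, which you could cite directly instead of re-deriving from \Cref{pr:PR geometry} and the blade-sum criterion.)
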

\begin{proof}
	By \Cref{pr:Theta prod cos}, 
	$U \in [V,W]_{\dFS}$ $\Leftrightarrow$ $\Theta_{V,W} = \Theta_{V,U} + \Theta_{U,W}$.
	Propositions \ref{pr:d min max} and \ref{pr:equalities subspaces}(\ref{it:V' sub V}) show
	(\ref{it:UV}) $\Leftrightarrow (\Theta_{V,U}=0$ and $\Theta_{V,W} = \Theta_{U,W})$,
	and
	(\ref{it:VW}) $\Leftrightarrow (\Theta_{U,W}=0$ and $\Theta_{V,W} = \Theta_{V,U}$).

	It is immediate that (\ref{it:UVW ABC}) $\Rightarrow$ $U \in [V,W]_{\dFS}$, and we prove that (\ref{it:UVW ABC}) follows from $\Theta_{V,W} = \Theta_{V,U} + \Theta_{U,W}$ with $\Theta_{V,U} \neq 0$ and $\Theta_{U,W}\neq 0$.
	In such case, $\Theta_{V,U} \neq\frac\pi 2$ and $\Theta_{U,W}\neq\frac\pi 2$.
	Let $U' = P_U(V) \neq V$ (as $\Theta_{V,U} \neq 0$)  and $W' = P_W (U')$.
	By \Cref{pr:d subspaces},
	$\Theta_{U',W} \neq \frac{\pi}{2}$.
	By \eqref{eq:Theta dim}, $V, U', W' \in \Gr_p(n)$ for some $p$.
	The triangle inequality, \eqref{eq:Theta VPV} and \Cref{pr:d subspaces} give 
	\[ \Theta_{V,W'} 
	\leq \Theta_{V,U'} + \Theta_{U',W'} 
	= \Theta_{V,U} + \Theta_{U',W} 
	\leq \Theta_{V,U} + \Theta_{U,W} = \Theta_{V,W}. \]

	By \Cref{pr:d subspaces}, these $\leq$'s are $=$'s.
	The last one gives $\Theta_{U',W} = \Theta_{U,W} \neq 0$,
	so $U' \neq W'$.	
	The first one gives $U' \in (V,W')_{\dFS}$.
	By \Cref{pr:triangle equality same dim},
	$U' = J \orthsum S$, $V = K \orthsum S$ and $W' = L \orthsum S$
	for $S \in \Gr_{p-1}(n)$ and distinct lines $J,K,L \subset S^\perp$ with $J \in (K,L)_{\dFS}$.

	By \Cref{pr:PR geometry}, $K \subset J \oplus L \subset U' + W'$.
	As $\Theta_{U',W} = \Theta_{U,W}$ and, by \eqref{eq:Theta dim}, $U' \not\pperp W$, \Cref{pr:equalities subspaces}(\ref{it:V' sub V})
	gives
	$R' = U'^\perp\cap U \subset W$.
	As $P_W(U') = W'$, we have $U' + W' \perp R'$ and $U' + W' \perp T' = (W' \orthsum R')^\perp \cap W$.
	Thus $J,K,L \perp T$ for $T = S \orthsum R' \orthsum T'$.
	Let $R = S \orthsum R'$.
	As $U = U' \orthsum R' = J \orthsum R$ and
	$W = W' \orthsum R' \orthsum T' = L \orthsum T$,
	\Cref{pr:U in segm}(\ref{it:U in I}) gives (\ref{it:UVW ABC}).
\end{proof}

\begin{example}
	Let $\{e_1,\ldots,e_6\}$ be the canonical basis  of $\R^6$.
	By \Cref{pr:segment FS}, there is no segment from $V=\Span\{e_1+e_5,e_2+e_6\}$ to $W=\Span\{e_1,\ldots,e_4\}$,
	but $U_1 = V \orthsum \Span\{e_3\}$ and $U_2 = \Span\{e_1,e_2,e_3\}$ are in $(V,W)_{\dFS}$, 
	respectively by
		\OMIT{\ref{pr:Total Grassmannian}}
	(\ref{it:UV}) and (\ref{it:VW}) above.
\end{example}

So, unlike \Cref{pr:geodesics dg},
	\OMIT{(\ref{it:between segment})}
$U \in [V,W]_{\dFS}$ does not mean $U$ lies in a segment $V \segm W$ of $(\Gr^{\pm}(n),\dFS)$.
But we have the following.

\begin{proposition}\label{pr:U betw in geod}
	$U \in [V,W]_{\dFS}$ $\Rightarrow$ $U$ lies in a minimal geodesic from $V$ to $W$ in $(\Gr^{\pm}(n),\dFS)$.
\end{proposition}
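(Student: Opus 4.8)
The plan is to run through the three cases of \Cref{pr:Total Grassmannian} and, in each, exhibit a minimal geodesic from $V$ to $W$ passing through $U$. Two standing facts make this efficient. First, every segment is a minimal geodesic, since $\dFS(V,W)\leq D_{FS}(V,W)\leq L(\gamma)=\dFS(V,W)$ forces equality; so whenever $U$ already lies on a $\dFS$-segment there is nothing left to prove. Second, by \Cref{pr:geodesics G^n} the space is geodesic with intrinsic metric $D_{FS}(V,W)=\min\{\dg(V,W),\frac\pi2\}$, and by \Cref{pr:length partition} an expansion contributes no length change; hence splicing a null expansion to a minimal geodesic yields a path whose length is exactly that of the geodesic piece. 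So in the remaining cases it suffices to glue a minimal geodesic to a null expansion and check that the total length equals $D_{FS}(V,W)$.

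In \Cref{pr:Total Grassmannian}(\ref{it:UVW ABC}) the point $U$ lies on a type I segment, which is already a minimal geodesic. The partially orthogonal branches of (\ref{it:UV}) and (\ref{it:VW}) are equally quick: if $V\pperp W$ with $V\subset U$ or $U\subset W$, then \Cref{pr:U in segm}(\ref{it:U in II}) places $U$ on a type II segment, again a minimal geodesic.

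The substance lies in the branches where $V\not\pperp W$, so $\dim V\leq\dim W$ and all principal angles of $V$ with $W$ are below $\frac\pi2$. In \Cref{pr:Total Grassmannian}(\ref{it:UV}) we then have $V\subset U$ and $V^\perp\cap U\subset W$; \Cref{pr:equalities subspaces}(\ref{it:V' sub V}) gives $\dFS(U,W)=\dFS(V,W)<\frac\pi2$, which forces $U\not\pperp W$ and hence $\dim U\leq\dim W$, after which \Cref{pr:equalities subspaces}(\ref{it:dg V' sub V}) yields $\dg(U,W)=\dg(V,W)$ and therefore $D_{FS}(U,W)=D_{FS}(V,W)$. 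A null expansion $V\expan U$ followed by a minimal geodesic $U\geod W$ is then a minimal geodesic from $V$ to $W$ through $U$. The case \Cref{pr:Total Grassmannian}(\ref{it:VW}) is dual: here $\dim P_W(V)=p$ (the projection is injective since $V\not\pperp W$), so $P_W(V)\subset U$ gives $\dim U\geq p$, and \Cref{pr:equalities subspaces}(\ref{it:dg W' sub W}) yields $\dg(V,U)=\dg(V,W)$, whence $D_{FS}(V,U)=D_{FS}(V,W)$; a minimal geodesic $V\geod U$ followed by a null expansion $U\expan W$ then works.

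The only real care needed is at the splice, and this is where I expect the subtlety to lie. On the one hand, $\dFS$-betweenness does \emph{not} transfer directly to the intrinsic metric, because $\dFS$ and $D_{FS}$ disagree once $\dg$ exceeds $\frac\pi2$; this is exactly why the geodesic-distance equalities above are needed, so that $D_{FS}$ (and not merely $\dFS$) is computed correctly. On the other hand, the spliced path must be continuous in both $\Gr^-(n)$ and $\Gr^+(n)$, which means the expansion has to be placed on the appropriate side of its critical point in each topology, as in the remark following \Cref{df:-gamma perp}. Since by \Cref{pr:length partition} an expansion never changes the length, this adjustment preserves minimality, and the constructed path is a minimal geodesic through $U$ in $\Gr^{\pm}(n)$ in every case.
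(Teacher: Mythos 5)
Your proof is correct and follows essentially the same route as the paper's: the same case split via \Cref{pr:Total Grassmannian}, with case (\ref{it:UVW ABC}) and the partially orthogonal branches of (\ref{it:UV})--(\ref{it:VW}) handled by segments, and the remaining branches by splicing a null expansion onto a geodesic piece through $U$. The only (harmless) difference is that you certify minimality by deriving $D_{FS}(U,W)=D_{FS}(V,W)$ (resp.\ $D_{FS}(V,U)=D_{FS}(V,W)$) from the $\dg$-equality conditions of \Cref{pr:equalities subspaces}, whereas the paper exhibits both a type I and a type II path through $U$ and invokes \Cref{pr:geodesics G^n} to take the shorter one.
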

\begin{proof}\OMIT{\ref{pr:Total Grassmannian}}	
	Assume \WLOG $U,V,W$ distinct.
	In case (\ref{it:UVW ABC}) above, it is immediate.	
	In (\ref{it:UV}) and (\ref{it:VW}), with $V\pperp W$, it follows from \Cref{pr:U in segm}(\ref{it:U in II}).
		
	
	If $V_{{(p)}} \not\pperp W_{(q)}$ then $\dim P_W(V) = p \leq q$.
	In (\ref{it:UV}), 
		\OMIT{$V \subset U$ and $V^\perp \cap U \subset W$}
	$U_{(r)}$ lies in a type I path $V \expan U \geodim{r} (P_W V) \orthsum (V^\perp \cap U)  \expan  W$
	and, if $V\neq 0$, in a type II path $V \expan U \contr 0 \expan  W$.
	In (\ref{it:VW}), 
		\OMIT{$P_W(V) \subset U \subset W$} 
	it lies in a type I path $V  \geodim{p} P_W (V)  \expan U \expan W$,
	and, if $V\neq 0$, in a type II path $V \contr 0 \expan U \expan W$.
	In each case, the shorter of the two paths is a minimal geodesic.
\end{proof}

The converse does not hold:

\begin{example}
	$\gamma(t) = \Span\{ (\cos \frac{t\pi}{3}, \sin \frac{t\pi}{3} , 0, 0) , (0,0,\cos \frac{t\pi}{3},\sin \frac{t\pi}{3}) \}$ is a minimal geodesic from $V= \gamma(0)$ to $W=\gamma(1)$ in $(\Gr_2(4),\dFS)$, and also in $(\Gr^{\pm}(4),\dFS)$,
	as principal angles are $\theta_1=\theta_2=\frac\pi3$ and so $L_1 = \dg(V,W) = \frac{\pi \sqrt{2}}{3} < \frac\pi2 = L_2$.
		\OMIT{\Cref{ex:shortcut}}
	But $U=\gamma(\frac12) \notin [V,W]_{\dFS}$,
	as $\dFS(V,W) = \cos^{-1}(\cos^2 \frac\pi3) \cong 76^\circ$ and 
	$\dFS(V,U) = \dFS(U,W) = \cos^{-1}(\cos^2 \frac\pi6) \cong 41^\circ$.
\end{example}

Unlike \Cref{pr:dg not convex}, $(V,W)_{\dFS} = \emptyset$ is quite common:

\begin{proposition}\label{pr:distinct between FS}
	$(\Gr(n),\dFS)$ is not Menger convex for $n \neq 0$.
		\SELF{$n=0$ is vacuously convex, as there are no $V \neq W$}
	Also,  $(V_{(p)},W_{(q)})_{\dFS} \neq \emptyset$
	if and only if:
		\OMIT{the cases intersect}
	\begin{enumerate}[(i)]
		\item $V \pperp W$ with $V \neq \F^n$ or $W \neq 0$; or \label{it:bet1}
		\item $W\pperp V$ and $V\not\subset W$; or \label{it:bet2}
		\item $p \leq q-2$ or $p = q = \dim(V\cap W) +1$. \label{it:bet3}
	\end{enumerate}
\end{proposition}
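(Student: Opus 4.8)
The plan is to reduce everything to the description of $[V,W]_{\dFS}$ already obtained in \Cref{pr:Total Grassmannian} (recalling $\dFS=\Theta_{V,W}$ by \Cref{pr:Theta prod cos}), and then decide, case by case, when that set contains a point $U\neq V,W$. \Cref{pr:Total Grassmannian} lists three (non-exclusive) ways for $U$ to lie in $[V,W]_{\dFS}$: $U$ on a type I segment; $U\supset V$ with $V\pperp W$ or $V^\perp\cap U\subset W$; and $U\subset W$ with $V\pperp W$ or $P_W(V)\subset U$. So the proposition amounts to translating ``one of these admits a solution $U\notin\{V,W\}$'' into the trichotomy (i)--(iii). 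Non-Menger convexity will then drop out once the equivalence is proved: for $n\geq 1$ any pair $V\subsetneq W$ with $\dim W=\dim V+1$ (e.g.\ $V=0$ and $W$ a line) satisfies none of (i)--(iii), so $(V,W)_{\dFS}=\emptyset$; for $n=0$ there is no pair $V\neq W$, so convexity is vacuous.

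For the ($\Leftarrow$) direction I would exhibit an explicit between-point for each condition. Under (i), $V\pperp W$ forces $V\neq 0$ and $W\neq\F^n$; if $V\neq\F^n$ take $U=\F^n$ (its clause on $V\subset U$), otherwise $W\neq 0$ and $U=0$ works (its clause on $U\subset W$), and in both cases $U\notin\{V,W\}$. Under (ii), set $U=P_W(V)$: it lies in $[V,W]_{\dFS}$ by the $P_W(V)\subset U$ clause, differs from $V$ because $V\not\subset W$, and the hypothesis $W\pperp V$ yields a nonzero $w\in W$ with $w\perp V$, hence $w\perp P_W(V)$, so $P_W(V)\subsetneq W$ and $U\neq W$. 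Under (iii), if $p\leq q-2$ any $U$ with $P_W(V)\subsetneq U\subsetneq W$ of dimension $\dim P_W(V)+1\leq q-1$ is a between-point; if instead $p=q=\dim(V\cap W)+1$, the shared hyperplane $S=V\cap W$ together with \Cref{pr:triangle equality same dim} supplies a genuine interior point of the $\Gr_p(n)$-geodesic $V\segm W$.

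For ($\Rightarrow$) I would take $U\in(V,W)_{\dFS}$ and run through the three cases of \Cref{pr:Total Grassmannian}. If $U$ comes from one of its last two clauses with the ``$V\pperp W$'' alternative, then $V\subsetneq U$ or $U\subsetneq W$ forces $V\neq\F^n$ or $W\neq 0$, giving (i). Otherwise the alternatives $V^\perp\cap U\subset W$ and $P_W(V)\subset U$ each produce a nonzero subspace of $W$ orthogonal to $V$, i.e.\ $W\pperp V$; splitting on whether $V\subset W$ then yields (ii) when $V\not\subset W$, or, after noting $V\subsetneq U\subsetneq W$ (using $U=V\orthsum(V^\perp\cap U)$), the gap $q-p\geq 2$, i.e.\ (iii). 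The type I case is handled through \Cref{pr:U in segm}: writing $U=J\orthsum R$, $V=K\orthsum S$, $W=L\orthsum T$, one checks $V\subset W$ with $\dim(V\cap W)=p$ when $K=L$ (so $U\neq V,W$ forces $q-p\geq 2$, giving (iii)), and $\dim(V\cap W)=p-1$ when $K\neq L$ (so $U\neq V,W$ forces either $p=q$, the second clause of (iii), or $p<q$ with $V\not\subset W$, giving (ii)).

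The main obstacle is the bookkeeping in the ($\Rightarrow$) direction: $V\pperp W$ and $W\pperp V$ are genuinely distinct asymmetric relations, and the delicate point is recognising that the orthogonality/projection clauses of \Cref{pr:Total Grassmannian} secretly encode $W\pperp V$, so that the reverse relation of condition (ii) surfaces exactly when $V\not\subset W$ and $p<q$. Equal care is needed to guarantee every constructed $U$ is distinct from \emph{both} $V$ and $W$: it is precisely the borderline configurations ($V$ a hyperplane of $W$, and $p=q-1$) where these candidates collapse onto an endpoint, and those collapses are exactly the pairs responsible for the failure of Menger convexity.
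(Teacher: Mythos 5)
Your proposal is correct and follows essentially the same route as the paper: both directions reduce to the characterization of $[V,W]_{\dFS}$ in \Cref{pr:Total Grassmannian}, with the explicit between-points $\F^n$, $0$, $P_W(V)$, an intermediate subspace between $P_W(V)$ and $W$, and the point supplied by \Cref{pr:triangle equality same dim}, and with the forward direction resolved via \Cref{pr:U in segm}(\ref{it:U in I}) in the type~I case. The only difference is organizational — the paper first dispatches the cases $V\pperp W$ and $V\subset W$ before invoking \Cref{pr:Total Grassmannian}, whereas you case-split directly on its three clauses — and the paper exhibits the non-convex pair $(\F^n,0)$ directly rather than through the equivalence.
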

\begin{proof}
	It is not convex as $(\F^n,0)_{\dFS} = \emptyset$, by Propositions \ref{pr:trivial geod} and \ref{pr:U betw in geod}.
		\OMIT{$\F^n \neq 0$ for $n \neq 0$}
		
	($\Rightarrow$) 
	There is $U \in (V,W)_{\dFS}$, so $U,V,W$ are distinct.
	If $V \pperp W$ we have (\ref{it:bet1}), as $(\F^n,0)_{\dFS} = \emptyset$.
	If $V \subset W$ then $V \subsetneq U \subsetneq W$, so $p \leq q-2$.
	Assuming $V \not\pperp W$ and $V \not\subset W$,
	\Cref{pr:Total Grassmannian} shows either:
	\begin{itemize}
		\item $U$ lies in a $V \segm_I W$, so $U,V,W$ are as in \Cref{pr:U in segm}(\ref{it:U in I}),
		and if $S = T$ then $p = q = \dim(V\cap W) +1$,
		and otherwise $W \pperp V$; or
		
		\item $V \subsetneq U$ and $V^\perp \cap U \subset W$, or $P_W(V) \subset U \subsetneq W$, so $W \pperp V$.
	\end{itemize}
%

	($\Leftarrow$)
	By \Cref{pr:Total Grassmannian}, we have $U \in (V,W)_{\dFS}$:
	in (\ref{it:bet1}), for any $U \supsetneq V$ or $U\subsetneq W$;
	in (\ref{it:bet2}), for $U=P_W(V)$ ($\neq V,W$ as $V\not\subset W$ and $W\pperp V$);
	in (\ref{it:bet3}), for any $P_W(V) \subsetneq U \subsetneq W$ or
	$U$ as in \Cref{pr:triangle equality same dim}.
\end{proof}

\section{Final remarks}\label{sc:conclusion}

We have presented a simple and natural method for extending regular Grassmannian metrics to asymmetric metrics in the Total Grassmannian,
and have given explicit formulas for the main ones.
The resulting geometry is more adequate for $\Gr(n)$ than the usual disjoint union of $\Gr_p(n)$'s, as it reflects containment and proximity relations between subspaces of any dimensions,
and provides more flexibility for applications, with continuous paths linking the $\Gr_p(n)$'s.
A complete description of minimal geodesics was given for the asymmetric $\ell^2$ and $\wedge$ metrics, as well as convexity results for the asymmetric geodesic and Fubini-Study metrics. 

It would be interesting to extend such results to other asymmetric metrics.
We expect the max ones to have more unwieldy geodesics, 
as they control directly only the largest principal angle.
We note that their simplicity can be convenient for some purposes, but such rough metrics are inadequate for applications in which many small differences between subspaces can be more relevant than a single large one.

There are many ways to change the asymmetric metrics of \Cref{tab:asymmetric metrics} into others more suitable for particular problems.
For example, with their conjugates $\bar{d}$ (\Cref{df:conjugate maxsym}) many results are simply reversed:
\begin{itemize}
	\item $\cd(V,W)$ measures the failure of $V\supset W$, so columns in \Cref{tab:asymmetric metrics} are swapped: $\cd(V_{(p)},W_{(q)}) = \Delta_q$ for $p < q$, the formula for $p \geq q > 0$;
	\item $\Delta L \neq 0$ at expansions, and null curves have contractions; etc.
\end{itemize}

Symmetrizing them via different norms yields new metrics for $\Gr(n)$, but some information is lost.
\SELFR{$\ell^2$ norm turns $\dpF$ into $\tilde{d}_{pF}(V,W) = \sqrt{2\sum_{i=1}^p \sin^2 \theta_i}$ if $p = q$, otherwise $\sqrt{\max\{p,q\} + \sum_{i=1}^{\min\{p,q\}} \sin^2 \theta_i}$}
Max-symmetrized ones are constant for distinct dimensions, but can still have some uses: 
e.g., $\MFS$ is an angle $\hat{\Theta}_{V,W} = \max\{\Theta_{V,W},\Theta_{W,V}\}$,
which extends the second formula in \eqref{eq:norm contr} for $p \neq q$, as $|\inner{A,B}| = \|A\|\|B\|\cos\hat{\Theta}_{[A],[B]}$ \cite{Mandolesi_Products}.

The definition of $d(V,W)$ in \eqref{eq:asym metric} takes the infimum in $[0,\Delta_p]$,
but it can be tweaked.
With $[0,\infty]$ we obtain $d(V,W) = \infty$ if $\dim V > \dim W$, which may be useful in applications where dimensions can increase but not decrease 
(or vice versa, with $\cd$).
With $[0,\Delta_n]$, 
\Cref{pr:antiisometry} extends to the $\ell^2$ case; 
in \Cref{pr:length partition}, $\Delta L = \Delta_{n,d}$ for contractions no longer depends on $p$; 
\Cref{df:types}(\ref{df:type II}) no longer needs $\Delta_{q,d} = \Delta_{p,d}$;
type II paths can expand before contracting even in the $\ell^2$ case; etc.

Links of (asymmetric or not) $\wedge$ metrics with Grassmann or Clifford geometric algebras have been little explored in applications, possibly due to lack of familiarity.
Asymmetric angles and projection factors provide a bridge:
one can take advantage of their properties \cite{Mandolesi_Grassmann,Mandolesi_Pythagorean,Mandolesi_Products,Mandolesi_Trigonometry},
given by the algebras, without using these directly
(although anyone working with subspaces would benefit from learning the basics of such formalisms).

The $\wedge$ metrics might not be suitable for all uses, as they quickly approach their maximum values if several principal angles are large, or many are not too small.
Still, this can be helpful in some problems:
e.g., it is crucial for quantum decoherence \cite{Schlosshauer2007}, allowing many-particle quantum states to quickly become nearly orthogonal, since their Fubini-Study distance approaches $\frac\pi2$ very fast as perturbations propagate.

High dimensional asymmetric metric spaces with measure can, under some conditions, be nearly symmetric \cite{Stojmirovic2004}, \ie have negligible \emph{asymmetry} $|d(x,y)-d(y,x)|$ outside a small measure set of pairs $(x,y)$,
due to concentration of measure \cite{Ledoux2001}.
Though the details have yet to be worked out, this seems to be the case with $(\Gr(n),\dFS)$ for large $n$.
The $\Gr_p(n)$'s have natural measures given by the action of the unitary group, 
	\SELF{$O(n)$ has Haar measure, so given $V_0$ and $S\subset \Gr_p(n)$, $\mu(S) = \mu\{T\in O(n): T(V_0) \in S\}$}
and taking, for example, their dimensions $p(n-p)$ as relative weights, we obtain a Borel measure in $\Gr(n)$.
	\SELF{for its finer topology $\tau$, hence also for $\tau^\pm$} 
If $n$ is large, most pairs $(V,W)$ should have many non-negligible principal angles, 
so that $\dFS(V,W)  \cong \frac\pi2 \cong \dFS(W,V)$.

Decoherence is deemed responsible for the quantum-classical transition as the dimension of the quantum state space increases, 
so this begs the question of whether this transition might be linked to a loss of asymmetry.
As far as we know, there have been no attempts to relate quantum theory to asymmetric metric spaces.
But such relation would not be surprising, as fermionic Fock space is the projective space of a Grassmann algebra, the Fubini-Study metric $\dFS$ appears naturally in the theory, and the asymmetric angle $\Theta_{V,W}$ has led to complex volumetric Pythagorean theorems \cite{Mandolesi_Pythagorean} with unexpected links to quantum probabilities \cite{Mandolesi_Born}.

\appendix

\section{Grassmann algebra}\label{sc:Grassmann algebra}

The \emph{Grassmann (exterior) algebra} \cite{Kozlov2000I,MacLane1988,Dorst2007}
of $V\in \Gr(\F^n)$ is a graded algebra $\bigwedge V = \bigoplus_{p\in\Z} \bigwedge^p V$, with \emph{exterior powers} $\bigwedge^0 V = \F$, $\bigwedge^1 V = V$, and $\bigwedge^p V = 0$ for $p \not\in [0,\dim V]$.\label{df:Grass alg}
For $A\in \bigwedge^p V$ and $B\in \bigwedge^q V$,
its bilinear associative \emph{exterior product} $\wedge$
satisfies%
	\footnote{The sign is related to orientations, which are important for some uses.}
$A\wedge B = (-1)^{pq} B\wedge A \in \bigwedge^{p+q} V$.
For $u,v\in V$, 
\begin{equation}\label{eq:wedge}
	u \wedge v = -v\wedge u \quad\text{and}\quad v\wedge v=0.
\end{equation}

For $p>0$, elements of $\bigwedge^p V$ are sums of \emph{($p$-)blades}, or \emph{decomposable $p$-vectors},
$B = v_1\wedge\cdots\wedge v_p$ for $v_1,\ldots,v_p\in V$. 
And
$B \neq 0$ $\Leftrightarrow$ $v_1,\ldots,v_p$ are linearly independent, in which case $B$ represents the $p$-subspace 
\begin{equation}\label{eq:blade space}
	[B] = \{v\in V: v\wedge B=0\} = \Span\{v_1,\ldots,v_p\}.
\end{equation}
A $0$-blade is any $\lambda\in\F$, and $[\lambda] = 0$.
	\SELF{Only represents $0$, and has $V=\Ann(\lambda)$, $\bigwedge^0 V=\Span\{\lambda\}$, if $\lambda \neq 0$} 

For nonzero blades $A,B\in \bigwedge V$,
if $[A] \cap [B] = 0$ then $[A\wedge B] = [A]\oplus [B]$, and otherwise $A\wedge B = 0$.
For nonzero $p$-blades $A,B\in \bigwedge^p V$, $A+B$ is still a blade 
$\Leftrightarrow \dim ([A]\cap[B]) \geq p-1$ \cite[p.\,555]{MacLane1988}.

The inner product $\inner{A,B} = \det\!\big(\inner{v_i,w_j}\big)$ of $A=v_1\wedge\cdots\wedge v_p$ and $B=w_1\wedge\cdots\wedge w_p$ \label{df:inner AB}
	\OMIT{and $\inner{\kappa,\lambda}=\bar{\kappa}\lambda$ for  $\kappa,\lambda \in \F$.}
is extended linearly (sesquilinearly, if $\F=\C$), with distinct $\bigwedge^p V$'s set as  orthogonal.
If $\F=\R$, $\|A\|=\sqrt{\inner{A,A}}$ is the $p$-dimensional volume of the parallelotope spanned by $v_1,\ldots,v_p$, often used to represent $A$ (Fig.\,\ref{fig:blades}).
If $\F=\C$, $\|A\|^2$ is the $2p$-dimensional volume of that spanned by $v_1,\im v_1,\ldots,v_p, \im v_p$ in the underlying real space \cite{Mandolesi_ComplexDet}.
	\OMIT{If $[A] \perp [B]$, $\|A\wedge B\| = \|A\|\|B\|$.
	For $u,v \in V$, $\|u\wedge v\| = \|u\|\|v\|\sin \gamma_{u,v}$.}

\begin{figure}
	\centering
	\includegraphics[width=0.75\linewidth]{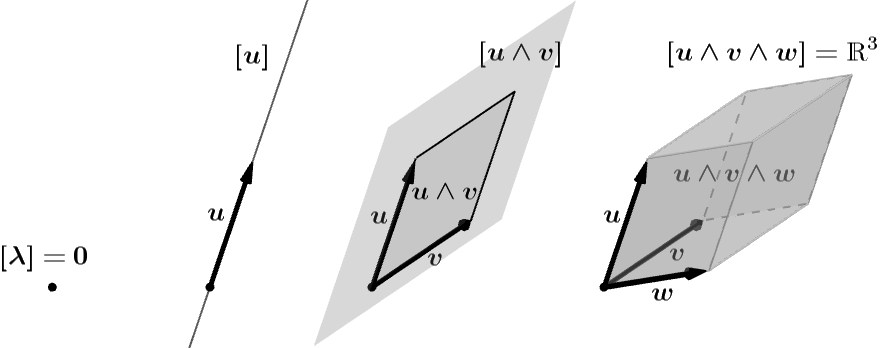}
	\caption{In $\bigwedge \R^3$, a 0-blade is a scalar $\lambda$, a 1-blade is a vector $u$, a 2-blade $u\wedge v$ is shown as a parallelogram, and a 3-blade $u\wedge v\wedge w$ as a parallelepiped. Their subspaces are a point $[\lambda]=0$, a line $[u]$, a plane $[u\wedge v]$ and the space $[u\wedge v\wedge w] = \R^3$. Norms are $|\lambda|$, the length of $u$, area of $u\wedge v$ and volume of $u\wedge v\wedge w$. Shapes do not matter: e.g., any (oriented) area of same value in the same plane also represents $u \wedge v$.}
	\label{fig:blades}
\end{figure}

Given an orthonormal basis $\{v_1,\ldots,v_q\}$ of $V \in \Gr_q(\F^n)$, 
another one of $\bigwedge^p V$ is $\{v_{i_1}\wedge\cdots\wedge v_{i_p}:1\leq i_1 < \cdots<i_p\leq q\}$,
so $\bigwedge^p V \cong \F^{\binom{q}{p}}$.
In particular, $\bigwedge^q V = \Span\{v_1 \wedge\cdots\wedge v_q\}$ is a line in $\bigwedge^q \F^n$.
For $0 \neq B \in \bigwedge \F^n$,
we have $\bigwedge^q V = \Span\{B\} \Leftrightarrow B$ is a $q$-blade and $V=[B]$.

The orthogonal projection $P_V:\F^n\rightarrow V$ extends to an orthogonal projection $P_V:\bigwedge \F^n\rightarrow \bigwedge V$ with $P_V(A\wedge B) = P_V A \wedge P_V B$ for $A,B\in \bigwedge \F^n$.
If $A \in \bigwedge^p \F^n$ then $P_V A \in \bigwedge^p V$, 
so $P_V A = 0$ if $p>\dim V$.
And $P_V \lambda = \lambda$ for $\lambda \in \F$.

The \emph{interior product}%
	\footnote{Also called \emph{contraction}. There are different conventions, differing by certain signs \cite{Mandolesi_Contractions}.}
$A\lcontr B \in\bigwedge^{q-p} \F^n$ of $A\in\bigwedge^p \F^n$ on $B\in\bigwedge^q \F^n$ is defined by $\inner{C,A \lcontr B} = \inner{A\wedge C,B}$ for all $C\in\bigwedge^{q-p} \F^n$.\label{df:contr}
It is bilinear (sesquilinear if $\F=\C$), $A\lcontr B = \inner{A,B}$ if $p=q$, and $A\lcontr B = 0$ if $p>q$ (so it is asymmetric).
Many properties facilitate its use \cite{Mandolesi_Contractions,Mandolesi_Contractions2}:
e.g., if $\{u_1,\ldots,u_n\}$ is orthonormal, $i\neq j$, and $k, l, m$ are distinct, then
\begin{equation}\label{eq:contraction orthonormal}
		(u_i \wedge u_j) \lcontr (u_k \wedge u_l \wedge u_m) = 
		\begin{cases}
			0 &\text{if } \{i,j\} \not\subset \{k,l,m\}, \\
			u_m &\text{if } i=k \text{ and } j=l,
		\end{cases}
\end{equation}
and other cases follow via anticommutativity of $\wedge$.
For $A = v_1\wedge\cdots\wedge v_p$ and $B = w_1\wedge\cdots\wedge w_q \neq 0$,
using matrices (though it is not the best way) $\mathbf{B} = \big(\inner{w_i,w_j}\big)_{q \times q}$ and
$\mathbf{C} =\big(\inner{w_i,v_j}\big)_{q \times p}$ we have \cite{Mandolesi_Contractions}
\begin{equation}\label{eq:norm contr matrices}
	\|A\lcontr B\| = \sqrt{\det \mathbf{B} \cdot \det(\mathbf{C}^\dagger \mathbf{B}^{-1}\mathbf{C})}.
\end{equation}

\section{Metrics in Grassmannians $\Gr_p(n)$}\label{sc:Metrics and distances on Grassmannians}

Here we review and classify the main metrics used in $\Gr_p(n)$ \cite{Edelman1999,Deza2016,Stewart1990}%
	\footnote{In the notation of \cite{Stewart1990}, $\rho_\mathrm{b}=\dcF$, $\rho_\mathrm{p,F} = \sqrt{2}\dpF$, $\rho_\theta = \dFS$, $\rho_\mathrm{s} = \dBC$ and $\rho_\mathrm{g,2}=\dpd$.},
and prove some inequalities.

\begin{figure}[t]
	\centering
	\includegraphics[width=0.38\linewidth]{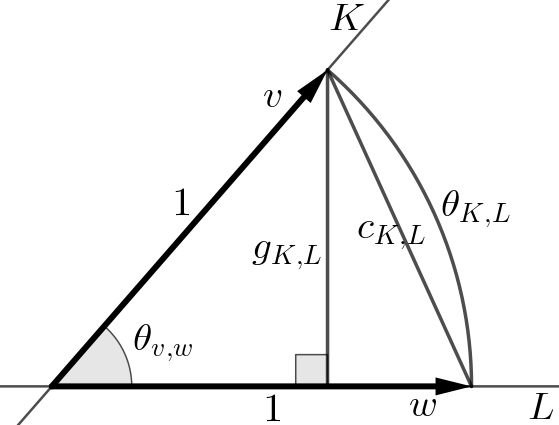}
	\caption{For unit $v$ and $w$ with $\inner{v,w} \geq 0$, the \emph{angular}, \emph{chordal} and \emph{gap distances} of $K=\Span\{v\}$ and $L=\Span\{w\}$ are
		$\theta_{K,L} = \theta_{v,w}$, 
		$c_{K,L} = \|v-w\| = 2\sin\frac{\theta_{v,w}}{2}$ and 
		$g_{K,L} = \|v-P_{L}v\| = \sin \theta_{v,w}$, respectively.
		If $K\neq L$, $\frac\pi2 g_{K,L} \geq \theta_{K,L} > c_{K,L} > g_{K,L}$.}
	\label{fig:distances_lines}
\end{figure}

Though often presented as a loose collection of formulas, these metrics derive from distances between lines (Fig.\,\ref{fig:distances_lines}),
	\SELF{$\Span\{u\}$ ao invés de $[u]$ para não confundir quando  usar com lines spanned by blades}
and fit into a natural scheme.
\Cref{tab:metrics same dim} classifies them according to the line distance (angular, chordal or gap) and which lines are used or how they are combined ($\ell^2$, $\wedge$ or max).
Their usual formulas are given in \Cref{tab:metrics Gpn},%
	\SELF{$P_V = \mathbf{A}\mathbf{A}^\dagger$, $P_W = \mathbf{B}\mathbf{B}^\dagger$, $\tr(P_V P_W) = \|\mathbf{A}^\dagger\mathbf{B}\|_\mathrm{F}^2 = \sum_{i=1}^m \cos^2\theta_i$}
and they are described below with the notation from the figure and tables.
	\SELF{$\theta$ more fundamental \cite{Qiu2005} as $c$, $g$ are concave functions of it; triangle ineq attains equal for $c$, $g$ only trivially (if 2 lines coincide), $\PR(X)$ is convex for $\theta$. Geodesics in $\PR(\R^n)$ are quotients of great circles of $S^{n-1}$ by antipodals; in $\PR(\C^n)$ great circles in the $\PR(\C^2)\cong S^2$ given by 2 distinct complex lines.}

\begin{table}[t]
	\centering
	\renewcommand{\arraystretch}{1}
	\begin{tabular}{llll}
		\toprule
		Type & angular & chordal & gap \\
		\cmidrule(lr){1-1} \cmidrule(lr){2-2} \cmidrule(lr){3-3} \cmidrule(lr){4-4}
		$\ell^2$ & $\dg = \sqrt{\sum_{i=1}^p \theta_{K_i,L_i}^2}$ & $\dcF = \sqrt{\sum_{i=1}^p c_{K_i,L_i}^2}$ & $\dpF = \sqrt{\sum_{i=1}^p g_{K_i,L_i}^2}$
		\\[6pt]	
		$\wedge$ & $\dFS = \theta_{K_\wedge,L_\wedge}$ & $\dcw = c_{K_\wedge,L_\wedge}$ & $\dBC = g_{K_\wedge,L_\wedge}$
		\\[6pt]
		max & $\dA = \theta_{K_p,L_p}$ & $\dcd = c_{K_p,L_p}$ & $\dpd = g_{K_p,L_p}$
		\\
		\bottomrule
	\end{tabular}
	\caption{Metrics in $\Gr_p(n)$ in terms of angular, chordal or gap distances between lines $K_i=\Span\{e_i\}$ and $L_i=\Span\{f_i\}$, or $K_\wedge = \Span\{A\}$ and $L_\wedge = \Span\{B\}$, for principal bases $(e_1,\ldots,e_p)$ and $(f_1,\ldots,f_p)$ of $V$ and $W$, $A=e_1\wedge\cdots\wedge e_p$ and $B=f_1\wedge\cdots\wedge f_p$.}
	\label{tab:metrics same dim}
\end{table} 

\begin{table}[t]
	\centering
	\renewcommand{\arraystretch}{1}
	\begin{tabular}{ll@{}}
		\toprule
		Metric & Formulas
		\\
		\cmidrule(lr){1-1} \cmidrule(lr){2-2}
		$\dg$ & $\sqrt{\sum_{i=1}^p \theta_i^2}$ 
		\\[3pt]
		$\dcF$ &  $2\sqrt{\sum_{i=1}^p \sin^2 \frac{\theta_i}{2}} = \sqrt{\sum_{i=1}^p \|e_i - f_i\|^2} =  \|\mathbf{E}-\mathbf{F}\|_{\mathrm{F}}$. 
		\\[3pt] 
		$\dpF$ &  $\sqrt{\sum_{i=1}^p \sin^2 \theta_i} = \sqrt{\sum_{i=1}^p \|e_i - P_W e_i\|^2} = \frac{\|P_V-P_W\|_{\mathrm{F}}}{\sqrt{2}}  = \sqrt{p - \|\mathbf{A}^\dagger\mathbf{B}\|_\mathrm{F}^2}$ 
		\\[6pt]
		$\dFS$ &  $\cos^{-1}(\prod_{i=1}^p \cos\theta_i) = \theta_{A,B} = \cos^{-1} |\det \mathbf{A}^\dagger\mathbf{B}|$
		\\[3pt]
		$\dcw$ &  $\sqrt{2-2\prod_{i=1}^p \cos\theta_i} = 2\sin \frac{\theta_{A,B}}{2} = \|A-B\|$
		\\[3pt] 
		$\dBC$ & $\sqrt{1-\prod_{i=1}^p \cos^2\theta_i} = \sin\theta_{A,B} = \|A-P_W A\| = \sqrt{1-|\det \mathbf{A}^\dagger\mathbf{B}|^2}$
		\\[6pt] 
		$\dA$ & $\theta_p = \sin^{-1} \|P_V-P_W\|_2 $ 
		\\[3pt] 
		$\dcd$ &  $2\sin \frac{\theta_p}{2} = \|e_p-f_p\| =  \|\mathbf{E}-\mathbf{F}\|_2 = \!\!\max\limits_{v\in V, \|v\|=1} \min\limits_{w\in W, \|w\|=1} \|v-w\|$ 
		\\[3pt]
		$\dpd$ &  $\sin \theta_p = \|e_p - P_W e_p\| = \|P_V-P_W\|_2 = \max\limits_{v\in V, \|v\|=1} \|v-P_W v\|$ 
		\\
		\bottomrule
	\end{tabular}
		\caption{Distances between $V,W\in \Gr_p(n)$, with
		principal angles $\theta_1\leq\cdots\leq\theta_p$,
		unit blades $A \in \bigwedge^p V$ and $B \in \bigwedge^p W$ with $\inner{A,B} \geq 0$, 
		matrices $\mathbf{E}$ and $\mathbf{F}$ with principal bases $(e_1,\ldots,e_p)$ and $(f_1,\ldots,f_p)$ as columns,
		$\mathbf{A}$ and $\mathbf{B}$ with any orthonormal bases.}
	\label{tab:metrics Gpn}
\end{table}

The \emph{$\ell^2$ metrics} use the $\ell^2$ norm of distances between $K_i$ and $L_i$ for $1\leq i\leq p$.
	The \emph{geodesic metric} $\dg$ \cite{Kozlov2000III,Wong1967}
		\CITEL{Zhang2018, Zuccon2009, Lerman2011}
	is the length of minimal geodesics for the unique%
	\footnote{Up to scaling, and with an exception for $\Gr_2(\R^4)$ \cite[p.\,591]{Wong1967}.} 
		\CITE{In $\Gr_2(\R^4)$, $\dg$ still optimal: \cite[p.\,2249]{Kozlov2000I}, oriented case}
	Riemannian metric invariant by unitary maps, 
	given by the Hilbert-Schmidt product 
		\SELF{$\inner{S,T} = \tr(S^*T)$ \\ Bendokat2024 com fator $\frac12$}
	in the tangent space $\Hom(V,V^\perp)$ at $V$.
	The \emph{chordal Frobenius} $\dcF$%
		\SELF{$= \sqrt{2p-2\sum_{i=1}^p \cos\theta_i}$}
	\cite{Edelman1999,Paige1984}
		\CITE{Chikuse2012, Turaga2008}
	and \emph{projection Frobenius} $\dpF$ \cite{Hamm2008,Draper2014}
		\SELF{Tem $\frac{1}{\sqrt{2}}$ porque repete o $\|e_i - P_W e_i\| = \|P_V f_i - f_i\|$; Deza2016 omite esse fator}
	are obtained embedding $\Gr_p(n)$, respectively, in sets of $n\times p$ matrices and projections, with $\|\cdot\|_{\mathrm{F}}$.
	Other names are \emph{Grassmann} \cite{Deza2016,Ye2016} for $\dg$, \emph{Procrustes} \cite{Hamm2008,Ye2016} for $\dcF$, and, due to another embedding in a sphere, \emph{chordal} \cite{Barg2002,Conway1996,Dhillon2008} for $\dpF$.

The \emph{$\wedge$ metrics} use distances between $K_\wedge$ and $L_\wedge$ in the Grassmann algebra.
	The \emph{Fubini-Study} $\dFS$ \cite{Love2005,Pereira2021,Lu1963}
		\CITE{Dhillon2008}
	is a geodesic distance through $\PR(\bigwedge^p \F^n)$  (see \Cref{sc:Fubini-Study metric}).
	The \emph{chordal $\wedge$} metric $\dcw$ seems to be new in the literature,
	and the \emph{Binet-Cauchy} $\dBC$ is used in \cite{Hamm2008,Vishwanathan2006}.
		\SELF{$= \|\Psi_A - P_{\Psi_B} \Psi_A\|$ with $\Psi_A$ formed by \Plucker\ coord in orthonormal basis Wolf2003}

\emph{Max metrics} use only the distance of $K_p$ and $L_p$ (the largest one for the $K_i$'s and $L_i$'s).
	The \emph{Asimov} $\dA$ \cite{Asimov1985,Weinstein2000} 
		\CITER{Weinstein2000 Appendix. \\ $\dA$ está errada em Ye2016}
	is the 
		\SELF{largest angular dist $S(V)$ to $S(W)$; cos is largest semi-axis of $P_W(S(V))$}
	geodesic distance for a Finsler metric 
		\SELF{Finsler é dada por norma no espaço tang, simétrica ou não}
	given by $\|\cdot\|_2$ in $\Hom(V,V^\perp)$.
	The \emph{chordal 2-norm} $\dcd$ \cite{Barg2002} 
		\SELF{largest dist $S(V)$ to $S(W)$ (Hausdorff dist.); 
			`2-norm' is norm in $\F^n$}
	and \emph{projection 2-norm} $\dpd$ \cite{Edelman1999,Barg2002,Ye2016}
		\SELF{largest dist. from $S(V)$ to $W$; in normed spaces gap is not metric {Kato1995}}
	originate like $\dcF$ and $\dpF$, but with $\|\cdot\|_2$.
	Other names are \emph{spectral distance} \cite{Deza2016,Ye2016}  for $\dcd$, and \emph{gap} \cite{Kato1995,Stewart1990} or \emph{min-correlation} \cite{Hamm2008} for $\dpd$.%
		\CITE{\emph{containment gap} or \emph{projection distance} in {Deza2016}}

Other families of metrics are obtained in \cite{Stewart1990,Qiu2005} via norms of principal angles or line distances (e.g., $\sqrt[r]{\sum_{i=1}^p \theta_i^r}$ for the $\ell^r$ norm).
They include our $\ell^2$ and max ones, and the $\wedge$ ones fit into the same structure, except that lines are in $\bigwedge^p \F^n$.
The following distances do not fit, and are not metrics in $\Gr_p(n)$, though sometimes they have been treated as such:

\begin{itemize}
	\item The \emph{max-correlation} or \emph{spectral distance} \cite{Hamm2008,Dhillon2008} $d = \sin \theta_1$ 
		\SELFR{smallest dist from $S(V)$ to $W$}
	does not satisfy a triangle inequality, and $d(V,W) = 0 \Leftrightarrow V\cap W \neq 0$.
	
	\item The \emph{Martin distance} \cite{Deza2016,Ye2016,Martin_2000,De_Cock_2002} 
	for certain subspaces 
		\CITE{spanned by vector of the form $(1,\alpha,\alpha^2,\ldots)$ with $|\alpha|<1$}
	associated to ARMA
		\CITE{Auto-Regressive Moving Average} 
	models is $d = \sqrt{-\log \prod_{i=1}^p\cos^2\theta_i}$. 
	For general subspaces it does not satisfy a triangle inequality (e.g., take 3 lines in $\R^2$, separated by angles $\theta$, $\theta$ and $2\theta$), and $d(V,W) =\infty$ if $V\pperp W$.
\end{itemize}

The results below imply the well known topological equivalence of the metrics of \Cref{tab:metrics Gpn} (they all give $\Gr_p(n)$ the same topology). We provide proofs as we could not find them in the literature, and because there have been imprecise statements: 
e.g., \cite[eq.\,(4.2)]{Edelman1999}
	\OMIT{also (4.2)--(4.5)}
gives $\dg > \dFS$ for $V \neq W$, but if $\dim(V\cap W) = p-1$ then $\theta_1 = \cdots = \theta_{p-1} = 0$ and $\dg = \dFS = \theta_p$.

Note how \Cref{tab:metrics same dim} reflects the inequalities: distances decrease as one moves right (strictly, if $V \neq W$) or down (strictly, if $\dim(V\cap W) \leq p-2$) in it.
\CITE{$\dg>\dFS$, $\dcF>\dcd$, $\dpF>\dpd$}
The distances can be compared, for $p=2$, in the graphs of Fig.\,\ref{fig:distances Gp}.

\begin{figure}
	\centering
	\begin{subfigure}[b]{0.325\textwidth}
		\includegraphics[width=\textwidth]{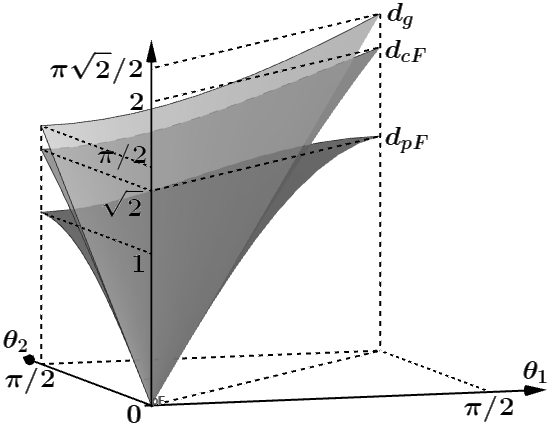}
		\caption{$\ell^2$ metrics}
	\end{subfigure}
	\begin{subfigure}[b]{0.325\textwidth}
		\includegraphics[width=\textwidth]{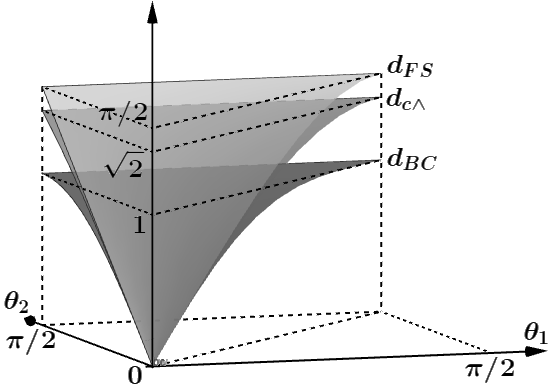}
		\caption{$\wedge$ metrics}
	\end{subfigure}
	\begin{subfigure}[b]{0.325\textwidth}
		\includegraphics[width=\textwidth]{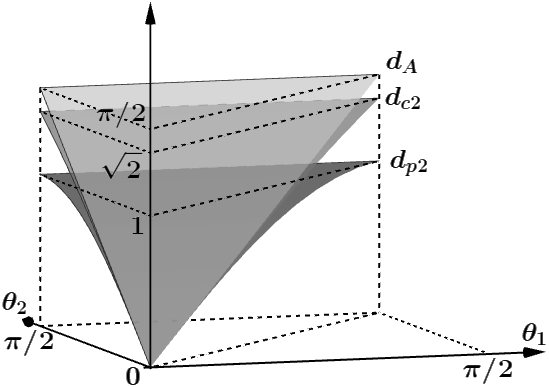}
		\caption{max metrics}
	\end{subfigure}
	\begin{subfigure}[b]{0.325\textwidth}
		\includegraphics[width=\textwidth]{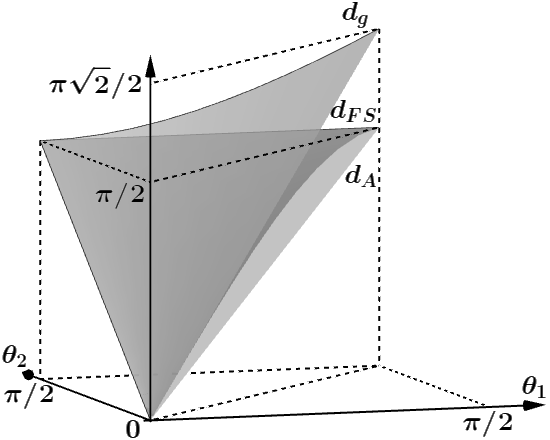}
		\caption{angular metrics}
	\end{subfigure}
	\begin{subfigure}[b]{0.325\textwidth}
		\includegraphics[width=\textwidth]{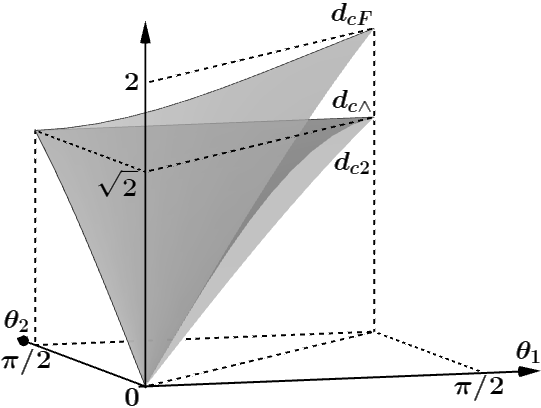}
		\caption{chordal metrics}
	\end{subfigure}
	\begin{subfigure}[b]{0.325\textwidth}
		\includegraphics[width=\textwidth]{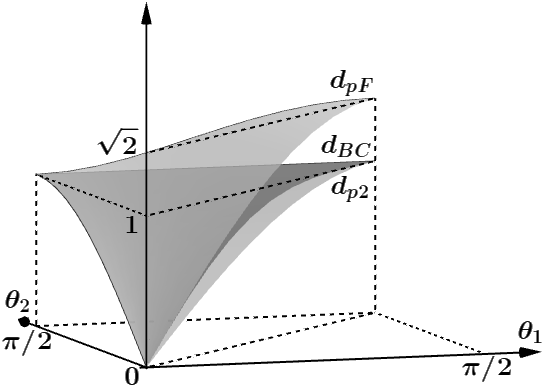}
		\caption{gap metrics}
	\end{subfigure}
	\caption{Distances in $\Gr_2(n)$ as functions of principal angles $\theta_1\leq\theta_2$}
	\label{fig:distances Gp}
\end{figure}

\begin{proposition}\label{pr:ineqs1}
	For distances from $V$ to $W$ in $\Gr_p(n)$, with $V \neq W$:
	\begin{enumerate}[(i)]
		\item $\frac\pi2 \dpF \geq \dg > \dcF > \dpF$.
		\item $\frac\pi2 \dBC \geq \dFS > \dcw > \dBC$.
		\item $\frac\pi2 \dpd \geq \dA > \dcd > \dpd$.
	\end{enumerate}
\end{proposition}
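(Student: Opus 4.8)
The plan is to reduce all three chains to one elementary single-variable inequality and then aggregate appropriately. Concretely, I would first establish the following \emph{single-angle lemma}: for every $\theta\in(0,\tfrac\pi2]$,
\[
	\tfrac\pi2\sin\theta \;\geq\; \theta \;>\; 2\sin\tfrac\theta2 \;>\; \sin\theta .
\]
Each link is elementary. The middle one, $\theta>2\sin\tfrac\theta2$, is the arc-exceeds-chord fact, following from $\sin x<x$ for $x>0$ applied to $x=\tfrac\theta2$. The right one, $2\sin\tfrac\theta2>\sin\theta$, follows from $\sin\theta=2\sin\tfrac\theta2\cos\tfrac\theta2$ together with $\cos\tfrac\theta2<1$ (valid since $\tfrac\theta2\in(0,\tfrac\pi4]$). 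The left one is Jordan's inequality $\sin\theta\geq\tfrac2\pi\theta$ on $[0,\tfrac\pi2]$, which is tight only at the endpoints; this is why that comparison stays non-strict (equality exactly at $\theta=\tfrac\pi2$).

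With the lemma in hand, parts (iii) and (ii) are immediate. For (iii), since $V\neq W$ forces $\theta_p\in(0,\tfrac\pi2]$, I would substitute $\theta=\theta_p$ and read off $\dA=\theta_p$, $\dcd=2\sin\tfrac{\theta_p}2$, $\dpd=\sin\theta_p$ from \Cref{tab:metrics Gpn}. For (ii), I would observe that the three $\wedge$ metrics are precisely the angular, chordal, and gap distances of a \emph{single} angle $\Theta:=\dFS=\cos^{-1}\!\big(\prod_{i=1}^p\cos\theta_i\big)$: using $1-\cos\Theta=2\sin^2\tfrac\Theta2$ one gets $\dcw=\sqrt{2-2\cos\Theta}=2\sin\tfrac\Theta2$ and $\dBC=\sqrt{1-\cos^2\Theta}=\sin\Theta$. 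Since $V\neq W$ gives $\prod\cos\theta_i<1$, we have $\Theta\in(0,\tfrac\pi2]$, so the lemma with $\theta=\Theta$ yields (ii). (Equivalently, this is just the single-line inequality for the blade lines $K_\wedge,L_\wedge$ already recorded in \Cref{fig:distances_lines}.)

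For the $\ell^2$ case (i), I would aggregate the pointwise bounds. The lemma, extended trivially to $\theta_i=0$ (where all three quantities vanish), gives for every $i$
\[
	\theta_i \;\geq\; 2\sin\tfrac{\theta_i}2 \;\geq\; \sin\theta_i \;\geq\; 0
	\qquad\text{and}\qquad
	\tfrac\pi2\sin\theta_i \;\geq\; \theta_i ,
\]
the first two being strict exactly when $\theta_i>0$. Summing squares and using monotonicity of the $\ell^2$ norm on nonnegative vectors yields $\dg\geq\dcF\geq\dpF$ and $\tfrac\pi2\dpF\geq\dg$ (the latter from the squared Jordan bound $\tfrac{\pi^2}4\sum\sin^2\theta_i\geq\sum\theta_i^2$). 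Because $V\neq W$ guarantees $\theta_p>0$, the term $i=p$ makes the inner comparisons strict, upgrading them to $\dg>\dcF>\dpF$.

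The computation is routine throughout, so I do not expect a genuine obstacle; the only point needing care is bookkeeping the \emph{strictness pattern}. The outer $\geq$'s (the $\tfrac\pi2$-factors) must remain non-strict precisely because Jordan's inequality is attained at $\theta=\tfrac\pi2$, whereas the inner comparisons become strict as soon as one principal angle is nonzero — which is exactly the hypothesis $V\neq W$.
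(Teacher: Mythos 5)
Your proposal is correct and is essentially the paper's own argument: the paper proves this proposition by citing the single-line inequalities $\frac\pi2 g_{K,L} \geq \theta_{K,L} > c_{K,L} > g_{K,L}$ of Fig.\,\ref{fig:distances_lines} together with the classification in \Cref{tab:metrics same dim}, which is exactly your single-angle lemma applied to $\theta_p$ (max case), to the blade-line angle $\Theta=\theta_{K_\wedge,L_\wedge}$ ($\wedge$ case), and aggregated in $\ell^2$ over the principal pairs ($\ell^2$ case). You have simply written out the details, including the correct strictness bookkeeping, that the paper leaves implicit.
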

\begin{proof}
	Follow from the formulas in \Cref{tab:metrics same dim} and inequalities in Fig.\,\ref{fig:distances_lines}.
\end{proof}

\begin{proposition}\label{pr:ineqs2}
	For distances from $V$ to $W$ in $\Gr_p(n)$, the inequalities below hold if	$\dim(V\cap W) \leq p-2$, and otherwise
	all $>$'s become $=$'s.
		\SELFR{no lugar de $\sqrt{p}$ pode ser $\sqrt{p-r}$, com $r=\dim V\cap W$}
	\begin{enumerate}[(i)]
		\item $\sqrt{p}\, \dA \geq \dg > \dFS > \dA$. \label{it:angular ineq}
		\item $\sqrt{p}\, \dcd \geq \dcF > \dcw > \dcd$. \label{it:chordal ineq}
		\item $\sqrt{p}\, \dpd \geq \dpF > \dBC > \dpd$. \label{it:gap ineq}
	\end{enumerate}
\end{proposition}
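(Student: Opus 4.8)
The plan is to reduce every inequality to an elementary inequality in the principal angles $0\le\theta_1\le\cdots\le\theta_p\le\frac\pi2$, using the closed forms in \Cref{tab:metrics Gpn} together with the fact that $\dim(V\cap W)$ equals the number of vanishing principal angles. First I would dispatch the degenerate case: if $\dim(V\cap W)\ge p-1$ then at most one angle ($\theta_p$) is nonzero, so each of $\dg,\dFS,\dA$ collapses to $\theta_p$ (and likewise for the chordal and gap rows), turning every $>$ into $=$. For the leftmost bounds $\sqrt p\,\dA\ge\dg$, $\sqrt p\,\dcd\ge\dcF$, $\sqrt p\,\dpd\ge\dpF$, each is an instance of $\|x\|_2\le\sqrt p\,\|x\|_\infty$ applied to $x_i=\theta_i$, $\sin\frac{\theta_i}2$, $\sin\theta_i$ respectively, each nondecreasing in $\theta_i$ and hence maximized at $i=p$.

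For the rightmost inequalities ($\wedge$ beats max) I would show all three collapse to one scalar statement. Since $\cos$ is decreasing on $[0,\frac\pi2]$, $\dFS>\dA$ iff $\prod_{i=1}^p\cos\theta_i<\cos\theta_p$; squaring gives $\dcw>\dcd\iff\prod\cos\theta_i<\cos\theta_p$ and $\dBC>\dpd\iff\prod\cos^2\theta_i<\cos^2\theta_p$. All reduce to $\prod_{i=1}^{p-1}\cos\theta_i<1$, which holds exactly when some $\theta_i>0$ for $i<p$; this is guaranteed (in fact $\theta_{p-1}>0$) by $\dim(V\cap W)\le p-2$, the only loss of strictness occurring at the boundary $\theta_p=\frac\pi2$, where the $\wedge$ and max values saturate together. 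For the chordal and gap middle inequalities I would pass to $\dcF^2>\dcw^2$ and $\dpF^2>\dBC^2$, which via $1-\cos\theta=2\sin^2\frac\theta2$ both become $\sum_i(1-a_i)>1-\prod_i a_i$ with $a_i=\cos\theta_i$ and $a_i=\cos^2\theta_i$ respectively; this is the Weierstrass product inequality for $a_i\in[0,1]$, proved by the one-line induction $1-a\,P=(1-a)+a(1-P)$ and strict precisely when at least two $a_i<1$, i.e. two positive angles.

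The one genuinely analytic step, which I expect to be the main obstacle, is the angular middle inequality $\dg>\dFS$, i.e. $\sqrt{\sum\theta_i^2}>\cos^{-1}(\prod\cos\theta_i)$. I would split on the size of $\dg$. If $\dg\ge\frac\pi2$ then $\dg\ge\frac\pi2\ge\dFS$, and strictness follows since two positive angles force $\sum\theta_i^2>(\frac\pi2)^2$ in the only case ($\dFS=\frac\pi2$) where equality would threaten. If $\dg<\frac\pi2$, monotonicity of $\cos$ reduces the claim to $\cos\sqrt{\sum\theta_i^2}<\prod\cos\theta_i$. The crux is the lemma that $H(x):=-\ln\cos\sqrt x$ is strictly convex on $[0,(\frac\pi2)^2)$ with $H(0)=0$: one computes $H'(x)=\frac{\tan\sqrt x}{2\sqrt x}$, and $H'$ is increasing because $u\mapsto\frac{\tan u}{u}$ is increasing on $(0,\frac\pi2)$ (its derivative has the sign of $2u-\sin 2u>0$). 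A convex function vanishing at $0$ is superadditive, so $\sum_i H(\theta_i^2)\le H(\sum_i\theta_i^2)$, i.e. $\sum_i(-\ln\cos\theta_i)\le-\ln\cos\sqrt{\sum\theta_i^2}$, equivalently $\prod\cos\theta_i\ge\cos\sqrt{\sum\theta_i^2}$, with strict superadditivity once two of the $\theta_i^2$ are positive; this closes the case. In each elementary inequality the equality clause aligns with the threshold $\dim(V\cap W)=p-1$, which yields the ``otherwise $=$'' assertion.
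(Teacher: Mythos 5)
Your proof is correct and reaches the same elementary reductions as the paper for the chordal and gap rows (both rest on the Weierstrass product inequality $\sum(1-a_i)\ge 1-\prod a_i$ with the same strictness analysis), but your treatment of the one nontrivial step, $\dg>\dFS$, is genuinely different. The paper first proves the two-angle case $\sqrt{x^2+y^2}>\cos^{-1}(\cos x\cos y)$ by comparing partial derivatives ($\tfrac{\partial}{\partial x}\sqrt{x^2+y^2}=\tfrac{x}{\sqrt{x^2+y^2}}$ versus $\tfrac{\sin x}{\sqrt{\sin^2x+\tan^2y}}$, using that $t\mapsto t/\sqrt{t^2+y^2}$ is increasing) and then inducts on $p$, folding $\theta_1,\dots,\theta_p$ into a single angle $x$ at each step; you instead prove strict convexity of $H(x)=-\ln\cos\sqrt{x}$ via monotonicity of $\tan u/u$ and invoke superadditivity of a convex function vanishing at $0$, which handles all $p$ angles in one stroke. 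Your route is arguably cleaner and makes the equality analysis more transparent (strictness is exactly strict superadditivity, i.e.\ two positive angles), at the cost of one more derivative; the paper's route is more elementary but requires the separate guard $x>\tfrac\pi2$ inside the induction. One further point in your favour: by writing out the ``simple'' rightmost inequalities, you correctly observe that $\dFS>\dA$ (and likewise $\dcw>\dcd$, $\dBC>\dpd$) reduces to $\cos\theta_p\prod_{i<p}\cos\theta_i<\cos\theta_p$, which \emph{degenerates to equality} when $\theta_p=\tfrac\pi2$ even though $\dim(V\cap W)\le p-2$; the paper's proof dismisses these as simple and misses this boundary case, so the strict $>$ in the third position of each chain should really be a $\ge$ (or the hypothesis should exclude $\theta_p=\tfrac\pi2$). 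You should state this explicitly as a correction rather than leaving it as an aside about ``saturation.''
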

\begin{proof}
	If $\dim(V\cap W)\geq p-1$ then $\theta_1=\cdots=\theta_{p-1}=0$, and the formulas in \Cref{tab:metrics Gpn} give the equalities.
	If $\dim(V\cap W) \leq p-2$ then $\theta_{p-1},\theta_p\neq 0$.
	We prove the second inequality in each item, as the others are simple.
	
	(\ref{it:angular ineq})
	We must show $\sqrt{\theta_1^2 + \cdots + \theta_p^2} > \cos^{-1}(\cos\theta_1 \cdots \cos\theta_p)$ 
	for $\theta_i \in [0,\frac\pi2]$ 
	with $\theta_{p-1},\theta_p\neq 0$.
	Assume $\theta_p \neq \frac\pi2$, otherwise it is trivial.
	
	For $p=2$,
	$f(x,y) = \sqrt{x^2+y^2} > \cos^{-1}(\cos x \cos y) = g(x,y)$ 
	for $x,y\in\ (0,\frac\pi2)$
	since
	$f(0,y) = g(0,y)$ and, 
	as $\frac{\partial f}{\partial x} = \frac{x}{\sqrt{x^2+y^2}}$ is increasing in $x$,
		\OMITR{$\frac{\partial^2 f}{\partial x^2}>0$; and decreasing in $y$}
	$\frac{\partial f}{\partial x}  > \frac{\sin x}{\sqrt{\sin^2 x + \tan^2 y}} = \frac{\partial g}{\partial x}$.
	Assuming the result for some $p\geq 2$, 
	let 
	$x = \sqrt{\theta_1^2 + \cdots + \theta_p^2} > \cos^{-1}(\cos\theta_1 \cdots \cos\theta_p) > 0$.
	If $x > \frac\pi2$ the result for $p+1$ is trivial, and otherwise
	$\sqrt{\theta_1^2 + \cdots + \theta_{p+1}^2} = \sqrt{x^2 + \theta_{p+1}^2} > \cos^{-1}(\cos x \cos\theta_{p+1}) \geq \cos^{-1}(\cos\theta_1 \cdots \cos\theta_{p+1})$.
		\OMITR{Last $\geq$ not strict if $\theta_{p+1}=\frac\pi2$}
	
	(\ref{it:chordal ineq})
	$\dcF = \sqrt{2p-2\sum_{i=1}^p x_i}$ and $\dcw = \sqrt{2-2\prod_{i=1}^p x_i}$ for $x_i = \cos\theta_i$, 
	so we show $p-\sum_{i=1}^p x_i > 1-\prod_{i=1}^p x_i$ for $x_i\in [0,1]$ with $x_{p-1},x_p \neq 1$.%
		\OMITR{$x_i=\cos\theta_i$}
	
	If $p=2$ it is easy to check that 
	$2-x_1-x_2 > 1-x_1x_2$.
		\OMIT{$2-x_1-x_2 = 1 - x_1x_2 + (1-x_1)(1-x_2) > 1-x_1x_2$}
	Assuming the result for some $p\geq 2$, let 
	$x = 1-(p - \sum_{i=1}^p x_i) < \prod_{i=1}^p x_i < 1$.
	If $x \geq 0$ then
		\OMIT{Last $\geq$ not strict if $x_{p+1}=0$}
	$p+1-\sum_{i=1}^{p+1} x_i = 2 - x - x_{p+1} > 1 - x x_{p+1} \geq 1-\prod_{i=1}^{p+1} x_i$,
	and otherwise
	$p+1-\sum_{i=1}^{p+1} x_i = 2 - x - x_{p+1} > 1 \geq 1-\prod_{i=1}^{p+1} x_i$.
	
	
	(\ref{it:gap ineq})
	$\dpF = \sqrt{p-\sum_{i=1}^p x_i}$ and $\dBC = \sqrt{1-\prod_{i=1}^p x_i}$ for $x_i=\cos^2\theta_i$, so it follows as in (\ref{it:chordal ineq}).
\end{proof}

\section{Fubini-Study distances}\label{sc:Fubini-Study metric}

The Fubini-Study distance%
	\footnote{In $\PR(\F^n)$, `Fubini-Study metric' usually refers to a Riemannian metric that gives $\dFS$.}
\cite{Deza2016,Goldman1999,Bengtsson2017}
	\CITER{Bengtsson, Zyczkowski p.141}
originates in the projective space $\PR(\F^n) = \Gr_1(\F^n)$ as $\dFS(K,L) = \theta_{K,L}$ (see \Cref{df:angle lines}).%
	\CITE{real: Reid2005 p.38 \\
		complex: Goldman1999 p.16}

It transfers to $\Gr_p(n)$ via its \emph{\Plucker\ embedding} 
\cite{Harris1992}
	\CITE{Harris p.64 \\
	Griffiths, Harris p.209}
in the projective space $\PR(\bigwedge^p \F^n)$ of $\bigwedge^p \F^n \cong \F^{\binom{n}{p}}$, that maps $V \in \Gr_p(n)$ to the line $\bigwedge^p V$.
If $V,W \in\Gr_p(n)$
	\OMIT{$p\neq0$}
have principal vectors and angles $e_i$, $f_i$ and $\theta_i$
then $\bigwedge^p V = \Span\{A\}$ and $\bigwedge^p W = \Span\{B\}$ for unit blades $A = e_1\wedge\cdots\wedge e_p$ and $B = f_1\wedge\cdots\wedge f_p$,
and since
$\inner{A,B} = \prod_{i=1}^p \cos \theta_i$ we have
\begin{equation}\label{eq:usual FS}
	\begin{aligned}
		\dFS(V,W) &= \dFS(\textstyle{\bigwedge^p V, \bigwedge^p W}) = \theta_{\bigwedge^p V, \bigwedge^p W} = \theta_{A,B} \\
		&= \cos^{-1} \big(\textstyle{\prod_{i=1}^p} \cos \theta_i\big).
	\end{aligned}
\end{equation}
This is the same as a \emph{volumetric angle} $\Theta_{V,W}$ \cite{Gluck1967,Jiang1996} between subspaces of same dimension, whose cosine (in the real case) measures volume contraction in orthogonal projections between them.

For $V_{(p)}, W_{(q)} \in \Gr(n)$ with $p \neq q$, one usually takes the angle between the smaller subspace and its projection on the other (or $\frac\pi2$ if projecting reduces its dimension) \cite{Venticos1956,Jiang1996,Gunawan2005}.
This gives a \emph{minimal Fubini-Study distance} $\mFS$ \cite{Pereira2021}: for $m=\min\{p,q\}$ and principal angles $\theta_1,\ldots,\theta_m$,
\begin{equation}\label{eq:mFS}
	\mFS(V,W) = 
	\begin{cases}
		0 &\text{ if } m=0, \\
		\cos^{-1}(\prod_{i=1}^m \cos\theta_i) &\text{ if } m \neq 0.
	\end{cases}
\end{equation}
It is not a metric, as the triangle inequality fails (see p.\,\pageref{pg:triangle fail}), and its balls do not form a basis for a topology in $\Gr(n)$.

The total \Plucker\ embedding of $\Gr(n)$ in $\PR(\bigwedge \F^n)$ gives it a \emph{maximal Fubini-Study metric} $\MFS$. As the $\bigwedge^p \F^n$'s are orthogonal, 
\begin{equation}\label{eq:dFS}
	\MFS(V,W) = 
	\begin{cases}
		0 &\text{ if } p=q=0, \\
		\cos^{-1}(\prod_{i=1}^p \cos\theta_i) &\text{ if } p=q \neq 0, \\
		\frac\pi2 &\text{ if } p\neq q.
	\end{cases}
\end{equation}
It is a metric, and gives $\Gr(n)$ the disjoint union topology, as the $\Gr_p(n)$'s are separated by a fixed distance of $\frac\pi2$.

\subsection{Convexity in $(\Gr_p(n),\dFS)$}

Here we extend convexity results of $(\Gr_p(n),\dFS)$ to the complex case.
	\SELF{Qiu2005: $\dg$ has triangle equality in direct rotations, $\dcF, \dpF, \dcd, \dpd$ only trivially}
	
First, in $\Gr_1(n) = \PR(\F^n)$, we give equality conditions for the triangle inequality of $\dFS(K,L) = \theta_{K,L}$.
They are obvious in the real case, and still hold in the complex one, as we show.
Please recall \Cref{df:angle lines}.

\begin{proposition}\label{pr:PR geometry}
	In $\PR(\F^n)$, $\dFS$ is intrinsic.
	For distinct 
	\SELF{Precisa distinct para a volta}
	$J,K,L \in \PR(\F^n)$, 
	$J \in (K,L)_{\dFS} \Leftrightarrow J$ lies in a segment from $K$ to $L$ $\Leftrightarrow$ $J$, $K$, $L$ are respectively spanned by aligned $u,v,w\in \F^n$ 
	\OMITR{nonzero}
	with $u= v + w$.%
		\OMIT{\ref{scrpr:spherical triangle inequality lines} has direct proof}
\end{proposition}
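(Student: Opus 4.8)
The plan is to handle the three assertions in turn, relying on the general principle recorded earlier that in a geodesic space with intrinsic metric the between-points of a pair are exactly the points lying on its segments. First I would establish intrinsicness and produce an explicit segment at the same time. For distinct $K=\Span\{v\}$ and $L=\Span\{w\}$, choose the representatives aligned, so $\theta_{K,L}=\phi_{v,w}=\theta_{v,w}=:\theta$, and let $\gamma$ be the image in $\PR(\F^n)$ of the great-circle arc from $v$ to $w$ in the underlying real sphere, $\gamma(t)=\Span\{\tfrac{\sin((1-t)\theta)}{\sin\theta}\,v+\tfrac{\sin(t\theta)}{\sin\theta}\,w\}$ for $t\in[0,1]$ (constant if $\theta=0$). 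The only input needed is the pointwise bound $\phi_{x,y}\le\theta_{x,y}$, which holds since $\cos\phi_{x,y}=|\inner{x,y}|\ge\Re\inner{x,y}=\cos\theta_{x,y}$. Summing over a partition, $L(\gamma)=\sup\sum\phi_{\gamma(t_i),\gamma(t_{i+1})}\le\sum\theta_{\gamma(t_i),\gamma(t_{i+1})}=\theta$, the last equality because the Euclidean angles of the ordered arc telescope to $\theta$; as $L(\gamma)\ge\dFS(K,L)=\theta$ always, $\gamma$ is a segment. Hence $\PR(\F^n)$ is geodesic and $\dFS$ is intrinsic, and by the general lemma $J\in(K,L)_{\dFS}$ iff $J$ lies on a segment from $K$ to $L$, which settles the first equivalence.

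For the equivalence with $u=v+w$, the easy direction is immediate: if $J,K,L$ are spanned by aligned $u,v,w$ with $u=v+w$, then $\inner{v,w}\ge0$ gives $\inner{u,v}=\|v\|^2+\inner{w,v}\ge0$ and similarly $\inner{u,w}\ge0$, so the three are pairwise aligned, all Hermitian angles equal the Euclidean ones, the vectors lie in $\Span_{\R}\{v,w\}$, and $u$ is a positive combination of $v,w$; thus $[u]$ is interior to the arc $\gamma$ above and lies on a segment.

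The substance is the converse, which I would prove in two reductions. With unit representatives set $\alpha=\theta_{K,J}$, $\beta=\theta_{J,L}$; distinctness forces $\alpha,\beta\in(0,\tfrac\pi2)$ and $\alpha+\beta=\theta\le\tfrac\pi2$. First I reduce to a complex $2$-plane $\Pi=\Span_{\C}\{v,w\}$: projecting $u$ onto $\Pi$ and normalizing can only raise $|\inner{v,\cdot}|$ and $|\inner{w,\cdot}|$, hence only lower both angles, so the triangle inequality inside $\Pi\cong\PR(\C^2)$ combined with $\alpha+\beta=\theta$ forces equality, whence $u\in\Pi$. Now in $\PR(\C^2)$, where $\dFS$ is half the round metric of the Bloch sphere $S^2$, the between-point condition is the equality case of the spherical triangle inequality and places the Bloch point of $J$ on the minor great-circle arc joining those of $K$ and $L$. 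Writing $\hat w=\cos\theta\,\hat v+\sin\theta\,\hat w_\perp$ with the phase of $\hat w$ chosen so that $\inner{\hat v,\hat w}=\cos\theta\ge0$, this arc is the real meridian $\Span\{\cos s\,\hat v+\sin s\,\hat w_\perp\}$, whose interior points are $[a\hat v+b\hat w]$ with $a,b>0$; taking $v=a\hat v$ and $w=b\hat w$ yields aligned representatives with $u=v+w$.

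The main obstacle is exactly this converse in the complex case: showing that the between-point condition forces a genuinely real configuration with no residual relative phase, so aligned representatives satisfying $u=v+w$ exist. The delicate point is the cut-locus case $\theta=\tfrac\pi2$, where $\hat v\perp\hat w$ and the minimizing geodesic in $\PR(\C^2)$ is far from unique. There I would argue directly that $\alpha+\beta=\tfrac\pi2$ is equivalent to $|\inner{v,u}|^2+|\inner{w,u}|^2=1$, which by Parseval gives $u\in\Pi$ and $u=\lambda v+\mu w$; absorbing the phases of $\lambda,\mu$ into $v,w$ is harmless since $\inner{v,w}=0$ remains $\ge0$, and this produces the required aligned form. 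For the $S^2$ step I would invoke the already-available spherical-triangle characterization for lines.
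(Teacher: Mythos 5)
Your proposal is correct, but it takes a genuinely more self-contained route than the paper. The paper disposes of intrinsicness in one line ($\dFS=\dg$ on $\Gr_1(n)$, and $\dg$ is the intrinsic metric of all $\ell^2$ and $\wedge$ metrics, as recorded in the preliminaries), and then gets the hard direction ($\Rightarrow$) almost for free from the fact, also stated in the preliminaries via \cite{Qiu2005}, that $U\in[V,W]_{\dg}$ exactly when $U=\mu(t_0)$ for a direct-rotation geodesic $\mu$ as in \eqref{eq:geodesic Grp}: plugging $p=1$ into that formula immediately exhibits $u=v+w$ with aligned $v,w$ proportional to the principal vectors. You instead re-derive both ingredients from scratch: you verify that the great-circle arc is a segment by the pointwise bound $\phi_{x,y}\le\theta_{x,y}$ plus telescoping of Euclidean angles (which is a clean, correct argument), and you prove the between-point characterization directly from the equality case of the triangle inequality, reducing to the complex $2$-plane $\Span_\C\{v,w\}$ by a projection/normalization argument (valid since $\alpha,\beta<\frac\pi2$ forces $\|P_\Pi u\|=1$) and then working on $\PR(\C^2)\cong S^2$, with a separate Parseval argument at the cut locus $\theta=\frac\pi2$. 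The easy direction is essentially the paper's coplanarity-and-betweenness observation in the underlying real plane. What your approach buys is independence from the unproved-in-paper classification of $[V,W]_{\dg}$ and an explicit treatment of the complex-phase subtlety; what it costs is length, and a mild reliance on the Bloch-sphere isometry that the paper never needs to invoke.
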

\begin{proof}
	$\dFS = \dg$ in $\Gr_1(n)$, so it is intrinsic.
	For the last equivalence:
		\OMIT{first one follows from intrinsic}
	
	($\Rightarrow$)
	Let $K$ and $L$ have principal vectors $e$ and $f$ and principal angle $\theta \neq 0$.
	As $J = \mu(t)$ for $\mu$ as in \eqref{eq:geodesic Grp} and some $t \in (0,1)$, it is spanned by
	$u = \cos(t\theta) e + \sin (t\theta) \frac{f - P_{K} f}{\|f - P_{K} f\|} 
	= \cos(t\theta) e + \sin (t\theta) \frac{f - \cos(\theta) e}{\sin (\theta)}
	= v + w$
	for $v = \frac{\sin((1-t)\theta)}{\sin(\theta)} e$
	and $w = \frac{\sin(t\theta)}{\sin(\theta)} f$, which are aligned (with $u$ as well).

	($\Leftarrow$)
	As $u= v +  w$, 
	\OMIT{O cálculo direto com $\inner{\cdot,\cdot}$ é trabalhoso, mas padrão}
	they are coplanar vectors (in $\R^n$ or $(\C^n)_\R$) with $u$ between $v$ and $w$, so
	$\theta_{K,L} = \theta_{v,w}  = \theta_{v,u} + \theta_{u,w} = \theta_{K,J} + \theta_{J,L}$.
		\OMIT{as $u, v, w$ are aligned}
\end{proof}

Alignment is needed so $\theta_{K,L} = \theta_{v,w}$, for example.
If $\F=\R$, it just ensures $\theta_{v,w} \in [0,\frac\pi2]$, but if $\F=\C$ its role is more subtle:

\begin{example}
	In $\C^2$, $v=(\im,0)$, $w = (\frac{\sqrt{2}}{2},\frac{\sqrt{2}}{2})$ and $u=v+w$ are not aligned. 
	Though $\theta_{v,w}  = \theta_{v,u} + \theta_{u,w}$, for $J,K,L$ as above we find $\theta_{K,L} = \phi_{v,w} = 45^\circ$, by \eqref{eq:angles}, but $\theta_{K,J} = \theta_{J,L} = 30^\circ$, so $J \notin (K,L)_{\dFS}$.
\end{example}

The conditions in $\Gr_p(n)$ are obtained via its \Plucker\ embedding and:

\begin{lemma}\label{pr:linear combination blades}
	Let $B,C \in \bigwedge^p \F^n$ be nonzero blades with $[B] \neq [C]$.
	If $B + C$ is still a blade then $B=v\wedge D$ and $C=w\wedge D$
	for a unit blade $D\in\bigwedge^{p-1} \F^n$ 
		\SELF{$[D] = [B]\cap [C]$}
	and $v,w\in [D]^\perp$,
	with $\inner{v,w} = \inner{B,C}$.
\end{lemma}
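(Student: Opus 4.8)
The plan is to reduce the statement to the structure of $[B]\cap[C]$ and then verify the inner-product identity by a single Gram-determinant computation. First I would invoke the blade-sum criterion recalled in \Cref{sc:Grassmann algebra}: since $B+C$ is a nonzero blade, $\dim([B]\cap[C]) \geq p-1$; and since $[B]$ and $[C]$ are distinct $p$-subspaces, $\dim([B]\cap[C]) \leq p-1$. Hence $M := [B]\cap[C]$ is a $(p-1)$-subspace, and I would fix a unit blade $D \in \bigwedge^{p-1}\F^n$ with $[D] = M$.

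Next I would produce the two factorizations, both using this same $D$. Since $M = [D] \subset [B]$ and $\dim[B] = p$, pick any $v_0 \in [B] \setminus M$; then $[v_0 \wedge D] = \Span\{v_0\}\oplus M = [B]$, so $v_0\wedge D$ spans the line $\bigwedge^p [B]$, giving $B = c\,(v_0 \wedge D)$ for a scalar $c$. Setting $v := c\, P_{M^\perp} v_0$ then gives $B = v \wedge D$ with $v \in [D]^\perp$: replacing $v_0$ by $P_{M^\perp} v_0$ only subtracts an element of $M = [D]$, which wedges to zero with $D$ by \eqref{eq:blade space}. The identical argument applied to $C$ (whose subspace also contains $M$) yields $C = w \wedge D$ with $w \in [D]^\perp$.

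Finally I would check $\inner{v,w} = \inner{B,C}$. Writing $D = d_1 \wedge \cdots \wedge d_{p-1}$ for an orthonormal basis $(d_1,\ldots,d_{p-1})$ of $M$, we have $B = v \wedge d_1 \wedge \cdots \wedge d_{p-1}$ and $C = w \wedge d_1 \wedge \cdots \wedge d_{p-1}$, so by the determinant formula for the inner product of blades recalled in \Cref{sc:Grassmann algebra}, $\inner{B,C}$ is the determinant of the Gram matrix of these two ordered frames. Because $v,w \perp M$, every entry $\inner{v,d_j}$ and $\inner{d_i,w}$ vanishes, while the lower-right $(p-1)\times(p-1)$ block equals $\inner{d_i,d_j} = \delta_{ij}$; expanding along the first row leaves $\inner{B,C} = \inner{v,w}\cdot \det I = \inner{v,w}$.

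All steps are short, so the only point requiring real care is the complex case, where $\inner{\cdot,\cdot}$ is conjugate-linear in the first slot: throughout the computation I must keep $B$ (hence $v$) in the first argument and $C$ (hence $w$) in the second, so that the conjugations in the Gram matrix match those in $\inner{v,w}$. Beyond this bookkeeping, everything is a direct consequence of the blade-sum criterion and the orthogonality of $v$ and $w$ to $[D]$.
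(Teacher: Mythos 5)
Your proof is correct and follows essentially the same route as the paper's: the blade-sum criterion forces $\dim([B]\cap[C])=p-1$, one factors out a unit blade $D$ spanning that intersection with $v,w\in[D]^\perp$, and the identity $\inner{B,C}=\inner{v,w}$ follows from the Gram-determinant formula (the paper writes this step as $\inner{v\wedge D,w\wedge D}=\inner{v,w}\|D\|^2$, which is exactly your block-determinant computation). You merely supply more explicit detail for the factorization step and the conjugate-linearity bookkeeping, both of which the paper leaves implicit.
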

\begin{proof}
	If $B+C$ is a blade and $[B] \neq [C]$ then $\dim ([B]\cap[C]) = p-1$ (see \Cref{sc:Grassmann algebra}).
	Given a unit blade $D \in \bigwedge^{p-1} \F^n$ with $[D] = [B] \cap [C]$, there are $v,w \in [D]^\perp$ such that $B=v\wedge D$ and $C=w\wedge D$.
	And $\inner{B,C} = \inner{v\wedge D,w\wedge D} = \inner{v,w}\cdot\|D\|^2 = \inner{v,w}$.
\end{proof}

\begin{proposition}\label{pr:triangle equality same dim}
	For $U,V,W\in \Gr_p(n)$, 
		\SELF{Não precisa especificar distinct, é automático de $U \in (V,W)_{\dFS}$ ou de $J \in (K,L)_{\dFS}$}
	we have
	$U \in (V,W)_{\dFS}$ $\Leftrightarrow U = J \orthsum S$, $V = K \orthsum S$ and $W = L \orthsum S$ for $S \in \Gr_{p-1}(n)$ and lines $J,K,L \subset S^\perp$ with $J \in (K,L)_{\dFS}$.
%
\end{proposition}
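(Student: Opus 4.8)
The plan is to reduce the whole statement to the line case already settled in \Cref{pr:PR geometry}, by transporting the problem to projective space through the \Plucker\ embedding. Recall from \eqref{eq:usual FS} that $\dFS$ on $\Gr_p(n)$ is by construction the restriction of the Fubini-Study distance on $\PR(\bigwedge^p \F^n)$ under $V \mapsto \bigwedge^p V$, so this embedding preserves $\dFS$, and hence the between-relation: $U \in (V,W)_{\dFS}$ in $\Gr_p(n)$ if and only if $\bigwedge^p U \in (\bigwedge^p V, \bigwedge^p W)_{\dFS}$ in $\PR(\bigwedge^p \F^n)$. Applying \Cref{pr:PR geometry} in the latter space, this holds exactly when $\bigwedge^p U, \bigwedge^p V, \bigwedge^p W$ are spanned by aligned $p$-blades $A, B, C$ (representing $U, V, W$) with $A = B + C$. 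Since a between-point is distinct from the endpoints and $\dFS$ is a genuine metric on $\Gr_p(n)$, we must have $V \neq W$, i.e.\ $[B]\neq [C]$.

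For the forward direction I would start from such aligned blades with $A = B + C$ and $[B]\neq[C]$. As $A = B+C$ is again a blade, \Cref{pr:linear combination blades} supplies a unit blade $D \in \bigwedge^{p-1}\F^n$ with $[D] = V\cap W =: S \in \Gr_{p-1}(n)$, together with $v, w \in S^\perp$ such that $B = v\wedge D$, $C = w\wedge D$ and $\inner{v,w} = \inner{B,C} \geq 0$. Then $A = (v+w)\wedge D$, so putting $K = \Span\{v\}$, $L = \Span\{w\}$ and $J = \Span\{v+w\}$, which are lines in $S^\perp$, gives $V = K\orthsum S$, $W = L\orthsum S$ and $U = J\orthsum S$. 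A short computation, using $\inner{w,v} = \overline{\inner{v,w}} = \inner{v,w}$, yields $\inner{v,v+w} = \|v\|^2 + \inner{v,w}\geq 0$ and $\inner{w,v+w} = \inner{v,w} + \|w\|^2 \geq 0$, so $v, w, v+w$ are pairwise aligned; \Cref{pr:PR geometry}, now applied inside $S^\perp$, then gives $J \in (K,L)_{\dFS}$, as required.

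For the converse I would run the same computation backwards. Given the stated decomposition with $J \in (K,L)_{\dFS}$, \Cref{pr:PR geometry} provides aligned $v, w \in S^\perp$ spanning $K, L$ with $u := v+w$ spanning $J$. Choosing a unit blade $D$ with $[D] = S$, the blades $B = v\wedge D$ and $C = w\wedge D$ represent $V$ and $W$, while $A := u\wedge D = B + C$ represents $U$. Because $\inner{B,C} = \inner{v,w} \geq 0$, one checks exactly as above that $A, B, C$ are pairwise aligned, so \Cref{pr:PR geometry} in $\PR(\bigwedge^p \F^n)$ yields $\bigwedge^p U \in (\bigwedge^p V, \bigwedge^p W)_{\dFS}$, that is, $U \in (V,W)_{\dFS}$.

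The main obstacle is keeping the alignment bookkeeping correct in the complex case: the equivalence in \Cref{pr:PR geometry} genuinely needs alignment (nonnegative real Hermitian products), not mere real coplanarity, and the passage of alignment between the blades $A, B, C$ in $\bigwedge^p \F^n$ and the vectors $v, w$ in $\F^n$ rests entirely on the identity $\inner{v,w} = \inner{B,C}$ from \Cref{pr:linear combination blades} (together with $\|D\| = 1$). Once this identity is secured, the Hermitian products of the sum with its summands are automatically nonnegative, so the alignment conditions propagate cleanly in both directions, and everything else is routine $\orthsum$-bookkeeping with the Plücker correspondence.
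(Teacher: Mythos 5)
Your proof is correct and follows essentially the same route as the paper's: transport the problem to $\PR(\bigwedge^p\F^n)$ via the \Plucker\ embedding, apply \Cref{pr:PR geometry} there to get aligned blades with $A=B+C$, and factor them with \Cref{pr:linear combination blades} to extract $J,K,L$ and $S$. The only divergence is in the converse, where the paper shortcuts by observing that the sole nonzero principal angle of $K\orthsum S$ and $L\orthsum S$ is $\theta_{K,L}$, so all three distances reduce to the corresponding line distances; your reversal of the blade construction reaches the same conclusion with a bit more bookkeeping.
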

\begin{proof}
	\emph{($\Rightarrow$)}
	By \eqref{eq:usual FS},
	$\tilde{J} \in (\tilde{K},\tilde{L})_{\dFS}$
	for lines $\tilde{J}= \bigwedge^p U$, $\tilde{K}= \bigwedge^p V$ and $\tilde{L}= \bigwedge^p W$.
	\Cref{pr:PR geometry}, in $\PR(\bigwedge^p \F^n)$, shows $\tilde{J}, \tilde{K}, \tilde{L}$ are spanned by aligned  $A,B,C \in \bigwedge^p \F^n$ with $A = B +  C$.
	But then (see \Cref{sc:Grassmann algebra}) these are blades, $U=[A]$, $V=[B]$ and $W=[C]$ .	
	By \Cref{pr:linear combination blades}, 
	$B=v\wedge D$, $C=w\wedge D$ and  $A = (v+w) \wedge D$
	for a blade $D\in\bigwedge^{p-1} \F^n$ and aligned
	$v,w\in [D]^\perp$.
	We have $U$, $V$, $W$ as above for $J=[v+w]$, $K=[v]$, $L=[w]$ and $S=[D]$.
	By \Cref{pr:PR geometry}, $J \in (K,L)_{\dFS}$.
		\OMIT{now in $\PR(\F^n)$}
	
	\emph{($\Leftarrow$)} 
	The only nonzero principal angle of $K \orthsum S$ and $L \orthsum S$ is $\theta_{K,L}$, so $\dFS(V,W) = \dFS( K \orthsum S, L \orthsum S) = \dFS(K,L)$, and likewise for the others.
	Thus $J \in (K,L)_{\dFS} \Rightarrow U \in (V,W)_{\dFS}$.
\end{proof}

This extends \cite[Thm.\,3]{Jiang1996} to the complex case.
	\CITE{Jiang1996: $1<p<n$, real}
Note that the result does not hold with $[V,W]_{\dFS}$, as $V \in [V,W]_{\dFS}$ $\forall\, V,W$.

\begin{proposition}\label{pr:segment Gp}
	For distinct $V, W \in (\Gr_p(n),\dFS)$, the following are equivalent:
	\begin{enumerate}[(i)]
		\item $(V,W)_{\dFS} \neq \emptyset$. \label{it:between point Gp}
		\item $\dim(V\cap W)=p-1$. \label{it:dim p-1}
		\item Minimal geodesics from $V$ to $W$ are segments. \label{it:geod segm Gp}
		\item There is a segment from $V$ to $W$. \label{it:tight geodesic Gp}
	\end{enumerate}
\end{proposition}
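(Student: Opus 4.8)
The statement bundles a purely metric equivalence with a combinatorial one, so the plan is to split the cycle into two halves: first the metric equivalences (\ref{it:dim p-1}) $\Leftrightarrow$ (\ref{it:geod segm Gp}) $\Leftrightarrow$ (\ref{it:tight geodesic Gp}), then (\ref{it:between point Gp}) $\Leftrightarrow$ (\ref{it:dim p-1}). The engine for the first half is that, inside $\Gr_p(n)$, the intrinsic metric of $\dFS$ is $\dg$ (as noted in \Cref{sc:Grassmannians}, $\dg$ is the intrinsic metric of every $\ell^2$ or $\wedge$ metric), together with the sharp comparison from \Cref{pr:ineqs2}(\ref{it:angular ineq}): for distinct $V,W$ one has $\dFS(V,W) \leq \dg(V,W)$, with equality exactly when $\dim(V\cap W) = p-1$ (the only alternative, for distinct $p$-subspaces, being $\dim(V\cap W) \leq p-2$, which makes the inequality strict).

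For the metric half I would run the cycle (\ref{it:dim p-1}) $\Rightarrow$ (\ref{it:geod segm Gp}) $\Rightarrow$ (\ref{it:tight geodesic Gp}) $\Rightarrow$ (\ref{it:dim p-1}). If (\ref{it:dim p-1}) holds then $\dFS(V,W) = \dg(V,W)$, so every minimal geodesic, having length $\dg(V,W) = \dFS(V,W)$, is a segment, giving (\ref{it:geod segm Gp}). Since $\Gr_p(n)$ is a geodesic space, a minimal geodesic exists, and by (\ref{it:geod segm Gp}) it is a segment, giving (\ref{it:tight geodesic Gp}). Finally, any segment $\gamma$ satisfies $\dFS(V,W) = L(\gamma) \geq D(V,W) = \dg(V,W) \geq \dFS(V,W)$, forcing $\dg(V,W) = \dFS(V,W)$ and hence, by the equality condition above, (\ref{it:dim p-1}). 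This closes the three-fold equivalence.

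For (\ref{it:between point Gp}) $\Leftrightarrow$ (\ref{it:dim p-1}) I would invoke \Cref{pr:triangle equality same dim}. If $U \in (V,W)_{\dFS}$, that result writes $V = K \orthsum S$, $W = L \orthsum S$ with $S \in \Gr_{p-1}(n)$ and lines $K,L \subset S^\perp$ satisfying $J \in (K,L)_{\dFS}$ for some $J$; since $(K,K)_{\dFS} = \emptyset$ by the $T_0$ condition, this forces $K\neq L$, whence $V\cap W = S$ and $\dim(V\cap W) = p-1$. Conversely, given $\dim(V\cap W) = p-1$ I would set $S = V\cap W$ and $K = S^\perp\cap V$, $L = S^\perp \cap W$ (both lines, distinct because $V\neq W$), produce a between-point $J \in (K,L)_{\dFS}$ via \Cref{pr:PR geometry} (take aligned vectors $v\in K$, $w\in L$ and $J = \Span\{v+w\}$, which is a genuine line distinct from $K$ and $L$ as $K\neq L$), and then read off $U = J\orthsum S \in (V,W)_{\dFS}$ from \Cref{pr:triangle equality same dim}. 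The main obstacle is the bookkeeping around $(K,L)_{\dFS}$: verifying that distinct lines always admit a between-point, that the spanning vectors can be taken aligned so that \Cref{pr:PR geometry} applies, and conversely that the $T_0$ separation rules out $K = L$. Once this is in place, the two halves combine into the full four-fold equivalence.
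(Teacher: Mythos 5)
Your proof is correct, and it uses the same two key inputs as the paper: the equality condition $\dg(V,W)=\dFS(V,W)\Leftrightarrow\dim(V\cap W)\geq p-1$ from \Cref{pr:ineqs2}, and \Cref{pr:triangle equality same dim} for relating between-points to the line picture. The difference is organizational, and it costs you some work. The paper runs a single cycle (\ref{it:between point Gp})$\Rightarrow$(\ref{it:dim p-1})$\Rightarrow$(\ref{it:geod segm Gp})$\Rightarrow$(\ref{it:tight geodesic Gp})$\Rightarrow$(\ref{it:between point Gp}), closing the loop with the one-line observation that in a metric space a segment between distinct points is nontrivial, so any interior point of it is automatically a between-point; this makes (\ref{it:tight geodesic Gp})$\Rightarrow$(\ref{it:between point Gp}) free and renders any explicit construction of a between-point unnecessary. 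You instead split into the sub-cycle (\ref{it:dim p-1})$\Leftrightarrow$(\ref{it:geod segm Gp})$\Leftrightarrow$(\ref{it:tight geodesic Gp}) (your closing step (\ref{it:tight geodesic Gp})$\Rightarrow$(\ref{it:dim p-1}) via $\dFS(V,W)=L(\gamma)\geq D(V,W)=\dg(V,W)\geq\dFS(V,W)$ is a clean and valid alternative) and then prove (\ref{it:dim p-1})$\Rightarrow$(\ref{it:between point Gp}) by explicitly building $U=J\orthsum S$ with $J=\Span\{v+w\}$ through \Cref{pr:PR geometry}. That construction is sound --- $v+w\neq 0$ because $v,w$ are aligned, $J\neq K,L$ because $K\neq L$, and $J\subset K+L\subset S^\perp$ so the ($\Leftarrow$) direction of \Cref{pr:triangle equality same dim} applies --- and it has the virtue of exhibiting a concrete between-point, but it is strictly more than the equivalence requires. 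Your attention to the detail that $K\neq L$ in the (\ref{it:between point Gp})$\Rightarrow$(\ref{it:dim p-1}) direction (which the paper leaves implicit) is a welcome addition.
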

\begin{proof}
	(\ref{it:between point Gp}$\,\Rightarrow\,$\ref{it:dim p-1}) Follows from \Cref{pr:triangle equality same dim}.
	(\ref{it:dim p-1}$\,\Rightarrow\,$\ref{it:geod segm Gp}) By \Cref{pr:ineqs2}, $\dg(V,W) = \dFS(V,W)$.
	(\ref{it:geod segm Gp}$\,\Rightarrow\,$\ref{it:tight geodesic Gp})
	$(\Gr_p(n),\dFS)$ is geodesic space.
	(\ref{it:tight geodesic Gp}$\,\Rightarrow\,$\ref{it:between point Gp})
	In metric spaces, segments are nontrivial for $V\neq W$.
\end{proof}

\begin{corollary}\label{pr:Gp convex}
	$(\Gr_p(n),\dFS)$ is Menger convex $\Leftrightarrow$ $p\in\{0,1,n - 1, n\}$.
\end{corollary}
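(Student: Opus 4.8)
The plan is to read the characterization straight off \Cref{pr:segment Gp}. By definition $(\Gr_p(n),\dFS)$ is Menger convex iff $(V,W)_{\dFS}\neq\emptyset$ for all distinct $V,W\in\Gr_p(n)$, and \Cref{pr:segment Gp} equates this, for each such pair, with $\dim(V\cap W)=p-1$. So the corollary reduces to the purely linear-algebraic question: for which $p$ do \emph{all} distinct pairs of $p$-subspaces of $\F^n$ meet in dimension exactly $p-1$?

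First I would dispatch the four claimed good values. For $p=0$ and $p=n$ the Grassmannian is a single point, so there are no distinct pairs and Menger convexity holds vacuously. For $p=1$ any two distinct lines meet only in $0$, so $\dim(V\cap W)=0=p-1$. For $p=n-1$ two distinct hyperplanes span $\F^n$, whence the dimension formula $\dim(V\cap W)=\dim V+\dim W-\dim(V+W)=2(n-1)-n=n-2=p-1$ gives the claim. Thus $(\Gr_p(n),\dFS)$ is Menger convex for every $p\in\{0,1,n-1,n\}$.

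Next I would show failure in the remaining range $2\le p\le n-2$ by exhibiting a single pair whose intersection has dimension strictly below $p-1$. Recall that the least possible dimension of $V\cap W$ for $V,W\in\Gr_p(n)$ is $\max\{0,2p-n\}$, and every integer between this and $p$ is realized by some pair. It therefore suffices to verify $\max\{0,2p-n\}\le p-2$: when $p\le n/2$ this minimum is $0$ and $0\le p-2$ since $p\ge2$; when $p>n/2$ it is $2p-n$ and $2p-n\le p-2$ since $p\le n-2$. In either case a distinct pair with $\dim(V\cap W)\le p-2\neq p-1$ exists, so by \Cref{pr:segment Gp} that pair has $(V,W)_{\dFS}=\emptyset$, and Menger convexity fails. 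Combining the two directions, the convex values are exactly $p\in\{0,1,n-1,n\}$.

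The argument is essentially bookkeeping; the only point needing care is the case split on $p\le n/2$ versus $p>n/2$ when locating the minimal intersection dimension, together with noting that the small cases $n\le3$ (where the ``bad'' range $2\le p\le n-2$ is empty) are automatically covered, since there one has $\{0,1,\ldots,n\}\subseteq\{0,1,n-1,n\}$ and every $p$ is convex, in agreement with the statement.
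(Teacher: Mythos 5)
Your proof is correct and follows the same route as the paper: reduce via \Cref{pr:segment Gp} to the question of whether every distinct pair of $p$-subspaces meets in dimension exactly $p-1$, and then settle that by elementary dimension counting. The paper states the final equivalence without elaboration; you have merely filled in the bookkeeping (the trivial endpoints, the hyperplane case, and the existence of a pair with $\dim(V\cap W)\leq p-2$ when $2\leq p\leq n-2$), all of which checks out.
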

\begin{proof}
	Let $0<p<n$, otherwise it is trivial.
	Then
		\OMIT{\ref{pr:segment Gp}} 
	 $(V,W)_{\dFS} \neq \emptyset$ $\forall\, V \neq W$ $\Leftrightarrow$ $\dim(V\cap W)=p-1$ $\forall\, V \neq W$ $\Leftrightarrow p=1$ or $p=n-1$. 
\end{proof}

This corrects a claim in \cite[Thm.\,6]{Jiang1996}, that $\Gr_p(n)$ is not Menger convex for $n>3$ and $p>1$ (e.g., $\Gr_3(4) \cong \Gr_1(4)$ is clearly convex for $\dFS$).

\section{Asymmetric angles}\label{sc:Asymmetric angles}

As seen, the volumetric angle $\Theta_{V,W}$ \eqref{eq:usual FS} is usually extended to $\Gr(n)$ as \eqref{eq:mFS}, breaking the triangle inequality.
A better extension is \cite{Mandolesi_Grassmann,Mandolesi_Pythagorean,Mandolesi_Products,Mandolesi_Trigonometry}:

\begin{definition}\label{df:Theta pi}
	The \emph{asymmetric angle}
	and \emph{volume projection factor} from $V\in \Gr_p(\F^n)$ to $W\in \Gr_q(\F^n)$ are, respectively, 
	\begin{equation}\label{eq:def Theta pi}
		\Theta_{V,W} = \cos^{-1} \frac{\|P_W A\|}{\|A\|} \quad\text{and}\quad \pi_{V,W}=\frac{\vol_{r}(P_W(S))}{\vol_{r}(S)},
	\end{equation}
	for $0 \neq A\in \bigwedge^p V$,
	\SELFR{$\Theta_{\{0\},W}=0$ for any $W$, \\ $\Theta_{V,\{0\}}=\frac{\pi}{2}$ for $V\neq\{0\}$.}
	$P_W = P_{W_\R}$,
	$r = \dim V_\R$ ($p$ if $\F=\R$, or $2p$ if $\F=\C$), 
	and a measurable subset $S\subset V_\R$ with $r$-di\-men\-sion\-al volume $\vol_r(S) \neq 0$.
\end{definition}

So, $\pi_{V,W}$ is the factor by which (top-di\-men\-sion\-al) volumes in $V_\R$ contract if orthogonally projected to $W$.
The next result links it to $\Theta_{V,W}$, which then gains a similar geometric interpretation.

\begin{proposition}\label{pr:Theta pi}
	For $V,W \in \Gr(n)$,
	\begin{equation}\label{eq:Theta pi}
		\pi_{V,W} = 
		\begin{cases}
			\cos \Theta_{V,W} &\text{if $\F=\R$}, \\
			\cos^2 \Theta_{V,W} &\text{if $\F=\C$.}
		\end{cases}
	\end{equation}
\end{proposition}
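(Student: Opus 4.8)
The plan is to express both $\pi_{V,W}$ and $\cos\Theta_{V,W}$ through the volume interpretation of the exterior-power norm recalled in \Cref{sc:Grassmann algebra}, so that the two definitions in \eqref{eq:def Theta pi} collapse onto a single quantity. Since $\dim_\F \bigwedge^p V = 1$, the ratio $\|P_W A\|/\|A\|$ defining $\Theta_{V,W}$ is independent of the nonzero $A \in \bigwedge^p V$; likewise $\pi_{V,W}$, being the volume-scaling factor of the linear map $P_{W_\R}|_{V_\R}$, is independent of the test set $S$. I may therefore fix a convenient pair. Concretely, I would take an orthonormal $\F$-basis $e_1,\dots,e_p$ of $V$, set $A = e_1\wedge\cdots\wedge e_p$ (a unit blade), and let $S$ be the parallelotope spanned by $e_1,\dots,e_p$ if $\F=\R$, or by $e_1,\im e_1,\dots,e_p,\im e_p$ if $\F=\C$; in both cases $S$ generates $V_\R$ and $\vol_r(S)=1$.

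For $\F=\R$ this is immediate. By \Cref{sc:Grassmann algebra}, $\|A\|$ is the $p$-volume of $S$ and $\|P_W A\| = \|P_W e_1\wedge\cdots\wedge P_W e_p\|$ is the $p$-volume of $P_W(S)$. Hence $\pi_{V,W} = \vol_p(P_W(S))/\vol_p(S) = \|P_W A\|/\|A\| = \cos\Theta_{V,W}$. Choosing the $e_i$ to be principal vectors, both sides equal $\prod_{i=1}^p\cos\theta_i$, recovering \eqref{eq:usual FS}.

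For $\F=\C$ the same idea works once squared, and this is the step requiring care. The crucial point is that $W$ is a \emph{complex} subspace, so $P_{W_\R}$ commutes with the complex structure $J$ (multiplication by $\im$), whence $P_{W_\R}(\im e_j) = \im\, P_W e_j$ and $P_{W_\R}$ agrees with the complex projection $P_W$ on vectors. Thus $P_{W_\R}(S)$ is the real parallelotope spanned by $P_W e_1, \im P_W e_1,\dots, P_W e_p, \im P_W e_p$. By the complex volume interpretation of the norm (\Cref{sc:Grassmann algebra} and \cite{Mandolesi_ComplexDet}), the $2p$-volume of this parallelotope is exactly $\|P_W A\|^2$, where $P_W A = P_W e_1\wedge\cdots\wedge P_W e_p \in \bigwedge^p W$ (interpreted as $0$, with degenerate parallelotope, when the $P_W e_j$ are dependent, e.g.\ when $p>q$). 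Since $\vol_{2p}(S) = \|A\|^2 = 1$, this yields $\pi_{V,W} = \|P_W A\|^2/\|A\|^2 = \cos^2\Theta_{V,W}$.

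The main obstacle is this last complex identity: justifying that projecting the real $2p$-frame $\{e_j,\im e_j\}$ and measuring its $2p$-volume reproduces $\|P_W A\|^2$ rather than $\|P_W A\|$. This rests entirely on the $J$-equivariance of $P_{W_\R}$ (valid precisely because $W$ is complex) together with \cite{Mandolesi_ComplexDet}; equivalently, it reflects that the real singular values of $P_{W_\R}|_{V_\R}$ are the Hermitian cosines $\cos\theta_i$, each occurring with multiplicity two, so that the real volume factor $\prod_i \cos^2\theta_i$ is the square of the complex factor $\prod_i\cos\theta_i = \cos\Theta_{V,W}$. A principal-basis computation can serve as an alternative check in place of the abstract volume argument.
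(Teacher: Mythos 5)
Your proposal is correct and follows essentially the same route as the paper, whose entire proof is the remark that blade norms (squared, if $\F=\C$) are volumes of parallelotopes, citing \cite{Mandolesi_Pythagorean,Mandolesi_ComplexDet}. You have merely spelled out the details the paper leaves implicit — the independence of the choice of $S$, the $J$-equivariance of $P_{W_\R}$, and the identification of $P_{W_\R}(S)$ with the parallelotope of the projected frame — all of which are sound.
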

\begin{proof}
	Follows from \eqref{eq:def Theta pi}, as blade norms (squared, if $\F=\C$) are volumes of parallelotopes \cite{Mandolesi_Pythagorean,Mandolesi_ComplexDet}.
\end{proof}

\begin{figure}[t]
	\centering
	\includegraphics[width=0.6\linewidth]{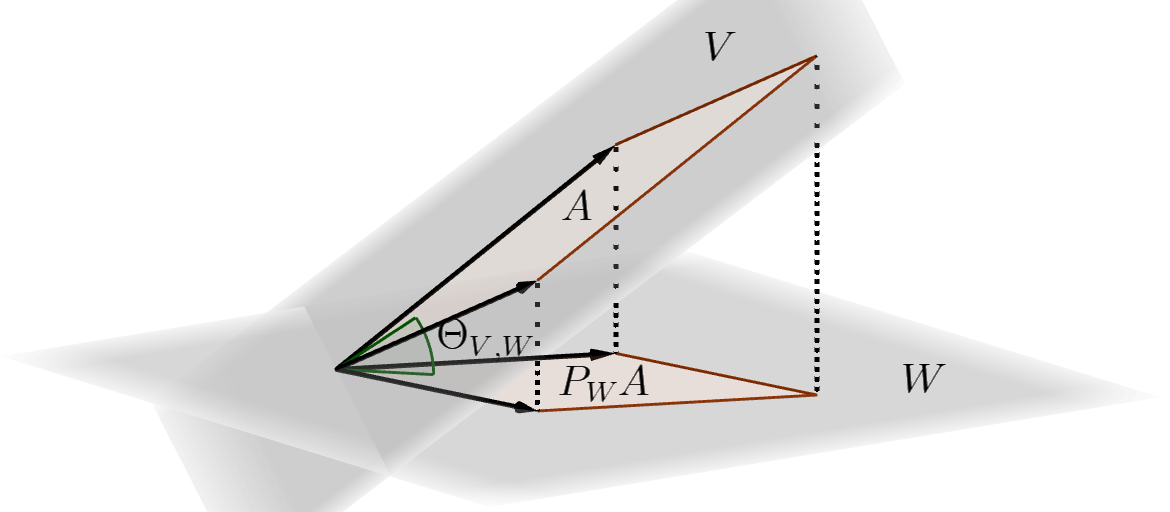}
	\caption{Orthogonal projection of a real 2-blade, $\cos \Theta_{V,W} = \frac{\|P_W A\|}{\|A\|} = \frac{\area(P_W A)}{\area(A)}$}
	\label{fig:projecao}
\end{figure}

As $\cos \Theta_{V,W}$ (squared, if $\F=\C$) measures volume contraction in orthogonal projections (Fig.\,\ref{fig:projecao}),
$\Theta_{V,W}$ does extend the volumetric angle (hence the same symbol).

In \Cref{sc:Asymmetric angle in complex spaces} we discuss the square in the complex case.
One might think it better to unify both cases by defining $\Theta_{V,W} = \cos^{-1} \pi_{V,W}$, but then most properties of this angle would become dependent on $\F$.

The next result shows $\Theta_{V,W}$ equals the asymmetric metric $\dFS$ \eqref{eq:adFS}.

\begin{proposition}\label{pr:Theta thetai}
	For $V_{(p)}, W_{(q)} \in \Gr(n)$,
	\begin{equation}\label{eq:Theta thetai}
		\Theta_{V,W} = 
		\begin{cases}
			0 &\text{ if } p=0, \\
			\cos^{-1}(\prod_{i=1}^p \cos\theta_i) &\text{ if } 0 < p \leq q, \\
			\frac\pi2 &\text{ if } p > q,
		\end{cases}
	\end{equation}
	where the $\theta_i$'s are the principal angles.
\end{proposition}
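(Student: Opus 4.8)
The plan is to compute the ratio $\|P_W A\|/\|A\|$ in the definition \eqref{eq:def Theta pi} directly, for a conveniently chosen $A$. Since $V_{(p)}$ has $\bigwedge^p V$ of dimension $\binom{p}{p}=1$, any two nonzero $A \in \bigwedge^p V$ differ by a scalar, so the ratio — and hence $\Theta_{V,W}$ — does not depend on the choice of $A$; I am free to take the most useful one. This well-definedness remark lets me bypass any worry about canonicity.

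First I would dispose of the edge cases. If $p=0$, then $A=\lambda\in\F$ is a nonzero scalar with $P_W\lambda=\lambda$ (see \Cref{sc:Grassmann algebra}), so $\|P_W A\|/\|A\|=1$ and $\Theta_{V,W}=0$. If $p>q$, then $P_W A\in\bigwedge^p W$, which vanishes since $p>\dim W=q$; thus the ratio is $0$ and $\Theta_{V,W}=\frac\pi2$. Both follow immediately from the basic facts about $P_W$ on the exterior algebra.

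The main case is $0<p\leq q$. Here I would pick principal bases $(e_1,\ldots,e_p)$ of $V$ and $(f_1,\ldots,f_q)$ of $W$, so $\inner{e_i,f_j}=\delta_{ij}\cos\theta_i$, and set $A=e_1\wedge\cdots\wedge e_p$, a unit blade. Because $P_W$ is multiplicative on exterior products, $P_W A = P_W e_1\wedge\cdots\wedge P_W e_p$, and the key step is $P_W e_i = \cos\theta_i\,f_i$: expanding $P_W e_i=\sum_j \inner{f_j,e_i}\,f_j$ in the orthonormal basis of $W$, all cross terms vanish by the principal-basis relation (with $\inner{f_j,e_i}$ the real number $\delta_{ij}\cos\theta_i$, even in the Hermitian case). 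Hence $P_W A=\big(\prod_{i=1}^p\cos\theta_i\big)\,f_1\wedge\cdots\wedge f_p$, and since $f_1\wedge\cdots\wedge f_p$ is also a unit blade, $\|P_W A\|=\prod_{i=1}^p\cos\theta_i$, giving $\cos\Theta_{V,W}=\prod_{i=1}^p\cos\theta_i$.

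The one point demanding care — and the expected obstacle — is the complex case, where $P_W=P_{W_\R}$ yet the statement asserts the \emph{same} formula (no square) as the real case, even though $\pi_{V,W}=\cos^2\Theta_{V,W}$ by \eqref{eq:Theta pi}. The resolution I would emphasize is that $\Theta_{V,W}$ is computed in the \emph{complex} exterior algebra $\bigwedge^p V\subset\bigwedge^p\C^n$, where for a complex subspace $W$ the Hermitian projection $P_W$ agrees with $P_{W_\R}$ on $\C^n$ and the Gram-determinant norm of a complex orthonormal blade equals $1$; the squaring only enters when translating these complex blade norms into the $2p$-dimensional real volumes defining $\pi_{V,W}$. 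Thus the computation above goes through verbatim, with the principal angles read off in the Hermitian sense, delivering the uniform formula in both fields.
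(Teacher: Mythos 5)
Your proof is correct and follows essentially the same route as the paper's: dispose of $p=0$ and $p>q$ via the basic behaviour of $P_W$ on $\bigwedge^p\F^n$, then compute $\|P_W A\|$ for the unit blade $A=e_1\wedge\cdots\wedge e_p$ built from a principal basis, using $P_W e_i=f_i\cos\theta_i$ and the multiplicativity of $P_W$ on exterior products. The extra remarks on independence of the choice of $A$ and on why no square appears in the complex case are sound and harmless additions, not deviations.
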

\begin{proof}
	If $p = 0$ then $1 \in \bigwedge^0 V$ and $P_W 1 = 1$, so $\Theta_{V,W} = 0$.

	If $p>q$ then $P_W A = 0$ for $A \in \bigwedge^p V$, so $\Theta_{V,W} = \frac\pi2$.

	If $0<p \leq q$ then $V$ and $W$ have principal bases and angles $(e_1,\ldots,e_p)$, $(f_1,\ldots,f_q)$ and $\theta_1 \leq\ldots\leq\theta_p$.
	Using the unit blade $A=e_1\wedge\cdots\wedge e_p$, 
	since $P_W e_i = f_i\,\cos\theta_i$ we obtain
	\[ \cos \Theta_{V,W} = \frac{\|P_W A\|}{\|A\|} = \| (f_1 \cos\theta_1) \wedge\cdots\wedge (f_p \cos\theta_p)\| = \prod_{i=1}^p \cos\theta_i. \qedhere \]
\end{proof}

The asymmetry $\Theta_{V,W}\neq \Theta_{W,V}$ for $p \neq q$ may seem strange, but reflects the dimensional asymmetry of subspaces.
It makes the angle simpler to define and compute (see \Cref{sc:Computing asymmetric angles}), and gives it better properties \cite{Mandolesi_Grassmann,Mandolesi_Products,Mandolesi_Trigonometry}.
For example, this asymmetry is crucial for the oriented triangle inequality (Fig.\,\ref{fig:oriented triangle inequality}),
and even for the apparently trivial relation 
\begin{equation}\label{eq:Theta VPV}
	\Theta_{V,W}=\Theta_{V,P_W(V)}.
\end{equation}
Symmetric angles \cite{Venticos1956,Jiang1996,Gunawan2005} corresponding to $\mFS$ \eqref{eq:mFS} have value $\frac\pi2$ for perpendicular planes $V,W\subset \R^3$, but are $0$ for $V$ and $P_W(V) = V \cap W$.
The asymmetry also lets the angle carry some important dimensional data: 
	\OMIT{\ref{pr:d min max}}
\begin{equation}\label{eq:Theta dim}
	\Theta_{V,W}\neq\frac{\pi}{2} \,\Rightarrow\, V \not\pperp W \,\Rightarrow\, \dim P_W(V) = \dim V \leq \dim W.
\end{equation}

\begin{figure}
	\centering
	\includegraphics[width=.6\textwidth]{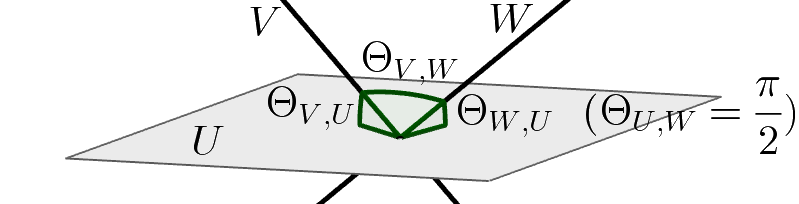}
	\caption{$\Theta_{V,W} \leq \Theta_{V,U} + \Theta_{U,W}$ due to $\Theta_{U,W} = \frac\pi2$, even if $\Theta_{V,W} > \Theta_{V,U} + \Theta_{W,U}$}
	\label{fig:oriented triangle inequality}
\end{figure}

Another useful property 
\cite{Mandolesi_Pythagorean} is that $\sum_U \cos^2 \Theta_{V,U} = 1$,
where $V \in \Gr_p(n)$ and the sum runs over all coordinate $p$-subspaces $U$ (see fn.\ \ref{ft:coord subspace} on p.\ \pageref{ft:coord subspace}) of an orthogonal basis of $\F^n$.
By \eqref{eq:Theta pi}, it is the same as 
\begin{equation}\label{eq:sum pi2}
	\sum_U \pi^2_{V,U} = 1 \text{ if } \F=\R, \quad \text{ or } \quad \sum_U \pi_{V,U} = 1 \text{ if } \F=\C,
\end{equation}
which corresponds to a volumetric Py\-thag\-o\-re\-an theorem: the (squared, if $\F=\R$) volume of a measurable subset $S \subset V_\R$ is the sum of (squared, if $\F=\R$) volumes of its orthogonal projections on the $U$'s.

\subsection{Asymmetric angle in complex spaces}\label{sc:Asymmetric angle in complex spaces}

Understanding $\Theta_{V,W}$ in the complex case requires some basic facts about Hermitian spaces \cite{Goldman1999}.
When needed, we write
$\F$-basis, $\Span_\F$, $\dim_\F$, etc.\ to specify the field $\F$.
Also, $\R$-orthonormal and $\perp$ are \wrt $\inner{\cdot,\cdot}_\R$, while $\C$-orthonormal and $\perp_\C$ are \wrt $\inner{\cdot,\cdot}$.
The \emph{complex structure} $J$ is the $\frac\pi2$ rotation in $(\C^n)_\R$ corresponding to multiplication by $\im$ in $\C^n$,
so \eqref{eq:Hermitian} becomes
\begin{equation}\label{eq:Hermitian J}
	\inner{v,w} = \inner{v,w}_\R + \im \inner{J v,w}_\R.
\end{equation}
For $W\in \Gr(\C^n)$, $P_W$ is $\C$-linear, so it commutes with $J$.

Let $(e_1,\ldots,e_p)$, $(f_1,\ldots,f_q)$ and $\theta_1 \leq\ldots\leq\theta_{\min\{p,q\}}$ be principal bases and angles of $V_{(p)},W_{(q)} \in \Gr(\C^n)$.
Then $V_\R$ and $W_\R$ have principal bases $(e_1, J e_1,\ldots,e_p, J e_p)$ and $(f_1, J f_1,\ldots,f_q, J f_q)$, and the same principal angles twice repeated, 
as, by \eqref{eq:Hermitian J},
$\inner{J e_i, f_j}_\R = -\inner{e_i,J f_j}_\R = \operatorname{Im} \inner{e_i, f_j} = 0$
and $\inner{J e_i,J f_j}_\R = \inner{e_i,f_j}_\R = \operatorname{Re} \inner{e_i,f_j} = \delta_{ij} \cos \theta_i$, 
so that $\theta_{Je_i,Jf_i} = \theta_{e_i,f_i} = \theta_i$.

This duplication of principal angles explains the square in \eqref{eq:Theta pi} for $\F=\C$:
\eqref{eq:def Theta pi} defines $\pi_{V,W}$ via volumes in $V_\R$, but while both $e_i$ and $J e_i$ contract by $\cos \theta_i$ if projected on $W_\R$,
\eqref{eq:Theta thetai} has only one $\cos \theta_i$.

On the other hand, \eqref{eq:Theta pi} and $\pi_{V,W} = \pi_{V_\R,W_\R}$ imply $\Theta_{V,W} \neq \Theta_{V_\R,W_\R}$, which may seem weird, as metrically $\C^n \cong \R^{2n}$. 
But $\Gr(\C^n) \neq \Gr(\R^{2n})$, so it is actually natural that $\dFS(V,W) \neq \dFS(V_\R,W_\R)$.
One can also understand these angles as different ways to convey the same projection factor (see \Cref{ex:Theta underlying} below).


The complex case admits yet another interpretation, as follows.

\begin{definition}\label{df:tot real}
	Let $U\in \Gr((\C^n)_\R)$ and $V,W \in \Gr(\C^n)$. We say $U$ is:
	\begin{itemize}
		\item \emph{totally real}%
		\footnote{We use the term as in \cite{Goldman1999}. Some authors use it to mean just $J(U) \cap U= 0$.} 
		if $J(U) \perp U$; 
		\item \emph{totally real \wrt $W_\R$} if $U$ and $P_W (U)$ are totally real; 
		\item a \emph{maximal} such subspace of $V_\R$ if, moreover, $U \orthsum J(U) = V_\R$.%
			\SELF{Contraexemplo: $(e_1,e_2)$ of $V$, $(f_1,f_2)$ of $W$, $\theta_1=\frac\pi4$, $\theta_2=\frac\pi3$, $U=\Span_\R\{e_1+\im e_2,\im e_1+e_2\}$ totally real, $P_W(U)=\Span_\R\{f_1+\im \sqrt{2} f_2, \im f_1+\sqrt{2} f_2\}$ is not}	
	\end{itemize}  
\end{definition}

By \eqref{eq:Hermitian J},
$U$ is totally real $\Leftrightarrow$ $\inner{\cdot,\cdot} = \inner{\cdot,\cdot}_\R$ on $U$.
And, for $V \in \Gr(\C^n)$ and $U \in \Gr(V_\R)$, the following are equivalent \cite{Goldman1999}: 
	\CITE{p.39, shows $(i) \Leftrightarrow (ii) \Rightarrow (iv)$, others are immediate}
\begin{enumerate}[(i)]
	\item $U$ is a maximal totally real subspace of $V_\R$;
	\item $U$ is totally real and $\dim_\R U = \dim_\C V$;
	\item $U$ is $\R$-spanned by a $\C$-orthonormal
		\SELF{Precisa. $u=(1,0)$ e $v=(\im,1)$ é base de $\C^2$, mas $Jv \not \perp u$}
	$\C$-basis of $V$;
	\item Any $\R$-orthonormal $\R$-basis of $U$ is a $\C$-orthonormal $\C$-basis of $V$.
\end{enumerate}

With total reality \wrt $W_\R$, we have:

\begin{proposition}
	$U$ is maximal totally real subspace of $V_\R$ \wrt $W_\R \Leftrightarrow U$ is $\R$-spanned by a principal $\C$-basis of $V$ \wrt $W$.
\end{proposition}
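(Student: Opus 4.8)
The plan is to route both directions through the $\C$-linear self-adjoint operator $Q = (P_W|_V)^\dagger (P_W|_V)$ on $V$, whose orthonormal eigenbases are exactly the principal $\C$-bases of $V$ \wrt $W$ (its eigenvalues being the $\cos^2\theta_i$). The extra hypothesis beyond $U$ being a maximal totally real subspace of $V_\R$ --- namely that $P_W(U)$ is totally real --- will be the ingredient that forces an eigenbasis of $Q$ to sit inside $U$.

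First I would dispose of ($\Leftarrow$). If $U = \Span_\R\{e_1,\dots,e_p\}$ for a principal $\C$-basis $(e_i)$, then $(e_i)$ is $\C$-orthonormal, so condition (iii) of the equivalences above gives at once that $U$ is a maximal totally real subspace of $V_\R$; only total reality of $P_W(U)$ remains. From $\inner{e_i,f_j}=\delta_{ij}\cos\theta_i$ one reads off $P_W e_i=\cos\theta_i\,f_i$ (and $P_W e_i=0$ when $i>q$), whence $P_W(U)\subseteq\Span_\R\{f_1,\dots,f_q\}$; since $(f_j)$ is $\C$-orthonormal, $\inner{\cdot,\cdot}$ is real on this span, so it is totally real and so is its subspace $P_W(U)$.

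The substance is in ($\Rightarrow$). Given $U$ maximal totally real \wrt $W_\R$, I would pick any $\R$-orthonormal $\R$-basis $(u_1,\dots,u_p)$ of $U$, which by condition (iv) is a $\C$-orthonormal $\C$-basis of $V$. Using that $P_W$ is self-adjoint and idempotent, one has $\inner{P_W u,P_W u'}=\inner{u,Q u'}$ for $u,u'\in V$, so total reality of $P_W(U)$ (equivalently $\operatorname{Im}\inner{\cdot,\cdot}=0$ there, via \eqref{eq:Hermitian J}) says precisely that every entry $M_{ij}=\inner{u_i,Q u_j}$ is real. As $Q$ is self-adjoint, $M$ is Hermitian, hence $M$ is a real symmetric matrix. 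Diagonalizing it by a real orthogonal $O$ and setting $e_i=\sum_j O_{ji}u_j$ produces a new basis that is still $\R$-orthonormal --- so it $\R$-spans $U$ and $\C$-spans $V$ --- and now diagonalizes $Q$, making the $e_i$ eigenvectors of $Q$, i.e.\ principal vectors of $V$ \wrt $W$. Setting $f_i=P_W e_i/\cos\theta_i$ for $\cos\theta_i\neq0$ and completing to an orthonormal basis of $W$ yields $\inner{e_i,f_j}=\delta_{ij}\cos\theta_i$, so $(e_i)$ is a genuine principal $\C$-basis and $U=\Span_\R\{e_i\}$.

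The hard part is the observation at the heart of ($\Rightarrow$): that ``$P_W(U)$ totally real'' is exactly what upgrades the Gram matrix of $Q$ from Hermitian to real symmetric, and it is this realness that permits a \emph{real} orthogonal --- rather than merely unitary --- diagonalization, keeping the resulting eigenbasis inside $U$ rather than only inside $V$. Everything else (the two-inner-product bookkeeping, and checking that the $f_i$ assemble into a principal basis, including the degenerate cases $\theta_i=\tfrac\pi2$ and $p>q$ where some $P_W e_i=0$) is routine.
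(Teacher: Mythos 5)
Your proof is correct, and the route through $Q=(P_W|_V)^\dagger(P_W|_V)$ is genuinely different in its mechanics from the paper's, though both hinge on the same fact. The paper proves ($\Rightarrow$) by taking real principal bases $(e_i)$ of $U$ and $(f_j)$ of $P_W(U)$ as its starting point and then computing $\inner{Je_i,f_j}_\R=\inner{JP_We_i,f_j}_\R=\|P_We_i\|\inner{Jf_i,f_j}_\R=0$, using that $P_W$ commutes with the complex structure $J$ and that $J(P_W(U))\perp P_W(U)$; equation \eqref{eq:Hermitian J} then upgrades the real relations $\inner{e_i,f_j}_\R=\delta_{ij}\cos\theta_i$ to the Hermitian ones. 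You instead start from an arbitrary $\R$-orthonormal basis of $U$ and manufacture the principal vectors via the real spectral theorem, with total reality of $P_W(U)$ entering exactly once, as realness of the Gram matrix $M_{ij}=\inner{P_Wu_i,P_Wu_j}$. These are two presentations of one mechanism --- your real symmetric $M$ is precisely the Gram matrix of the real operator whose eigenvectors the paper takes as given --- but yours isolates more cleanly where the hypothesis on $P_W(U)$ is used and derives the existence of the relevant real basis from linear algebra rather than importing it, at the cost of the extra bookkeeping of assembling the aligned $f_i=P_We_i/\cos\theta_i$ and handling the degenerate angles $\theta_i=\tfrac\pi2$ (which you correctly flag as routine). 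A small bonus of your version: the case $P_W(U)=0$, which the paper treats separately, is absorbed into the zero eigenspace of $Q$ and needs no special handling.
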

\begin{proof}
	($\Rightarrow$)  
	$P_W (U)$ is a maximal totally real subspace of $(P_W (V))_\R = P_W(U \orthsum J(U)) = P_W(U) \orthsum J(P_W(U))$.
		\SELF{Sem total reality \wrt $W_\R$, teria $(P_W V)_\R = (P_W U) + J(P_W(U))$, mas pode haver interseção, e até $(P_W V)_\R = (P_W U) = J(P_W(U))$}
	If $P_W(U) = 0$ then $V \perp_\C W$,
	so any $\R$-orthonormal $\R$-basis of $U$ is a principal $\C$-basis of $V$ \wrt $W$.
	
	Let $\beta = (e_1,\ldots,e_p)$ and $(f_1,\ldots,f_r)$ be principal $\R$-bases of  $U$ and $P_W(U) \neq 0$.
	We have
	$\inner{J e_i,f_j}_\R = \inner{P_W J e_i,f_j}_\R = \inner{J P_W e_i,f_j}_\R = \|P_W e_i\| \inner{J f_i,f_j}_\R = 0$,
	and so, by \eqref{eq:Hermitian J},
	$\inner{e_i,f_j} = \inner{e_i,f_j}_\R = \delta_{ij} \cos \theta_i$.
	Thus $\beta$ is a principal $\C$-basis of $V$ \wrt $P_W(V)$, hence also \wrt $W$.

	($\Leftarrow$)
	Given principal $\C$-bases $\beta_V = (e_1,\ldots,e_p)$ of $V$ and $\beta_W$ of $W$, let $U = \Span_\R \beta_V$.
	As $\beta_V$ is orthonormal \wrt \eqref{eq:Hermitian J}, $\inner{Je_i,e_j}_\R = 0$, hence $J(U) \perp U$
	and $V_\R = \Span_\R \{e_1,\ldots,e_p,Je_1,\ldots,Je_p\} = U \orthsum J(U)$.
	As $P_W(V)$ is 0 or $\C$-spanned by some $\beta' \subset \beta_W$, 
	$P_W(U)$ is 0 or $\R$-spanned by $\beta'$,
	and again \eqref{eq:Hermitian J} implies $J(P_W(U)) \perp P_W(U)$.
\end{proof}

\begin{proposition}\label{pr:pi max tot real}
	If $U$ is maximal totally real subspace of $V_\R$ \wrt $W_\R$
	then $\pi_{V,W} = \pi^2_{U,W_\R}$,\, $\Theta_{V,W} = \Theta_{U,W_\R}$, and $\cos \Theta_{V,W} = \pi_{U,W_\R}$.
\end{proposition}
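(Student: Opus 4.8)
The plan is to reduce everything to the two identities already proved for the asymmetric angle: the principal-angle formula \eqref{eq:Theta thetai} and the link \eqref{eq:Theta pi} between $\Theta$ and $\pi$. The only real work is to check that $U$ and $V$ ``see'' $W$ through the same principal angles, after which the three asserted identities fall out by combining the real and complex versions of those two propositions. First I would invoke the preceding proposition to write $U = \Span_\R\{e_1,\ldots,e_p\}$, where $(e_1,\ldots,e_p)$ is a principal $\C$-basis of $V$ \wrt $W$ with principal angles $\theta_1\leq\cdots\leq\theta_p$ (setting $\theta_i=\frac\pi2$ for $i>q$, as in \Cref{pr:theta' theta}), and let $(f_1,\ldots,f_q)$ be the matching principal $\C$-basis of $W$, so that $P_W e_i = (\cos\theta_i)\,f_i$.

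The key step is to identify the real principal angles of $U$ with $W_\R$. Since $P_W=P_{W_\R}$ is $\C$-linear (hence commutes with $J$) and $P_{W_\R} e_i = (\cos\theta_i)\,f_i$, while total reality makes $\inner{\cdot,\cdot}$ agree with $\inner{\cdot,\cdot}_\R$ on $U$ and on $P_W(U)$, the $e_i$ are $\R$-orthonormal and the $f_i$ are $\R$-orthonormal. Thus the matrix of $P_{W_\R}\!\restriction_U$ in these bases is diagonal with singular values $\cos\theta_1,\ldots,\cos\theta_p$, so the real principal angles of $U$ and $W_\R$ are exactly $\theta_1,\ldots,\theta_p$ (the values $\theta_i=\frac\pi2$ for $i>q$ appearing as directions $e_i\perp W_\R$). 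Applying \Cref{pr:Theta thetai} in the real case to $(U,W_\R)$ and in the complex case to $(V,W)$ then gives $\Theta_{U,W_\R} = \cos^{-1}\!\big(\prod_{i=1}^p\cos\theta_i\big) = \Theta_{V,W}$, which is the middle identity; note the degenerate range $p>q$ is handled automatically, as both products vanish and both angles equal $\frac\pi2$.

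Finally I would combine this with \Cref{pr:Theta pi}. For the real pair it yields $\pi_{U,W_\R} = \cos\Theta_{U,W_\R} = \cos\Theta_{V,W}$, which is the third identity; and for the complex pair it yields $\pi_{V,W} = \cos^2\Theta_{V,W} = (\cos\Theta_{U,W_\R})^2 = \pi_{U,W_\R}^2$, the first. The hard part will be the second paragraph: verifying rigorously that the real principal angles of $U$ coincide with the complex principal angles of $V$. This is exactly where the hypothesis that $U$ is totally real \emph{\wrt $W_\R$} is indispensable, since it is what guarantees $\R$-orthonormality of both $\{e_i\}$ and their projections $\{f_i\}$ and thereby prevents any spurious principal angle from entering; the borderline dimension count $q<p\leq 2q$ also deserves an explicit check that the $p-q$ directions $e_i$ with $i>q$ indeed contribute real angles $\frac\pi2$.
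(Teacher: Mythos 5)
Your proof is correct, but it runs in the opposite direction from the paper's. The paper proves the identity $\pi_{V,W}=\pi_{U,W_\R}^2$ \emph{first}, by a direct volume computation from the definition \eqref{eq:def Theta pi}: it takes $S\subset U$, uses maximality ($V_\R=U\orthsum J(U)$) to form the product set $S\times J(S)$, uses that $P_W$ commutes with $J$ together with total reality of $P_W(U)$ to split $\vol_{2p}(P_W(S\times J(S)))=\vol_p(P_W(S))\cdot\vol_p(J(P_W(S)))$, and then reads off the two angle identities from \eqref{eq:Theta pi}. You instead prove $\Theta_{V,W}=\Theta_{U,W_\R}$ first, by showing via the characterization of maximal totally real subspaces \wrt $W_\R$ as real spans of principal $\C$-bases that the real principal angles of $(U,W_\R)$ coincide with the complex principal angles of $(V,W)$, then apply \Cref{pr:Theta thetai} twice and deduce the $\pi$-identities from \eqref{eq:Theta pi}. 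Your central computation is sound: $\R$-orthonormality of $(e_i)$ and of $(f_1,Jf_1,\ldots,f_q,Jf_q)$, together with $\inner{Jf_j,P_We_i}_\R=\cos\theta_i\inner{Jf_j,f_i}_\R=0$, does make the matrix of $P_{W_\R}\restriction_U$ diagonal with singular values $\cos\theta_i$, and the $p>q$ case degenerates correctly on both sides. In fact the paper itself sketches your route in the remark immediately following the proposition (``Another way to see why $\Theta_{V,W}=\Theta_{U,W_\R}$\ldots''). What each approach buys: the paper's volume argument needs no principal-angle bookkeeping and makes the squaring in $\pi_{V,W}=\pi_{U,W_\R}^2$ geometrically transparent (the two orthogonal copies $P_W(S)$ and $J(P_W(S))$), while yours isolates exactly where the hypothesis ``totally real \wrt $W_\R$'' (and not mere total reality) is needed, namely to guarantee that $U$ is spanned by a principal basis so that no spurious real principal angles appear.
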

\begin{proof}
	Let $p=\dim_\C V = \dim_\R U$.
	For $S \subset U$ with $\vol_p(S) \neq 0$,
	\begin{align*}
		\pi_{V,W} &= \frac{\vol_{2p}(P_W(S \times J(S)))}{\vol_{2p}(S \times J(S))}
		= \frac{\vol_{2p}(P_W(S) \times J(P_W(S)))}{\vol_{2p}(S \times J(S))} \\
		&= \frac{\vol_p(P_W(S)) \cdot \vol_p(J(P_W(S)))}{\vol_p(S) \cdot \vol_p(J(S))}
		= \frac{\vol_{p}(P_W(S))^2}{\vol_p(S)^2}
		= \pi_{U,W_\R}^2. 
	\end{align*}
	The other formulas follow from \eqref{eq:Theta pi}.
\end{proof}

So, $\cos \Theta_{V,W}$ also describes how volumes from maximal totally real subspaces of $V_\R$ \wrt $W_\R$ contract if orthogonally projected to $W_\R$.

Another way to see why $\Theta_{V,W} = \Theta_{U,W_\R}$ is to note that $U$ and $W_\R$ have the same principal angles as $V$ and $W$,
and if $\dim_\C V > \dim_\C W$ then $\Theta_{V,W}=\frac\pi2 = \Theta_{U,W_\R}$ by \eqref{eq:Theta dim},
	\SELF{even if $\dim U \leq \dim W_\R$} 
as $P_W(U) \orthsum J(P_W(U)) \subset W_\R$ 
and so $\dim_\R P_W(U) \leq \frac{\dim_\R W_\R}{2} = \dim_\C W < \dim_\C V = \dim_\R U$.

\subsection{Computing asymmetric angles}\label{sc:Computing asymmetric angles}

The asymmetric angle is linked to the various products of Grassmann and Clifford geometric algebras \cite{Mandolesi_Products,Mandolesi_Contractions}, which give formulas to compute it and useful properties \cite{Mandolesi_Grassmann,Mandolesi_Trigonometry}.
In particular, if $V_{(p)}=[A]$ and $W_{(q)} = [B]$ for nonzero blades $A, B \in \bigwedge \F^n$ then \cite{Mandolesi_Products}
\begin{subequations}\label{eq:norm prods}
\begin{align}
	\cos \Theta_{V,W} &= \frac{\|A\lcontr B\|}{\|A\|\|B\|} \quad \left(= \frac{|\inner{A,B}|}{\|A\|\|B\|} \text{ if } p=q\right), \label{eq:norm contr} \\ 
	\shortintertext{and}
	\cos \Theta_{V,W^\perp} &= \frac{\|A\wedge B\|}{\|A\|\|B\|} = \prod_{i=1}^{\min\{p,q\}} \sin \theta_i, \label{eq:norm wedge}
\end{align}
\end{subequations}
where the $\theta_i$'s are the principal angles of $V$ and $W$.

We note \cite{Mandolesi_Grassmann} that $\Theta_{V,W^\perp} = \Theta_{W,V^\perp}$, but in general $\Theta_{V,W^\perp}$ does not equal $\Theta_{V^\perp,W}$ or $\frac\pi2 - \Theta_{V,W}$.
In \cite{Mandolesi_Contractions}, $\Theta_{V^\perp,W}$ is related to Grassmann's regressive product, and in \cite{Mandolesi_Trigonometry} we give a matrix formula for it.

The matrix versions of \eqref{eq:norm prods} are a prime example of how matrices can be worse than Grassmann algebra for working with subspaces:

\begin{proposition}\label{pr:formula any base dimension}
	Given bases $(v_1,\ldots,v_p)$ of $V$ and $(w_1,\ldots,w_q)$ of $W$, let 
	$\mathbf{A}=\big(\inner{v_i,v_j}\big)_{p \times p}$, 
	$\mathbf{B} =\big(\inner{w_i,w_j}\big)_{q \times q}$ and
	$\mathbf{C} =\big(\inner{w_i,v_j}\big)_{q \times p}$.
	Then
	\begin{subequations}\label{eq:formula Theta matrices}
		\begin{align}
			\cos \Theta_{V,W} &= \frac{\sqrt{\det(\mathbf{C}^\dagger \mathbf{B}^{-1}\mathbf{C})}}{\sqrt{\det \mathbf{A}}} \quad \left(= \frac{|\det \mathbf{C}\,|}{\sqrt{\det \mathbf{A} \cdot\det \mathbf{B}}} \text{ if $p=q$}\right), \label{eq:formula Theta any base any dimension} \\[3pt]
			\shortintertext{and}
			\cos \Theta_{V,W^\perp} &= \frac{\sqrt{\det(\AA - \CC^\dagger \BB^{-1} \CC)}}{\sqrt{\det \AA}} = \frac{\sqrt{\det(\BB - \CC \AA^{-1} \CC^\dagger)}}{\sqrt{\det \BB}}. \label{eq:formula Theta perp} 
		\end{align}
	\end{subequations} 
\end{proposition}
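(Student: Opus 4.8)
The plan is to represent $V$ and $W$ by blades and reduce both formulas to the norm expressions in \eqref{eq:norm prods}, so that only the Gram-determinant identities $\|A\|^2=\det\mathbf{A}$, $\|B\|^2=\det\mathbf{B}$ (from \Cref{sc:Grassmann algebra}) and a Schur-complement computation remain. First I would set $A=v_1\wedge\cdots\wedge v_p$ and $B=w_1\wedge\cdots\wedge w_q$, so that $V=[A]$ and $W=[B]$, and record that $\|A\|^2=\det\mathbf{A}$ and $\|B\|^2=\det\mathbf{B}$, since $\inner{A,A}=\det(\inner{v_i,v_j})$ and likewise for $B$.

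To obtain \eqref{eq:formula Theta any base any dimension}, I would substitute \eqref{eq:norm contr matrices} into \eqref{eq:norm contr}, so that the factor $\det\mathbf{B}$ cancels:
\[
	\cos\Theta_{V,W}=\frac{\|A\lcontr B\|}{\|A\|\|B\|}
	=\frac{\sqrt{\det\mathbf{B}\cdot\det(\mathbf{C}^\dagger\mathbf{B}^{-1}\mathbf{C})}}{\sqrt{\det\mathbf{A}}\,\sqrt{\det\mathbf{B}}}
	=\frac{\sqrt{\det(\mathbf{C}^\dagger\mathbf{B}^{-1}\mathbf{C})}}{\sqrt{\det\mathbf{A}}}.
\]
When $p=q$ the matrix $\mathbf{C}$ is square and $\det(\mathbf{C}^\dagger\mathbf{B}^{-1}\mathbf{C})=|\det\mathbf{C}|^2/\det\mathbf{B}$, which immediately yields the parenthetical formula.

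For \eqref{eq:formula Theta perp} I would start from \eqref{eq:norm wedge}, namely $\cos\Theta_{V,W^\perp}=\|A\wedge B\|/(\|A\|\|B\|)$, and compute $\|A\wedge B\|^2$ as the Gram determinant of the concatenated system $(v_1,\dots,v_p,w_1,\dots,w_q)$:
\[
	\|A\wedge B\|^2=\det\begin{pmatrix}\mathbf{A} & \mathbf{C}^\dagger\\[2pt] \mathbf{C} & \mathbf{B}\end{pmatrix},
\]
where the off-diagonal blocks are $\mathbf{C}$ and $\mathbf{C}^\dagger$ because $\inner{v_i,w_j}=\overline{\inner{w_j,v_i}}$. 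Expanding this block determinant by the Schur complement in its two equivalent forms, $\det\mathbf{A}\cdot\det(\mathbf{B}-\mathbf{C}\mathbf{A}^{-1}\mathbf{C}^\dagger)$ and $\det\mathbf{B}\cdot\det(\mathbf{A}-\mathbf{C}^\dagger\mathbf{B}^{-1}\mathbf{C})$, and dividing by $\|A\|^2\|B\|^2=\det\mathbf{A}\cdot\det\mathbf{B}$, produces the two expressions in \eqref{eq:formula Theta perp}.

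The only delicate points are bookkeeping: confirming the Hermitian block structure of the Gram matrix (the placement of $\mathbf{C}$ versus $\mathbf{C}^\dagger$, which rests on conjugate-linearity in the first entry when $\F=\C$), and invoking the Schur identity, which needs $\mathbf{A}$ and $\mathbf{B}$ invertible — guaranteed since $(v_i)$ and $(w_j)$ are bases, hence their Gram matrices are positive definite. I do not expect a genuine obstacle: the argument is entirely a reduction to \eqref{eq:norm prods} followed by standard linear algebra.
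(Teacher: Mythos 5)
Your proposal is correct and follows essentially the same route as the paper's proof: both reduce \eqref{eq:formula Theta any base any dimension} to \eqref{eq:norm contr} together with \eqref{eq:norm contr matrices}, and \eqref{eq:formula Theta perp} to \eqref{eq:norm wedge} plus Schur's determinant identity applied to the block Gram determinant $\|A\wedge B\|^2$. The extra details you supply (the $p=q$ simplification, the placement of $\mathbf{C}$ versus $\mathbf{C}^\dagger$, and the invertibility of the Gram blocks) are all accurate and merely make explicit what the paper leaves implicit.
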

\begin{proof}
	If $A=v_1\wedge\cdots \wedge v_p$ and $B=w_1\wedge\cdots\wedge w_q$
	then $\|A\| = \sqrt{\det \mathbf{A}}$ and $\|B\| = \sqrt{\det \mathbf{B}}$, 
	so \eqref{eq:formula Theta any base any dimension} follows from \eqref{eq:norm contr} and \eqref{eq:norm contr matrices},
	and \eqref{eq:formula Theta perp} from \eqref{eq:norm wedge} and Schur's determinant identity
		\OMIT{Schur (Brualdi1983): $\begin{vsmallmatrix}
			A & B \\ 
			C & D
		\end{vsmallmatrix} =$ \\ $\det(D)\det(A-BD^{-1}C)=$ \\ $\det(A)\det(D-CA^{-1}B)$}
	applied to $\|A\wedge B\|^2 = \begin{vsmallmatrix}
		\AA & \CC^\dagger \\ 
		\CC & \BB
	\end{vsmallmatrix}$.
\end{proof}

With orthonormal bases, the formulas become simpler.

\begin{corollary}\label{pr:determinant projection}
	Let $\mathbf{P}$ be a matrix for the orthogonal projection $V\rightarrow W$ in  orthonormal bases of $V$ and $W$. Then
	\begin{subequations}\label{eq:formulas orthonormal bases}
		\begin{align}
			\cos \Theta_{V,W} &= \sqrt{\det(\mathbf{P}^\dagger \mathbf{P})} \quad \left(=|\det \mathbf{P}| \text{ if } p=q \right), \label{eq:formula Theta orthonormal bases} \\
			\shortintertext{and}
			\cos \Theta_{V,W^\perp} &= \sqrt{\det(\mathds{1}_{p\times p} - \mathbf{P}^\dagger \mathbf{P})} = \sqrt{\det(\mathds{1}_{q\times q} - \mathbf{P}\mathbf{P}^\dagger)}. \label{eq:formula Theta perp orthonormal bases}
		\end{align}
	\end{subequations}
\end{corollary}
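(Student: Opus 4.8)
The plan is to derive the corollary by simply specializing the general-basis formulas of Proposition~\ref{pr:formula any base dimension} to orthonormal bases, so the whole argument reduces to one matrix identification plus a substitution.

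First I would note that when $(v_1,\ldots,v_p)$ and $(w_1,\ldots,w_q)$ are orthonormal the Gram matrices collapse: $\AA = \big(\inner{v_i,v_j}\big) = \mathds{1}_{p\times p}$ and $\BB = \big(\inner{w_i,w_j}\big) = \mathds{1}_{q\times q}$, whence $\det\AA = \det\BB = 1$ and $\BB^{-1} = \mathds{1}_{q\times q}$. This immediately clears the denominators and the inverse appearing in \eqref{eq:formula Theta any base any dimension} and \eqref{eq:formula Theta perp}.

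The key step is identifying $\CC$ with the projection matrix $\mathbf{P}$. Since $(w_1,\ldots,w_q)$ is orthonormal, for each $j$ the orthogonal projection of $v_j$ onto $W$ is $P_W v_j = \sum_i \inner{w_i,v_j}\, w_i$; indeed, using conjugate-linearity in the first entry one checks $\inner{w_k, P_W v_j} = \inner{w_k, v_j}$ for every $k$, and $P_W v_j \in W$. Hence the matrix of the orthogonal projection $V \to W$ in these bases has $(i,j)$-entry $\inner{w_i,v_j} = \CC_{ij}$, that is, $\mathbf{P} = \CC$.

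Substituting $\AA = \mathds{1}$, $\BB = \mathds{1}$ and $\CC = \mathbf{P}$ into \eqref{eq:formula Theta any base any dimension} gives $\cos\Theta_{V,W} = \sqrt{\det(\mathbf{P}^\dagger\mathbf{P})}$, and for $p=q$ the alternative form yields $|\det\mathbf{P}|$; likewise \eqref{eq:formula Theta perp} becomes the two expressions $\sqrt{\det(\mathds{1}_{p\times p} - \mathbf{P}^\dagger\mathbf{P})} = \sqrt{\det(\mathds{1}_{q\times q} - \mathbf{P}\mathbf{P}^\dagger)}$ of \eqref{eq:formula Theta perp orthonormal bases}. There is no real obstacle here; the only point demanding care is the inner-product convention, since conjugate-linearity in the first slot makes the expansion coefficients $\inner{w_i,v_j}$ (not their conjugates), which is exactly what forces $\mathbf{P} = \CC$ rather than $\mathbf{P} = \CC^\dagger$.
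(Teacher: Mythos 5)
Your proposal is correct and is essentially the paper's own proof: the paper likewise just observes that in orthonormal bases $\mathbf{A} = \mathds{1}_{p\times p}$, $\mathbf{B} = \mathds{1}_{q\times q}$ and $\mathbf{C} = \mathbf{P}$, and substitutes into Proposition~\ref{pr:formula any base dimension}. Your extra verification that the conjugate-linearity convention forces $\mathbf{P} = \mathbf{C}$ rather than $\mathbf{C}^\dagger$ is a sound (and welcome) elaboration of the same argument.
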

\begin{proof}
	In such bases, $\mathbf{A} = \mathds{1}_{p\times p}$, $\mathbf{B} = \mathds{1}_{q\times q}$ and $\mathbf{C} = \mathbf{P}$.
\end{proof}

In principal bases $\mathbf{P}$ is a diagonal matrix with the $\cos\theta_i$'s and extra lines or columns with $0$'s,%
	\OMIT{If $p>q$ the diagonal of $\bar{\mathbf{P}}^T \mathbf{P}$ has $0$'s}
so \eqref{eq:formulas orthonormal bases} also follows from \eqref{eq:Theta thetai} and \eqref{eq:norm wedge}.

\begin{example}\label{ex:contraction}
	Let $\{u_1,\ldots,u_5\}$ be the canonical basis of $\R^5$, 
	$V_{(2)}$ be spanned by $v_1=2u_1-u_2$ and $v_2 = 2u_1+u_3$,
	and $W_{(3)}$ by $w_1=u_2+u_5$, $w_2=u_3-u_4$ and $w_3=u_4$. 
	Then 
	\[ \mathbf{A}=\begin{psmallmatrix}
		5 & 4 \\ 4 & 5
	\end{psmallmatrix}, \quad
	\mathbf{B}=\begin{psmallmatrix}
		2 & 0 & 0 \\ 0 & 2 & -1 \\ 0 & -1 & 1
	\end{psmallmatrix}, \quad
	\mathbf{C}=\begin{psmallmatrix}
		-1 & 0 \\ 0 & 1 \\ 0 & 0
	\end{psmallmatrix}, \]
	and \eqref{eq:formula Theta matrices} gives
	$\Theta_{V,W}= \cos^{-1}\frac{\sqrt{2}}{6} \cong 76.4^\circ$ and 
	$\Theta_{V,W^\perp}= \cos^{-1}\frac{\sqrt{2}}{3} \cong 61.9^\circ$,
	so areas in $V$ contract by $\frac{\sqrt{2}}{6}$ (\resp $\frac{\sqrt{2}}{3}$) if orthogonally projected on $W$ (\resp $W^\perp$).	
	With 
	\[ \mathbf{A}=\begin{psmallmatrix}
		2 & 0 & 0 \\ 0 & 2 & -1 \\ 0 & -1 & 1
	\end{psmallmatrix}, \quad
	\mathbf{B}=\begin{psmallmatrix}
		5 & 4 \\ 4 & 5
	\end{psmallmatrix}, \quad
	\mathbf{C}=\begin{psmallmatrix}
		-1 & 0 & 0 \\ 0 & 1 & 0
	\end{psmallmatrix}, \]
	we find, as expected, $\Theta_{W,V}= 90^\circ$ and $\Theta_{W,V^\perp} = \Theta_{V,W^\perp}$, so volumes in $W$ vanish (\resp contract by $\frac{\sqrt{2}}{3}$) if orthogonally projected on $V$ (\resp $V^\perp$).	
	
	Grassmann algebra allows simpler calculations.
	Writing $v_{12} = v_1 \wedge v_2$, for example, we have
	$V=[A]$ for $A = v_{12} = 2u_{12}+2u_{13}-u_{23}$, by \eqref{eq:wedge},
	and
	$W=[B]$ for $B= w_{123} = u_{234}+u_{345}$.
	One easily finds
	$\|A\| = 3$ and $\|B\| = \sqrt{2}$ by orthonormality of $u_{ij}$'s and $u_{ijk}$'s,
	$A\lcontr B = -u_4$ by \eqref{eq:contraction orthonormal}, $B \lcontr A = 0$, and $A \wedge B = 2u_{12345}$.
	Then \eqref{eq:norm prods} gives the same results.
\end{example}

\begin{example}\label{ex:Theta underlying}
	In $\C^3$, let $V=\Span\{v\}$ and $W=\Span\{w_1,w_2\}$ for $v=(1,0,\im)$, $w_1=(1,0,0)$ and $w_2=(\im,1,0)$. 
	We  find  $\Theta_{V,W}= 45^\circ$ using 
	\[\mathbf{A}=(2), \quad
	\mathbf{B}=\begin{psmallmatrix}
		1 & \im \\ -\im & 2
	\end{psmallmatrix}, \quad
	\mathbf{C}=\begin{psmallmatrix}
		1 \\ -\im
	\end{psmallmatrix}. \] 
	Since $V_\R=\Span_\R\{v,Jv\}$ and $W_\R=\Span_\R\{w_1,Jw_1,w_2,Jw_2\}$ with
	\begin{align*}
		v&=(1,0,0,0,0,1), & w_1&=(1,0,0,0,0,0), & w_2&=(0,1,1,0,0,0), \\
		Jv&=(0,1,0,0,-1,0), & Jw_1&=(0,1,0,0,0,0), & Jw_2&=(-1,0,0,1,0,0),
	\end{align*}
	we find $\Theta_{V_\R,W_\R}= 60^\circ$ using
	\[ \mathbf{A}=\begin{psmallmatrix}
		2 & 0 \\
		0 & 2
	\end{psmallmatrix}, \quad
	\mathbf{B}=\begin{psmallmatrix}
		1 & 0 & 0 & -1 \\
		0 & 1 & 1 & 0 \\
		0 & 1 & 2 & 0 \\
		-1 & 0 & 0 & 2
	\end{psmallmatrix}, \quad
	\mathbf{C}=\begin{psmallmatrix}
		1 & 0 \\
		0 & 1 \\
		0 & 1 \\
		-1 & 0
	\end{psmallmatrix}. \]
	Though $\Theta_{V_\R,W_\R} \neq \Theta_{V,W}$, both tell us, via \eqref{eq:Theta pi}, that areas in $V_\R$ contract by $\pi_{V_\R,W_\R} = \pi_{V,W} = \frac12$ if orthogonally projected to $W_\R$.
\end{example}


\begin{thebibliography}{10}
	
	\bibitem{Edelman1999}
	A.~Edelman, T.~A. Arias, and S.~T. Smith.
	\newblock The geometry of algorithms with orthogonality constraints.
	\newblock {\em SIAM J. Matrix Anal. Appl.}, 20(2):303--353, 1999.
	
	\bibitem{Deza2016}
	M.~M. Deza and E.~Deza.
	\newblock {\em Encyclopedia of Distances}.
	\newblock Springer, 4th edition, 2016.
	
	\bibitem{Stewart1990}
	G.~Stewart and J.~Sun.
	\newblock {\em Matrix Perturbation Theory}.
	\newblock Academic Press, 1990.
	
	\bibitem{Qiu2005}
	L.~Qiu, Y.~Zhang, and C.~Li.
	\newblock Unitarily invariant metrics on the {G}rassmann space.
	\newblock {\em SIAM J. Matrix Anal. Appl.}, 27(2):507--531, 2005.
	
	\bibitem{Kobayashi1996}
	S.~Kobayashi and K.~Nomizu.
	\newblock {\em Foundations of Differential Geometry}, volume~2.
	\newblock Wiley, 1996.
	
	\bibitem{Harris1992}
	J.~Harris.
	\newblock {\em Algebraic geometry}.
	\newblock Springer, New York, 1992.
	
	\bibitem{Kozlov2000I}
	S.~E. Kozlov.
	\newblock Geometry of real {G}rassmann manifolds. {P}arts {I}, {II}.
	\newblock {\em J. Math. Sci.}, 100(3):2239--2253, 2000.
	
	\bibitem{Kozlov2000III}
	S.~E. Kozlov.
	\newblock Geometry of real {G}rassmann manifolds. {P}art {III}.
	\newblock {\em J. Math. Sci.}, 100(3):2254--2268, 2000.
	
	\bibitem{Wong1967}
	Y.~Wong.
	\newblock Differential geometry of {G}rassmann manifolds.
	\newblock {\em Proc. Natl. Acad. Sci. USA}, 57(3):589--594, 1967.
	
	\bibitem{Bendokat2024}
	T.~Bendokat, R.~Zimmermann, and P.-A. Absil.
	\newblock A {G}rassmann manifold handbook: basic geometry and computational
	aspects.
	\newblock {\em Adv. Comput. Math.}, 50(1):6, 2024.
	
	\bibitem{Hamm2008}
	J.~Hamm and D.~Lee.
	\newblock Grassmann discriminant analysis: A unifying view on subspace-based
	learning.
	\newblock In {\em Proc. Int. Conf. Mach. Learn.}, pages 376--383. ACM, 2008.
	
	\bibitem{Huang2018}
	Z.~Huang, J.~Wu, and L.~Van~Gool.
	\newblock Building deep networks on {G}rassmann manifolds.
	\newblock In {\em Proc. Conf. AAAI Artif. Intell.}, volume~32, 2018.
	
	\bibitem{Lerman2011}
	G.~Lerman and T.~Zhang.
	\newblock Robust recovery of multiple subspaces by geometric $l_p$
	minimization.
	\newblock {\em Ann. Stat.}, 39(5):2686--2715, 2011.
	
	\bibitem{Lui2012}
	Y.~M. Lui.
	\newblock Advances in matrix manifolds for computer vision.
	\newblock {\em Image Vis. Comput.}, 30(6-7):380--388, 2012.
	
	\bibitem{Turaga2008}
	P.~Turaga, A.~Veeraraghavan, and R.~Chellappa.
	\newblock Statistical analysis on {S}tiefel and {G}rassmann manifolds with
	applications in computer vision.
	\newblock In {\em 2008 {IEEE} Conf. Comput. Vis. Pattern Recog.}, 2008.
	
	\bibitem{Vishwanathan2006}
	S.~V.~N. Vishwanathan, A.~J. Smola, and R.~Vidal.
	\newblock {B}inet-{C}auchy kernels on dynamical systems and its application to
	the analysis of dynamic scenes.
	\newblock {\em Int. J. Comput. Vis.}, 73(1):95--119, 2006.
	
	\bibitem{Ashikhmin2010}
	A.~Ashikhmin and A.~R. Calderbank.
	\newblock Grassmannian packings from operator {R}eed-{M}uller codes.
	\newblock {\em {IEEE} Trans. Inform. Theory}, 56(11):5689--5714, nov 2010.
	
	\bibitem{Conway1996}
	J.~H. Conway, R.~H. Hardin, and N.~J.~A. Sloane.
	\newblock Packing lines, planes, etc.: Packings in {G}rassmannian spaces.
	\newblock {\em Exp. Math.}, 5(2):139--159, 1996.
	
	\bibitem{Barg2002}
	A.~Barg and D.~Y. Nogin.
	\newblock Bounds on packings of spheres in the {G}rassmann manifold.
	\newblock {\em {IEEE} Trans. Inform. Theory}, 48(9):2450--2454, 2002.
	
	\bibitem{Dhillon2008}
	I.~S. Dhillon, R.~W.~Heath Jr., T.~Strohmer, and J.~A. Tropp.
	\newblock Constructing packings in {G}rassmannian manifolds via alternating
	projection.
	\newblock {\em Exp. Math.}, 17(1):9--35, 2008.
	
	\bibitem{Love2003}
	D.~J. Love, R.~W. Heath, and T.~Strohmer.
	\newblock Grassmannian beamforming for multiple-input multiple-output wireless
	systems.
	\newblock {\em {IEEE} Trans. Inform. Theory}, 49(10):2735--2747, 2003.
	
	\bibitem{Love2005}
	D.~J. Love and R.~W. Heath.
	\newblock Limited feedback unitary precoding for orthogonal space-time block
	codes.
	\newblock {\em {IEEE} Trans. Signal Process.}, 53(1):64--73, 2005.
	
	\bibitem{Pereira2021}
	R.~Pereira, X.~Mestre, and D.~Gregoratti.
	\newblock Subspace based hierarchical channel clustering in massive {MIMO}.
	\newblock In {\em 2021 {IEEE} Globecom Workshops}, pages 1--6, 2021.
	
	\bibitem{Basri2011}
	R.~Basri, T.~Hassner, and L.~Zelnik-Manor.
	\newblock Approximate nearest subspace search.
	\newblock {\em {IEEE} Trans. Pattern Anal. Mach. Intell.}, 33(2):266--278,
	2011.
	
	\bibitem{Draper2014}
	B.~Draper, M.~Kirby, J.~Marks, T.~Marrinan, and C.~Peterson.
	\newblock A flag representation for finite collections of subspaces of mixed
	dimensions.
	\newblock {\em Linear Algebra Appl.}, 451:15--32, 2014.
	
	\bibitem{Kato1995}
	T.~Kato.
	\newblock {\em Perturbation Theory for Linear Operators}.
	\newblock Springer, 1995.
	
	\bibitem{Wang2006}
	L.~Wang, X.~Wang, and J.~Feng.
	\newblock Subspace distance analysis with application to adaptive {B}ayesian
	algorithm for face recognition.
	\newblock {\em Pattern Recognit.}, 39(3):456--464, 2006.
	
	\bibitem{Sun_2007}
	X.~Sun, L.~Wang, and J.~Feng.
	\newblock Further results on the subspace distance.
	\newblock {\em Pattern Recognit.}, 40(1):328--329, 2007.
	
	\bibitem{Zuccon2009}
	G.~Zuccon, L.~A. Azzopardi, and C.~J. van Rijsbergen.
	\newblock Semantic spaces: Measuring the distance between different subspaces.
	\newblock In {\em Quantum Interaction}, pages 225--236. Springer, 2009.
	
	\bibitem{Gruber2009}
	P.~Gruber, H.~W. Gutch, and F.~J. Theis.
	\newblock Hierarchical extraction of independent subspaces of unknown
	dimensions.
	\newblock In {\em Int. Conf. Ind. Compon. Anal. Signal Separation}, pages
	259--266. Springer, 2009.
	
	\bibitem{Renard2018}
	E.~Renard, K.~A. Gallivan, and P.~A. Absil.
	\newblock A {G}rassmannian minimum enclosing ball approach for common subspace
	extraction.
	\newblock In {\em Int. Conf. Latent Variable Anal. Signal Separation}, pages
	69--78, 2018.
	
	\bibitem{Beattie2005}
	C.~A. Beattie, M.~Embree, and D.~C. Sorensen.
	\newblock Convergence of polynomial restart {K}rylov methods for eigenvalue
	computations.
	\newblock {\em {SIAM} Review}, 47(3):492--515, 2005.
	
	\bibitem{Sorensen2002}
	D.~C. Sorensen.
	\newblock Numerical methods for large eigenvalue problems.
	\newblock {\em Acta Numer.}, 11:519--584, 2002.
	
	\bibitem{Ye2016}
	K.~Ye and L.~H. Lim.
	\newblock Schubert varieties and distances between subspaces of different
	dimensions.
	\newblock {\em SIAM J. Matrix Anal. Appl.}, 37(3):1176--1197, 2016.
	
	\bibitem{Wilson1931}
	W.~A. Wilson.
	\newblock On quasi-metric spaces.
	\newblock {\em Amer. J. Math.}, 53(3):675, 1931.
	
	\bibitem{Busemann1944}
	H.~Busemann.
	\newblock Local metric geometry.
	\newblock {\em Trans. Amer. Math. Soc.}, 56:200--274, 1944.
	
	\bibitem{Zaustinsky1959}
	E.~M. Zaustinsky.
	\newblock {\em Spaces with non-symmetric distance}, volume~34.
	\newblock American Mathematical Soc., 1959.
	
	\bibitem{Albert1941}
	G.~E. Albert.
	\newblock A note on quasi-metric spaces.
	\newblock {\em Bull. Amer. Math. Soc.}, 47(6):479--482, 1941.
	
	\bibitem{Mennucci2013}
	A.~C.~G. Mennucci.
	\newblock On asymmetric distances.
	\newblock {\em Anal. Geom. Metr. Spaces}, 1(1):200--231, 2013.
	
	\bibitem{GoubaultLarrecq2013}
	J.~Goubault-Larrecq.
	\newblock {\em Non-{H}ausdorff Topology and Domain Theory}.
	\newblock Cambridge University Press, 2013.
	
	\bibitem{Kunzi2001}
	H.~P. K{\"u}nzi.
	\newblock Nonsymmetric distances and their associated topologies: about the
	origins of basic ideas in the area of asymmetric topology.
	\newblock In C.~E. Aull and R.~Lowen, editors, {\em Handbook of the history of
		general topology}, volume~3, pages 853--968. Springer, 2001.
	
	\bibitem{Kunzi2009}
	H.~P. K{\"u}nzi.
	\newblock An introduction to quasi-uniform spaces.
	\newblock {\em Contemp. Math.}, 486:239--304, 2009.
	
	\bibitem{Bao2012}
	D.~Bao, S.~S. Chern, and Z.~Shen.
	\newblock {\em Introduction to {R}iemann-{F}insler Geometry}.
	\newblock Springer, 2012.
	
	\bibitem{Mayor2010}
	G.~Mayor and O.~Valero.
	\newblock Aggregation of asymmetric distances in computer science.
	\newblock {\em Inform. Sci.}, 180(6):803--812, 2010.
	
	\bibitem{Romaguera1999}
	S.~Romaguera and M.~Schellekens.
	\newblock Quasi-metric properties of complexity spaces.
	\newblock {\em Topol. Appl.}, 98(1-3):311--322, 1999.
	
	\bibitem{Seda2008}
	A.~K. Seda and P.~Hitzler.
	\newblock Generalized distance functions in the theory of computation.
	\newblock {\em Comput. J.}, 53(4):443--464, 2008.
	
	\bibitem{Gutierres2012}
	G.~Gutierres and D.~Hofmann.
	\newblock Approaching metric domains.
	\newblock {\em Appl. Categ. Structures}, 21(6):617--650, 2012.
	
	\bibitem{Lawvere1973}
	F.~W. Lawvere.
	\newblock Metric spaces, generalized logic, and closed categories.
	\newblock {\em Rend. Sem. Mat. Fis. Milano}, 43(1):135--166, 1973.
	\newblock Reissued in Reprints in Theory and Applications of Categories 1
	(2002), 1--37.
	
	\bibitem{Stojmirovic2004}
	A.~Stojmirovi{\'{c}}.
	\newblock Quasi-metric spaces with measure.
	\newblock {\em Topol. Proc.}, 28(2):655--671, 2004.
	
	\bibitem{Stojmirovic2009}
	A.~Stojmirovi{\'{c}} and Y.-K. Yu.
	\newblock Geometric aspects of biological sequence comparison.
	\newblock {\em J. Comput. Biol.}, 16(4):579--610, 2009.
	
	\bibitem{Mielke2003a}
	A.~Mielke.
	\newblock Energetic formulation of multiplicative elasto-plasticity using
	dissipation distances.
	\newblock {\em Continuum Mech. Therm.}, 15(4):351--382, 2003.
	
	\bibitem{Rieger2008}
	M.~O. Rieger and J.~Zimmer.
	\newblock Young measure flow as a model for damage.
	\newblock {\em Z. Angew. Math. Phys.}, 60(1):1--32, 2008.
	
	\bibitem{Kopperman1995}
	R.~Kopperman.
	\newblock Asymmetry and duality in topology.
	\newblock {\em Topol. Appl.}, 66(1):1--39, 1995.
	
	\bibitem{Ivanov2000}
	A.~A. Ivanov.
	\newblock Bitopological spaces.
	\newblock {\em J Math Sci}, 98(5):509--616, 2000.
	
	\bibitem{Kelly1963}
	J.~C. Kelly.
	\newblock Bitopological spaces.
	\newblock {\em Proc. Lond. Math. Soc.}, s3-13(1):71--89, 1963.
	
	\bibitem{Chenchiah2009}
	I.~V. Chenchiah, M.~O. Rieger, and J.~Zimmer.
	\newblock Gradient flows in asymmetric metric spaces.
	\newblock {\em Nonlinear Anal. Theory Methods Appl.}, 71(11):5820--5834, 2009.
	
	\bibitem{Collins2007}
	J.~Collins and J.~Zimmer.
	\newblock An asymmetric {A}rzel{\`{a}}{\textendash}{A}scoli theorem.
	\newblock {\em Topol. Appl.}, 154(11):2312--2322, 2007.
	
	\bibitem{Cobzas2012}
	S.~Cobzas.
	\newblock {\em Functional Analysis in Asymmetric Normed Spaces}.
	\newblock Springer, 2012.
	
	\bibitem{GarciaRaffi2003}
	L.~M. Garc{\'{\i}}a-Raffi, S.~Romaguera, and E.~A. Sanchez-P{\'{e}}rez.
	\newblock The dual space of an asymmetric normed linear space.
	\newblock {\em Quaest. Math.}, 26(1):83--96, 2003.
	
	\bibitem{Romaguera2015}
	S.~Romaguera and P.~Tirado.
	\newblock A characterization of {S}myth complete quasi-metric spaces via
	{C}aristi's fixed point theorem.
	\newblock {\em Fixed Point Theory Appl.}, 2015(1):1--13, 2015.
	
	\bibitem{Mennucci2014}
	A.~C.~G. Mennucci.
	\newblock Geodesics in asymmetric metric spaces.
	\newblock {\em Anal. Geom. Metr. Spaces}, 2(1):115--153, 2014.
	
	\bibitem{Bengtsson2017}
	I.~Bengtsson and K.~{\.Z}yczkowski.
	\newblock {\em Geometry of quantum states: an introduction to quantum
		entanglement}.
	\newblock Cambridge University Press, 2017.
	
	\bibitem{Mandolesi_Grassmann}
	A.~L.~G. Mandolesi.
	\newblock Grassmann angles between real or complex subspaces.
	\newblock {\em arXiv:1910.00147}, 2019.
	
	\bibitem{Mandolesi_Products}
	A.~L.~G. Mandolesi.
	\newblock Blade products and angles between subspaces.
	\newblock {\em Adv. Appl. Clifford Algebras}, 31(69), 2021.
	
	\bibitem{Mandolesi_Trigonometry}
	A.~L.~G. Mandolesi.
	\newblock Asymmetric trigonometry of subspaces.
	\newblock {\em Manuscript in preparation}.
	
	\bibitem{Mandolesi_Pythagorean}
	A.~L.~G. Mandolesi.
	\newblock Projection factors and generalized real and complex
	{P}y\-thag\-o\-re\-an theorems.
	\newblock {\em Adv. Appl. Clifford Algebras}, 30(43), 2020.
	
	\bibitem{Mandolesi_Born}
	A.~L.~G. Mandolesi.
	\newblock Quantum fractionalism: the {B}orn rule as a consequence of the
	complex {P}ythagorean theorem.
	\newblock {\em Phys. Lett. A}, 384(28):126725, 2020.
	
	\bibitem{Scharnhorst2001}
	K.~Scharnhorst.
	\newblock Angles in complex vector spaces.
	\newblock {\em Acta Appl. Math.}, 69(1):95--103, 2001.
	
	\bibitem{Bjorck1973}
	A.~Bjorck and G.~Golub.
	\newblock Numerical methods for computing angles between linear subspaces.
	\newblock {\em Math. Comp.}, 27(123):579, 1973.
	
	\bibitem{Bhatia1997}
	R.~Bhatia.
	\newblock {\em Matrix analysis}.
	\newblock Springer, New York, 1997.
	
	\bibitem{Wang2015}
	S.~Wang, W.~Pedrycz, Q.~Zhu, and W.~Zhu.
	\newblock Subspace learning for unsupervised feature selection via matrix
	factorization.
	\newblock {\em Pattern Recogn.}, 48(1):10--19, 2015.
	
	\bibitem{Karami2023}
	S.~Karami, F.~Saberi-Movahed, P.~Tiwari, P.~Marttinen, and S.~Vahdati.
	\newblock Unsupervised feature selection based on variance-covariance subspace
	distance.
	\newblock {\em Neural Networks}, 2023.
	
	\bibitem{Horn1991}
	R.~A. Horn and C.~R. Johnson.
	\newblock {\em Topics in Matrix Analysis}.
	\newblock Cambridge University Press, 1991.
	
	\bibitem{Bajnok2020}
	B.~Bajnok.
	\newblock {\em An Invitation to Abstract Mathematics}.
	\newblock Springer, 2nd edition, 2020.
	
	\bibitem{Knyazev2007}
	A.~V. Knyazev and M.~E. Argentati.
	\newblock Majorization for changes in angles between subspaces, {R}itz values,
	and graph {L}aplacian spectra.
	\newblock {\em SIAM J. Matrix Anal. Appl.}, 29(1):15--32, 2007.
	
	\bibitem{Wolf2011}
	J.~A. Wolf.
	\newblock {\em Spaces of constant curvature}.
	\newblock AMS Chelsea Pub., 2011.
	
	\bibitem{Schlosshauer2007}
	M.~A. Schlosshauer.
	\newblock {\em Decoherence and the quantum-to-classical transition}.
	\newblock Springer, 2007.
	
	\bibitem{Ledoux2001}
	M.~Ledoux.
	\newblock {\em The Concentration of Measure Phenomenon}.
	\newblock American Mathematical Society, 2001.
	
	\bibitem{MacLane1988}
	S.~MacLane and G.~Birkhoff.
	\newblock {\em Algebra}.
	\newblock Chelsea Publishing Company, New York, NY, 3 edition, 1988.
	
	\bibitem{Dorst2007}
	L.~Dorst, D.~Fontijne, and S.~Mann.
	\newblock {\em Geometric algebra for computer science: an object-oriented
		approach to geometry}.
	\newblock Elsevier, 2007.
	
	\bibitem{Mandolesi_ComplexDet}
	A.~L.~G. Mandolesi.
	\newblock Full geometric interpretation of complex blades, determinants and
	{G}ramians.
	\newblock {\em arXiv:2403.17022 [math.CV]}, 2024.
	
	\bibitem{Mandolesi_Contractions}
	A.~L.~G. Mandolesi.
	\newblock Multivector contractions revisited, part {I}.
	\newblock {\em Adv. Appl. Clifford Algebras}, 34(56), 2024.
	
	\bibitem{Mandolesi_Contractions2}
	A.~L.~G. Mandolesi.
	\newblock Multivector contractions revisited, part {II}.
	\newblock {\em Adv. Appl. Clifford Algebras}, 34(54), 2024.
	
	\bibitem{Paige1984}
	C.~C. Paige.
	\newblock A note on a result of {S}un {J}i-{G}uang: Sensitivity of the {CS} and
	{GSV} decompositions.
	\newblock {\em SIAM J. Numer. Anal.}, 21(1):186--191, 1984.
	
	\bibitem{Lu1963}
	Q.~K. Lu.
	\newblock The elliptic geometry of extended spaces.
	\newblock {\em Chinese Math.}, 4:54--69, 1963.
	\newblock Translation of Acta Math. Sinica \textbf{13} (1963), 49--62.
	
	\bibitem{Asimov1985}
	D.~Asimov.
	\newblock The grand tour: A tool for viewing multidimensional data.
	\newblock {\em {SIAM} J. Sci. Stat. Comp.}, 6(1):128--143, 1985.
	
	\bibitem{Weinstein2000}
	A.~Weinstein.
	\newblock Almost invariant submanifolds for compact group actions.
	\newblock {\em J. Eur. Math. Soc.}, 2(1):53--86, 2000.
	
	\bibitem{Martin_2000}
	R.~J. Martin.
	\newblock A metric for {ARMA} processes.
	\newblock {\em {IEEE} Trans. Signal Process.}, 48(4):1164--1170, 2000.
	
	\bibitem{De_Cock_2002}
	K.~De Cock and B.~De Moor.
	\newblock Subspace angles between {ARMA} models.
	\newblock {\em Syst. Control Lett.}, 46(4):265--270, 2002.
	
	\bibitem{Goldman1999}
	W.~M. Goldman.
	\newblock {\em Complex Hyperbolic Geometry}.
	\newblock Oxford University Press, 1999.
	
	\bibitem{Gluck1967}
	H.~Gluck.
	\newblock Higher curvatures of curves in {E}uclidean space, {II}.
	\newblock {\em Amer. Math. Monthly}, 74(9):1049--1056, 1967.
	
	\bibitem{Jiang1996}
	S.~Jiang.
	\newblock Angles between {E}uclidean subspaces.
	\newblock {\em Geom. Dedicata}, 63:113--121, 1996.
	
	\bibitem{Venticos1956}
	G.~Venticos.
	\newblock Elachistikai goniai dyo grammikon ypochoron [{T}he minimum angles of
	two linear subspaces].
	\newblock {\em Bull. Greek Math. Soc.}, 30:85--93, 1956.
	
	\bibitem{Gunawan2005}
	H.~Gunawan, O.~Neswan, and W.~Setya-Budhi.
	\newblock A formula for angles between subspaces of inner product spaces.
	\newblock {\em Beitr. Algebra Geom.}, 46(2):311--320, 2005.
	
\end{thebibliography}
\end{document}